\newcommand{\f}[2]{\frac{#1}{#2}}
\newcommand{\bs}[1]{\boldsymbol{#1}}
\newcommand{\mbf}[1]{\mathbf{ #1}}
\newcommand{\tbf}[1]{\textbf{#1}}
\newcommand{\mb}[1]{\mathbb{#1}}
\newcommand{\mcl}[1]{\mathcal{#1}}
\newcommand{\R}{\mathbb{R}}
\newcommand{\N}{\mathbb{N}}
\newcommand{\norm}[1]{\left\lVert{#1}\right\rVert}
\newcommand{\bmat}[1]{\begin{bmatrix}#1\end{bmatrix}}
\newcommand{\smallbmat}[1]{\left[\scriptsize\begin{smallmatrix}
#1\end{smallmatrix} \right]}
\newcommand{\mat}[1]{\begin{matrix}#1\end{matrix}}
\newcommand{\ip}[2]{\left\langle #1, #2 \right\rangle}
\newcommand{\fvars}[3]{#1\rrbracket^{#2}_{#3}}
\newcommand{\fvarss}[3]{#1\rrbracket^{#2}_{#3}\thinspace}
\newtheorem{thm}{Theorem}
\newtheorem{defn}[thm]{Definition}
\newtheorem{lem}[thm]{Lemma}
\newtheorem{prop}[thm]{Proposition}
\newtheorem{cor}[thm]{Corollary}
\let\bl\bigl
\let\bbl\Bigl
\let\bbbl\biggl
\let\bbbbl\Biggl
\let\br\bigr
\let\bbr\Bigr
\let\bbbr\biggr
\let\bbbbr\Biggr
\title{\LARGE \bf
A PIE Representation of Coupled Linear 2D PDEs and Stability Analysis using LPIs
}
\author{\centering Declan S. Jagt, Matthew M. Peet %
\thanks{\tbf{Acknowledgement:} This work was supported by National Science Foundation grant CMMI-1935453.} %
} %
\begin{document}

\maketitle
\thispagestyle{empty}
\pagestyle{empty}

\begin{abstract}

We introduce a Partial Integral Equation (PIE) representation of Partial Differential Equations (PDEs) in two spatial variables. PIEs are an algebraic state-space representation of infinite-dimensional systems and have been used to model 1D PDEs and time-delay systems without continuity constraints or boundary conditions -- making these PIE representations amenable to stability analysis using convex optimization. To extend the PIE framework to 2D PDEs, we first construct an algebra of Partial Integral (PI) operators on the function space $L_2[x,y]$, providing formulae for composition, adjoint, and inversion. We then extend this algebra to $\R^n \times L_2[x] \times L_2[y] \times L_2[x,y]$ and demonstrate that, for any suitable coupled, linear PDE in 2 spatial variables, there exists an associated PIE whose solutions bijectively map to solutions of the original PDE -- providing conversion formulae between these representations. Next, we use positive matrices to parameterize the convex cone of 2D PI operators -- allowing us to optimize PI operators and solve Linear PI Inequality (LPI) feasibility problems. Finally, we use the 2D LPI framework to provide conditions for stability of 2D linear PDEs. We test these conditions on 2D heat and wave equations and demonstrate that the stability condition has little to no conservatism. 

\end{abstract}

\section{INTRODUCTION}
In this paper, we consider the problem of representation and stability analysis of linear Partial Differential Equations (PDEs) with multiple states evolving in 2 spatial dimensions.

First, consider how a PDE is defined. When we refer to a PDE, we are actually referring to 3 separate governing equations: The partial differential equation itself; a continuity constraint on the solution; and a set of boundary conditions (BCs). Any solution of the PDE is required to satisfy all three constraints at all times -- leading to challenging questions of existence and uniqueness of solutions. Furthermore, suppose we seek to examine whether all solutions to a PDE exhibit a common \textit{evolutionary} trait, such as stability or $L_2$-gain. How does each of the 3 governing equations affect this property? The fact that we have 3 governing equations significantly complicates the analysis and control of PDEs.

For comparison, consider the stability question for a system defined by a linear Ordinary Differential Equation (ODE) in state-space form, $\dot{x}(t)=Ax(t)$, where the ODE itself is the only constraint on the solutions of the system. In this case, a necessary and sufficient (N+S) condition for stability of the solutions of the system is the existence of a quadratic measure of energy (Lyapunov Function (LF)), $V(x) = x^T P x$ with $P> 0$, such that for any solution $x(t)\neq 0$ of the ODE, $V(x(t))$ is decreasing for all $t\ge 0$. A N+S condition for stability will then be existence of a matrix $P>0$ such that $\dot{V}(x(t)) = x^T(t) (PA +A^T P) x(t)\leq 0$ for all $x(t)\in \R^n$. This constraint is in the form of a Linear Matrix Inequality (LMI), and can be solved using convex optimization algorithms~\cite{boyd1994linear}.

Now, let us consider the problems with extending this state-space approach to stability analysis of 2D PDEs. In this case, the state at time $t$ of the PDE is a function $\mbf{u}(t,x,y)$ of 2 spatial variables -- raising the question of how one can parameterize the convex cone of LFs which are positive on this 2D function space without introducing significant conservatism. Moreover, we recall that solutions to the `PDE' are required to satisfy not only the PDE itself (e.g. $\mbf{u}_t=\mbf{u}_{xx}+\mbf{u}_{yy}$), but are also required to satisfy both continuity constraints (e.g. $\mbf{u}(t,\cdot,\cdot)\in H_2$) and boundary conditions (e.g. $\mbf{u}(t,0,y)=\mbf{u}(t,1,y)=\mbf{u}(t,x,0)=\mbf{u}(t,x,1)=0$). The second problem is then, given a Lyapunov function of the form $V(\mbf u)$, how to determine whether $\dot V(\mbf u(t))<0$ for all $\mbf u$ satisfying all three constraints. In particular, how do the BCs and continuity constraints influence $\dot V(\mbf u(t))$?

Regarding the first problem, it is known that for linear PDEs, as was the case for ODEs, existence of a decreasing quadratic LF is N+S for stability of solutions~\cite{curtain1995introduction} - so that we may assume the LF has the form $V(\mbf{u})=\ip{\mbf{u}}{\mcl{P}\mbf{u}}_{L_2}$ for some positive operator $\mcl{P}>0$. However, it is unclear how to parameterize a set of linear operators which is suitably rich so as to avoid significant conservatism, whilst still allowing positivity of the operators to be efficiently enforced. As a result, most prior work has been restricted to employing variations of the identity operator for $\mcl P$. For example, in~\cite{ahmadi2018framework}, a Lypanunov function of the form $V=\alpha \norm{u}_{L_2}^2$ was used. Meanwhile, in \cite{fridman2016new}, \cite{solomon2015stability}, and \cite{wakaiki2019lmi}, the authors assumed $\mcl{P}$ to be a multiplier operator of the form $\mcl{P}\mbf{u} =M\mbf{u}(s)$, with positivity implied by the matrix inequality $M>0$. In \cite{valmorbida2015convex}, the authors extended these functionals somewhat, using polynomial multipliers $\mcl{P}\mbf{u} =M(s)\mbf{u}(s)$ with the Sum-of-Squares (SOS) constraint $M(s)\ge 0$. However, in each of these cases, the use of multiplier operators (analagous to the use of diagonal matrices in an LMI) implies significant conservatism in any stability analysis using such results.

We now turn to the second problem with stability analysis of PDEs: enforcing negativity of the derivative. As stated, for linear ODEs $\dot x(t)=Ax(t)$, this condition is easy to enforce: $\dot V(x(t))=x(t)^T(PA+A^TP)x(t)\le 0$ for all solutions $x(t)$ and all $t \geq 0$ if and only if $PA+A^TP$ is a negative definite matrix - an LMI constraint.
However, for a PDE $\dot{\mbf{u}} = \mcl{A}\mbf{u}$, the state space has more structure. That is, while it is true that if $V=\ip{\mbf{u}}{\mcl P\mbf{u}}$ then $\dot V(\mbf{u})=\ip{\mbf{u}}{[\mcl{P}\mcl{A}+\mcl{A}^*\mcl{P}]\mbf{u}}$, negativity of the operator $\mcl{P}\mcl{A}+\mcl{A}^*\mcl{P}$ is not a N+S condition for stability, as $\dot V(\mbf{u})\leq 0$ need only hold for solutions $\mbf{u}\in X$ satisfying the BCs and continuity constraints.
To account for this, in \cite{fridman2016new}, \cite{solomon2015stability}, and \cite{wakaiki2019lmi}, the authors consider particular types of BCs, allowing the use of known integral inequalities such as Poincare, Wirtinger, etc. to prove negativity. In \cite{valmorbida2015convex}, it was proposed to use a more general set of inqualities defined by Green's functions.
In all these cases, however, the process of identification and application of useful inequalities to prove negativity requires significant expertise and insight.

To avoid having to manually integrate boundary conditions and continuity constraints into the expression for $\dot V$, \cite{shivakumar2020pietools,shivakumar2019generalized,peet2019discussion} suggest representing the PDE as a Partial Integral Equation (PIE). A PIE is a unitary representation of the PDE whose solution is defined on $L_2$ and hence does not require boundary conditions or continuity constraints.

For autonomous systems, PIEs are parameterized by the Partial Integral (PI) operators $\mcl{T}$ and $\mcl{A}$, and take the form $\mcl{T}\dot{\hat{\mbf{u}}}=\mcl{A}\hat{\mbf{u}}$, where $\hat{\mbf{u}}$ is the so-called ``fundamental state''. This \textit{fundamental state} $\hat{\mbf{u}}\in L_2$ is free of boundary and continuity constraints, and is associated to the PDE state through the transformation $\mbf{u}=\mcl{T} \hat{\mbf{u}}$. PI operators form a Banach *-algebra, with analytic expressions for composition, adjoint, etc. Furthermore, positive PI operators can be parameterized by positive matrices -- allowing us to solve Linear PI Inequality (LPI) optimization problems using semidefinite programming. Thus, the use of PIEs and PI operators also resolves the difficulty with parameterizing positive LFs, taking these to be of the form $V(\mbf{u})=\ip{\mbf{u}}{\mcl{P}\mbf{u}}$, where $\mcl{P}$ is a PI operator. In 1D, the Matlab toolbox PIETOOLS~\cite{shivakumar2020pietools} can be used for parsing and solving LPI optimization problems.

Thus, through use of the LPI and PIE framework, stability analysis, as well as tasks like $H_{\infty}$-optimal controller design~\cite{shivakumar2020duality}, can be efficiently performed for almost any 1D PDE using convex optimization.
However, as of yet, none of this architecture exists for 2D PDEs. Specifically, no concept of fundamental state has been defined for 2D PDEs, and there is no known algebra of 2D PI operators which could be used to parameterize a PIE representation, or be incorporated into some form of LPI optimization algorithm.

The goal of this paper, then, is to recreate the PIE framework for linear, 2nd order PDEs on a 2D domain $(x,y)\in[a,b]\times[c,d]$ with non-periodic boundary conditions. To this end, the paper makes the following contributions.

\textbf{1)} \textit{Identify an algebra of 2D PI operators.}\\
In Section~\ref{sec_algebras}, we parameterize a set of PI operators with domain $L_2[x,y]$, which we combine with the algebra of 4-PI operators on $\mb{R}^n\times L_2[x]\times L_2[y]$ (representing the boundary of the domain) to yield a Banach *-algebra of PI operators on $\mb{R}^n\times L_2[x]\times L_2[y]\times L_2[x,y]$. We demonstrate that this set of PI operators is closed under, addition, adjoint, and composition -- deriving analytic expressions for the result of each operation. In Section~\ref{sec_properties}, we further derive analytic expressions for the inverse of a suitable PI operator on $\mb{R}^n\times L_2[x]\times L_2[y]$, and the composition of a differential operator with a suitable 2D PI operator, showing that the result of each is a PI operator.

\textbf{2)} \textit{Identify the fundamental state for a 2D PDE.}\\
In Subsection~\ref{sec_uhat2u}, we differentiate ($\mcl{D}$) the PDE state $\mbf{u}\in X$ up to the maximal degree allowed per the continuity constraints. The resulting $\hat{\mbf{u}}=\mcl{D}\mbf{u}\in L_2$ will then be free of boundary and continuity constraints.

\textbf{3)} \textit{Reconstruct the PDE state from the fundamental state.}\\
Having defined a fundamental state $\hat{\mbf{u}}$, we isolate a set of ``core'' and ''full'' boundary values of the PDE state as $\Lambda_{\text{bc}}\mbf{u}$ and $\Lambda_{\text{bf}}\mbf{u}$ respectively. Using the fundamental theorem of calculus, we can then express $\mbf{u}=\mcl{K}_{1}\Lambda_{\text{bc}}\mbf{u}+\mcl{K}_{2}\hat{\mbf{u}}$ and $\Lambda_{\text{bf}}\mbf{u}=\mcl{H}_{1}\Lambda_{\text{bc}}\mbf{u}+\mcl{H}_{2}\hat{\mbf{u}}$, where $\mcl{K}_1$, $\mcl{K}_2$, $\mcl{H}_1$ and $\mcl{H}_2$ are 2D PI operators. Next, we impose the boundary conditions as $\mcl{B}\Lambda_{\text{bf}}\mbf{u}=0$, where $\mcl{B}$ is a PI operator, allowing us to write $\mcl{B}\mcl{H}_1\Lambda_{\text{bc}}\mbf{u}+\mcl{B}\mcl{H}_2\hat{\mbf{u}}=0$ and thus $\Lambda_{\text{bc}}\mbf{u}=-(\mcl{B}\mcl{H}_1)^{-1}\mcl{B}\mcl{H}_2\hat{\mbf{u}}$. Finally, we retrieve the PDE state as
$\mbf{u}=\mcl{T}\hat{\mbf{u}}=[\mcl{K}_2-(\mcl{B}\mcl{H}_1)^{-1}\mcl{B}\mcl{H}_2\mcl{K}_1]\hat{\mbf{u}}$, where $\mcl{T}$ is a 2D PI operator.

\textbf{4)} \textit{Derive a PIE representation for a standardized PDE.}\\
In Section~\ref{sec_PDE}, we present a standardized format for writing coupled PDEs. In Subsection~\ref{sec_PDE2PIE}, we then use the transformation $\mbf{u}=\mcl{T}\hat{\mbf{u}}$ and the composition rules for differential operators with PI operators, to derive an equivalent PIE representation as $\mcl{T}\dot{\hat{\mbf{u}}}=\mcl{A}\hat{\mbf{u}}$.

\textbf{5)} \textit{Derive an LPI stability test.}\\
In Subsection~\ref{sec_lyapunov_stability_criterion}, we parameterize a LF $V(\hat{\mbf{u}})=\ip{T\hat{\mbf{u}}}{\mcl{P}\mcl{T}\hat{\mbf{u}}}$ as a PI operator $\mcl{P}$. Using this LF, we prove that existence of a PI operator $\mcl{P}>0$ satisfying the LPI $\mcl{A}^*\mcl{P}\mcl{T}+\mcl{T}^*\mcl{P}\mcl{A}<0$ certifies stability of the PDE.

\textbf{6)} \textit{Parameterize the convex cone of positive PI operators.}\\
In Subsection~\ref{sec_positive_PI_parameterization}, we introduce a PI operator $\mcl{Z}$, defined by monomial basis functions $Z(x,y)$. For any positive matrix $P>0$, then, the product $\mcl{P}=\mcl{Z}^*P\mcl{Z}$ will be a PI operator satisfying $\ip{\hat{\mbf{u}}}{\mcl{P}\hat{\mbf{u}}}>0$ for any $\hat{\mbf{u}}\in L_2[x,y]$.

\textbf{7)} \textit{Implement this methodology in PIETOOLS.}\\
In Section~\ref{sec_implementation}, we implement a class of 2D-PI operators in the MATLAB toolbox PIETOOLS 2021b, along with the formulae for constructing the PIE representation of a PDE. This allows an arbitrary PDE to be converted to an equivalent PIE, at which point stability may be tested by solving the LPI $\bbl[\mcl{A}^*\mcl{P}\mcl{T}+\mcl{T}^*\mcl{P}\mcl{A}\bbr]<0$ for a positive operator $\mcl{P}>0$. Parameterizing the cone of positive operators as positive matrices, this problem may then be solved using semidefinite programming. We test this implementation on a heat equation and a wave equation in Section~\ref{sec_examples}.

\begin{table*}[!ht]
	\begin{tabular}{p{2.75cm}|p{8.25cm}|p{5.0cm}}
		Parameter space & Explicit notation & Associated PI mapping	\\\hline
		$\mcl{N}_{1D}^{n\times m}$ & $L_2^{n\times m}[x] \times L_2^{n\times m}[x,\theta] \times L_2^{n\times m}[x,\theta] $  &
		$L_2^{m}[x]\rightarrow L_2^{n}[x]$	\\\hline
		$\mcl{N}_{011}\smallbmat{n_0&m_0\\n_1&m_1}$ & $\bmat{\R^{n_0\times m_0}   &L_2^{n_0\times m_1}[x] &L_2^{n_0\times m_1}[y]\\
			L_2^{n_1\times m_0}[x] &\mcl{N}_{1D}^{n_1\times m_1}   &L_2^{n_1\times m_1}[x,y]\\
			L_2^{n_1\times m_0}[y] &L_2^{n_1\times m_1}[x,y]   &\mcl{N}_{1D}^{n_1\times m_1}}$	 &
		$\bmat{\R^{m_0}\\L_2^{m_1}[x]\\L_2^{m_1}[y]}\rightarrow \bmat{\R^{n_0}\\L_2^{n_1}[x]\\L_2^{n_1}[y]}$ 
		\\\hline
		$\mcl{N}_{2D}^{n\times m}$ & $\bmat{
			L_{2}^{n \times m}[x,y]           &L_{2}^{n \times m}[x,y,\nu]    &L_{2}^{n \times m}[x,y,\nu]\\
			L_{2}^{n \times m}[x,y,\theta]    &L_2^{n\times m}[x,y,\theta,\nu]&L_2^{n\times m}[x,y,\theta,\nu]\\
			L_{2}^{n \times m}[x,y,\theta]    &L_2^{n\times m}[x,y,\theta,\nu]&L_2^{n\times m}[x,y,\theta,\nu]}$ 
		& $L_2^{m}[x,y]\rightarrow L_2^{n}[x,y]$ \\\hline
		$\mcl N_{2D\rightarrow 1D}^{n\times m}$ & $L_2^{n \times m}[x,y] \times L_2^{n \times m}[x,y,\theta] \times L_2^{n \times m}[x,y,\theta]$
		& $L_2^{m}[x,y]\rightarrow L_2^{n}[x]$ \\\hline
		$\mcl N_{1D\rightarrow 2D}^{n\times m}$ & $L_2^{n \times m}[x,y] \times L_2^{n \times m}[x,y,\nu] \times L_2^{n \times m}[x,y,\nu]$ 
		& $L_2^{m}[x]\rightarrow L_2^{n}[x,y]$ \\\hline
		$\mcl{N}_{0112}{\bmat{n_0&m_0\\n_1&m_1\\n_2&m_2}}$ & $\bmat{
			\mcl{N}_{011}{\bmat{n_0&m_0\\n_1&m_1}} 
			&\mat{L_2^{n_0\times m_2}[x,y]\\ \mcl{N}^{n_1\times m_2}_{2D\rightarrow 1D}\\
				\mcl{N}^{n_1\times m_2}_{2D\rightarrow 1D}}\\
			\mat{L_2^{n_2\times m_0}[x,y]& \mcl{N}^{n_2\times m_1}_{1D\rightarrow 2D}&
				\mcl{N}^{n_2\times m_1}_{1D\rightarrow 2D}}
			&\mcl{N}_{2D}^{n_2\times n_2}}$
		& $\smallbmat{\R^{m_0}\\L_2^{m_1}[x]\\L_2^{m_1}[y]\\L_2^{m_2}[x,y]}\rightarrow \smallbmat{\R^{n_0}\\L_2^{n_1}[x]\\L_2^{n_1}[y]\\L_2^{n_2}[x,y]}$
	\end{tabular}
	\caption{Parameter spaces for PI operators introduced in Section~\ref{sec_algebras}}
	\label{tab_Nspaces}
\end{table*}

\section{Notation}

For a given domain $x \in [a,b]$ and $y\in [c,d]$, let $L_2^n[x,y]$ denote the set of $\R^n$-valued square-integrable functions on $[a,b]\times[c,d]$, with the standard inner product. $L_2^n[x]$ is defined similarly and we omit the domain when clear from context. For any $\alpha \in \N^2$, we denote $\norm{\alpha}_{\infty}:=\max \{\alpha_1,\alpha_2\}$. 
Using this notation, we define $H_{k}^n[x,y]$ as a Sobolev subspace of $L_2^n[x,y]$, where
\begin{align*}
	H_{k}^n[x,y]=\{\mbf{u}\mid \partial_x^{\alpha_1}\partial_y^{\alpha_2}\mbf{u}\in L_2^n[x,y],\enspace \forall \|\alpha\|_{\infty}\leq k\}.
\end{align*}
As for $L_2$, we occasionally use $H_{k}^n:=H_{k}^n[x,y]$ or $H_{k}^n:=H_{k}^n[x]$ when the domain is clear from context. For $\mbf{u}\in H_{k}^n[x,y]$, we use the norm
\begin{align*}
 \norm{\mbf{u}}_{H_k}=\sum_{\norm{\alpha}_{\infty}\leq k}\norm{\partial_x^{\alpha_1}\partial_y^{\alpha_2}\mbf{u}}_{L_2}
\end{align*}
For any $\mbf u \in L_2^n[x,y]$, we denote the Dirac delta operators
\[
[\Lambda_x^a \mbf u](y):=\mbf u(a,y)\quad \text{and}\quad [\Lambda_y^c \mbf u](x):=\mbf u(x,c).
\]

\section{Algebras of PI Operators}\label{sec_algebras}
In~\cite{peet2019discussion}, we parameterized an algebra of PI operators whose domain was functions $L_2[x]$ of a single spatial variable, $x\in[a,b]$. These operators took the form
\begin{align}\label{eq:1D_PI_operator} (\mcl{P}[N]\mbf{u})(x)&=N_{0}(x)\mbf{u}(x)+\int_{a}^{x}N_{1}(x,\theta)\mbf{u}(\theta)d\theta    \\
&\qquad+\int_{x}^{b}N_{2}(x,\theta)\mbf{u}(\theta)d\theta.	\nonumber
\end{align}
with polynomial parameters $N_i$. In~\cite{shivakumar2019generalized}, these PI operators were generalized, yielding operators with domain $\R^n \times L_2[x]$.  These extended operators then had the form
\[
\bmat{Pv +\int_{a}^{b}\bl( Q_1(\theta)\mbf{u}(\theta)\br)d\theta
	\\Q_2(x)v+(\mcl{P}[N]\mbf{u})(x)}
\]
with parameters $P,Q_1,Q_2$, and $\mcl{P}[N]$ -- where we note that the third parameter is itself a parameterized PI operator on the domain $L_2$. In this paper, we are tasked with further generalizing the algebra of PI operators to functions of \textit{two} spatial variables -- i.e. $L_2[x,y]$. Operators in this rather more complicated algebra take the form of
\begin{align}\label{eq:2D_PI_operator}
&(\mcl P[N]\mbf u)(x,y):=N_{00}(x,y)\mbf{u}(x,y)\\
& +\int_{a}^{x}N_{10}(x,y,\theta)\mbf{u}(\theta,y)d\theta +\int_{x}^{b}N_{20}(x,y,\theta)\mbf{u}(\theta,y)d\theta \nonumber\\ &+\int_{c}^{y}N_{01}(x,y,\nu)\mbf{u}(x,\nu)d\nu +\int_{y}^{d}N_{02}(x,y,\nu)\mbf{u}(x,\nu)d\nu \nonumber\\
&\qquad+\int_{a}^{x}\int_{c}^{y}N_{11}(x,y,\theta,\nu)\mbf{u}(\theta,\nu)d\nu d\theta \nonumber\\
&\qquad\qquad+\int_{x}^{b}\int_{c}^{y}N_{21}(x,y,\theta,\nu)\mbf{u}(\theta,\nu)d\nu d\theta \nonumber\\
&\qquad\qquad\qquad+\int_{a}^{x}\int_{y}^{d}N_{12}(x,y,\theta,\nu)\mbf{u}(\theta,\nu)d\nu d\theta \nonumber\\
&\qquad\qquad\qquad\qquad+\int_{x}^{b}\int_{y}^{d}N_{22}(x,y,\theta,\nu)\mbf{u}(\theta,\nu)d\nu d\theta.	\nonumber
\end{align}
where we now have 9 polynomial parameters $N_{ij}$. Making matters worse, we will also need to include cross-terms from $\R^n$, $L_2[x]$ and $L_2[y]$ in our algebra. In the following subsections, to make presentation of this class of operators somewhat tractable, we will make heavy use of an ``operator parameterization of operators'' approach, whereby we parameterize simpler algebras of operators using polynomials and embed these simpler algebras in the more complex ones. To aid in keeping track of these algebras, we use, e.g. the term 0112-PI algrebra to indicate its domain and range include $\R^n$ (indicated by `0'), $L_2[x]$ (indicated by $1$), $L_2[y]$  (indicated by $1$), and $L_2[x,y]$ (indicated by $2$).

Note that throughout this article we overload the $\mcl P[N]$ notation for a PI operator with parameters $N \in \mcl N$, where the structure and parameterization of the operator varies depending on the specific parameter space $\mcl N$. The different parameter spaces we consider are listed in Table~\ref{tab_Nspaces}.

\subsection{An Algebra of 2D to 2D PI Operators}\label{sec_2D}
We start by parameterizing operators on $L_2^{m}[x,y]$, for which we define the parameter space
\begin{align*}
\mcl N_{2D}^{n\times m}&\! \! :=\! \!
\left[\! \! \! \begin{array}{lll}
L_{2}^{n \times m}[x,y]           &\! \! \! L_{2}^{n \times m}[x,y,\nu]    &\! \! \! L_{2}^{n \times m}[x,y,\nu]\\
L_{2}^{n \times m}[x,y,\theta]    &\! \! \! L_2^{n\times m}[x,y,\theta,\nu]&\! \! \! L_2^{n\times m}[x,y,\theta,\nu]\\
L_{2}^{n \times m}[x,y,\theta]    &\! \! \! L_2^{n\times m}[x,y,\theta,\nu]&\! \! \! L_2^{n\times m}[x,y,\theta,\nu]
\end{array}\! \! \! \right]\! .
\end{align*}
Then, for any $N:=\smallbmat{N_{00}&N_{01}&N_{02}\\N_{10}&N_{11}&N_{12}\\N_{20}&N_{21}&N_{22}}\in \mcl N_{2D}^{n\times m}$, we let the associated 2D-PI operator $\mcl P[N]:L_{2}^{m}[x,y]\rightarrow L_{2}^{n}[x,y]$ be as in Eqn.~\eqref{eq:2D_PI_operator}. Defining addition and scalar multiplication of the parameters $N,M\in\mcl{N}_{2D}^{n\times m}$ in the obvious manner, it immediately follows that $\mcl{P}[N]+\mcl{P}[M]=\mcl{P}[N+M]$, and $\lambda\mcl{P}[N]=\mcl{P}[\lambda N]$, for any $\lambda\in\R$. Further defining multiplication of 2D-PI operators as in the following lemma, we conclude that the set of 2D-PI operators is an algebra.
\begin{lem}
	For any $N \in \mcl N_{2D}^{n\times p}$ and $M \in \mcl N_{2D}^{p\times m}$, there exists a unique $Q \in \mcl N_{2D}^{n\times m}$ such that $\mcl P[N]\mcl P[M]=\mcl P[Q]$. Specifically, we may choose
	\[
	Q=\mcl L_{2D}(N,M)\in \mcl N_{2D}^{n\times m},
	\]
	where $\mcl L_{2D}:\mcl{N}_{2D}^{n\times p} \times \mcl{N}_{2D}^{p\times m} \rightarrow \mcl{N}_{2D}^{n\times m}$ is defined in Eqn.~\eqref{eq_composition_2Dto2D_1_appendix} in Appendix~\ref{sec_2Dalgebra_appendix}.
\end{lem}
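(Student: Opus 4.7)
The plan is to compute $\mcl{P}[N]\mcl{P}[M]\mbf{u}$ by direct substitution and then reorganize the resulting nested integrals into the canonical form~\eqref{eq:2D_PI_operator}. Writing $\mbf{v}:=\mcl{P}[M]\mbf{u}$, I would substitute the 9-term expression for $\mbf{v}(\theta,y)$, $\mbf{v}(x,\nu)$, and $\mbf{v}(\theta,\nu)$ into each of the 9 terms defining $(\mcl{P}[N]\mbf{v})(x,y)$, producing 81 terms in total. Each such term involves a product of an $N_{ij}$ parameter with an $M_{kl}$ parameter, integrated against $\mbf{u}$ over some subset of $[a,b]^2\times[c,d]^2$ with limits depending on $x$, $y$, and on intermediate integration variables.

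The core technical step is to apply Fubini's theorem to each of the 81 terms so that they can be collected according to the nine canonical integration regions in $(\theta,\nu)$ appearing in~\eqref{eq:2D_PI_operator}. For example, a nested term of the form $\int_a^x N_{10}(x,y,s)\int_a^s M_{10}(s,y,\theta)\mbf{u}(\theta,y)\,d\theta\,ds$ becomes, after reordering, $\int_a^x \left(\int_\theta^x N_{10}(x,y,s)M_{10}(s,y,\theta)\,ds\right)\mbf{u}(\theta,y)\,d\theta$, which contributes to the $Q_{10}$ parameter; the remaining cases are handled analogously. A useful organizing principle is that the 2D-PI algebra factorizes as the tensor product of the 1D PI algebras in $x$ and in $y$: the $x$-index and $y$-index of each parameter transform independently under composition. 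This lets me reduce the combinatorics to applying the 1D composition rule of~\cite{peet2019discussion} separately in each coordinate, and the polynomial closure of each 1D composition ensures that each resulting $Q_{ij}$ again lies in the required $L_2$ parameter space. Collecting the 81 contributions in this way yields the piecewise formula $\mcl{L}_{2D}(N,M)$ given in Eqn.~\eqref{eq_composition_2Dto2D_1_appendix}.

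The main obstacle is the sheer combinatorial bookkeeping: correctly tracking all 81 cases, renaming intermediate variables consistently, and applying Fubini with variable limits each time. Beyond this, the argument is essentially mechanical. Uniqueness of $Q$ follows from the observation that, for fixed $(x,y)$ in the interior of $[a,b]\times[c,d]$, the nine canonical integration regions in $(\theta,\nu)$ are disjoint up to sets of measure zero, and the multiplier kernel $Q_{00}$ is supported on the diagonal; hence the map $Q\mapsto \mcl{P}[Q]$ is injective on $\mcl{N}_{2D}^{n\times m}$ up to almost-everywhere equality of the kernels. Existence together with this uniqueness then gives the lemma.
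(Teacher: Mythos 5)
Your proposal is correct and follows essentially the same route as the paper: the appendix proof also composes the operators directly and reorders the nested integrals coordinate-wise via the 1D composition rule, merely packaging the Fubini bookkeeping into the indicator/delta identities $\bs{\Phi}_i$, $\bs{\Psi}_{kij}$ of Proposition~\ref{prop_Psi_appendix} so that the $x$- and $y$-indices transform independently — exactly your tensor-product observation. Your uniqueness sketch (injectivity of $Q\mapsto\mcl{P}[Q]$ from the a.e.-disjointness of the nine integration regions) is a reasonable addition that the paper's own proof leaves implicit.
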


\subsection{An Algebra of 011-PI Operators}\label{sec_01D}
Having defined an algebra of operators on $L_2[x,y]$, we now consider a parameterization of operators on $RL^{n_0,n_1}:=\R^{n_0}\times L_2^{n_1}[x]\times L_2^{n_1}[y]$. To this end, we first let for any $N = \{N_0,N_1,N_2\} \in \mcl N_{1D}^{n\times m} := L_{2}^{n \times m}[x] \times L_{2}^{n \times m}[x,\theta] \times L_{2}^{n \times m}[x,\theta]$, the associated 1D-PI operator be as in~\eqref{eq:1D_PI_operator}. Next, we define a parameter space for 011-PI operators as
\begin{align*}
\mcl{N}_{011}{\smallbmat{n_0&m_0\\n_1&m_1}}\! :=\!
\bmat{\R^{n_0\times m_0}   &\! L_2^{n_0\times m_1}[x] &\! L_2^{n_0\times m_1}[y]\\
	L_2^{n_1\times m_0}[x] &\! \mcl{N}_{1D}^{n_1\times m_1}   &\! L_2^{n_1\times m_1}[x,y]\\
	L_2^{n_1\times m_0}[y] &\! L_2^{n_1\times m_1}[y,x]   &\! \mcl{N}_{1D}^{n_1\times m_1}}\! .
\end{align*}
Then, for any $B:=\smallbmat{B_{00}&B_{01}&B_{02}\\B_{10}&B_{11}&B_{12}\\B_{20}&B_{21}&B_{22}}\in \mcl{N}_{011}{\smallbmat{n_0&m_0\\n_1&m_1}}$, we let the associated PI operator $\mcl{P}[B]:RL^{m_0,m_1}\rightarrow RL^{n_0,n_1}$ be given by
\begin{align*}
\mcl P[B]:=\bmat{
	\text{M}[B_{00}] &\smallint_{x=a}^b [B_{01}]  &\smallint_{y=c}^d [B_{02}]  \\
	\text{M}[B_{10}] &\mcl{P}[B_{11}]        &\smallint_{y=c}^d[B_{12}]\\
	\text{M}[B_{20}] &\smallint_{x=a}^b[B_{21}]   &\mcl{P}[B_{22}]},
\end{align*}
where M is the multiplier operator and $\smallint$ is the integral operator, so that (through some abuse of notation)
\[
(\text{M}[N]\mbf{u})(x,y):=N(x,y)\mbf{u}(y),
\]
and
\[
\left(\smallint_{y=c}^{d}[N]\mbf{u}\right)(x):=\int_{c}^{d}N(x,y)\mbf{u}(y)dy.
\]
Clearly then, $\mcl{P}[B]+\mcl{P}[D]=\mcl{P}[B+D]$ and $\lambda\mcl{P}[B]=\mcl{P}[\lambda B]$ for any $B,D\in\mcl{N}_{011}$ and $\lambda\in\mb{R}$. Moreover, we can also compose 011-PI operators, as per the following lemma.

\begin{lem}
  For any $B \in \mcl{N}_{011}{\smallbmat{n_0&p_0\\n_1&p_1}}$, $D \in \mcl{N}_{011}{\smallbmat{p_0&m_0\\p_1&m_1}}$, there exists a unique
  $R \in \mcl{N}_{011}{\smallbmat{n_0&m_0\\n_1&m_1}}$ such that $\mcl{P}[B]\mcl{P}[D]=\mcl{P}[R]$. Specifically, we may choose
  \[
  R=\mcl{L}_{011}(B,D)\in \mcl{N}_{011}{\smallbmat{n_0&m_0\\n_1&m_1}},
  \]
  where the linear parameter map $\mcl L_{011}:\mcl{N}_{011}{\smallbmat{n_0&p_0\\n_1&p_1}} \times \mcl{N}_{011}{\smallbmat{p_0&m_0\\p_1&m_1}} \rightarrow \mcl{N}_{011}{\smallbmat{n_0&m_0\\n_1&m_1}}$ is defined in Eqn.~\eqref{eq_composition_011to011_1_appendix} in Appendix~\ref{sec_011algebra_appendix}. 
\end{lem}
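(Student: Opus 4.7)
The plan is to compute the composition $\mcl{P}[B]\mcl{P}[D]$ as a $3 \times 3$ block operator on $RL^{m_0,m_1}$ and identify each of the nine resulting block entries with the corresponding block of a single 011-PI operator $\mcl{P}[R]$, thereby reading off the definition of $\mcl{L}_{011}(B,D)$ entry by entry.

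First I would write the defining expressions for $\mcl{P}[B]$ and $\mcl{P}[D]$ blockwise, noting that each block is of one of a small number of basic types: a constant matrix, a definite integral onto $\R^n$, a pointwise matrix multiplier into $L_2$, a 1D PI operator on $L_2[x]$ or $L_2[y]$, or a cross-domain integral kernel in $L_2[x,y]$ mapping $L_2[y]\to L_2[x]$ (or vice versa). Next, I would expand each of the nine blocks of the composition by substituting the output of $\mcl{P}[D]$ into $\mcl{P}[B]$, collecting the three contributions per block indexed by the intermediate variable, and applying Fubini's theorem to re-order integration. For the outer blocks, those involving an $\R$-valued input or output, this directly produces matrix-valued and $L_2$-valued kernels of the shape demanded by $\mcl{N}_{011}$.

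The more subtle contributions arise in the blocks $R_{11}$, $R_{12}$, $R_{21}$, and $R_{22}$. For $R_{11}$, the term $\mcl{P}[B_{11}]\mcl{P}[D_{11}]$ is a composition of two 1D PI operators on $L_2[x]$, which by the already-known composition formula for 1D PI operators (see \cite{peet2019discussion}) returns another triple in $\mcl{N}_{1D}^{n_1\times m_1}$. The remaining contribution $\mcl{P}[B_{12}]\mcl{P}[D_{21}]$ composes an $L_2[y]\to L_2[x]$ operator with an $L_2[x]\to L_2[y]$ operator; Fubini produces a composed kernel $\int_c^d B_{12}(x,\nu)D_{21}(\nu,\theta)\thinspace d\nu$ which can be assigned symmetrically to the two integral parts of the 1D PI form, producing a valid element of $\mcl{N}_{1D}^{n_1\times m_1}$ to be added to the first contribution. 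The case $R_{22}$ is symmetric. The cross blocks $R_{12}$ and $R_{21}$ each receive three contributions (a multiplier/kernel pair through $\R^{p_0}$, a 1D PI acting on a 2D kernel, and a 2D kernel composed with a 1D PI) which combine additively into a single $L_2[x,y]$ kernel of the correct dimensions.

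The main obstacle is bookkeeping: collecting every contribution $B_{ik}D_{kj}$ into the correct block and verifying that each composed kernel lies in the function space specified by $\mcl{N}_{011}$. In particular, compositions such as $\mcl{P}[B_{11}]\mcl{P}[D_{12}]$ act on a 2D kernel through a 1D PI operator in the $x$-variable alone, producing a kernel of the form $B_{11,0}(x)D_{12}(x,y) + \int_a^x B_{11,1}(x,\theta)D_{12}(\theta,y)\thinspace d\theta + \int_x^b B_{11,2}(x,\theta)D_{12}(\theta,y)\thinspace d\theta$; showing such kernels lie in $L_2^{n_1\times m_1}[x,y]$ reduces to standard $L_2$-boundedness of 1D PI operators applied pointwise in $y$. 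Uniqueness of $R$ follows from the injectivity of the parameterization $B\mapsto \mcl{P}[B]$ on $\mcl{N}_{011}$, which reduces block-by-block to the trivial injectivity of the matrix, multiplier, and definite-integral parameterizations and the established 1D PI injectivity, all obtained by testing the operator on inputs supported in appropriate subspaces.
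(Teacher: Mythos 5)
Your proposal is correct and follows essentially the same route as the paper's proof in Appendix~\ref{sec_011algebra_appendix}: blockwise expansion of the composition, interchange of the order of integration, reuse of the 1D composition rule for the diagonal blocks, and symmetric assignment of the cross-kernel terms (the paper's maps $\mcl{L}_{101}$ and $\mcl{L}_{111}$) to the two integral parts of the 1D-PI form. The only difference is presentational -- the paper formalizes your Fubini/domain-splitting step through the $\bs{\Phi}/\bs{\Psi}$ indicator-function calculus of Proposition~\ref{prop_Psi_appendix} -- and you additionally sketch the injectivity argument for uniqueness, which the paper leaves implicit.
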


\subsection{An Algebra of 0112-PI Operators}\label{sec_012D}
We now combine the 011-PI algebra and the 2D-PI algebra to obtain an algebra of operators on $RL^{n_0,n_1}\times L_2^{n_2}[x,y]=\R^{n_0}\times L_2^{n_1}[x] \times L_2^{n_1}[y] \times L_{2}^{n_2}[x,y]$. Specifically, for any
\begin{align*}
&C =\bmat{B &\mat{C_{03} \\ C_{13}\\C_{23}}\\ \mat{C_{30}&C_{31}&C_{32}}&N}  \in \mcl{N}_{0112}{\smallbmat{n_0&m_0\\n_1&m_1\\n_2&m_2}}:=\\
& \bmat{
\mcl{N}_{011}{\smallbmat{n_0&m_0\\n_1&m_1}}
&\mat{L_2^{n_0\times m_2}[x,y]\\ \mcl{N}^{n_1\times m_2}_{2D\rightarrow 1D}\\
 \mcl{N}^{n_1\times m_2}_{2D\rightarrow 1D}}\\
\mat{L_2^{n_2\times m_0}[x,y]& \mcl{N}^{n_2\times m_1}_{1D\rightarrow 2D}&
 \mcl{N}^{n_2\times m_1}_{1D\rightarrow 2D}}
&\mcl{N}_{2D}^{n_2\times n_2}
},
\end{align*}
where
\[
\mcl N_{2D\rightarrow 1D}^{n\times m} := L_2^{n \times m}[x,y] \times L_2^{n \times m}[x,y,\theta] \times L_2^{n \times m}[x,y,\theta],
\]
\[
\mcl N_{1D\rightarrow 2D}^{n\times m} := L_2^{n \times m}[x,y] \times L_2^{n \times m}[x,y,\nu] \times L_2^{n \times m}[x,y,\nu],
\]
we may define the 0112-PI operator $\mcl{P}[C]:RL^{m_0,m_1}[x,y]\times L_{2}^{m_2}[x,y]\rightarrow RL^{n_0,n_1}[x,y] \times L_{2}^{n_2}[x,y]$ as
\[
\mcl P[C]=\bmat{\mcl P[B] & \hspace{-4mm}\mat{\smallint_{x=a}^{b}[I]\circ \smallint_{y=c}^{d}[C_{03}]\\
\mcl{P}[C_{13}] \\\mcl{P}[C_{23}]}\\ \mat{\text{M}[C_{30}]  &\mcl{P}[C_{31}] & \mcl{P}[C_{32}]} &\hspace{-4mm} \mcl P [N]},
\]
where for $D = \{D_0,D_1,D_2\} \in \mcl N_{2D\rightarrow 1D}^{n\times m}$, we have
\begin{align*}
&(\mcl{P}[D]\mbf{u})(x):=
\int_{c}^{d}\bbbl[D_0(x,y)\mbf{u}(x,y) \\
&+\int_{a}^{x}D_{1}(x,y,\theta)\mbf{u}(\theta,y)d\theta     
+\int_{x}^{b}D_{2}(x,y,\theta)\mbf{u}(\theta,y)d\theta\bbbr] dy,
\end{align*}
and for $E = \{E_0,E_1,E_2\} \in \mcl N_{1D\rightarrow 2D}^{n\times m}$, we have
\begin{align*}
&(\mcl{P}[E]\mbf{u})(x,y):=E_{0}(x,y)\mbf{u}(y) \\
&\quad+\int_{c}^{y}E_{1}(x,y,\nu)\mbf{u}(\nu)d\nu
 +\int_{y}^{d}E_{2}(x,y,\nu)\mbf{u}(\nu)d\nu.
\end{align*}

Clearly, the set of operators parameterized in this manner is closed under addition and scalar multiplication. By the following lemma, then, the set of 0112-PI operators is an algebra.

\begin{lem}
  For any $B \in \mcl N_{0112}\smallbmat{n_0&p_0\\n_1&p_1\\n_2&p_2}$ and $D \in \mcl N_{0112}\smallbmat{p_0&m_0\\p_1&m_1\\p_2&m_2}$, there exists a unique $R \in \mcl N_{0112}\smallbmat{n_0&m_0\\n_1&m_1\\n_2&m_2}$ such that $\mcl P[B]\mcl P[D]=\mcl P[R]$. Specifically, we may choose
  \[
  R=\mcl L_{0112}(B,D)\in \mcl N_{0112}\smallbmat{n_0&m_0\\n_1&m_1\\n_2&m_2},
  \]
  where the linear parameter map $\mcl{L}_{0112}:\mcl{N}_{0112}\smallbmat{n_0&p_0\\n_1&p_1\\n_2&p_2} \times \mcl{N}_{0112}\smallbmat{p_0&m_0\\p_1&m_1\\p_2&m_2} \rightarrow \mcl{N}_{0112}\smallbmat{n_0&m_0\\n_1&m_1\\n_2&m_2}$ is defined in Eqn.~\eqref{eq_composition_0112_appendix} in Appendix~\ref{sec_0112algebra_appendix}.\\
\end{lem}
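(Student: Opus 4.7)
The plan is to exploit the natural $2 \times 2$ block decomposition of $\mcl{N}_{0112}$ separating the $RL^{\cdot,\cdot}$-part from the $L_2[x,y]$-part. Writing
\begin{equation*}
\mcl{P}[B]=\bmat{\mcl{P}[B_{\text{TL}}] & \mcl{P}[B_{\text{TR}}]\\ \mcl{P}[B_{\text{BL}}] & \mcl{P}[B_{\text{BR}}]}, \quad \mcl{P}[D]=\bmat{\mcl{P}[D_{\text{TL}}] & \mcl{P}[D_{\text{TR}}]\\ \mcl{P}[D_{\text{BL}}] & \mcl{P}[D_{\text{BR}}]},
\end{equation*}
the TL-blocks are 011-PI, the BR-blocks are 2D-PI, and the off-diagonal blocks are built from $\mcl{N}_{2D\to 1D}$ and $\mcl{N}_{1D\to 2D}$ pieces (together with a simple $L_2[x,y]$-entry at the $\R^n$ boundary). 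The product $\mcl{P}[B]\mcl{P}[D]$ then breaks into four output blocks, each a sum of two compositions. Two of the resulting eight compositions, $\mcl{P}[B_{\text{TL}}]\mcl{P}[D_{\text{TL}}]$ and $\mcl{P}[B_{\text{BR}}]\mcl{P}[D_{\text{BR}}]$, are handled immediately by the 011-PI and 2D-PI composition lemmas proved above. The remaining six compositions are the genuinely new work: one must show that each lies in the advertised parameter space, from which the formula for $\mcl{L}_{0112}$ is read off and linearity in both arguments is inherited from linearity of each cross-composition formula in its kernels.

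I would treat the six cross-compositions in pairs grouped by the output block they contribute to. For the diagonal blocks, a $\mcl{N}_{2D\to 1D}$ factor composed with a $\mcl{N}_{1D\to 2D}$ factor produces an intermediate $\int_a^b\int_c^d$ (or outer 011-style operation) that, after splitting the remaining $\theta,\nu$ integrations at $x,y$ where appropriate, populates exactly the nine-slot structure of either a $\mcl{N}_{011}$ parameter (for the TL output) or a $\mcl{N}_{2D}$ parameter (for the BR output). For the four cross-compositions landing in off-diagonal output blocks, Fubini together with splitting the $x$- or $y$-integration at the corresponding spatial variable reduces each composition to the three-slot template $\{\text{no integral},\,\int_a^{\cdot},\,\int_{\cdot}^{b}\}$ of $\mcl{N}_{2D\to 1D}$ or the analogous $y$-template for $\mcl{N}_{1D\to 2D}$. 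Summing the eight contributions into the corresponding $\mcl{N}_{0112}$ slots yields the parameter $R=\mcl{L}_{0112}(B,D)$, and uniqueness is immediate because the action of a 0112-PI operator on tensor-product monomial test inputs determines every kernel almost everywhere.

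The main obstacle is not any single computation but the bookkeeping: each of the six cross-compositions is itself a sum over up to nine sub-kernel combinations, and each must be reorganized into the piecewise form dictated by $\mcl{N}_{0112}$. A disciplined naming convention for the dummy variables $(\theta,\nu)$ and for which sub-region of $[a,b]\times[c,d]$ each kernel piece is supported on is essential, but the only analytic tools required are Fubini and splitting of integrals at intermediate points---no new functional-analytic machinery is needed beyond what was used in the 011 and 2D cases.
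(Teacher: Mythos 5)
Your proposal follows essentially the same route as the paper: the paper's Theorem~\ref{thm_composition0112_appendix} performs exactly the $2\times 2$ block decomposition you describe, reducing the product to the 011 and 2D composition lemmas plus six cross-composition lemmas (the maps $\mcl{L}_{2D\rightarrow 011}^{1,2}$, $\mcl{L}_{011\rightarrow 2D}^{1,2}$, $\mcl{L}_{011\rightarrow 011}$, $\mcl{L}_{2D\rightarrow 2D}$ of Appendix~\ref{sec_2Dto011algebra_appendix}--\ref{sec_011to2Dalgebra_appendix}), each of which is established by precisely the Fubini-plus-interval-splitting bookkeeping you outline, formalized there via the $\bs{\Phi}_i$/$\bs{\Psi}_{kij}$ identities of Proposition~\ref{prop_Psi_appendix}. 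The approach and the grouping of the eight contributions into the four output blocks match the paper's proof.
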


For the purpose of implementation in Section~\ref{sec_implementation}, we will be considering only PI operators parameterized by polynomial functions, exploiting the following result:

\begin{cor}
	For any polynomial parameters $B \in \mcl N_{0112}\smallbmat{n_0&p_0\\n_1&p_1\\n_2&p_2}$ and $D \in \mcl N_{0112}\smallbmat{p_0&m_0\\p_1&m_1\\p_2&m_2}$, the composite parameters $R=\mcl L_{0112}(B,D)\in \mcl N_{0112}\smallbmat{n_0&m_0\\n_1&m_1\\n_2&m_2}$ are also polynomial.
\end{cor}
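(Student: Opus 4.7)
The plan is to reduce the claim to an elementary closure property: the ring of polynomials in finitely many variables is closed under addition, multiplication, and integration against one variable over intervals whose endpoints are either constants $\{a,b,c,d\}$ or one of the free variables $\{x,y\}$. Given this, I would inspect the explicit formula for $\mcl{L}_{0112}(B,D)$ in Appendix~\ref{sec_0112algebra_appendix} and check that each of its components is built from exactly these operations applied to the entries of $B$ and $D$.

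More concretely, I would first establish analogous corollaries for the sub-algebras, namely that $\mcl L_{2D}$ and $\mcl L_{011}$ preserve polynomiality. For $\mcl L_{2D}$, each entry of $\mcl L_{2D}(N,M)$ is a finite sum of terms of the form $\int_{\alpha}^{\beta} N_{ij}(\cdot)M_{kl}(\cdot)\,d\zeta$ (possibly iterated in two variables $\theta,\nu$), where the integration limits $\alpha,\beta$ are drawn from $\{a,b,c,d,x,y,\theta,\nu\}$. Polynomial products are polynomial, and one-variable antidifferentiation of a polynomial produces another polynomial, so evaluating at polynomial limits keeps us in the polynomial ring. A completely parallel argument handles $\mcl L_{011}$, where the only integrations are against $\theta$ over $[a,x]$, $[x,b]$, $[a,b]$ (and similarly in $y$). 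Once these two lemmas are in hand, the cross terms defined via $\mcl N_{1D\to 2D}$ and $\mcl N_{2D\to 1D}$ are handled identically, since their parameterizations are themselves lists of polynomials in $(x,y,\theta)$ or $(x,y,\nu)$, and composing them with 1D or 2D pieces again only involves products and integrations of the above type.

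With those preparations, the proof of the corollary itself is a direct reading of $\mcl L_{0112}(B,D)$ block by block: the top-left $3\times 3$ block equals $\mcl L_{011}$ applied to the $\mcl N_{011}$-parts of $B$ and $D$ plus contributions of the form $\int_c^d \int_a^b (\cdot)$ coming from the $\mcl N_{1D\to 2D}$/$\mcl N_{2D\to 1D}$ pieces; the bottom-right $\mcl N_{2D}$ block is $\mcl L_{2D}$ plus analogous cross contributions; and the off-diagonal cross blocks are again finite sums of products integrated over intervals with polynomial endpoints. Each summand is polynomial by the ring-closure observation above, and a finite sum of polynomials is polynomial, so every entry of $R=\mcl L_{0112}(B,D)$ lies in the appropriate polynomial subspace of $\mcl N_{0112}\smallbmat{n_0&m_0\\n_1&m_1\\n_2&m_2}$.

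The only real obstacle is notational: the composition formula has many blocks and several kinds of limits of integration, so the proof is essentially a careful enumeration rather than a conceptual argument. I expect the write-up to be almost entirely a pointer to the explicit formula in Appendix~\ref{sec_0112algebra_appendix} together with the single observation that $\int_{\alpha}^{\beta} p\,d\zeta$ is polynomial whenever $p$ is polynomial and $\alpha,\beta \in \{a,b,c,d,x,y,\theta,\nu\}$.
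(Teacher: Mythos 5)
Your proposal is correct and matches the paper's (implicit) argument: the paper states this corollary without a separate proof, precisely because each entry of $\mcl L_{0112}(B,D)$, as written out in Appendix~\ref{sec_0112algebra_appendix}, is a finite sum of products of the entries of $B$ and $D$, evaluations at the free variables (from the Dirac terms in $\bs{\Psi}_{kij}$), and integrals over intervals whose endpoints lie in $\{a,b,c,d,x,y,\theta,\nu\}$ — all operations under which the polynomial ring is closed. Your block-by-block reduction to the sub-maps $\mcl L_{011}$, $\mcl L_{2D}$, and the cross-term maps is exactly the intended enumeration.
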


\section{Useful Properties of PI Operators}\label{sec_properties}
Having defined the different algebras of PI operators to be used in later sections, we now derive several critical properties of such operators. Specifically, we focus on obtaining an analytic expression for the inverse of a 011-PI operator, using a generalization of the formula in~\cite{miao2019inversion}. This result will be used to enforce the boundary conditions when deriving the mapping from fundamental state to PDE state in Section~\ref{sec_PIE}. In addition, we consider composition of differential operators with a PI operator, proving that the result is a PI operator, and obtaining an analytic expression for this operator. This result will be used to relate differential operators in the PDE to PI operators in the equivalent PIE representation in Section~\ref{sec_PIE}. Finally, we give a formula for the adjoint of a PI operator, necessary for deriving and enforcing LPI stability conditions in Section~\ref{sec_stability_LPI}.

\subsection{Inverse of 011-PI Operators}\label{sec_inv}

First, given a 011-PI operator $\mcl P[Q]$ defined by parameters $Q\in\mcl{N}_{011}\smallbmat{n_0&n_0\\n_1&n_1}$, we prove that $\mcl P[Q]^{-1}=\mcl P[\hat{Q}]$ is a 011-PI operator and obtain an analytic expression for the parameters of the inverse $\hat{Q}\in\mcl{N}_{011}\smallbmat{n_0&n_0\\n_1&n_1}$. As is typical, we restrict ourselves to the case where the operator has a separable structure and the parameters are polynomial (and hence have a finite-dimensional parameterization).

\begin{lem}\label{lem_inv}
Suppose
\[
 Q=\bmat{Q_{00}&Q_{0x}&Q_{0y}\\Q_{x0}&Q_{xx}&Q_{xy}\\Q_{y0}&Q_{yx}&Q_{yy}}\in{N}_{011}\smallbmat{n_0&n_0\\n_1&n_1}
\]
where $Q_{xx}=\{Q_{xx}^0,Q^1_{xx},Q^1_{xx}\}\in\mcl{N}_{1D}^{n_1\times n_1}$ (separable) and $Q_{yy}=\{Q^0_{yy},Q^1_{yy},Q^1_{yy}\}\in\mcl{N}_{1D}^{n_1\times n_1}$ (separable). Suppose that $Q_{x0},Q_{y0},Q_{0x},Q_{0y},Q^1_{xx},Q_{xy},Q_{yx},Q^1_{yy}$ can be decomposed as
\begin{align*}
 &\bmat{&Q_{0x}(x)&Q_{0y}(y)\\Q_{x0}(x)&Q_{xx}^{1}(x,\theta)&Q_{xy}(x,y)\\Q_{y0}(y)&Q_{yx}(x,y)&Q_{yy}^{1}(y,\nu)}
 =\\
 &\qquad\bmat{&H_{0x}Z(x)&H_{0y}Z(y)\\
  Z^T(x)H_{x0}&Z^T(x)\Gamma_{xx}Z(\theta)&Z^T(x)\Gamma_{xy}Z(y)\\
  Z^T(y)H_{y0}&Z^T(y)\Gamma_{yx}Z(x)&Z^T(y)\Gamma_{yy}Z(\nu)}
\end{align*}
where $Z \in L_2^{q \times n_1}$ and
\[
 \bmat{&H_{0x}&H_{0y}\\H_{x0}&\Gamma_{xx}&\Gamma_{xy}\\H_{y0}&\Gamma_{yx}&\Gamma_{yy}}\in
 \bmat{&\mb{R}^{n_0\times q}&\mb{R}^{n_0\times q}\\
 \mb{R}^{q\times n_0}&\mb{R}^{q\times q}&\mb{R}^{q\times q}    \\
 \mb{R}^{q\times n_0}&\mb{R}^{q\times q}&\mb{R}^{q\times q}},
\]
for some $q\in \N$. Now suppose that $\hat{Q}=\mcl{L}_{\text{inv}}(Q)\in\mcl{N}_{011}\smallbmat{n_0&n_0\\n_1&n_1}$, with $\mcl{L}_{\text{inv}}:\mcl{N}_{011}\smallbmat{n_0&n_0\\n_1&n_1}\rightarrow \mcl{N}_{011}\smallbmat{n_0&n_0\\n_1&n_1}$ defined as in Eqn.~\eqref{eq_inv_011_appendix} in Appendix~\ref{sec_inv_appendix}. Then for any $\mbf{u}\in\R^{n_0}\times L_2^{n_1}[x]\times L_2^{n_1}[y]$,
\begin{align*}
 (\mcl{P}[\hat Q]\circ\mcl{P}[Q]) \mbf{u}=(\mcl{P}[Q]\circ\mcl{P}[\hat Q]) \mbf{u}=\mbf{u}.
\end{align*}

\end{lem}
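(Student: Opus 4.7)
The approach is to exploit the finite-rank structure imposed by the common monomial basis $Z$: under the separability hypothesis, every non-multiplier kernel of $\mcl{P}[Q]$ has the form $Z^T(\cdot)\,M\,Z(\cdot)$ (or a one-sided variant), so each integral term in $\mcl{P}[Q]\mbf{u}$ collapses into multiplication by a finite matrix against a ``moment'' of the input. I expect the inverse $\hat Q$ to inherit exactly the same separable structure, so that the inversion problem becomes finite-dimensional and the formula in Eq.~\eqref{eq_inv_011_appendix} can be read off from the inverse of a single assembled matrix.

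Concretely, writing $\mcl{P}[Q]\mbf{u}=\mbf{v}$ with $\mbf{u}=(u_0,u_x,u_y)$ and $\mbf{v}=(v_0,v_x,v_y)$, I would introduce the moment variables $\tilde u_x:=\int_a^b Z(x)u_x(x)\,dx$ and $\tilde u_y:=\int_c^d Z(y)u_y(y)\,dy$. The middle row then becomes
\[
Q^0_{xx}(x)u_x(x)=v_x(x)-Q_{x0}(x)u_0-Z^T(x)\Gamma_{xx}\tilde u_x-Z^T(x)\Gamma_{xy}\tilde u_y,
\]
which, assuming $Q^0_{xx}(x)$ is pointwise invertible (implicit in the separable ansatz, and necessary for $\mcl{P}[Q]$ to be invertible), solves pointwise for $u_x(x)$ in terms of $(u_0,\tilde u_x,\tilde u_y)$ and $v_x$. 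The third row is treated symmetrically. Substituting these expressions into the first row, and applying $\int Z(\cdot)$ to the second and third rows, yields a closed finite-dimensional linear system
\[
M\bmat{u_0\\\tilde u_x\\\tilde u_y}=\bmat{v_0\\\int Z(x)Q^0_{xx}(x)^{-1}v_x(x)\,dx\\\int Z(y)Q^0_{yy}(y)^{-1}v_y(y)\,dy},
\]
whose coefficient matrix $M$ is assembled from $\{H_{\cdot\cdot},\Gamma_{\cdot\cdot}\}$ together with Gram-type matrices $\int Z(s)\,Q^0_{\cdot\cdot}(s)^{-1}Z^T(s)\,ds$. Invertibility of $M$ is precisely the solvability condition for $\mcl{P}[Q]$.

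Inverting $M$ blockwise and tracing backwards produces, for each entry of $\hat Q$, a kernel of the required separable form whose coefficient matrices are read off from the blocks of $M^{-1}$; this is exactly the content of Eq.~\eqref{eq_inv_011_appendix} and confirms $\hat Q\in\mcl{N}_{011}\smallbmat{n_0&n_0\\n_1&n_1}$. The two-sided identity $\mcl{P}[\hat Q]\circ\mcl{P}[Q]=\mcl{P}[Q]\circ\mcl{P}[\hat Q]=I$ is then verified by directly applying the 011-composition rule $\mcl{L}_{011}$ from the previous subsection: all nine blocks of the composite parameter must collapse to the identity multiplier, which follows once the Schur-complement identities relating the blocks of $M^{-1}$ to $M$ are substituted.

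The main obstacle I anticipate is the algebraic bookkeeping inside $\mcl{L}_{011}(\hat Q,Q)$. Composition of two separable 011-PI operators produces numerous cross-terms arising from double integrals that mix $\Gamma_{xx},\Gamma_{yy},\Gamma_{xy},\Gamma_{yx}$ with the boundary row/column blocks $H_{\cdot\cdot}$, and one must check that, after pairing with the structured blocks of $M^{-1}$, the separable kernels $Z^T(\cdot)A\,Z(\cdot)$ cancel block-by-block and the piecewise split between $\int_a^x$ and $\int_x^b$ reassembles consistently. Once the linearization via $Z$ is fixed, the argument is the 011-level analogue of the 1D separable inversion of~\cite{miao2019inversion}, with the new off-diagonal couplings $Q_{xy}$ and $Q_{yx}$ handled uniformly through the Schur-complement structure of $M^{-1}$.
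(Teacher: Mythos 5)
Your proposal is correct in substance but takes a genuinely different route from the paper. The paper's proof is a pure verification: it takes the formula $\hat Q=\mcl{L}_{\text{inv}}(Q)$ as given, forms $R=\mcl{L}_{011}(\hat Q,Q)$ via the composition rule, and checks block by block that $R_{00}=I_{n_0}$, $R_{xx}^0=R_{yy}^0=I_{n_1}$ and all remaining blocks vanish, using the identities relating $\Pi_{\cdot\cdot}$, $\Sigma_x$, $\Sigma_y$, $K_{xx}$, $K_{yy}$ and $\text{E}_{xy}$, $\text{E}_{yx}$. You instead \emph{derive} the inverse by reducing $\mcl{P}[Q]\mbf{u}=\mbf{v}$ to a finite-dimensional system in $(u_0,\tilde u_x,\tilde u_y)$: your Gram matrices $\int Z(s)[Q^0_{\cdot\cdot}(s)]^{-1}Z^T(s)\,ds$ are exactly the paper's $K_{xx},K_{yy}$, and the Schur complements of your assembled matrix $M$ are exactly $\Sigma_x,\Sigma_y$ and $\Sigma_y-K_{yy}\Pi_{yx}\text{E}_{xy}$. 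Your route buys two things the paper's does not: it explains where the otherwise opaque formula in Eq.~\eqref{eq_inv_011_appendix} comes from, and, because the reduction is an equivalence, it yields the left and right inverse simultaneously (the paper explicitly verifies only $\mcl{P}[\hat Q]\circ\mcl{P}[Q]$). The paper's route avoids one step you gloss over: when reconstructing $u_x(x)=[Q^0_{xx}(x)]^{-1}\bl(v_x(x)-Z^T(x)(H_{x0}u_0+\Gamma_{xx}\tilde u_x+\Gamma_{xy}\tilde u_y)\br)$ from the solution of the finite-dimensional system, you must confirm that the moment $\int Z(x)u_x(x)\,dx$ of this reconstructed function really equals the $\tilde u_x$ you solved for — this consistency check is what makes the reduction reversible and hence gives the two-sided identity. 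Since you also propose to close the argument by running the $\mcl{L}_{011}$ composition (which is the paper's proof verbatim), this omission is not a gap, but if you want the derivation itself to carry the proof you should state that check explicitly. Both approaches rest on the same implicit hypotheses, namely invertibility of $Q_{00}$, pointwise invertibility of $Q^0_{xx}$ and $Q^0_{yy}$, and invertibility of the finite-dimensional Schur complements, which is precisely the condition under which $\mcl{L}_{\text{inv}}$ is well defined.
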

\begin{proof}
	See Appendix~\ref{sec_inv_appendix} for a proof.
\end{proof}

\subsection{Differentiation and 2D-PI Operators}\label{sec_div}
While differentiation is an unbounded operator and PI operators are bounded, we now show that for a PI operator with no multipliers, composition of the differential operator with this PI operator is a PI operator and hence bounded. This will be used in Section~\ref{sec_PIE} to show that the dynamics of a PDE can be equivalently represented using only bounded operators (the PIE representation).

\begin{lem}\label{lem_div_operator_x}
Suppose
\begin{align*}
N\! =\! \bmat{0&\! \! 0&\! \! 0\\N_{10}&\! \! N_{11}&\! \! N_{12}\\N_{20}&\! \! N_{21}&\! \! N_{22}}\! &\in\! \bmat{
	0               &\! \! \! 0              &\! \! \!0\\
	H_1^{n\times m} &\! \! \! H_1^{n\times m}&\! \! \! H_1^{n\times m}\\
	H_1^{n\times m} &\! \! \! H_1^{n\times m}&\! \! \! H_1^{n\times m}}\! \subset\! \mcl{N}_{2D}^{n\times m},
\end{align*}
and let
\begin{align}\label{eq_div_operator_x}
M=\bmat{M_{00}&M_{01}&M_{02}\\M_{10}&M_{11}&M_{12}\\M_{20}&M_{21}&M_{22}}\in\mcl{N}_{2D}^{n\times m},
\end{align}
where
\begin{align*}
&M_{00}(x,y)=N_{10}(x,y,x)-N_{20}(x,y,x),    \\
&M_{01}(x,y,\nu)=N_{11}(x,y,x,\nu)-N_{21}(x,y,x,\nu),   \\
&M_{02}(x,y,\nu)=N_{12}(x,y,x,\nu)-N_{22}(x,y,x,\nu),   \\
&M_{10}(x,y,\theta)=\partial_x N_{10}(x,y,\theta), \\
&M_{20}(x,y,\theta)=\partial_x N_{20}(x,y,\theta), \\
&M_{11}(x,y,\theta,\nu)=\partial_x N_{11}(x,y,\theta,\nu),\\
&M_{21}(x,y,\theta,\nu)=\partial_x N_{21}(x,y,\theta,\nu),\\
&M_{12}(x,y,\theta,\nu)=\partial_x N_{12}(x,y,\theta,\nu),\\
&M_{22}(x,y,\theta,\nu)=\partial_x N_{22}(x,y,\theta,\nu).
\end{align*}
Then, for any $\mbf{u}\in L_2^{m}[x,y]$,
\begin{align*}
\partial_x(\mcl{P}[N]\mbf{u})(x,y)
=(\mcl{P}[M]\mbf{u})(x,y).\\
\end{align*}
\end{lem}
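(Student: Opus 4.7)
The plan is to verify the identity $\partial_x(\mcl{P}[N]\mbf u)=\mcl{P}[M]\mbf u$ by applying Leibniz's differentiation-under-the-integral rule termwise to each of the integrals defining $\mcl{P}[N]\mbf u$. Because of the hypotheses $N_{00}=N_{01}=N_{02}=0$, the operator has no multiplier term and no $y$-only single integrals, so only the six kernels $N_{ij}$ with $i\in\{1,2\}$ enter the computation. The two features to track are (a) the boundary contributions produced when the variable of differentiation also appears as an integration limit, and (b) the interior contributions obtained by differentiating the kernels themselves.

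First I would handle the two single integrals in $\theta$. Leibniz yields $\partial_x\int_{a}^{x}N_{10}(x,y,\theta)\mbf u(\theta,y)d\theta=N_{10}(x,y,x)\mbf u(x,y)+\int_{a}^{x}\partial_x N_{10}(x,y,\theta)\mbf u(\theta,y)d\theta$, and similarly the $N_{20}$ integral contributes $-N_{20}(x,y,x)\mbf u(x,y)+\int_{x}^{b}\partial_x N_{20}\mbf u\,d\theta$. Summing, the two boundary pieces collapse to the prescribed multiplier $M_{00}(x,y)\mbf u(x,y)$, while the interior pieces match $M_{10}$ and $M_{20}$.

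Next I would treat the four double integrals. For $\partial_x\int_{a}^{x}\int_{c}^{y}N_{11}\mbf u\,d\nu d\theta$ Leibniz yields a boundary term $\int_{c}^{y}N_{11}(x,y,x,\nu)\mbf u(x,\nu)d\nu$ plus $\int_{a}^{x}\int_{c}^{y}\partial_x N_{11}\mbf u\,d\nu d\theta$. The $N_{21}$, $N_{12}$, $N_{22}$ double integrals produce analogous boundary pieces with signs determined by whether $x$ is the upper or lower $\theta$-limit. Grouping by the range of $\nu$ integration, the four boundary contributions pair up as $\int_{c}^{y}(N_{11}-N_{21})(x,y,x,\nu)\mbf u(x,\nu)d\nu$ and $\int_{y}^{d}(N_{12}-N_{22})(x,y,x,\nu)\mbf u(x,\nu)d\nu$, which are exactly the $M_{01}$ and $M_{02}$ terms in $\mcl{P}[M]$. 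The four interior pieces become the $M_{ij}$ for $i,j\in\{1,2\}$ by direct matching. Assembling, the result equals $\mcl{P}[M]\mbf u$.

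The main technical obstacle is justifying the differentiation under the integral sign and the pointwise trace evaluations $N_{i0}(x,y,x)$ and $N_{ij}(x,y,x,\nu)$ at the assumed regularity level $H_1$. I would handle this by a density argument: approximate each $H_1$ kernel by smooth kernels for which the classical Leibniz rule applies verbatim, and then pass to the $L_2$ limit using continuity of the map $N\mapsto\mcl{P}[N]$ together with boundedness of the trace operators $H_1\to L_2$ on the codimension-one slice $\{\theta=x\}$. Once this is in place, interchanging $\partial_x$ with the $\nu$-integral in the interior pieces and identifying the resulting kernels with the $M_{ij}$ is routine, so the proof reduces to a careful bookkeeping exercise.
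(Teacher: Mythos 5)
Your proposal is correct and follows essentially the same route as the paper: decompose $\mcl{P}[N]$ by linearity into the $\theta$-integral and the four double-integral pieces, apply the Leibniz integral rule to each, and observe that the boundary evaluations at $\theta=x$ assemble into $M_{00},M_{01},M_{02}$ while the interior derivatives give the remaining $M_{ij}$. Your added density/trace argument to justify differentiation under the integral at $H_1$ regularity is a reasonable extra rigor the paper omits, but it does not constitute a different approach.
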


\begin{proof}
To prove this result, we exploit the linearity of the PI operator, splitting
\begin{align*}
\mcl{P}[N]=\mcl{P}\smallbmat{0&0&0\\N_{10}&0&0\\N_{20}&0&0}
+\mcl{P}\smallbmat{0&0&0\\0&N_{11}&0\\0&N_{21}&0}
+\mcl{P}\smallbmat{0&0&0\\0&0&N_{12}\\0&0&N_{22}}.
\end{align*}
Recall now the Leibniz integral rule, stating that, for arbitrary $P\in L_2[x,\theta]$,
\begin{align*}
&\frac{d}{dx}\left(\int_{a(x)}^{b(x)} P(x,\theta)d\theta\right)
=\int_{a(x)}^{b(x)} \partial_x P(x,\theta)d\theta \\
&\hspace*{2.0cm}+P(x,b(x))\frac{d}{dx}b(x)
-P(x,a(x))\frac{d}{dx}a(x).
\end{align*}
Then, for arbitrary $\mbf{u}\in L_2[x,y]$,
\begin{align*}
&\partial_{x}\left(\mcl{P}\smallbmat{0&0&0\\N_{10}&0&0\\N_{20}&0&0}\mbf{u}\right)(x,y)    \\
&=\partial_x\bbbl(\int_{a}^{x}N_{10}(x,y,\theta)\mbf{u}(\theta)d\theta
+\int_{x}^{b}N_{20}(x,y,\theta)\mbf{u}(\theta)d\theta
\bbbr)    \\
&=N_{10}(x,y,x)\mbf{u}(x)
+\int_{a}^{x}\partial_{x}N_{10}(x,y,\theta)\mbf{u}(\theta)d\theta \\
&\qquad-N_{20}(x,y,x)\mbf{u}(x)
+\int_{x}^{b}\partial_{x}N_{20}(x,y,\theta)\mbf{u}(\theta)d\theta \\
&=\left(\mcl{P}\smallbmat{M_{00}&0&0\\M_{10}&0&0\\M_{20}&0&0}\mbf{u}\right)(x,y)
\end{align*}
Repeating these steps, it also follows that
\begin{align*}
&\partial_{x}\left(\mcl{P}\smallbmat{0&0&0\\0&N_{11}&0\\0&N_{21}&0}\mbf{u}\right)(x,y)
=\left(\mcl{P}\smallbmat{0&M_{01}&0\\0&M_{11}&0\\0&M_{21}&0}\mbf{u}\right)(x,y),\\
&\partial_{x}\left(\mcl{P}\smallbmat{0&0&0\\0&0&N_{12}\\0&0&N_{22}}\mbf{u}\right)(x,y)
=\left(\mcl{P}\smallbmat{0&0&M_{02}\\0&0&M_{12}\\0&0&M_{22}}\mbf{u}\right)(x,y).
\end{align*}
Finally, by linearity of the derivative and PI operators,
\begin{align*}
&\partial_x \left(\mcl{P}[N]\mbf{u}\right)(x,y)
=\partial_x\left(\mcl{P}\smallbmat{0&0&0\\N_{10}&0&0\\N_{20}&0&0}\mbf{u}\right)(x,y) \\
&\quad+\partial_x\left(\mcl{P}\smallbmat{0&0&0\\0&N_{11}&0\\0&N_{21}&0}\mbf{u}\right)(x,y)
+\partial_x\left(\mcl{P}\smallbmat{0&0&0\\0&0&N_{12}\\0&0&N_{22}}\mbf{u}\right)(x,y)  \\
&=\left(\mcl{P}\smallbmat{M_{00}&0&0\\M_{10}&0&0\\M_{20}&0&0}\mbf{u}\right)(x,y)
+\left(\mcl{P}\smallbmat{0&M_{01}&0\\0&M_{11}&0\\0&M_{21}&0}\mbf{u}\right)(x,y)   \\
&\hspace*{2.0cm}+\left(\mcl{P}\smallbmat{0&0&M_{02}\\0&0&M_{12}\\0&0&M_{22}}\mbf{u}\right)(x,y)
=\left(\mcl{P}[M]\mbf{u}\right)(x,y).
\end{align*}

\end{proof}

\begin{lem}\label{lem_div_operator_y}
Suppose
\begin{align*}
N\! =\! \bmat{0&\! \!  N_{01}&\! \! N_{02}\\0&\! \! N_{11}&\! \! N_{12}\\0&\! \! N_{21}&\! \! N_{22}}\! &\in\! \bmat{
	0 &\! H_1^{n\times m} &\! H_1^{n\times m}\\
	0 &\! H_1^{n\times m} &\! H_1^{n\times m}\\
	0 &\! H_1^{n\times m} &\! H_1^{n\times m}}\! 
\subset\!  \mcl{N}_{2D}^{n\times m},
\end{align*}
and let
\begin{align}\label{eq_div_operator_y}
M=\bmat{M_{00}&M_{01}&M_{02}\\M_{10}&M_{11}&M_{12}\\M_{20}&M_{21}&M_{22}}\in\mcl{N}_{2D}^{n\times m},
\end{align}
where
\begin{align*}
&M_{00}(x,y)=N_{01}(x,y,y)-N_{02}(x,y,y),    \\
&M_{10}(x,y,\theta)=N_{11}(x,y,\theta,y)-N_{12}(x,y,\theta,y),   \\
&M_{20}(x,y,\theta)=N_{21}(x,y,\theta,y)-N_{22}(x,y,\theta,y),   \\
&M_{01}(x,y,\nu)=\partial_y N_{01}(x,y,\nu), \\
&M_{02}(x,y,\nu)=\partial_y N_{02}(x,y,\nu), \\
&M_{11}(x,y,\theta,\nu)=\partial_y N_{11}(x,y,\theta,\nu),\\
&M_{21}(x,y,\theta,\nu)=\partial_y N_{21}(x,y,\theta,\nu),\\
&M_{12}(x,y,\theta,\nu)=\partial_y N_{12}(x,y,\theta,\nu),\\
&M_{22}(x,y,\theta,\nu)=\partial_y N_{22}(x,y,\theta,\nu).
\end{align*}
Then for any $\mbf{u}\in L_2^{m}[x,y]$,
\begin{align*}
\partial_y(\mcl{P}[N]\mbf{u})(x,y)
=(\mcl{P}[M]\mbf{u})(x,y).\\
\end{align*}
\end{lem}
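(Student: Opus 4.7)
The plan is to follow verbatim the structure of the proof of Lemma~\ref{lem_div_operator_x}, with the roles of $x$ and $y$ swapped. Where the previous lemma placed the nonzero entries of $N$ in the rows associated with $x$-integration, this lemma places them in the columns associated with $y$-integration, so it is the inner $y$-integrals $\int_c^y$ and $\int_y^d$ (with $y$-dependent limits) that produce boundary contributions when differentiated, while the outer integrals $\int_a^x$ and $\int_x^b$ simply commute with $\partial_y$ because their limits are independent of $y$.

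First I would use linearity of $\mcl P[\cdot]$ in $N$ to decompose
\[
\mcl P[N] = \mcl P\smallbmat{0&N_{01}&0\\0&N_{11}&0\\0&N_{21}&0} + \mcl P\smallbmat{0&0&N_{02}\\0&0&N_{12}\\0&0&N_{22}}.
\]
For each summand I would apply the Leibniz integral rule
\[
\frac{d}{dy}\int_{\alpha(y)}^{\beta(y)}P(y,\nu)\,d\nu = \int_{\alpha(y)}^{\beta(y)}\partial_y P(y,\nu)\,d\nu + P(y,\beta(y))\beta'(y) - P(y,\alpha(y))\alpha'(y)
\]
to the innermost $y$-integral, pulling $\partial_y$ under any outer $\int_a^x$ or $\int_x^b$ (legal since those limits are independent of $y$, and $\partial_y N_{ij}$ is integrable in $\theta$ by the $H_1$ hypothesis on the parameters). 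The first summand produces positive boundary contributions at $\nu=y$ from the upper limit of $\int_c^y$, and the second produces negative boundary contributions at $\nu=y$ from the lower limit of $\int_y^d$.

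Collecting terms by PI form, the surviving $y$-integrals yield the $\partial_y N_{ij}$ parameters of $M$ for $j\in\{1,2\}$, while the boundary contributions at $\nu=y$ combine with their opposite signs to populate the $j=0$ column: $M_{00}(x,y)=N_{01}(x,y,y)-N_{02}(x,y,y)$, and $M_{10}$, $M_{20}$ arise analogously from evaluating $N_{11},N_{12}$ and $N_{21},N_{22}$ at $\nu=y$ under the surviving $\int_a^x$ and $\int_x^b$. Summing the two pieces back together by linearity of $\mcl P$ then reassembles $\mcl P[M]\mbf u$, as required.

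There is no genuine mathematical obstacle here beyond careful bookkeeping of boundary terms and sign conventions, since by hypothesis the first column of $N$ vanishes and the Leibniz rule applies cleanly only to the $y$-variable. The one point worth checking is the interchange of $\partial_y$ with the outer $x$-integrals, which is standard under the $H_1$ regularity hypothesis and is handled in exactly the same way as in the proof of Lemma~\ref{lem_div_operator_x}.
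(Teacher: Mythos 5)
Your proposal is correct and follows essentially the same route as the paper, which itself proves this lemma by noting it "follows along the same lines" as Lemma~\ref{lem_div_operator_x}: decompose $\mcl{P}[N]$ by linearity, apply the Leibniz rule to the integrals with $y$-dependent limits (producing the boundary terms $N_{i1}(\cdot,y)-N_{i2}(\cdot,y)$ with the correct signs), and commute $\partial_y$ past the $x$-integrals whose limits do not depend on $y$. Your grouping into two summands by $y$-integral type rather than three by $x$-integral type is an immaterial bookkeeping difference.
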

\begin{proof}
The proof follows along the same lines as that of Lemma~\ref{lem_div_operator_x}.
\end{proof}

Whenever a 2D-PI operator $\mcl{P}[N]$ has no multipliers along the $x$-direction (as in Lemma~\ref{lem_div_operator_x}), or along the $y$-direction (as in Lemma~\ref{lem_div_operator_y}), we will denote the composition of the differential operator $\partial_x$ or $\partial_y$ with this PI operator in the obvious manner as $\partial_x\mcl{P}[N]$ or $\partial_y\mcl{P}[N]$ respectively, such that, e.g.
\[
 \left[\bl(\partial_x\mcl{P}[N]\br)\mbf{u}\right](x,y)
 =\partial_x\bl(\mcl{P}[N]\mbf{u}\br)(x,y).
\]

\subsection{Adjoint of 2D-PI Operators}\label{sec_adj}

Finally, we give an expression for the adjoint of a 2D-PI operator.

\begin{lem}\label{lem_adjoint}
	Suppose $ N \in \mcl N_{2D}^{n\times m}$ and define $ \hat{N} \in \mcl N_{2D}^{m\times n}$ such that
	\begin{align}\label{adj_operator}
	&\hat{N}(x,y,\theta,\nu)  \nonumber\\
	&=\bmat{\hat{N}_{00}(x,y)&\hat{N}_{01}(x,y,\nu)&\hat{N}_{02}(x,y,\nu)\\ \hat{N}_{10}(x,y,\theta)&\hat{N}_{11}(x,y,\theta,\nu)&\hat{N}_{12}(x,y,\theta,\nu)\\ \hat{N}_{20}(x,y,\theta)&\hat{N}_{21}(x,y,\theta,\nu)&\hat{N}_{22}(x,y,\theta,\nu)} \nonumber\\
	&=\bmat{N_{00}^T(x,y)&N^T_{02}(x,\nu,y)&N^T_{01}(x,\nu,y)\\ N^T_{20}(\theta,y,x)&N^T_{22}(\theta,\nu,x,y)&N^T_{21}(\theta,\nu,x,y)\\ N^T_{10}(\theta,y,x)&N^T_{12}(\theta,\nu,x,y)&N^T_{11}(\theta,\nu,x,y)}.
	\end{align}
	Then for any $\mbf u\in L_2^{m}[x,y]$ and $\mbf{v}\in L_2^{n}[x,y]$,
	\[
	\ip{\mbf{v}}{\mcl{P}[N]\mbf{u}}_{L_2}=\ip{\mcl{P}[\hat{N}]\mbf{v}}{\mbf{u}}_{L_2}.
	\]
\end{lem}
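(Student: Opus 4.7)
The plan is to expand the inner product $\ip{\mbf{v}}{\mcl{P}[N]\mbf{u}}_{L_2}$ as a sum of nine terms (one per component $N_{ij}$), and then for each term use Fubini's theorem to interchange the order of integration, followed by a relabeling of dummy variables to identify the result with the corresponding term of $\ip{\mcl{P}[\hat N]\mbf{v}}{\mbf{u}}_{L_2}$. By linearity, it suffices to treat each of the nine terms separately and sum the results at the end.

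The multiplier term is trivial: by definition of the matrix transpose,
\[
\int_a^b\!\int_c^d \mbf{v}(x,y)^T N_{00}(x,y)\mbf{u}(x,y)\,dy\,dx
=\int_a^b\!\int_c^d\bl(N_{00}^T(x,y)\mbf{v}(x,y)\br)^T\mbf{u}(x,y)\,dy\,dx,
\]
which yields $\hat N_{00}(x,y)=N_{00}^T(x,y)$. For a representative single-integral term (say the $N_{10}$ term), I would write
\[
\int_a^b\!\int_c^d \mbf{v}(x,y)^T\!\int_a^x N_{10}(x,y,\theta)\mbf{u}(\theta,y)\,d\theta\,dy\,dx,
\]
swap the order of integration on the triangular region $\{(x,\theta):a\le\theta\le x\le b\}$ so that $\theta$ becomes the outer variable ranging over $[a,b]$ and $x$ ranges over $[\theta,b]$, and then relabel $\theta\leftrightarrow x$. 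The result matches an $\hat N_{20}$ contribution with $\hat N_{20}(x,y,\theta)=N_{10}^T(\theta,y,x)$, exactly as claimed. The other single-integral terms $N_{20},N_{01},N_{02}$ follow by the same procedure, with the upper/lower integration bounds swapping whenever the region is flipped across the diagonal.

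For each of the four double-integral terms $N_{11},N_{12},N_{21},N_{22}$, I would apply Fubini twice, once in each spatial variable, carefully tracking how the two triangular regions transform. For instance, the $N_{12}$ term has integration region $\{a\le\theta\le x\le b,\ c\le y\le\nu\le d\}$; interchanging both integration orders replaces this with $\{a\le x\le\theta\le b,\ c\le\nu\le y\le d\}$, so after the relabeling $\theta\leftrightarrow x,\ \nu\leftrightarrow y$ one obtains an inner integral of the form $\int_x^b\int_c^y N_{12}^T(\theta,\nu,x,y)\mbf{v}(\theta,\nu)\,d\nu\,d\theta$, which contributes to the $\hat N_{21}$ position and matches the stated formula. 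The remaining three double-integral cases are handled identically: each one maps to the position obtained by swapping the $x$-direction index ($1\leftrightarrow 2$) and the $y$-direction index ($1\leftrightarrow 2$) in the block structure, consistent with the formula~\eqref{adj_operator}.

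The main difficulty is purely bookkeeping: there is no analytical obstacle, but one must be extremely careful about which triangular region maps to which under the Fubini swap, and about the order in which the arguments $(x,y,\theta,\nu)$ appear in each $N_{ij}$ before and after the relabeling. Summing all nine rewritten terms yields $\ip{\mcl{P}[\hat N]\mbf{v}}{\mbf{u}}_{L_2}$, completing the proof. Integrability needed to apply Fubini is guaranteed by the assumption $N\in\mcl N_{2D}^{n\times m}$, whose components are $L_2$ in their arguments, together with $\mbf{u}\in L_2^m[x,y]$ and $\mbf{v}\in L_2^n[x,y]$ and the Cauchy--Schwarz inequality applied componentwise.
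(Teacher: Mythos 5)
Your proof is correct and is essentially the computation the paper performs: the paper's Appendix proof encodes all nine terms at once using the indicator/delta functions $\bs{\Phi}_i$ and obtains the adjoint from the symmetries $\bs{\Phi}_0(x,\theta)=\bs{\Phi}_0(\theta,x)$ and $\bs{\Phi}_1(x,\theta)=\bs{\Phi}_2(\theta,x)$, which is exactly your term-by-term Fubini swap over triangular regions written in unified notation. Your bookkeeping of which block maps to which (swapping the $1\leftrightarrow 2$ indices in each spatial direction) matches the stated formula for $\hat{N}$.
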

\begin{proof}
	See Appendix~\ref{sec_adjoint_appendix} for a proof.
\end{proof}

\section{A Standardized PDE Format in 2D}\label{sec_PDE}

Having formulated the required hierarchy of algebras of PI operators, we now introduce a class of linear 2D PDEs, the solutions of which may be represented using 2D PIEs. These 2D PDEs are represented in a standardized format, allowing for efficient construction of a general mapping between the PDE and PIE state spaces -- this construction being found in Section~\ref{sec_PIE}.\\

We consider a coupled PDE in the following compact representation
\begin{align}\label{eq_PDE}
 &\dot{\mbf{u}}(t,x,y)=\sum_{i,j=0}^{2}A_{ij}\thinspace\partial_x^i \partial_y^j \bl(N_{\max\{i,j\}}\mbf{u}(t,x,y)\br),
\end{align}
where the matrices
\begin{align*}
 N_0&=I_{n_0+n_1+n_2},    \\
 N_1&=\bmat{0_{(n_1+n_2)\times n_0}&I_{n_1+n_2}}, \\
 N_2&=\bmat{0_{n_2\times n_0}&0_{n_2\times n_1}&I_{n_2}},
\end{align*}
allow us to partition the states according to differentiability, so that
$\mbf{u}(t) \in X(\mcl{B})$ for all $t\ge 0$, where $X(\mcl{B})$ also includes boundary conditions, parameterized by the 011-PI operator $\mcl B:=\mcl{P}[B]$ as
\begin{align}\label{eq_setX}
 X(\mcl{B}):=
 &\left\{\bmat{\mbf{u}_0\\\mbf{u}_{1}\\\mbf{u}_{2}}\in \bmat{L_2^{n_0}\\ H_1^{n_1}\\ H_2^{n_2}}\thinspace\bbbbr\vert~ \mcl{B}\Lambda_{\text{bf}}\mbf{u}=0\right\},
\end{align}
where $B\in \mcl{N}_{011}\smallbmat{n_1+4n_2& 4n_1+16n_2\\n_1+2n_2& 2n_1+4n_2}$ and
where $\Lambda_{\text{bf}}$ allows us to list all the possible boundary values for the state components $\mbf{u}_1$ and $\mbf{u}_2$, and as limited by differentiability. In particular,
\begin{align}\label{eq_Lambda_bf}
 \Lambda_{\text{bf}}=\bmat{\mcl{C}_1\\\mcl{C}_2\\\mcl{C}_3}:L_2^{n_0}\times H_1^{n_1}\times H_2^{n_2}\rightarrow \bmat{\R^{4n_1+16n_2}\\ L_2^{2n_1+4n_2}[x]\\ L_2^{2n_1+4n_2}[y]},
\end{align}
where
\begin{align*}
 \mcl C_1&:=
 \bmat{\begin{array}{l}
	0~~\Lambda_1~\thinspace 0\\
	0~~0~~~\Lambda_1 \\
	0~~0~~~\Lambda_1\partial_x \\
	0~~0~~~\Lambda_1\partial_y  \\
	0~~0~~~\Lambda_1\partial_{xy}
	\end{array}}
 ,\quad
 \bmat{ \mcl C_2\\ \mcl C_3}:=
 \bmat{\begin{array}{l}
 	0~~\Lambda_2 \partial_x~\thinspace 0   \\
 	0~~0~~~~~~\Lambda_2 \partial_x^2  \\
 	0~~0~~~~~~\Lambda_2\partial_x^2\partial_y \\ 
 	0~~\Lambda_3 \partial_y~\thinspace 0\\
 	0~~0~~~~~~\Lambda_3\partial_y^2   \\
 	0~~0~~~~~~\Lambda_3\partial_x\partial_y^2
 	\end{array}}
\end{align*}
where we use the Dirac operators $\Lambda_k$ defined as
\[
\Lambda_1=\bmat{
 \Lambda_x^a\Lambda_y^c\\
 \Lambda_x^b\Lambda_y^c\\
 \Lambda_x^a\Lambda_y^d\\
 \Lambda_x^b\Lambda_y^d
 },\quad \Lambda_2= \bmat{
 \Lambda_y^c    \\
 \Lambda_y^d
 },\quad \Lambda_3= \bmat{
 \Lambda_x^a    \\
 \Lambda_x^b
 }.
\]

This formulation is very general and allows us to express almost any linear 2D PDE. For examples of this representation, see the examples in Section~\ref{sec_examples}.

\paragraph{Definition of Solution} For a given initial condition $\mbf{u}_{\text{I}}\in X(\mcl{B})$, we say that a function $\mbf{u}(t)$ satisfies the PDE defined by $\{A_{ij}, \mcl{B}\}$ if $\mbf{u}$ is Frech\'et differentiable, $\mbf{u}(0)=\mbf{u}_{\text{I}}$,  $\mbf{u}(t)\in X$ for all $t \ge 0$ and Equation~\eqref{eq_PDE} is satisfied for all $t \ge0$.

\begin{defn}\label{defn_PDE_stability}
 We say that a solution $\mbf{u}$ with initial condition $\mbf{u}_{\text{I}}$ of the PDE defined by $\{A_{ij}, \mcl B\}$ is exponentially stable in $L_2$ if there exist constants $K,\gamma>0$ such that
 \[
  \norm{\mbf{u}(t)}_{L_2}\leq K e^{-\gamma t} \norm{\mbf{u}_{\text{I}}}_{L_2}
 \]
 We say the PDE defined by $\{A_{ij}, \mcl{B}\}$ is exponentially stable in $L_2$ if any solution $\mbf{u}$ of the PDE is exponentially stable in $L_2$.
\end{defn}

\section{The Fundamental State on 2D}\label{sec_PIE}
In this section, we provide the main technical result of the paper, wherein we show that for a suitably well-posed set of boundary conditions, $\mcl{ B}$, there exists a unitary 2D-PI operator $\mcl{T}:L_2 \rightarrow X(\mcl{B})$ (where $X(\mcl{B})$ is defined in Eqn.~\eqref{eq_setX}) such that if we define the differentiation operator
\begin{align}\label{eq_Dmap}
\mcl D:=\bmat{I&&\\&\partial_{x}\partial_{y}&\\&&\partial_{x}^2\partial_{y}^2}
\end{align}
then for any ${\mbf u}\in X(\mcl{B})$ and $\hat{\mbf u}\in L_{2}^{n_0+n_1+n_2}$, we have
\[
\mbf u = \mcl T \mcl D \mbf u \qquad \text{and}\qquad \hat{\mbf{u}} = \mcl D \mcl T \hat{\mbf{u}}.
\]
This implies that for any $\mbf{u} \in X(\mcl{B})$, there exists a \textit{unique} $\hat{\mbf{u}} \in L_2$ where the map from $\hat{\mbf{u}}$ to $\mbf{u}$ is defined by a 2D-PI operator. Because differentiation of a PI operator is a PI operator (Section~\ref{sec_div}), this implies that derivatives of $\mbf{u}$ can be expressed in terms of a PI operator acting on $\hat{\mbf{u}}$. Using these results, in Thm.~\ref{thm_Tmap}, we show  that for any suitable PDE defined by $\{A_{ij},\mcl{B}\}$, there exist 2D-PI operators $\mcl{T},\mcl{A}$ such that $\hat{\mbf{u}}\in L_2$ satisfies 
\[
\mcl{T}\dot{\hat{\mbf{u}}}(t)=\mcl{A}\hat{\mbf{u}}(t)
\]
if and only if $\mcl{T}\hat{\mbf{u}}\in X(\mcl{B})$ satisfies the PDE.

\begin{figure*}[!ht]
	\hrulefill
	\footnotesize
	\begin{flalign}
	&K_1=\bmat{K_{30} \\ \{0,K_{31},0\} \\ \{0,K_{32},0\}}\in \bmat{L_2 \\ \mcl{N}_{1D\rightarrow 2D} \\ \mcl{N}_{1D\rightarrow 2D}},		&
	&K_2= \bmat{T_{00}&0&0\\0&K_{33}&0\\0&0&0}\in\mcl{N}_{2D},	 \label{eq_K_mat} \\
	H_1 &= \bmat{H_{00}&H_{01}&H_{02}\\0&\{H_{11},0,0\}&0\\0&0&\{H_{22},0,0\}}\in\mcl{N}_{011},	&
	H_2 &= \bmat{H_{03}\\\{H_{13},0,0\}\\\{H_{23},0,0\}}\in\bmat{L_2\\\mcl{N}_{2D\rightarrow 1D}\\\mcl{N}_{2D\rightarrow 1D}},		&	& \label{eq_H_mat}
	\end{flalign}
	where $T_{00}$ is as defined in~\eqref{eq_Tmat} in Fig.~\ref{fig_Tmap_matrices}, and
	{\tiny
		\begin{align}
		&\mat{K_{31}(x,y,\theta)=
			\bmat{
				0&0&0\\
				I_{n_1}&0&0\\
				0&(x-\theta)&(y-c)(x-\theta)
			},		&
			&\hspace*{1.65cm}K_{32}(x,y,\nu)=
			\bmat{
				0&0&0\\
				I_{n_1}&0&0\\
				0&(y-\nu)&(x-a)(y-\nu)
			}, 	&
			&T_{00}=\bmat{I_{n_0}&0&0\\0&0&0\\0&0&0}},	\nonumber\\
		&\mat{K_{30}(x,y)=
			\bmat{
				0&0&0&0&0\\
				I_{n_1}&0&0&0&0\\
				0&I_{n_2}&(x-a)&(y-c)&(y-c)(x-a)
			},	&	
			\hspace{2.0cm}
			&K_{33}(x,y,\theta,\nu)=
			\bmat{
				0&0&0\\
				0&I_{n_1}&0\\
				0&0&(x-\theta)(y-\nu)
		}}.  \nonumber
		\end{align}}	
	and where,
	{\tiny
		\begin{align*}
		&H_{00} =
		\bmat{I_{n_1}&0&0&0&0\\
			I_{n_1}&0&0&0&0\\
			I_{n_1}&0&0&0&0\\
			I_{n_1}&0&0&0&0\\
			0&I_{n_2}&0&0&0\\
			0&I_{n_2}&(b-a)&0&0\\
			0&I_{n_2}&0&(d-c)&0\\
			0&I_{n_2}&(b-a)&(d-c)&(d-c)(b-a)\\
			0&0&I_{n_2}&0&0\\
			0&0&I_{n_2}&0&0\\
			0&0&I_{n_2}&0&(d-c)\\
			0&0&I_{n_2}&0&(d-c)\\
			0&0&0&I_{n_2}&0\\
			0&0&0&I_{n_2}&(b-a)\\
			0&0&0&I_{n_2}&0\\
			0&0&0&I_{n_2}&(b-a)\\
			0&0&0&0&I_{n_2}\\
			0&0&0&0&I_{n_2}\\
			0&0&0&0&I_{n_2}\\
			0&0&0&0&I_{n_2}},    &
		&H_{01}(x) =
		\bmat{
			0&0&0\\I_{n_1}&0&0\\0&0&0\\I_{n_1}&0&0\\
			0&0&0\\ 0&(b-x)&0\\ 0&0&0\\
			0&(b-x)&(d-c)(b-x)\\
			0&0&0\\ 0&I_{n_2}&0 \\0&0&0\\ 0&I_{n_2}&(d-c)\\
			0&0&0\\ 0&0&(b-x) \\0&0&0\\ 0&0&(b-x)\\
			0&0&0\\ 0&0&I_{n_2} \\0&0&0\\ 0&0&I_{n_2}
		},   &
		&H_{02}(y) =
		\bmat{
			0&0&0\\0&0&0\\I_{n_1}&0&0\\I_{n_1}&0&0\\
			0&0&0\\ 0&0&0\\ 0&(d-y)&0\\
			0&(d-y)&(b-a)(d-y)\\
			0&0&0\\ 0&0&0 \\0&0&(d-y)\\ 0&0&(d-y)\\
			0&0&0\\ 0&0&0 \\0&I_{n_2}&0\\ 0&I_{n_2}&(b-a)\\
			0&0&0\\ 0&0&0 \\0&0&I_{n_2}\\ 0&0&I_{n_2}
		},   \nonumber\\
		&\begin{array}{l}
		H_{11}=
		\bmat{
			I_{n_1}&0&0\\I_{n_1}&0&0\\
			0&I_{n_2}&0\\0&I_{n_2}&(d-c)\\0&0&I_{n_2}\\0&0&I_{n_2}
		},\\
		\\
		H_{22}=
		\bmat{
			I_{n_1}&0&0\\I_{n_1}&0&0\\
			0&I_{n_2}&0\\0&I_{n_2}&(b-a)\\0&0&I_{n_2}\\0&0&I_{n_2}
		}
		,
		\end{array}
		&
		&\begin{array}{l}
		H_{13}(y)=
		\bmat{
			0&0&0\\0&I_{n_1}&0\\
			0&0&0\\ 0&0&(d-y)\\0&0&0\\0&0&I_{n_2}
		},\\
		\\
		H_{23}(x)=
		\bmat{
			0&0&0\\0&I_{n_1}&0\\
			0&0&0\\ 0&0&(b-x)\\0&0&0\\0&0&I_{n_2}
		},
		\end{array}
		&
		&H_{03}(x,y)=
		\bmat{
			0&0&0\\0&0&0\\0&0&0\\0&I_{n_1}&0\\
			0&0&0\\0&0&0\\0&0&0\\ 0&0&(d-y)(b-x)\\0&0&0\\0&0&0\\0&0&0\\ 0&0&(d-y)\\0&0&0\\0&0&0\\0&0&0\\ 0&0&(b-x)\\
			0&0&0\\0&0&0\\0&0&0\\0&0&I_{n_2}
		}.
		\end{align*}
	}
	\hrulefill
	\caption{Parameters $K_1$, $K_2$, $H_1$ and $H_2$ defining the mappings in Lemma~\ref{lem_uhat_to_u} and in Corollary~\ref{cor_uhat_to_BC}}
	\label{fig_H_K_matrices}
\end{figure*}

\subsection{Map From Fundamental State to PDE State}\label{sec_uhat2u}

As mentioned above, given the boundary-constrained ``PDE state'' $\mbf{u}\in X(\mcl{B})$, we associate a corresponding ``fundamental state'' $\hat{\mbf{u}}\in L_2^{n_0+n_1+n_2}$, defined as
\begin{align}\label{eq_uhatu}
 \hat{\mbf{u}}(t):=\bmat{\hat{\mbf{u}}_{0}(t)\\\hat{\mbf{u}}_{1}(t)\\\hat{\mbf{u}}_{2}(t)}=\bmat{I&&\\&\partial_{x}\partial_{y}&\\&&\partial_{x}^2\partial_{y}^2}\bmat{\mbf{u}_0(t)\\\mbf{u}_1(t)\\\mbf{u}_2(t)}
\end{align}
In the following lemma, we temporarily ignore boundary conditions and use the fundamental theorem of calculus to express any $\mbf{u}\in L_2\times H_1\times H_2$ in terms of $\hat{\mbf{u}}$, and a set of ``core'' boundary values.

\begin{lem}\label{lem_uhat_to_u}
Let $\mbf{u}\in L_2[x,y] \times H_1[x,y] \times H_2[x,y]$. If $\mcl K_1 = \mcl P[K_1]$ and $\mcl K_2 = \mcl P[K_2]$ with $K_1 \in L_2\times \mcl N_{1D \rightarrow 2D}\times \mcl N_{1D \rightarrow 2D}$ and $K_2 \in \mcl N_{2D}$ as defined in Eqn.~\eqref{eq_K_mat} in Fig.~\ref{fig_H_K_matrices}, then
\[
\mbf u = \mcl K_1 \Lambda_{\text{bc}} \mbf u+\mcl K_2 \hat{\mbf u}
\]
 where
$\hat{\mbf u} = \mcl D \mbf u$ and
\[
\Lambda_{\text{bc}} :=\bmat{\begin{array}{lll}
0&\Lambda_x^a \Lambda_y^c&0\\
0&0&\Lambda_x^a\Lambda_y^c \\
0&0&\Lambda_x^a\Lambda_y^c \partial_x   \\
0&0&\Lambda_x^a\Lambda_y^c  \partial_y   \\
0&0&\Lambda_x^a\Lambda_y^c  \partial_x\partial_y  \\
0& \Lambda_y^c \partial_x & 0\\
0&0&\Lambda_y^c  \partial_x^2  \\
0&0&\Lambda_y^c  \partial_x^2\partial_y  \\
0& \Lambda_x^a \partial_y & 0\\
0&0& \Lambda_x^a   \partial_y^2 \\
0&0& \Lambda_x^a \partial_x \partial_y^2
\end{array}}
\]
\end{lem}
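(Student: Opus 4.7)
The plan is to build the decomposition component-by-component from the partition $\mbf u=(\mbf u_0,\mbf u_1,\mbf u_2)\in L_2\times H_1\times H_2$, using the fundamental theorem of calculus (FTC) to express each $\mbf u_i$ in terms of the traces collected by $\Lambda_{\text{bc}}\mbf u$ and the fundamental state $\hat{\mbf u}=\mcl D\mbf u$. Once all three component expansions are in hand, verification reduces to inspecting the polynomial kernels of $K_{30},K_{31},K_{32},K_{33}$ (and the constant block $T_{00}$) and confirming that they assemble in the block form $\mbf u=\mcl K_1\Lambda_{\text{bc}}\mbf u+\mcl K_2\hat{\mbf u}$.

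For $\mbf u_0\in L_2$ no integration is required: $\hat{\mbf u}_0=\mbf u_0$ by definition of $\mcl D$, the $\mbf u_0$-row of $K_{30}$ is zero, and the $(0,0)$-block $T_{00}$ of $K_2$ is the identity, so the first block of the asserted identity is immediate. For $\mbf u_1\in H_1$, I would apply the two-variable FTC to $\partial_x\partial_y\mbf u_1=\hat{\mbf u}_1\in L_2$, giving
\[
\mbf u_1(x,y)=[\Lambda_x^a\Lambda_y^c\mbf u_1]+\int_a^x[\Lambda_y^c\partial_x\mbf u_1](\theta)d\theta+\int_c^y[\Lambda_x^a\partial_y\mbf u_1](\nu)d\nu+\int_a^x\int_c^y\hat{\mbf u}_1(\theta,\nu)d\nu d\theta,
\]
which is precisely the contribution produced by the middle (i.e., $\mbf u_1$) rows of $K_{30}$, $K_{31}$, $K_{32}$, and $K_{33}$, each of which carries a single $I_{n_1}$ entry in the appropriate column.

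For $\mbf u_2\in H_2$ the argument is an iterated Taylor-with-integral-remainder expansion. Setting $v:=\partial_y^2\mbf u_2$, an application of the second-order FTC in $x$ yields
\[
v(x,y)=[\Lambda_x^a\partial_y^2\mbf u_2](y)+(x-a)[\Lambda_x^a\partial_x\partial_y^2\mbf u_2](y)+\int_a^x(x-\theta)\hat{\mbf u}_2(\theta,y)d\theta.
\]
A subsequent second-order FTC in $y$ applied to $\mbf u_2$, followed by a final 1D expansion of the $y=c$ traces $\mbf u_2(x,c)$ and $\partial_y\mbf u_2(x,c)$ in $x$, produces the full expansion of $\mbf u_2(x,y)$ as the sum of: four pointwise terms $(x-a)^i(y-c)^j[\Lambda_x^a\Lambda_y^c\partial_x^i\partial_y^j\mbf u_2]$ for $i,j\in\{0,1\}$; two $x$-edge integrals with kernels $(x-\theta)$ and $(y-c)(x-\theta)$ acting on $\Lambda_y^c\partial_x^2\mbf u_2$ and $\Lambda_y^c\partial_x^2\partial_y\mbf u_2$; two $y$-edge integrals with kernels $(y-\nu)$ and $(x-a)(y-\nu)$ acting on $\Lambda_x^a\partial_y^2\mbf u_2$ and $\Lambda_x^a\partial_x\partial_y^2\mbf u_2$; and the double integral $\int_a^x\!\int_c^y(x-\theta)(y-\nu)\hat{\mbf u}_2(\theta,\nu)d\nu d\theta$. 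These nine terms are in bijection with the nonzero entries of the bottom rows of $K_{30},K_{31},K_{32},K_{33}$.

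The main obstacle is purely bookkeeping in this last step: the iterated integrals must be reduced to the kernels $(x-\theta)$ and $(y-\nu)$ via the standard identity $\int_a^x\int_a^{\theta_1}f(\theta_2)d\theta_2 d\theta_1=\int_a^x(x-\theta)f(\theta)d\theta$ (and its $y$-counterpart), while the mixed kernels $(y-c)(x-\theta)$ and $(x-a)(y-\nu)$ must be produced by correctly pairing the 1D expansions of the edge traces $\mbf u_2(x,c),\partial_y\mbf u_2(x,c)$ (resp.\ $\mbf u_2(a,y),\partial_x\mbf u_2(a,y)$) with the outer $y$-integration (resp.\ $x$-integration). Once these term-by-term matchings are verified, assembling the three component expansions into the block formula gives $\mbf u=\mcl K_1\Lambda_{\text{bc}}\mbf u+\mcl K_2\hat{\mbf u}$.
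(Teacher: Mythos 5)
Your proposal is correct and follows essentially the same route as the paper: the paper's proof simply states the two fundamental-theorem-of-calculus identities (the first-order one for the $H_1$ component and the second-order Taylor-with-integral-remainder one for the $H_2$ component) and reads off the kernels, which is exactly the expansion you derive and match term-by-term to the entries of $K_{30},K_{31},K_{32},K_{33}$ and $T_{00}$. The nine terms you list for the $\mbf u_2$ component coincide precisely with the paper's second identity, so no gap remains.
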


\begin{proof}
	The proof follows directly from the identities
	\begin{align*}
	\mbf{u}(x,y) &= \mbf{u}(a,c) + \int_{a}^{x}\mbf{u}_x(\theta,c)d\theta    \\
	&\quad+ \int_{c}^{y} \mbf{u}_{y}(a,\nu)d\nu
	+ \int_{c}^{y}\int_{a}^{x}\mbf{u}_{xy}(\theta,\nu)d\theta d\nu, & &
	\end{align*}
	and
	\begin{align*}
	\mbf{u}(x,y) &= \mbf{u}(a,c)
	+ (x-a)\mbf{u}_{x}(a,c)      \nonumber\\
	&\hspace*{0.25cm}+ (y-c)\mbf{u}_{y}(a,c)
	+ (y-c)(x-a)\mbf{u}_{xy}(a,c)    \nonumber\\
	&\hspace*{0.5cm}+\int_{a}^{x}(x-\theta)\mbf{u}_{xx}(\theta,c)d\theta \nonumber\\
	&\hspace*{0.75cm}+\int_{c}^{y}(y-\nu)\mbf{u}_{yy}(a,\nu)d\nu    \nonumber\\
	&\hspace*{1.0cm}+(y-c)\int_{a}^{x}(x-\theta)\mbf{u}_{xxy}(\theta,c)d\theta \nonumber\\
	&\hspace*{1.25cm}+(x-a)\int_{c}^{y}(y-\nu)\mbf{u}_{xyy}(a,\nu)d\nu    \nonumber\\
	&\hspace*{1.5cm}+\int_{c}^{y}\int_{a}^{x}(y-\nu)(x-\theta)
	\mbf{u}_{xxyy}(\theta,\nu)d\theta d\nu,
	\end{align*}
	which follow from the fundamental theorem of calculus.
\end{proof}

\begin{cor}\label{cor_uhat_to_BC}
Let $\mbf{u}\in L_2[x,y] \times H_1[x,y] \times H_2[x,y]$. Then, if $\mcl{H}_1 = \mcl{P}[H_1]$ and $\mcl{H}_2 = \mcl{P}[H_2]$ with $H_1 \in \mcl N_{011}$ and $H_2 \in L_2 \times \mcl N_{2D \rightarrow 1D}\times \mcl N_{2D \rightarrow 1D}$ as defined in Eqn.~\eqref{eq_H_mat} in Fig.~\ref{fig_H_K_matrices}, then
\[
\Lambda_{\text{bf}} \mbf u = \mcl H_1 \Lambda_{\text{bc}} \mbf u+\mcl H_2 \hat{\mbf u}
\]
where
$\hat{\mbf u} = \mcl D \mbf u$, and $\Lambda_{\text{bf}}$ is as defined in Eqn.~\eqref{eq_Lambda_bf}.
\end{cor}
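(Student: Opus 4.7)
The plan is to start from the identity established in Lemma~\ref{lem_uhat_to_u}, namely $\mbf{u} = \mcl{K}_1 \Lambda_{\text{bc}} \mbf{u} + \mcl{K}_2 \hat{\mbf{u}}$, and to apply the boundary-value functional $\Lambda_{\text{bf}}$ componentwise to both sides. Because $\Lambda_{\text{bf}}$ (as defined in Eqn.~\eqref{eq_Lambda_bf}) is built out of partial derivatives $\partial_x^i \partial_y^j$ composed with the Dirac evaluation operators $\Lambda_1, \Lambda_2, \Lambda_3$, the proof decomposes naturally into a differentiation step and an evaluation step. Since $\Lambda_{\text{bc}} \mbf{u}$ is a vector of scalars and fixed-axis boundary traces (independent of at least one of $x$ or $y$), commutation of $\Lambda_{\text{bf}}$ with the operator $\mcl{K}_1$ is unproblematic, and the main work lies in applying $\Lambda_{\text{bf}}$ to $\mcl{K}_2 \hat{\mbf{u}}$.

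First, I would compute $\partial_x^i \partial_y^j(\mcl{K}_1 \Lambda_{\text{bc}} \mbf{u} + \mcl{K}_2 \hat{\mbf{u}})$ for each pair $(i,j)$ occurring in $\Lambda_{\text{bf}}$, namely $(0,0), (1,0), (0,1), (1,1), (2,0), (0,2), (2,1), (1,2)$. The explicit expansion from the proof of Lemma~\ref{lem_uhat_to_u} already gives $\mbf{u}$ as a sum of core boundary values multiplied by polynomials in $(x-a),(y-c)$ plus integrals of $\hat{\mbf{u}}$ against these polynomial weights. Differentiating term-by-term, using Lemmas~\ref{lem_div_operator_x}--\ref{lem_div_operator_y} (applied to the components of $\mcl{K}_2$ in the form given by Eqn.~\eqref{eq_K_mat}), collapses various integrals via the Leibniz rule and produces closed-form PI-type expressions in $\Lambda_{\text{bc}} \mbf{u}$ and $\hat{\mbf{u}}$. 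Crucially, the multiplier-free structure of the integral part of $\mcl{K}_2$ ensures each derivative remains a 2D-PI operator, as required.

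Second, I would precompose each such derivative with the appropriate Dirac operator. Evaluating at $x=a$ or $y=c$ collapses the corresponding $\int_a^x$ or $\int_c^y$ to zero, while evaluating at $x=b$ or $y=d$ leaves full integrals that become part of the entries of $\mcl{H}_2$; the polynomial weights $(x-\theta),(y-\nu),(x-a),(y-c)$ specialize to the constants $(b-a)$ and $(d-c)$ appearing in $H_{00}, H_{01}, H_{02}$ in Fig.~\ref{fig_H_K_matrices}. The same specialization handles the pure-boundary part $\mcl{H}_1 \Lambda_{\text{bc}} \mbf{u}$: each corner evaluation of a polynomial in $(x-a),(y-c)$ contributes one row of the block matrix $H_{00}$, and each edge evaluation contributes one row of $H_{11}$ or $H_{22}$. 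Matching these directly against the explicit forms in Fig.~\ref{fig_H_K_matrices} verifies the claim.

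The main obstacle is not conceptual but combinatorial: $\Lambda_{\text{bf}} \mbf{u}$ has $4n_1 + 16 n_2$ scalar components, $2n_1 + 4n_2$ $x$-edge components, and $2n_1 + 4n_2$ $y$-edge components, each corresponding to a distinct pair $(i,j)$ and a distinct boundary location. To keep the bookkeeping tractable, I would organize the argument by first writing a single general formula for $\partial_x^i \partial_y^j \mbf{u}(x,y)$ derived from the expansion of Lemma~\ref{lem_uhat_to_u}, and then systematically specializing $(x,y)$ to the four corners $(a,c),(b,c),(a,d),(b,d)$ for $\mcl{C}_1$, to $y\in\{c,d\}$ for $\mcl{C}_2$, and to $x\in\{a,b\}$ for $\mcl{C}_3$. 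Reading off the coefficients and stacking the resulting rows reproduces exactly the block matrices $H_{00},\ldots,H_{23}$ of Eqn.~\eqref{eq_H_mat}, completing the identification $\Lambda_{\text{bf}} \mbf{u} = \mcl{H}_1 \Lambda_{\text{bc}} \mbf{u} + \mcl{H}_2 \hat{\mbf{u}}$.
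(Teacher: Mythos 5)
Your proposal is correct and follows essentially the route the paper intends: the corollary is presented as a direct consequence of Lemma~\ref{lem_uhat_to_u}, obtained by differentiating the fundamental-theorem-of-calculus expansions term by term (collapsing integrals via the Leibniz rule, as in Lemmas~\ref{lem_div_operator_x} and~\ref{lem_div_operator_y}) and then evaluating at the corners and edges to read off the rows of $H_{00},\dots,H_{23}$. The only remaining work is the combinatorial bookkeeping you already identify, so nothing further is needed.
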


With these definitions, we can express an arbitrary PDE state $\mbf{u}\in X(\mcl{B})$ in terms of a corresponding state $\hat{\mbf{u}}\in L_2^{n_0\times n_1\times n_2}$ and $\Lambda_{\text{bc}}\mbf{u}$. In the following theorem, we describe this relation as a PI operator, incorporating the boundary conditions to describe a map from $L_2^{n_0\times n_1\times n_2}$ to $X(\mcl{B})$. In doing so, we will require the operator $\mcl{B}$ to be well-posed, defining sufficient boundary conditions for the solution to the PDE to be uniquely defined. We express this restriction through invertibility of a 011-PI operator.

\begin{defn}\label{defn_well-posed}
Let $\mcl{H}_1=\mcl{P}[H_1]$, where $H_1\in\mcl{N}_{011}$ is as defined in Eqn.~\eqref{eq_H_mat} in Fig.~\ref{fig_H_K_matrices}. Then, we say that $\mcl{B}=\mcl{P}[B]$ for $B\in\mcl{N}_{011}$ defines a set of \textit{well-posed} boundary conditions if the operator $\mcl{B}\mcl{H}_1$ is invertible, so that there exist parameters $\hat{E}\in\mcl{N}_{011}$ such that $\mcl{P}[\hat{E}]=(\mcl{B}\mcl{H}_1)^{-1}$. 
\end{defn}

\begin{figure*}[!ht]
	\hrulefill\\
	\footnotesize
	Define
	\begin{flalign}\label{eq_Tmat}
	T:=\bmat{T_{00}&0&0\\0&T_{11}&T_{12}\\0&T_{21}&T_{22}}\in\mcl{N}_{2D}
	\end{flalign}
	where
	\begin{flalign}
	&\mat{\begin{array}{l}
	T_{11}(x,y,\theta,\nu) = K_{33}(x,y,\theta,\nu)
	+ T_{21}(x,y,\theta,\nu) + T_{12}(x,y,\theta,\nu)
	-T_{22}(x,y,\theta,\nu), \nonumber\\
	T_{21}(x,y,\theta,\nu) = - K_{32}(x,y,\nu)G_{23}^{0}(\theta,\nu) + T_{22}(x,y,\theta,\nu), \nonumber\\
	T_{12}(x,y,\theta,\nu) = - K_{31}^{1}(x,y,\theta)G_{13}^{0}(\theta,\nu) + T_{22}(x,y,\theta,\nu), \nonumber\\
	T_{22}(x,y,\theta,\nu) = -K_{30}(x,y)G_{03}(\theta,\nu)
	-\int_{a}^{x}K_{31}(x,y,\eta)G_{13}^{1}(\eta,\nu,\theta)d\eta
	-\int_{c}^{y}K_{32}(x,y,\mu)G_{23}^{1}(\theta,\mu,\nu)d\mu, 
	\end{array}	&
	T_{00}=\bmat{I_{n_0}&0&0\\0&0&0\\0&0&0}},
	\intertext{
	with the functions $K_{ij}$ as defined in~\eqref{eq_K_mat}, and}
	&\enspace G_{0}(x,y)=\hat{E}_{00}F_{0}(x,y) + \hat{E}_{01}(x)F_{1}^{0}(x,y) + \int_{a}^{b}\hat{E}_{01}(\theta)F_{1}^{1}(\theta,y,x)d\theta
	+ \hat{E}_{02}(y)F_{2}^{0}(x,y) + \int_{c}^{d}\hat{E}_{02}(\nu)F_{2}^{1}(x,\nu,y),    \nonumber\\
	&
	\begin{array}{l}
	G_{1}^{0}(x,y)=\hat{E}_{11}^{0}(x)F_{1}^{0}(x,y),
	\\
	G_{1}^{1}(x,y,\theta)=\hat{E}_{10}(x)F_{0}(\theta,y) + \hat{E}_{11}^{0}(x)F_{1}^{1}(x,y,\theta) 
	\\
	\quad+ \hat{E}_{11}^{1}(x,\theta)F_{1}^{0}(\theta,y)  
	+\int_{a}^{b}\hat{E}_{11}^{1}(x,\eta)F_{1}^{1}(\eta,y,\theta)d\eta
	\\
	\qquad+\hat{E}_{12}(x,y)F_{2}^{0}(\theta,y) + \int_{c}^{d}\hat{E}_{12}(x,\nu)F_{2}^{1}(\theta,\nu,y)d\nu,
	\end{array}
	\hspace{2.5cm}
	\begin{array}{l}
	G_{2}^{0}(x,y)=\hat{E}_{22}^{0}(y)F_{2}^{0}(x,y), 
	\\
	G_{2}^{1}(x,y,\nu)=\hat{E}_{20}(y)F_{0}(x,\nu) + \hat{E}_{22}^{0}(y)F_{2}^{1}(x,y,\nu) 
	\\
	\quad+ \hat{E}_{22}^{1}(y,\nu)F_{2}^{0}(x,\nu)  
	+\int_{c}^{d}\hat{E}_{22}^{1}(y,\mu)F_{2}^{1}(x,\mu,\nu)d\mu
	\\
	\qquad+\hat{E}_{21}(x,y)F_{1}^{0}(x,\nu) + \int_{a}^{b}\hat{E}_{21}(\theta,y)F_{1}^{1}(\theta,\nu,x)d\theta,
	\end{array} \label{eq_G_mat}
	\intertext{with}
	&\begin{array}{l l}
	F_{0}(x,y)=B_{00}H_{03}(x,y) + B_{01}(x)H_{13}(x,y) + B_{02}(y)H_{23}(x,y),    \\
	F_{1}^{1}(x,y,\theta)=B_{10}(x)H_{03}(\theta,y) + B_{11}^{1}(x,\theta)H_{13}(\theta,y)
	+B_{12}(x,y)H_{23}(\theta,y),   & \quad
	F_{1}^{0}(x,y)=B_{11}^{0}(x)H_{13}(x,y), \\
	F_{2}^{1}(x,y,\nu)=B_{20}(y)H_{03}(x,\nu) + B_{22}^{1}(y,\nu)H_{23}(x,\nu)
	+B_{21}(x,y)H_{13}(x,\nu),   & \quad
	F_{2}^{0}(x,y)=B_{22}^{0}(y)H_{23}(x,y),
	\end{array} \label{eq_F_mat}
	\intertext{and $\smallbmat{\hat{E}_{00}&\hat{E}_{01}&\hat{E}_{02}\\\hat{E}_{10}&\hat{E}_{11}&\hat{E}_{12}\\\hat{E}_{20}&\hat{E}_{21}&\hat{E}_{22}}=\mcl{L}_{\text{inv}}\left(\smallbmat{E_{00}&E_{01}&E_{02}\\E_{10}&E_{11}&E_{12}\\E_{20}&E_{21}&E_{22}}\right)\in\mcl{N}_{011}$, where $\mcl{L}_{\text{inv}}:\mcl{N}_{011}\rightarrow\mcl{N}_{011}$ is defined as in Equation~\eqref{eq_inv_011_appendix} in~\ref{sec_inv_appendix}, and} &\enspace E_{11}=\{E_{11}^{0},E_{11}^{1},E_{11}^{1}\}\in\mcl{N}_{1D},\qquad \text{and}\qquad E_{22}=\{E_{22}^{0},E_{22}^{1},E_{22}^{1}\}\in\mcl{N}_{1D},\qquad \text{with}\nonumber\\
	&\begin{array}{l l}
	E_{00}=B_{00}H_{00} + \int_{a}^{b}B_{01}(x)H_{10}(x)dx + \int_{c}^{d}B_{02}(y)H_{20}(y)dy, \\
	E_{01}(x)=B_{00}H_{01}(x) + B_{01}(x)H_{11}(x),  &
	E_{02}(y)=B_{00}H_{02}(y) + B_{02}(y)H_{22}(y),  \\
	E_{10}(x)=B_{10}(x)H_{00}, &
	E_{20}(y)=B_{20}(y)H_{00}, \\
	E_{11}^{0}(x)=B_{11}^{0}H_{11},   &
	E_{22}^{0}(y)=B_{22}^{0}H_{22},   \\
	E_{11}^{1}(x,\theta)= B_{10}(x)H_{01}(\theta) + B_{11}^{1}(x,\theta)H_{11}, &
	E_{22}^{1}(y,\nu)= B_{20}(y)H_{02}(\nu) + B_{22}^{1}(y,\nu)H_{22}, \\
	E_{12}(x,y)=B_{10}(x)H_{02}(y) + B_{12}(x,y)H_{22}(y), &
	E_{21}(x,y)=B_{20}(y)H_{01}(x) + B_{21}(x,y)H_{11}(x), 
	\end{array}\label{eq_E_mat}
	\end{flalign}
	where the functions $H_{ij}$ are as defined in~\eqref{eq_H_mat}.\\
	
	\hrulefill
	\caption{Parameters $T$ describing PI operator $\mcl{T}=\mcl{P}[T]$ mapping the fundamental state back to the PDE state in Theorem~\ref{thm_Tmap}}
	\label{fig_Tmap_matrices}
\end{figure*}

%

\begin{thm}\label{thm_Tmap}
Let $B=\smallbmat{B_{00}&B_{01}&B_{02}\\B_{10}&B_{11}&B_{12}\\B_{20}&B_{21}&B_{22}} \in \mcl N_{011}$ be given, where $B_{11}=\{B_{11}^{0},B_{11}^{1},B_{11}^{1}\}\in\mcl{N}_{1D}$ and $B_{22}=\{B_{22}^{0},B_{22}^{1},B_{22}^{1}\}\in\mcl{N}_{1D}$ are separable, and such that $\mcl{B}:=\mcl{P}[B]$ defines a set of well-posed boundary conditions as $\mcl{B}\Lambda_{\text{bf}}\mbf{u}=0$. Let associated parameters $T \in \mcl N_{2D}$ be as defined in Eqn.~\eqref{eq_Tmat} in Fig~\ref{fig_Tmap_matrices}. Then if $\mcl{T}=\mcl{P}[T]$, for any $\mbf{u}\in X(\mcl{B})$ and $\hat{\mbf u} \in L_2[x,y]$, we have
\begin{align*}
 \mbf{u}=\mcl{T}\mcl{D}\mbf{u} \qquad \text{and} \qquad   \hat{\mbf{u}}=\mcl{D}\mcl{T}\hat{\mbf{u}}.\\
\end{align*}
\end{thm}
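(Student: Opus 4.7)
I will implement the construction sketched in Section~\ref{sec_PIE}: express $\mbf u\in X(\mcl B)$ in terms of $\hat{\mbf u}=\mcl D\mbf u$ and the ``core'' boundary trace $\Lambda_{\text{bc}}\mbf u$, use the boundary condition together with well-posedness to eliminate $\Lambda_{\text{bc}}\mbf u$, and then verify that the resulting operator agrees with the explicit $T$ of Fig.~\ref{fig_Tmap_matrices}. Concretely, for $\mbf u\in X(\mcl B)$, Lemma~\ref{lem_uhat_to_u} and Corollary~\ref{cor_uhat_to_BC} give
\[
\mbf u=\mcl K_1\Lambda_{\text{bc}}\mbf u+\mcl K_2\hat{\mbf u},\qquad \Lambda_{\text{bf}}\mbf u=\mcl H_1\Lambda_{\text{bc}}\mbf u+\mcl H_2\hat{\mbf u}.
\]
Applying $\mcl B$ to the second identity and invoking $\mcl B\Lambda_{\text{bf}}\mbf u=0$ gives $(\mcl B\mcl H_1)\Lambda_{\text{bc}}\mbf u=-\mcl B\mcl H_2\hat{\mbf u}$. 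Definition~\ref{defn_well-posed} supplies a 011-PI inverse $\mcl P[\hat E]$ of $\mcl B\mcl H_1$, so $\Lambda_{\text{bc}}\mbf u=-\mcl P[\hat E]\mcl B\mcl H_2\hat{\mbf u}$, and substituting back yields
\[
\mbf u=\bl[\mcl K_2-\mcl K_1\mcl P[\hat E]\mcl B\mcl H_2\br]\hat{\mbf u}.
\]

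\textbf{Identifying $T$.} The identification of this composite operator with $\mcl P[T]$ is a symbolic computation performed entirely inside the algebras of Section~\ref{sec_algebras}. First, $\mcl B\mcl H_1=\mcl P[E]$ with $E=\mcl L_{011}(B,H_1)$; a block-by-block evaluation reproduces the $E_{ij}$ in \eqref{eq_E_mat}. Second, the separability hypotheses on $B_{11}$ and $B_{22}$, combined with the factored form of the diagonal blocks of $H_1$, place $E$ within the scope of Lemma~\ref{lem_inv}, yielding $\hat E=\mcl L_{\text{inv}}(E)$. Third, $\mcl B\mcl H_2$ has 0112-parameters $F_0,F_1^i,F_2^i$ as in \eqref{eq_F_mat}, and composition with $\mcl P[\hat E]$ produces the $G_0,G_1^i,G_2^i$ in \eqref{eq_G_mat}. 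Finally, composition of this with $-\mcl K_1$ and addition of $\mcl K_2$ assemble the blocks $T_{ij}$ listed in \eqref{eq_Tmat}. Every step uses only the composition formulas of Section~\ref{sec_algebras} and the inversion formula of Lemma~\ref{lem_inv}, so no new analytic input is needed. This establishes $\mbf u=\mcl T\mcl D\mbf u$.

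\textbf{The reverse identity.} For any $\hat{\mbf u}\in L_2^{n_0+n_1+n_2}$, I would compute $\mcl D\mcl T\hat{\mbf u}$ directly from the explicit form of $T$ using Lemmas~\ref{lem_div_operator_x} and~\ref{lem_div_operator_y}. The $(0,0)$ block of $T$ is the identity multiplier, so the first component is $\hat{\mbf u}_0$. For the remaining two components, each term in the ``boundary correction'' $-\mcl K_1\mcl P[\hat E]\mcl B\mcl H_2\hat{\mbf u}$ is a low-degree polynomial in $(x-a)$, $(y-c)$, $(x-\theta)$ and $(y-\nu)$ (at most linear for the $\partial_x\partial_y$ component, at most cubic for the $\partial_x^2\partial_y^2$ component); these factors are annihilated after the prescribed number of differentiations, and what survives from $\mcl K_2$ are exactly the kernels $K_{33}$ and the $(x-\theta)(y-\nu)$-type products whose mixed derivatives are identity multipliers. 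Hence $\mcl D\mcl T\hat{\mbf u}=\hat{\mbf u}$.

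\textbf{Main obstacle.} The conceptual steps are brief, but the real technical burden is the block-level bookkeeping through $\mcl L_{011}$, $\mcl L_{\text{inv}}$, and the cross-algebra composition formulas needed to match the explicit $T$ in Fig.~\ref{fig_Tmap_matrices}; I expect this to dominate the length of the formal proof. A secondary subtlety is that for the reverse identity one must simultaneously keep track of the Dirac-trace terms produced by $\partial_x$ and $\partial_y$ in Lemmas~\ref{lem_div_operator_x},~\ref{lem_div_operator_y} and the boundary-correction terms introduced by $\mcl P[\hat E]$, and verify that they cancel exactly -- this is where the well-posedness hypothesis is used non-trivially.
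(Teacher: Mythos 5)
Your proposal follows essentially the same route as the paper's proof: the forward identity is obtained exactly as in the paper by combining Lemma~\ref{lem_uhat_to_u} and Corollary~\ref{cor_uhat_to_BC}, applying $\mcl{B}$ to eliminate $\Lambda_{\text{bc}}\mbf{u}$ via the well-posedness (invertibility of $\mcl{B}\mcl{H}_1$), and substituting to get $\mcl{T}=\mcl{K}_2-\mcl{K}_1\mcl{G}$; the reverse identity is likewise handled, as in the paper's Appendix~\ref{sec_Tmap_appendix}, by differentiating the explicit $T$ with Lemmas~\ref{lem_div_operator_x} and~\ref{lem_div_operator_y} and observing that the low-degree polynomial kernels are annihilated while the diagonal traces of $K_{33}$ yield identity multipliers. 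The only minor quibble is attribution: the cancellations in $\mcl{D}\mcl{T}$ are structural consequences of the $K_{3j}$ kernels rather than of well-posedness, which enters only through the existence of $\hat{E}$.
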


\begin{proof}
	Suppose $\mbf{u}\in X(\mcl{B})$, and define $\hat{\mbf{u}}=\mcl{D}\mbf{u}\in L_2^{n_0+n_1+n_2}[x,y]$. Furthermore,
	let $K_1$ and $K_2$ be as defined in Eqn.~\eqref{eq_K_mat}, and $H_1$ and $H_2$ be as defined in Eqn.~\eqref{eq_H_mat}, such that (by Lemma~\ref{lem_uhat_to_u} and Corollary~\ref{cor_uhat_to_BC})
	\begin{align}\label{eq_uhat_to_u}
	\Lambda_{\text{bf}} \mbf{u} &= \mcl{H}_1 \Lambda_{\text{bc}} \mbf{u}+\mcl{H}_2 \hat{\mbf{u}} \nonumber\\
	\mbf{u} &= \mcl{K}_1 \Lambda_{\text{bc}} \mbf{u}+\mcl{K}_2 \hat{\mbf{u}},
	\end{align}
	where $\mcl{H}_1=\mcl{P}[H_1]$, $\mcl{H}_2=\mcl{P}[H_2]$, $\mcl{K}_1=\mcl{P}[K_1]$, and $\mcl{K}_2=\mcl{P}[K_2]$. Enforcing the boundary conditions $\mcl{B}\Lambda_{\text{bf}}\mbf{u}=0$, we may use the composition rules of PI operators to express
	\begin{align*}
	0=\mcl{B}\Lambda_{\text{bf}}\mbf{u}
	=\mcl{B}\mcl{H}_1 \Lambda_{\text{bc}} \mbf{u}+\mcl{B}\mcl{H}_2 \hat{\mbf{u}}
	=\mcl{E}\Lambda_{\text{bc}}\mbf{u}
	+\mcl{F}\hat{\mbf{u}},
	\end{align*}
	where $\mcl{E}=\mcl{P}[E]$ and $\mcl{F}=\mcl{P}[F]$, with
	\begin{align*}
	E&\! =\! \bmat{E_{00}&E_{01}&E_{02}\\E_{10}&E_{11}&E_{12}\\E_{20}&E_{21}&E_{22}}\! \in\mcl{N}_{011},    &
	F&\! =\! \bmat{F_{0}\\F_{1}\\F_{2}}\! \in\bmat{L_2\\\mcl{N}_{2D\rightarrow 1D}\\\mcl{N}_{2D\rightarrow 1D}},
	\end{align*}
	and
	\begin{align*}
	E_{11}&=\{E_{11}^{0},E_{11}^{1},E_{11}^{1}\}\in\mcl{N}_{1D}
	\\
	E_{22}&=\{E_{22}^{0},E_{22}^{1},E_{22}^{1}\}\in\mcl{N}_{1D} \\
	F_{1}&=\{F_{1}^{0},F_{1}^{1},F_{1}^{1}\}\in\mcl{N}_{2D\rightarrow 1D} \\
	F_{2}&=\{F_{2}^{0},F_{2}^{1},F_{2}^{1}\}\in\mcl{N}_{2D\rightarrow 1D}
	\end{align*}
	defined as in Equations~\eqref{eq_F_mat} and~\eqref{eq_E_mat}. By well-posedness of the boundary conditions, operator $\mcl{E}$ is invertible, so that the boundary state $\Lambda_{\text{bc}}\mbf{u}$ may be expressed directly in terms of the fundamental state $\hat{\mbf{u}}$ as
	\begin{align*}
	\Lambda_{\text{bc}}\mbf{u}=-\mcl{E}^{-1}\mcl{F}\hat{\mbf{u}}=-\mcl{G}\hat{\mbf{u}},
	\end{align*}
	where $\mcl{G}=\mcl{P}[G]$ with
	\begin{align*}
	G&=\bmat{G_{0}\\G_{1}\\G_{2}}\in\bmat{L_2\\\mcl{N}_{2D\rightarrow 1D}\\\mcl{N}_{2D\rightarrow 1D}}\\
	G_{1}&=\{G_{1}^{0},G_{1}^{1},G_{1}^{1}\}\in\mcl{N}_{2D\rightarrow 1D} \\
	G_{2}&=\{G_{2}^{0},G_{2}^{1},G_{2}^{1}\}\in\mcl{N}_{2D\rightarrow 1D}
	\end{align*}
	defined as in Equations~\eqref{eq_G_mat}. Finally, substituting this expression into Equation~\eqref{eq_uhat_to_u}, we obtain
	\begin{align*}
	\mbf{u} = \mcl{K}_1 \Lambda_{\text{bc}} \mbf{u}+\mcl{K}_2 \hat{\mbf{u}}
	&=-\mcl{K}_1 \mcl{G}\hat{\mbf{u}}+\mcl{K}_2 \hat{\mbf{u}}	\\
	&=(\mcl{K}_2-\mcl{K}_1\mcl{G})\hat{\mbf{u}}
	=\mcl{T}\hat{\mbf{u}}=\mcl{T}\mcl{D}\mbf{u},
	\end{align*}
	as desired.	
	
	The converse result $\hat{\mbf{u}}=\mcl{D}\mcl{T}\hat{\mbf{u}}$ for $\hat{\mbf{u}}\in L_2$ may be derived using the composition rules for differential operators with PI operators (Lemmas~\ref{lem_div_operator_x} and ~\ref{lem_div_operator_y}), showing that $\mcl{D}\mcl{T}$ describes an identity operation. A full proof of this result can be found in Appendix~\ref{sec_Tmap_appendix}.

\end{proof}

\begin{cor}
	Let $\mcl T$ be as defined in Theorem~\ref{thm_Tmap}. Then $\mcl T:L_2 \rightarrow X(\mcl{B})$ is unitary with respect to
	\[
	\ip{\mbf u}{\mbf v}_X := \ip{\mcl{D}\mbf{u}}{\mcl{D}\mbf{v}}_{L_2}
	\]
\end{cor}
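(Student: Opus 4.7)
The plan is to derive this corollary as a direct consequence of Theorem~\ref{thm_Tmap}, which already gives the two identities $\mbf{u} = \mcl{T}\mcl{D}\mbf{u}$ for $\mbf{u}\in X(\mcl{B})$ and $\hat{\mbf{u}} = \mcl{D}\mcl{T}\hat{\mbf{u}}$ for $\hat{\mbf{u}}\in L_2$. Together these say that $\mcl{T}:L_2\to X(\mcl{B})$ and $\mcl{D}:X(\mcl{B})\to L_2$ are two-sided inverses. So essentially no new work is required beyond packaging Theorem~\ref{thm_Tmap} into the unitary statement.

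First, I would verify that $\ip{\cdot}{\cdot}_X$ is a genuine inner product on $X(\mcl{B})$. The only nontrivial property is positive-definiteness, i.e.\ that $\mcl{D}\mbf{u}=0$ on $X(\mcl{B})$ forces $\mbf{u}=0$. This follows from the first identity in Theorem~\ref{thm_Tmap}, since $\mbf{u}=\mcl{T}\mcl{D}\mbf{u}=\mcl{T}\cdot 0=0$. The remaining properties (linearity, conjugate symmetry, non-negativity) are inherited directly from the $L_2$ inner product via pullback by the linear map $\mcl{D}$.

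Next, I would establish bijectivity of $\mcl{T}:L_2\to X(\mcl{B})$. Surjectivity: given $\mbf{u}\in X(\mcl{B})$, set $\hat{\mbf{u}}:=\mcl{D}\mbf{u}\in L_2$, and the first identity gives $\mcl{T}\hat{\mbf{u}}=\mbf{u}$. Injectivity: if $\mcl{T}\hat{\mbf{u}}=0$, then applying $\mcl{D}$ and using the second identity yields $\hat{\mbf{u}}=\mcl{D}\mcl{T}\hat{\mbf{u}}=0$. One small point to mention is that $\mcl{T}$ does map into $X(\mcl{B})$, which one reads off from the construction in Theorem~\ref{thm_Tmap}: the boundary conditions $\mcl{B}\Lambda_{\text{bf}}\mcl{T}\hat{\mbf{u}}=0$ are enforced by the choice $\Lambda_{\text{bc}}\mbf{u}=-\mcl{G}\hat{\mbf{u}}$ used to build $\mcl{T}$, and the requisite differentiability of $\mcl{T}\hat{\mbf{u}}$ follows from the structure of the parameters $T$ in Fig.~\ref{fig_Tmap_matrices}.

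Finally, for the inner product identity, I would simply compute, for any $\hat{\mbf{u}},\hat{\mbf{v}}\in L_2$,
\[
\ip{\mcl{T}\hat{\mbf{u}}}{\mcl{T}\hat{\mbf{v}}}_X
=\ip{\mcl{D}\mcl{T}\hat{\mbf{u}}}{\mcl{D}\mcl{T}\hat{\mbf{v}}}_{L_2}
=\ip{\hat{\mbf{u}}}{\hat{\mbf{v}}}_{L_2},
\]
where the first equality is the definition of $\ip{\cdot}{\cdot}_X$ and the second is the second identity from Theorem~\ref{thm_Tmap} applied to each argument. Combined with the bijectivity above, this gives that $\mcl{T}$ is unitary in the stated sense. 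There is no real obstacle here; the entire corollary is a bookkeeping step on top of Theorem~\ref{thm_Tmap}, with the only mild subtlety being the verification that $\ip{\cdot}{\cdot}_X$ is non-degenerate on $X(\mcl{B})$ -- which, as noted, is handled by the same theorem.
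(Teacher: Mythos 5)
Your proposal is correct and follows essentially the same route as the paper: surjectivity from $\mbf{u}=\mcl{T}\mcl{D}\mbf{u}$, and preservation of the inner product from $\hat{\mbf{u}}=\mcl{D}\mcl{T}\hat{\mbf{u}}$. The extra points you add (injectivity, non-degeneracy of $\ip{\cdot}{\cdot}_X$) are sound but already implied by the isometry identity, so the paper simply omits them.
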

\begin{proof}
	By Thm.~\ref{thm_Tmap}, for any $\mbf{u}\in X(\mcl{B})$, there exists $\hat{\mbf{u}}=\mcl{D}\mbf{u}\in L_2$ such that $\mbf{u}=\mcl{T}\hat{\mbf{u}}$, hence $\mcl{T}$ is surjective. Furthermore, for any $\hat{\mbf{u}},\hat{\mbf{v}}\in L_2$, 
	\begin{align*}
	 \ip{\mcl{T}\hat{\mbf{u}}}{\mcl{T}\hat{\mbf{v}}}_{X}=\ip{\mcl{D}\mcl{T}\hat{\mbf{u}}}{\mcl{D}\mcl{T}\hat{\mbf{v}}}_{L_2}=\ip{\hat{\mbf{u}}}{\hat{\mbf{v}}}_{L_2},
	\end{align*}
	concluding the proof.
\end{proof}

\subsection{PDE to PIE conversion}\label{sec_PDE2PIE}
We now demonstrate that, given a PDE defined by $\{A_{ij},\mcl B\}$, for appropriate choice of $\mcl A,\mcl T$, we may define a Partial Integral Equation (PIE) whose solutions are equivalent to those of the PDE. Specifically, for given PI operators $\mcl T,\mcl A$, and an initial condition $\hat{\mbf u}_{\text{I}}$, we say $\hat{\mbf u}(t)$ solves the PIE defined by $\{\mcl T,\mcl A\}$ for initial condition $\hat{\mbf u}_{\text{I}}$ if $\hat{\mbf u}(0)=\hat{\mbf u}_{\text{I}}$, $\hat{\mbf{u}}(t)\in L_2^{n}[x,y]$ for all $t\ge 0$ and for all $t\ge 0$
\begin{align}\label{PIE}
 \mcl{T}\dot{\hat{\mbf{u}}}(t)=\mcl{A}\hat{\mbf{u}}(t).
\end{align}
The following result shows that if $\mcl T$ is as defined in Theorem~\ref{thm_Tmap}, then $\mbf u(t)$ satisfies the PDE defined by $\{A_{ij},\mcl B\}$ if and only if $\hat{\mbf u}(t) = \mcl T \mbf u(t)$ satisfies the PIE defined by $\{\mcl{A},\mcl{T}\}$.

\begin{lem}\label{lem_PIE}
 Suppose $\mcl T$ is as defined in Theorem~\ref{thm_Tmap}, and let 
 \begin{align}\label{eq_A_operator}
 &\mcl{A}=\sum_{i,j=0}^{2}A_{ij}\bbl(\partial_x^i \partial_y^j\bbl[N_{\max\{i,j\}}\mcl{T}\bbr]\bbr),
 \end{align}
 where
 \begin{align*}
  N_0&=I_{n_0+n_1+n_2},    \\
  N_1&=\bmat{0_{(n_1+n_2)\times n_0}&I_{n_1+n_2}}. \\
  N_2&=\bmat{0_{n_2\times n_0}&0_{n_2\times n_1}&I_{n_2}}.
 \end{align*}
 Then, given $\hat{\mbf{u}}_{\text{I}}\in L_2^{n_0+n_1+n_2}[x,y]$, $\hat{\mbf{u}}(t)$ solves the PIE~\eqref{PIE} defined by $\{\mcl{T},\mcl{A}\}$ with the initial condition $\hat{\mbf{u}}_{\text{I}} $ if and only if $\mbf{u}(t)=\mcl{T}\hat{\mbf{u}}(t)$ satisfies the PDE defined by $\{A_{ij},\mcl B\}$ with the initial condition $\mbf{u}_{\text{I}}=\mcl{T}\hat{\mbf{u}}_{\text{I}}$.\\
\end{lem}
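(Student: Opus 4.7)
The plan is to use Theorem~\ref{thm_Tmap} as the bridge between the PDE state $\mbf{u}\in X(\mcl{B})$ and the fundamental state $\hat{\mbf{u}}\in L_2$, exploiting the two identities $\mbf{u}=\mcl{T}\mcl{D}\mbf{u}$ for $\mbf{u}\in X(\mcl{B})$ and $\hat{\mbf{u}}=\mcl{D}\mcl{T}\hat{\mbf{u}}$ for $\hat{\mbf{u}}\in L_2$. Together, these identities make $\mcl{T}$ a bijection between $L_2$ and $X(\mcl{B})$, with the continuity constraints and boundary conditions defining $X(\mcl{B})$ absorbed directly into the parameters of $\mcl{T}$.

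For the forward direction, suppose $\hat{\mbf{u}}(t)\in L_2$ solves the PIE~\eqref{PIE} with $\hat{\mbf{u}}(0)=\hat{\mbf{u}}_{\text{I}}$, and set $\mbf{u}(t):=\mcl{T}\hat{\mbf{u}}(t)$. Since $\mcl{T}$ maps $L_2$ into $X(\mcl{B})$, we obtain $\mbf{u}(t)\in X(\mcl{B})$ for all $t\geq 0$, so the continuity constraints and boundary conditions hold automatically, and $\mbf{u}(0)=\mcl{T}\hat{\mbf{u}}_{\text{I}}=\mbf{u}_{\text{I}}$. Substituting $\hat{\mbf{u}}=\mcl{D}\mcl{T}\hat{\mbf{u}}$ and the definition of $\mcl{A}$ into the PIE then yields
\begin{align*}
\dot{\mbf{u}}(t)=\mcl{T}\dot{\hat{\mbf{u}}}(t)=\mcl{A}\hat{\mbf{u}}(t)
&=\sum_{i,j=0}^{2}A_{ij}\thinspace\partial_x^i\partial_y^j\bl(N_{\max\{i,j\}}\mcl{T}\hat{\mbf{u}}(t)\br)  \\
&=\sum_{i,j=0}^{2}A_{ij}\thinspace\partial_x^i\partial_y^j\bl(N_{\max\{i,j\}}\mbf{u}(t)\br),
\end{align*}
which is precisely~\eqref{eq_PDE}.

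For the reverse direction, suppose $\mbf{u}(t)\in X(\mcl{B})$ solves the PDE with $\mbf{u}(0)=\mbf{u}_{\text{I}}=\mcl{T}\hat{\mbf{u}}_{\text{I}}$, and define $\hat{\mbf{u}}(t):=\mcl{D}\mbf{u}(t)$, which lies in $L_2$ by the regularity imposed by $X(\mcl{B})$ and which satisfies $\mbf{u}(t)=\mcl{T}\hat{\mbf{u}}(t)$ by Theorem~\ref{thm_Tmap}. The crux is to show $\mcl{T}\dot{\hat{\mbf{u}}}=\dot{\mbf{u}}$. Because $X(\mcl{B})$ is the kernel of the continuous linear map $\mcl{B}\Lambda_{\text{bf}}$ on $L_2\times H_1\times H_2$, hence a closed linear subspace, and because $\mbf{u}$ is Fr\'echet-differentiable in $t$ with $\mbf{u}(t)\in X(\mcl{B})$ for all $t$, it follows that $\dot{\mbf{u}}(t)\in X(\mcl{B})$, so $\dot{\mbf{u}}=\mcl{T}\mcl{D}\dot{\mbf{u}}=\mcl{T}\dot{\hat{\mbf{u}}}$. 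Combining this with the PDE and $\mbf{u}=\mcl{T}\hat{\mbf{u}}$ as in the forward calculation then gives $\mcl{T}\dot{\hat{\mbf{u}}}=\mcl{A}\hat{\mbf{u}}$, while $\hat{\mbf{u}}(0)=\mcl{D}\mcl{T}\hat{\mbf{u}}_{\text{I}}=\hat{\mbf{u}}_{\text{I}}$.

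The main technical obstacle is confirming that the expression for $\mcl{A}$ in~\eqref{eq_A_operator} is actually well-defined as a PI operator. Lemmas~\ref{lem_div_operator_x} and~\ref{lem_div_operator_y} require the PI operator being differentiated to have vanishing multiplier-type entries along the direction of differentiation. One must verify, via the explicit form of $T$ in Eqn.~\eqref{eq_Tmat}, that the projections $N_k$ for $k\geq 1$ annihilate the only multiplier block $T_{00}=\mathrm{diag}(I_{n_0},0,0)$, so that $N_{\max\{i,j\}}\mcl{T}$ for $\max\{i,j\}\geq 1$ has no multiplier terms in either the $x$- or $y$-direction. Once this observation is in hand, iterated application of Lemmas~\ref{lem_div_operator_x} and~\ref{lem_div_operator_y} realizes each $\partial_x^i\partial_y^j(N_{\max\{i,j\}}\mcl{T})$ as an honest PI operator, and the remaining algebraic identities become routine.
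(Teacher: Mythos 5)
Your proposal is correct and follows essentially the same route as the paper's proof: both directions reduce to substituting the bijection $\mbf{u}=\mcl{T}\hat{\mbf{u}}$, $\hat{\mbf{u}}=\mcl{D}\mbf{u}$ from Theorem~\ref{thm_Tmap} into Eqn.~\eqref{eq_PDE} and Eqn.~\eqref{PIE} respectively. You additionally justify two points the paper leaves implicit -- that $\dot{\mbf{u}}(t)\in X(\mcl{B})$ (via closedness of $X(\mcl{B})$) so that $\dot{\mbf{u}}=\mcl{T}\dot{\hat{\mbf{u}}}$, and that $N_{\max\{i,j\}}\mcl{T}$ has no multiplier terms so $\mcl{A}$ is a well-defined PI operator -- which is a welcome tightening rather than a departure.
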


\begin{proof}
	
	Let $\hat{\mbf{u}}\in L_2$ be such that $\mbf{u}=\mcl{T}\hat{\mbf{u}}\in X(\mcl{B})$ is a solution to the PDE defined by $\{A_{ij},\mcl{B}\}$, and with initial condition $\mbf{u}_{\text{I}}=\mcl{T}\hat{\mbf{u}}_{\text{I}}$. Then, by Theorem~\ref{thm_Tmap},
	\[
	\hat{\mbf{u}}(t=0)=\mcl{D}\mbf{u}(t=0)=\mcl{D}\mbf{u}_{\text{I}}=\mcl{D}\mcl{T}\hat{\mbf{u}}_{\text{I}}=\hat{\mbf{u}}_{\text{I}},
	\]
	and, invoking Eqn.~\eqref{eq_PDE} for the standardized PDE,
	\begin{align*}
	\mcl{T}\dot{\hat{\mbf{u}}}(t)&=\dot{\mbf{u}}(t)    \\
	&=\sum_{i,j=0}^{2}A_{ij}\bbl(\partial_x^i \partial_y^j\bbl[N_{\max\{i,j\}}\mbf{u}(t)\bbr]\bbr)    \\
	&=\sum_{i,j=0}^{2}A_{ij}\bbl(\partial_x^i \partial_y^j\bbl[N_{\max\{i,j\}}\mcl{T}\hat{\mbf{u}}(t)\bbr]\bbr)   \\
	&=\sum_{i,j=0}^{2}A_{ij}\bbl(\partial_x^i \partial_y^j\bbl[N_{\max\{i,j\}}\mcl{T}\bbr]\bbr)\hat{\mbf{u}}(t)
	=\mcl{A}\hat{\mbf{u}}(t),
	\end{align*}
	suggesting $\hat{\mbf{u}}$ is a solution to the PIE defined by $\{\mcl{T},\mcl{A}\}$.\\
	
	Conversely, let $\hat{\mbf{u}}$ be a solution to the PIE defined by $\{\mcl{T},\mcl{A}\}$, and with initial condition $\hat{\mbf{u}}_{\text{I}}$. Then, by Theorem~\ref{thm_Tmap},
	\[
	\mbf{u}(t=0)=\mcl{T}\hat{\mbf{u}}(t=0)=\mcl{T}\hat{\mbf{u}}_{\text{I}}=\mbf{u}_{\text{I}},
	\]
	and, invoking Eqn.~\eqref{eq_PDE} for the standardized PDE,
	\begin{align*}
	\dot{\mbf{u}}(t)&=\mcl{T}\dot{\hat{\mbf{u}}}(t)    
	=\mcl{A}\hat{\mbf{u}}(t)  \\
	&=\left[\sum_{i,j=0}^{2}A_{ij}\bbl(\partial_x^i \partial_y^j\bbl[N_{\max\{i,j\}}\mcl{T}\bbr]\bbr)\right]\hat{\mbf{u}}(t)    \\
	&=\sum_{i,j=0}^{2}A_{ij}\bbl(\partial_x^i \partial_y^j\bbl[N_{\max\{i,j\}}\mcl{T}\hat{\mbf{u}}(t)\bbr]\bbr) \\
	&=\sum_{i,j=0}^{2}A_{ij}\bbl(\partial_x^i \partial_y^j\bbl[N_{\max\{i,j\}}\mbf{u}(t)\bbr]\bbr),
	\end{align*}
	suggesting $\mbf{u}=\mcl{T}\hat{\mbf{u}}$ is a solution to the PDE defined by $\{A_{ij},\mcl{B}\}$, as desired.

\end{proof}

Specific examples of PDEs and their PIE equivalents are given in Section~\ref{sec_examples}. In the following section, we propose stability conditions for the PIE which can be enforced using LMIs.

\section{Stability as an LPI}\label{sec_stability_LPI}

Having derived an equivalent PIE representation of PDEs, we now show how this representation can be used for stability analysis. First, we show that existence of a quadratic Lyapunov function for a PIE can be posed as a convex Linear PI Inequality (LPI) optimization problem, with variables of the form $\mcl{P}=\mcl{P}[P]$ for $P\in\mcl{N}_{2D}$, and inequality constraints of the form $\mcl{P}\geq 0$.
Next, we show how to use LMIs to parameterize the cone of positive semidefinite 2D-PI operators - allowing us to test the Lyapunov stability criterion. Finally, we will discuss a PIETOOLS numerical implementation of this stability test, which will be applied to several numerical examples in Section~\ref{sec_examples}.

\subsection{Lyapunov Stability Criterion}\label{sec_lyapunov_stability_criterion}

We first express the problem of existence of a quadratic Lyapunov function as an LPI, whose feasibility implies stability of the associated PIE and PDE. Specifically, the following theorem tests for existence of a quadratic Lyapunov function of the form $V(\mbf u)=\ip{\mbf u}{\mcl P \mbf u}_{L_2} = \ip{\hat{\mbf u}}{\mcl T^*\mcl P \mcl T\hat{\mbf u}}_{L_2} \ge  \alpha \norm{\mbf u}^2_{L_2}$ such that $\dot V(\mbf u(t))\le \delta \norm {\mbf u(t)}^2_{L_2}$ for any solution $\mbf u$ of the PDE defined by $\{A_{ij},\mcl B\}$.

\begin{thm}\label{thm_stability_as_LPI}

 Suppose $\mcl{T}$ and $\mcl{A}$ are as defined in Theorem~\ref{thm_Tmap} and Lemma~\ref{lem_PIE} respectively, and that there exist $\epsilon,\delta>0$ and $P\in\mcl{N}_{2D}$ such that the PI operator $\mcl{P}:=\mcl{P}[P]$ satisfies $\mcl{P}=\mcl{P}^*$, $\mcl{P}\geq \epsilon I$, and
 \[
  \mcl{A}^*\mcl{P}\mcl{T}+\mcl{T}^*\mcl{P}\mcl{A}\leq -\delta\mcl{T}^*\mcl{T}.
 \]
 Then, any solution $\mbf{u}(t)\in X(\mcl{B})$ of the PDE defined by $\{A_{ij},\mcl{B}\}$ satisfies
 \[
  \|\mbf{u}(t)\|_{L_2}^2\leq \f{\zeta}{\epsilon}\|\mbf{u}(0)\|_{L_2}^2 e^{-\f{\delta}{\zeta}t},
 \]
 where $\zeta=\|\mcl{P}\|_{\mcl{L}_{L_2}}$.

\end{thm}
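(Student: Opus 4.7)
The approach is a standard quadratic Lyapunov argument applied to the PIE representation. The plan is to use $V(\mbf{u}) := \langle \mbf{u}, \mcl{P}\mbf{u}\rangle_{L_2} = \langle \hat{\mbf{u}}, \mcl{T}^*\mcl{P}\mcl{T}\hat{\mbf{u}}\rangle_{L_2}$ as a Lyapunov function along solutions, invoke Lemma~\ref{lem_PIE} to differentiate it via the PIE dynamics, and finish with a Gr\"onwall-type comparison argument to extract the exponential decay rate.

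First, I would fix a solution $\mbf{u}(t) \in X(\mcl{B})$ of the PDE defined by $\{A_{ij},\mcl{B}\}$, and set $\hat{\mbf{u}}(t) := \mcl{D}\mbf{u}(t) \in L_2$. By Theorem~\ref{thm_Tmap} and Lemma~\ref{lem_PIE}, $\hat{\mbf{u}}$ solves the PIE $\mcl{T}\dot{\hat{\mbf{u}}}(t) = \mcl{A}\hat{\mbf{u}}(t)$ and $\mbf{u}(t) = \mcl{T}\hat{\mbf{u}}(t)$. Since $\mcl{P} = \mcl{P}^*$ and $\mbf{u}$ is Fr\'echet differentiable with $\dot{\mbf{u}} = \mcl{T}\dot{\hat{\mbf{u}}}$, I would compute
\begin{align*}
\dot V(\mbf{u}(t)) &= \langle \mcl{T}\dot{\hat{\mbf{u}}}, \mcl{P}\mcl{T}\hat{\mbf{u}}\rangle_{L_2} + \langle \mcl{T}\hat{\mbf{u}}, \mcl{P}\mcl{T}\dot{\hat{\mbf{u}}}\rangle_{L_2} \\
&= \langle \hat{\mbf{u}}, (\mcl{A}^*\mcl{P}\mcl{T} + \mcl{T}^*\mcl{P}\mcl{A})\hat{\mbf{u}}\rangle_{L_2},
\end{align*}
and then apply the hypothesis $\mcl{A}^*\mcl{P}\mcl{T} + \mcl{T}^*\mcl{P}\mcl{A} \leq -\delta\, \mcl{T}^*\mcl{T}$ to bound this by $-\delta \|\mcl{T}\hat{\mbf{u}}\|_{L_2}^2 = -\delta \|\mbf{u}(t)\|_{L_2}^2$.

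Next, I would sandwich $V$ between multiples of $\|\mbf{u}\|_{L_2}^2$. The lower bound $V(\mbf{u}) \geq \epsilon \|\mbf{u}\|_{L_2}^2$ follows directly from $\mcl{P} \geq \epsilon I$, while the upper bound $V(\mbf{u}) \leq \zeta \|\mbf{u}\|_{L_2}^2$ follows from Cauchy--Schwarz and the definition $\zeta := \|\mcl{P}\|_{\mcl{L}_{L_2}}$. Combining with the derivative estimate, I obtain the differential inequality $\dot V(\mbf{u}(t)) \leq -\tfrac{\delta}{\zeta} V(\mbf{u}(t))$, whence $V(\mbf{u}(t)) \leq V(\mbf{u}(0)) e^{-\delta t/\zeta}$ by Gr\"onwall. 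Chaining the two-sided bounds on $V$ then gives
\[
\epsilon \|\mbf{u}(t)\|_{L_2}^2 \leq V(\mbf{u}(t)) \leq V(\mbf{u}(0)) e^{-\delta t / \zeta} \leq \zeta \|\mbf{u}(0)\|_{L_2}^2 e^{-\delta t / \zeta},
\]
which yields the claimed decay rate after dividing by $\epsilon$.

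The conceptual steps are routine; the main subtlety I expect is the justification of the chain rule for $V$ along solutions (requiring Fr\'echet differentiability of $\mbf{u}$, which is built into the definition of solution) together with the validity of moving $\mcl{T}$ through the time derivative. Everything else is essentially the classical finite-dimensional Lyapunov argument, lifted to the infinite-dimensional setting by virtue of the PIE representation replacing the unbounded generator with the bounded PI operators $\mcl{A}$ and $\mcl{T}$.
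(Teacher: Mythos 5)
Your proposal is correct and follows essentially the same route as the paper's proof: the same Lyapunov functional $V=\ip{\mcl{T}\hat{\mbf{u}}}{\mcl{P}\mcl{T}\hat{\mbf{u}}}_{L_2}$, the same derivative computation using the PIE dynamics and the adjoint identity, the same two-sided bounds via $\mcl{P}\ge\epsilon I$ and $\zeta=\norm{\mcl{P}}_{\mcl{L}_{L_2}}$, and the same Gr\"onwall step. No substantive differences.
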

\begin{proof}
	
	Let $\mbf{u}\in X$ be an arbitrary solution to the PDE defined by $\{A_{ij},\mcl{B}\}$, so that $\hat{\mbf{u}}=\mcl{D}\mbf{u}\in L_2$ is a solution to the PIE defined by $\{\mcl{T},\mcl{A}\}$. Consider the candidate Lyapunov function $V:L_2\rightarrow\R$ defined as
	\begin{align*}
	V(\hat{\mbf{v}})=\ip{\mcl{T}\hat{\mbf{v}}}{\mcl{P}\mcl{T}\hat{\mbf{v}}}_{L_2}\geq \epsilon \|\mcl{T}\hat{\mbf{v}}\|^2_{L_2}.
	\end{align*}
	Since $\|\mcl{P}\|_{\mcl{L}_{L_2}}=\zeta$, this function is bounded from above as
	\begin{align*}
	V(\hat{\mbf{v}})=
	\ip{\mcl{T}\hat{\mbf{v}}}{\mcl{P}\mcl{T}\hat{\mbf{v}}}_{L_2}\leq \zeta\|\mcl{T}\hat{\mbf{v}}\|_{L_2}^2.
	\end{align*}
	In addition, since $\hat{\mbf{u}}$ is a solution to the PIE, the temporal derivative of $V$ along $\hat{\mbf{u}}$ satisfies
	\begin{align*}
	\dot{V}(\hat{\mbf{u}})
	&=\ip{\mcl{T}\dot{\hat{\mbf{u}}}}{\mcl{P}\mcl{T}\hat{\mbf{u}}}_{L_2}
	+\ip{\mcl{T}\hat{\mbf{u}}}{\mcl{P}\mcl{T}\dot{\hat{\mbf{u}}}}_{L_2}    \\
	&=\ip{\mcl{A}\hat{\mbf{u}}}{\mcl{P}\mcl{T}\hat{\mbf{u}}}_{L_2}
	+\ip{\mcl{T}\hat{\mbf{u}}}{\mcl{P}\mcl{A}\hat{\mbf{u}}}_{L_2}    \\
	&=\ip{\hat{\mbf{u}}}{\left(\mcl{A}^*\mcl{P}\mcl{T} + \mcl{T}^*\mcl{P}\mcl{A}\right)\hat{\mbf{u}}}_{L_2}    \\
	&\leq -\delta\|\mcl{T}\hat{\mbf{u}}\|^2_{L_2}
	\leq -\f{\delta}{\zeta}V(\hat{\mbf{u}}).
	\end{align*}
	Applying the Gr\"onwell-Bellman inequality, it immediately follows that
	\begin{align*}
	V(\hat{\mbf{u}}(t))\leq V(\hat{\mbf{u}}(0))e^{-\f{\delta}{\zeta} t},
	\end{align*}
	implying
	\begin{align*}
	\|\mcl{T}\hat{\mbf{u}}(t)\|^2_{L_2}\leq \f{\zeta}{\epsilon}\|\mcl{T}\hat{\mbf{u}}(0)\|_{L_2}^2 e^{-\f{\delta}{\zeta} t},
	\end{align*}
	and thus
	\begin{align*}
	\|\mbf{u}(t)\|^2_{L_2}\leq \f{\zeta}{\epsilon}\|\mbf{u}(0)\|_{L_2}^2 e^{-\f{\delta}{\zeta} t}.
	\end{align*}
	
\end{proof}

In this stability condition, the decision variable is $P\in\mcl{N}_{2D}^{n\times n}$ and the constraints are operator inequalities on the inner product space $L_2$. While the decision variables may be readily parameterized using polynomials, to numerically enforce the inequality constraints, we need to parameterize the cone of operators in $\mcl{N}_{2D}^{n\times n}$ which are positive semidefinite. This problem will be addressed in the following subsection.

\subsection{A Parameterization of Positive PI Operators}\label{sec_positive_PI_parameterization}

Having posed the PDE stability problem as an LPI, we now show how to parameterize the cone of positive 2D-PI operators using positive matrices. Specifically, we have the following result.

\begin{prop}\label{prop_pos_PI}
For any $Z\in L_2^{q\times n}[x,y,\theta,\nu]$ and scalar function $g\in L_2[x,y]$ satisfying $g(x,y)\ge 0$ for all $x,y \in [a,b] \times [c,d]$ let $\mcl{L}_{\text{PI}}: \R^{9q \times 9q} \rightarrow \mcl N_{2D}^{n\times n}$ be as defined in Eqn.~\eqref{eq_posmat_to_posPI_appendix} in Appendix~\ref{sec_pos_PI_appendix}. Then for any $P\geq 0$, if $N =\mcl{L}_{\text{PI}} (P)$, we have that $\mcl{P}^*[N]=\mcl{P}[N]$, and $\ip{\mbf{u}}{\mcl{P}[N]\mbf{u}}_{L_2}\geq 0$ for any $\mbf{u}\in L_2^n[x,y]$.
\end{prop}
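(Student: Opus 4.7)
The plan is to mirror the 1D positive-parameterization of \cite{peet2019discussion}, realizing any candidate positive 2D-PI operator as a ``square'' $\mcl{Z}^{*}P\mcl{Z}$, where $\mcl{Z}$ is a block operator built from the basis $Z$ and the nonnegative weight $g$, and $P\ge 0$ is the input $9q\times 9q$ matrix. Concretely, I would define $\mcl{Z}:L_{2}^{n}[x,y]\to L_{2}^{9q}[x,y]$ so that its nine output blocks reproduce the nine integration regions appearing in~\eqref{eq:2D_PI_operator}: the $(00)$-block is $\sqrt{g(x,y)}\,Z_{00}(x,y)\mbf{u}(x,y)$, the $(10)$-block is $\sqrt{g(x,y)}\int_{a}^{x}Z_{10}(x,y,\theta)\mbf{u}(\theta,y)d\theta$, the $(12)$-block is $\sqrt{g(x,y)}\int_{a}^{x}\!\int_{y}^{d}Z_{12}(x,y,\theta,\nu)\mbf{u}(\theta,\nu)d\nu\,d\theta$, and analogously for the remaining six blocks. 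The claimed map $\mcl{L}_{\text{PI}}$ will then be the linear map obtained by identifying $\mcl{Z}^{*}P\mcl{Z}$ as an element of $\mcl{P}[\mcl{N}_{2D}^{n\times n}]$.

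Self-adjointness and positivity are immediate: $(\mcl{Z}^{*}P\mcl{Z})^{*}=\mcl{Z}^{*}P^{T}\mcl{Z}=\mcl{Z}^{*}P\mcl{Z}$, and
\[
\ip{\mbf{u}}{\mcl{Z}^{*}P\mcl{Z}\mbf{u}}_{L_{2}}=\ip{\mcl{Z}\mbf{u}}{P\mcl{Z}\mbf{u}}_{L_{2}}\ge 0,
\]
so the real substance of the proposition is that $\mcl{Z}^{*}P\mcl{Z}$ actually lies in the image of $\mcl{P}[\cdot]$ on $\mcl{N}_{2D}^{n\times n}$, with parameters depending linearly on~$P$. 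To verify this, I would partition $P$ conformally as a $3\times 3$ block grid $\{P_{(ij)(kl)}\}_{i,j,k,l\in\{0,1,2\}}$ matching the block structure of $\mcl{Z}$, giving
\[
\mcl{Z}^{*}P\mcl{Z}=\sum_{i,j,k,l=0}^{2}\mcl{Z}_{ij}^{*}\,P_{(ij)(kl)}\,\mcl{Z}_{kl}.
\]
Each of the $81$ cross-terms is a multiple integral; forming $\mcl{Z}_{ij}^{*}$ is a direct Fubini swap (and could equivalently be extracted from Lemma~\ref{lem_adjoint}), after which swapping the outer $(x,y)$ integration with the inner $(\theta,\nu)$ integration produces a kernel supported on a specific region of $(\theta,\nu)$ relative to $(x,y)$.

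For each fixed $(i,j,k,l)$ this support region turns out to be exactly one of the nine appearing in~\eqref{eq:2D_PI_operator}, so the cross-term contributes to a single parameter block $N_{pq}$, and summing the $81$ contributions yields the explicit linear map $P\mapsto N=\mcl{L}_{\text{PI}}(P)$. The main obstacle is bookkeeping --- determining, for each $(i,j,k,l)$, which triangular-Fubini identity to apply (with $\int_{a}^{b}\!\int_{a}^{x}f(x,\theta)d\theta\,dx=\int_{a}^{b}\!\int_{\theta}^{b}f(x,\theta)dx\,d\theta$ and its four 2D analogues on rectangle-minus-triangle regions doing the bulk of the work), and identifying the target block $N_{pq}$. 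Once this $9\times 9$ table is assembled, the formula for $\mcl{L}_{\text{PI}}$ in Eqn.~\eqref{eq_posmat_to_posPI_appendix} is read off by collecting the coefficient of each $P_{(ij)(kl)}$ in each $N_{pq}$, with $g(x,y)=(\sqrt{g(x,y)})^{2}$ appearing as an overall nonnegative scalar multiplier in every kernel.
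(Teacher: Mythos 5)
Your proposal is correct and follows essentially the same route as the paper: the paper likewise defines a block operator $\mcl{Z}:L_2^n[x,y]\rightarrow L_2^{9q}[x,y]$ whose nine components are $\sqrt{g}$ times the nine integration regions, identifies $\mcl{P}[\mcl{L}_{\text{PI}}(P)]=\mcl{Z}^*P\mcl{Z}$ via the composition and adjoint rules for 2D-PI operators (which encapsulate exactly the triangular-Fubini bookkeeping you describe), and concludes positivity by writing $P=[P^{1/2}]^TP^{1/2}$. The only cosmetic difference is that the paper indexes the nine basis blocks as $Z_1,\dots,Z_9$ rather than by region labels.
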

\paragraph{Outline of Proof}
Given $Z$ and scalar function $g$, a 2D-PI operator $\mcl{Z}:L_2^n[x,y]\rightarrow L_2^n[x,y]$ is defined as in Eqn.~\eqref{eq_Zop_appendix}, where each of the defining parameters is a product of $Z$ and $\sqrt{g}$.
Then, if $N=\mcl{L}_{\text{PI}}(P)$ for some matrix $P$, by definition of the map $\mcl{L}_{\text{PI}}$, the associated PI operator is such that $\mcl{P}[N]=\mcl{Z}^* P\mcl{Z}$. It follows that, for any $P\geq 0$, 
\begin{align*}
\ip{\mbf{u}}{\mcl{P}[N]\mbf{u}}_{L_2}
&=\ip{\mcl{Z}\mbf{u}}{P\mcl{Z}\mbf{u}}_{L_2}   
=\ip{P^{\frac{1}{2}}\mcl{Z}\mbf{u}}{P^{\frac{1}{2}}\mcl{Z}\mbf{u}}_{L_2}\geq 0
\end{align*}
for any $\mbf{u}\in L_2^n[x,y]$, as desired.
For the full proof of this proposition, please see Appendix~\ref{sec_pos_PI_appendix}.

Using Prop.~\ref{prop_pos_PI}, we may enforce an LPI with variable $\mcl{P}[\mcl L_{\text{PI}} (P)]$ using an LMI constraint on $P$. For this test, we use a monomial basis, $Z_d$ for $Z$, thus implying the parameters in $N =\mcl L_{\text{PI}} (P)$ will be polynomial. For our choice of $g(x,y)\geq 0$, we may choose $g(x,y)=1$ (implying the inequality is valid over any domain), or $g(x,y)=(x-a)(b-x)(y-c)(d-y)$, implying the operator is positive only on the domain $(x,y)\in [a,b]\times[c,d]$.

In the following section, we will apply Prop~\ref{prop_pos_PI} to obtain an LMI for stability of a given PDE. For this section we use the notation
\begin{align*}
 \Omega_d:=\{&\mcl{P}[N]+\mcl{P}[M] \mid N,M\in\mcl{N}_{2D}\text{ are of Form~\eqref{eq_posmat_to_posPI_appendix}}\\
 &~\text{with } Z_i=Z_d \text{ and } g(x,y)=1 \text{ and respectively}\\ &\hspace*{1.1cm} g(x,y)=(x-a)(b-x)(y-c)(d-y) \}
\end{align*}
where now $\mcl{P}\in\Omega_d$ is an LMI constraint which implies $\mcl{P}:L_2[x,y] \rightarrow L_2[x,y]$ is a positive operator on $L_2[x,y]$.\\

\section{PIETOOLS Implementation}\label{sec_implementation}

In this section, we show how the PIETOOLS 2021b toolbox may be used to perform stability analysis of PDEs. This toolbox offers a framework for implementation and manipulation of PI operators in MATLAB, allowing e.g. Lyapunov stability analysis~\cite{shivakumar2019generalized}, robust stability analysis~\cite{das2020robust}, and $H_{\infty}$-optimal control~\cite{shivakumar2020duality} of PDEs. For a detailed manual of the PIETOOLS toolbox we refer to~\cite{shivakumar2020pietools}.

To implement PI operators in MATLAB, the \texttt{dpvar} class of polynomial objects is used to define the polynomial functions $N$ parameterizing PI operators $\mcl{P}[N]$. A class of 0112-PI operators is then defined as \texttt{opvar2d} objects, and overloaded with standard operations such as multiplication (*), addition (+) and adjoint (') presented in earlier sections. Defining decision operators \texttt{dopvar2d} in terms of positive matrices, we may also enforce positivity conditions $\mcl{P}\in\Omega_d$, allowing stability to be tested with any LMI solver.

An overview of the steps performed in this process is provided below.

\begin{enumerate}
 \item Define the independent polynomial variables. These are the spatial variables in the PDE. Also define the ``dummy'' variables $\theta$ and $\nu$.
 \begin{verbatim} pvar x y tt nu; \end{verbatim}

 \item Initialize an optimization program structure X.
 \begin{verbatim} X = sosprogram([x y tt nu]); \end{verbatim}

 \item Construct the PDE, defining the sizes $n_0,n_1,n_2$ of the state variables, the matrices $A_{ij}$ defining the PDE, and an \texttt{opvar2d} object \texttt{Ebb} defining $\mcl{B}$. Convert these to a corresponding PIE using \texttt{convert\_PIETOOLS\_PDE}, and extract \texttt{opvar2d} objects $\mcl{T}$ and $\mcl{A}$.
 \begin{verbatim}
 PDE.n.n_pde = [n0,n1,n2];   
 PDE.dom = [a,b;c,d];
 PDE.PDE.A = ...;	PDE.BC.Ebb = ...;
 PIE = convert_PIETOOLS_PDE(PDE); 
 T = PIE.T;        A = PIE.A; \end{verbatim}

 \item Declare a positive operator $\mcl{P}\in\Omega_d$ as a \texttt{dopvar2d} object, using maximal monomial degree $d\in \N$, and spatial domain \texttt{dom}. Add a small constant $\epsilon<<1$ to ensure strict positivity. Impose the additional requirement $-\delta\mcl{T}^*\mcl{T}-\mcl{A}^*\mcl{P}\mcl{T}-\mcl{T}^*\mcl{P}\mcl{A}\in\Omega_d$ for some $\delta>0$.
 \begin{verbatim}
  [X, P] = poslpivar_2d(X,n,dom,d);
  P = P + eps;
  D = -del*(T'*T) - A'*P*T - T'*P*A;
  X = lpi_ineq_2d(X,D); \end{verbatim}

 \item Call the SDP solver.
 \begin{verbatim}
  X = sossolve(X); \end{verbatim}

 \item Get the solution $\mcl{P}$, certifying stability.
 \begin{verbatim}
  Psol = getsol_lpivar_2d(X,P); \end{verbatim}

\end{enumerate}

\section{Illustrative Examples}\label{sec_examples}

To illustrate the techniques described in the previous sections, we will apply them to several simple examples. In each case, we will show how the problem may be expressed in the standardized format, as well as how the corresponding PIE is defined, and we will numerically test stability.

\subsection{Heat Equation}

As a first example, we consider a 2D heat equation
\begin{align*}
u_t(t,x,y)&=u_{xx}(t,x,y)+u_{yy}(t,x,y),\\
u(x,0)&=u_y(x,0)=u(0,y)=u_x(0,y)=0.
\end{align*}
To describe this system in the standardized Format~\eqref{eq_PDE}, we use PDE state $\mbf{u}=\mbf{u}_2=u\in H_2^{n_2}[x,y]$ with $n_2=1$, and define $A_{20}=A_{02}=1$, setting $A_{ij}=0$ for all other $i,j\in\{0,1,2\}$. To enforce the boundary conditions, we require $\mbf{u}(1,0)=\mbf{u}_x(0,0)=\mbf{u}_y(0,1)=\mbf{u}_{xy}(0,0)=0$, as well as $\mbf{u}_{xx}(x,0)=\mbf{u}_{xxy}(x,0)=\mbf{u}_{yy}(0,y)=\mbf{u}_{xyy}(0,y)=0$, which can be expressed as $B\Lambda_{\text{bf}}\mbf{u}=0$ where
\begin{align*}
B&=
\smallbmat{
	0&1&0&0& 0&0&0&0 &0&0&0&0 &0&0&0&0 &0&0&0&0 &0&0&0&0\\
	0&0&0&0& 1&0&0&0 &0&0&0&0 &0&0&0&0 &0&0&0&0 &0&0&0&0\\
	0&0&0&0& 0&0&0&0 &0&0&1&0 &0&0&0&0 &0&0&0&0 &0&0&0&0\\
	0&0&0&0& 0&0&0&0 &0&0&0&0 &1&0&0&0 &0&0&0&0 &0&0&0&0\\
	0&0&0&0& 0&0&0&0 &0&0&0&0 &0&0&0&0 &1&0&0&0 &0&0&0&0\\
	0&0&0&0& 0&0&0&0 &0&0&0&0 &0&0&0&0 &0&0&1&0 &0&0&0&0\\
	0&0&0&0& 0&0&0&0 &0&0&0&0 &0&0&0&0 &0&0&0&0 &1&0&0&0\\
	0&0&0&0& 0&0&0&0 &0&0&0&0 &0&0&0&0 &0&0&0&0 &0&0&1&0}\in
\R^{8\times 24}
\end{align*}
Here we note that, any matrix $B\in\R^{m_0+2m_1\times n_0+2n_1}$ may be represented as a diagonal PI operator $\mcl{B}=\mcl{P}\smallbmat{B_{00}&0&0\\0&B_{11}&0\\0&0&B_{22}}$ through appropriate choice of the parameters $B_{00}\in\R^{m_0\times n_0}$, $B_{11}=\{B_{11}^{0},0,0\}\in\mcl{N}_{1D}^{m_1\times n_1}$ and $B_{22}=\{B_{22}^{0},0,0\}\in\mcl{N}_{1D}^{m_1\times n_1}$, so that the boundary conditions $B\Lambda_{\text{bf}}\mbf{u}=0$ may also be written in the standardized format $\mcl{B}\Lambda_{\text{bf}}\mbf{u}=0$.
Then, we may describe the system as a PDE defined by $\{A_{ij},B\}$ (or really $\{A_{ij},\mcl{B}\}$), for which we obtain a corresponding PIE representation
\begin{align*}
 \mcl{T}\dot{\hat{\mbf{u}}}&=\int_{0}^{x}\int_{0}^{y}(x-\theta)(y-\nu)\dot{\hat{\mbf{u}}}(\theta,\nu)d\nu d\theta   \\
 &=\int_{0}^{x}(x-\theta)\hat{\mbf{u}}(\theta,y)d\theta
 +\int_{0}^{y}(y-\nu)\hat{\mbf{u}}(x,\nu)d\nu=\mcl{A}\hat{\mbf{u}},
\end{align*}
where $\hat{\mbf{u}}=u_{xxyy}$ is the fundamental state.\\

For the purpose of testing accuracy of the stability analysis, we consider the following system, as presented in~\cite{holmes1994partial},
\begin{align*}
u_t(t,x,y)&=u_{xx}(t,x,y) + u_{yy}(t,x,y) + r u(t,x,y),\\
u(x,0)&=u(0,y)=u(x,1)=u(1,y)=0,
\end{align*}
where $h,r> 0$ are positive constants. This system may be represented in the standardized format by letting $A_{20}=A_{02}=h$, as well as $A_{00}=r$, with all other matrices $A_{ij}=0$. For the boundary conditions, we require $\mbf{u}(0,0)=\mbf{u}_y(1,0)=\mbf{u}_x(1,1)=\mbf{u}_{y}(0,1)=0$, as well as $\mbf{u}_{xx}(x,0)=\mbf{u}_{xx}(x,1)=\mbf{u}_{yy}(0,y)=\mbf{u}_{yy}(1,y)=0$, 
which may be enforced as $B\Lambda_{\text{bc}}\mbf{u}=0$, where
\begin{align*}
B&=
\smallbmat{1&0&0&0&0&0&0&0&0&0&0&0&0&0&0&0 &0&0&0&0 &0&0&0&0\\
		   0&0&0&0&0&0&0&1&0&0&0&0&0&0&0&0 &0&0&0&0 &0&0&0&0\\
		   0&0&0&0&0&0&0&0&0&1&0&0&0&0&0&0 &0&0&0&0 &0&0&0&0\\
		   0&0&0&0&0&0&0&0&0&0&1&0&0&0&0&0 &0&0&0&0 &0&0&0&0\\
	   	   0&0&0&0&0&0&0&0&0&0&0&0&0&0&0&0 &1&0&0&0 &0&0&0&0\\
   	   	   0&0&0&0&0&0&0&0&0&0&0&0&0&0&0&0 &0&1&0&0 &0&0&0&0\\
      	   0&0&0&0&0&0&0&0&0&0&0&0&0&0&0&0 &0&0&0&0 &1&0&0&0\\
      	   0&0&0&0&0&0&0&0&0&0&0&0&0&0&0&0 &0&0&0&0 &0&1&0&0}\in
         \R^{8\times 24}
\end{align*}
Implementing this system in PIETOOLS 2021b, we obtain a PIE representation
\begin{align*}
 &\int_{0}^{x}\! g(x,\theta)\bbbl[\int_{0}^{y}\! g(y,\nu)\dot{\hat{\mbf{u}}}(\theta,\nu)d\nu\! 
 +\! \int_{y}^{1}\! g(\nu,y)\dot{\hat{\mbf{u}}}(\theta,\nu)d\nu
 \bbbl]d\theta\\
 &+\! \int_{x}^{1}\! g(\theta,x)\bbbl[\int_{0}^{y}\! g(y,\nu)\dot{\hat{\mbf{u}}}(\theta,\nu)d\nu\! 
 +\! \int_{y}^{1}\! g(\nu,y)\dot{\hat{\mbf{u}}}(\theta,\nu)d\nu
 \bbbl]d\theta\\
 &=\! r\int_{0}^{x}\! g(x,\theta)\bbbl[\int_{0}^{y}\! g(y,\nu)\hat{\mbf{u}}(\theta,\nu)d\nu\! 
 +\! \int_{y}^{1}\! g(\nu,y)\hat{\mbf{u}}(\theta,\nu)d\nu
 \bbbl]d\theta\\
 &+\! r\int_{x}^{1}\! g(\theta,x)\bbbl[\int_{0}^{y}\! g(y,\nu)\hat{\mbf{u}}(\theta,\nu)d\nu\! 
 +\! \int_{y}^{1}\! g(\nu,y)\hat{\mbf{u}}(\theta,\nu)d\nu
 \bbbl]d\theta\\
 &\quad+\! h\int_{0}^{x}\! g(x,\theta)\hat{\mbf{u}}(\theta,y)d\theta\! 
 +\! \int_{x}^{1}\! g(\theta,x)\hat{\mbf{u}}(\theta,y)d\theta \\
 &\qquad+\! h\int_{0}^{y}\! g(y,\nu)\hat{\mbf{u}}(x,\nu)d\nu\! 
 +\! \int_{y}^{1}\! g(\nu,y)\hat{\mbf{u}}(x,\nu)d\nu,
\end{align*}
where we define
\begin{align*}
 g(s,\eta)=(s-1)\eta
\end{align*}
Letting $\ell=1$ and $h=1$, stability of this system can be proven analytically whenever $r \leq 2\pi^2=19.739...$. Using PIETOOLS 2021b, performing a bisection search over $r$ and using monomial degree $d=3$, exponential stability can be verified for any $r\leq 19.736$.

\subsection{Wave Equation}

As a second example, we consider a 2D wave equation
\begin{align*}
\ddot{u}(t,x,y) &= u_{xx}(t,x,y)+u_{yy}(t,x,y),\\
u(x,0)&=u_y(x,0)=u_x(0,y)=u(0,y)=0.
\end{align*}
To write this system in the standardized form, we define $\mbf{u}_1=u$ and $\mbf{u}_2=\dot{u}$, so that the PDE may be denoted as
\begin{align*}
\bmat{\dot{\mbf{u}}_1\\\dot{\mbf{u}}_2}
&= \bmat{0&1\\0&0}\bmat{\mbf{u}_1\\\mbf{u}_2}
+\bmat{0&0\\1&0}\partial_{x}^2\bmat{\mbf{u}_1\\\mbf{u}_2}
+\bmat{0&0\\1&0}\partial_{y}^2\bmat{\mbf{u}_1\\\mbf{u}_2} \\
&= A_{00}\mbf{u} + A_{20}\partial_{x}^2\mbf{u} + A_{02}\partial_{y}^2\mbf{u}
\end{align*}
Requiring $\mbf{u}(1,0)=\mbf{u}_x(0,0)=\mbf{u}_y(0,1)=\mbf{u}_{xy}(0,0)=0$, and $\mbf{u}_{xx}(x,0)=\mbf{u}_{xxy}(x,0)=\mbf{u}_{yy}(0,y)=\mbf{u}_{xyy}(0,y)=0$, we may write the boundary conditions as $B\Lambda_{\text{bf}}\mbf{u} =0$, where
\begin{align*}
B &=\smallbmat{0&I&0&0&0&0&0&0&0&0&0&0&0&0&0&0&0&0&0&0&0&0&0&0\\
	0&0&0&0&I&0&0&0&0&0&0&0&0&0&0&0&0&0&0&0&0&0&0&0\\
	0&0&0&0&0&0&0&0&0&0&I&0&0&0&0&0&0&0&0&0&0&0&0&0\\
	0&0&0&0&0&0&0&0&0&0&0&0&I&0&0&0&0&0&0&0&0&0&0&0\\
	0&0&0&0&0&0&0&0&0&0&0&0&0&0&0&0&I&0&0&0&0&0&0&0\\
	0&0&0&0&0&0&0&0&0&0&0&0&0&0&0&0&0&0&I&0&0&0&0&0\\
	0&0&0&0&0&0&0&0&0&0&0&0&0&0&0&0&0&0&0&0&0&I&0&0\\
	0&0&0&0&0&0&0&0&0&0&0&0&0&0&0&0&0&0&0&0&0&0&0&I}\in\mb{R}^{16\times48}.
\end{align*}
Letting $\hat{\mbf{u}}=\mbf{u}_{xxyy}$, the associated PIE representation is
\begin{align*}
&\mcl{T}\dot{\hat{\mbf{u}}}\! =\! \int_{0}^{x}\! \int_{0}^{y}\bmat{(x-\theta)(y-\nu)&\! \! \! 0\\0&\! \! \! (x-\theta)(y-\nu)}\dot{\hat{\mbf{u}}}(\theta,\nu)d\nu d\theta \\
&=\! \int_{0}^{x}\bmat{0&\! \! (x-\theta)\\0&\! \! 0}\hat{\mbf{u}}(\theta,y)d\theta
+\int_{0}^{y}\bmat{0&\! \! (y-\nu)\\0&\! \! 0}\hat{\mbf{u}}(x,\nu)d\nu \\
&\qquad+\int_{0}^{x}\int_{0}^{y}\bmat{0&\! 0\\(x-\theta)(y-\nu)&\! 0}\hat{\mbf{u}}(\theta,\nu)d\nu d\theta=\mcl{A}\hat{\mbf{u}},
\end{align*}
where $\hat{\mbf{u}}=\mbf{u}_{xxyy}$ is the fundamental state.
Simulation suggests the system is neutrally stable ($\gamma=0$ in Defn.~\ref{defn_PDE_stability}) with the alternative boundary conditions $u(x,0)=u(x,1)=u(0,y)=u(1,y)=0$. Setting $\delta=0$ (see Thm.~\ref{thm_stability_as_LPI}), and using a monomial degree $d=3$, this can be verified with PIETOOLS.

\subsection{Coupled ODE-PDE System}

As a final example, we consider a diffusion equation coupled to an ODE, a s appearing in~\cite{krstic2009compensating},
\begin{align*}
	\dot{X}(t) &= (A+BK)X(t) + Bu(t,0,0)	\\
	\dot{u}(t,x,y) &= u_{xx}(t,x,y) + u_{yy}(t,x,y), \\
	u_x(t,0,y) &= u(t,1,y) = 0,	\\
	u_y(t,x,0) &= u(t,x,1) = 0,
\end{align*}
which is stable whenever the matrix $A+BK$ is Hurwitz.
An equivalent PIE representation of this system may be obtained by first deriving a PIE representation of the PDE subsystem. To this end, we use as PDE state $\mbf{u}=\mbf{u}_2=u\in H_2^{n_2}[x,y]$ with $n_2=1$, and let $A_{20}=A_{02}=1$ to describe the PDE in the standardized format~\eqref{eq_PDE}. For the boundary conditions, we enforce $\mbf{u}(0,1)=\mbf{u}_{x}(1,1)=\mbf{u}_{y}(1,0)=\mbf{u}_{xy}(0,0)$, as well as $\mbf{u}_{xx}(x,1)=\mbf{u}_{xxy}(x,0)=\mbf{u}_{yy}(1,y)=\mbf{u}_{xyy}(0,y)=0$, enforcing the conditions as $B\Lambda_{\mbf{bf}}\mbf{u}=0$, where
\begin{align*}
B &=\smallbmat{0&0&1&0&0&0&0&0&0&0&0&0&0&0&0&0&0&0&0&0&0&0&0&0\\
	0&0&0&0&0&0&0&1&0&0&0&0&0&0&0&0&0&0&0&0&0&0&0&0\\
	0&0&0&0&0&0&0&0&0&1&0&0&0&0&0&0&0&0&0&0&0&0&0&0\\
	0&0&0&0&0&0&0&0&0&0&0&0&1&0&0&0&0&0&0&0&0&0&0&0\\
	0&0&0&0&0&0&0&0&0&0&0&0&0&0&0&0&0&1&0&0&0&0&0&0\\
	0&0&0&0&0&0&0&0&0&0&0&0&0&0&0&0&0&0&1&0&0&0&0&0\\
	0&0&0&0&0&0&0&0&0&0&0&0&0&0&0&0&0&0&0&0&0&1&0&0\\
	0&0&0&0&0&0&0&0&0&0&0&0&0&0&0&0&0&0&0&0&0&0&1&0}\in\mb{R}^{8\times24}.
\end{align*}
Using PIETOOLS 2021b
, the associated PIE representation is found to be
\begin{align*}
&\mcl{T}\dot{\hat{\mbf{u}}}=\\
&\int_{0}^{x}(1-x)\bbbl[\int_{0}^{y}(1-y)\dot{\hat{\mbf{u}}}(\theta,\nu)d\nu
+\int_{y}^{1}(1-\nu)\dot{\hat{\mbf{u}}}(\theta,\nu)d\nu
\bbbl]d\theta\\
&+\int_{x}^{1}(1-\theta)\bbbl[\int_{0}^{y}(1-y)\dot{\hat{\mbf{u}}}(\theta,\nu)d\nu
+\int_{y}^{1}(1-\nu)\dot{\hat{\mbf{u}}}(\theta,\nu)d\nu
\bbbl]d\theta\\
&=\int_{0}^{x}(x-1)\hat{\mbf{u}}(\theta,y)d\theta
+\int_{x}^{1}(\theta-1)\hat{\mbf{u}}(\theta,y)d\theta \\
&\qquad+\int_{0}^{y}(y-1)\hat{\mbf{u}}(x,\nu)d\nu
+\int_{y}^{1}(\nu-1)\hat{\mbf{u}}(x,\nu)d\nu=\mcl{A}{\hat{\mbf{u}}},
\end{align*}
where $\hat{\mbf{u}}=\mbf{u}_{xxyy}$. Now, to incorporate the ODE dynamics, we note that the boundary value $\mbf{u}(0,0)$ may be written in terms of the PIE state $\hat{\mbf{u}}$ as
\[
\mbf{u}=\int_{0}^{1}\int_{0}^{1}(1-x)(1-y)\hat{\mbf{u}}(x,y)dx dy,
\]
allowing the ODE dynamics to described using 0112-PI operators as
\begin{align*}
\bmat{I&0\\0&\mcl{T}}\bmat{\dot{X}\\\dot{\hat{\mbf{u}}}}&=\bmat{A+BK&\smallint_{x=0}^{1}[I]\circ\smallint_{y=0}^{1}[\tilde{B}]\\0&\mcl{A}}\bmat{X\\\hat{\mbf{u}}},
\end{align*}
where $\tilde{B}(x,y)=(1-x)(1-y)B$. In this representation, stability analysis can also be performed as discussed in Section~\ref{sec_stability_LPI}, using a Lyapunov function based on the 0112-PI operators describing the system. This was done for a simple scalar case where $A=B=1$, and $K=k$, in which case stability can be proven analytically whenever $k\leq -1$. Using degree $d=2$ and performing bisection on $k$, stability could be verified for $k\leq -1.0410$.

\section{Conclusion}

In this paper, we have shown that any well-posed, linear, second order 2D PDE can be converted to an equivalent PIE, and we have provided the formulae describing this conversion. To derive these formulae, we have introduced different PI operators in 2D, showing that the product, inverse, adjoint, and composition with differential operators of these operators are described by PI operators as well. Exploiting these relations, we derived a mapping $\mbf{u}=\mcl{T}\hat{\mbf{u}}$ between the PDE state $\mbf{u}\in X(\mcl{B})$, constrained by the boundary conditions as described by $\mcl{B}$, and a fundamental state $\hat{\mbf{u}}\in L_2$, free of any such constraints. Accordingly, the solution to the PIE is not constrained by boundary or continuity constraints, which allowed us to derive LPI conditions for stability of the system. Finally, by paramaterizing PI operators as matrices, we showed how stability may be tested using semidefinite programming, as implemented in the MATLAB toolbox PIETOOLS 2021b.\\
Having demonstrated that the PIE representation can be extended to 2D PDEs, an obvious next step would be to extend it to 3D, and possibly ND systems. In addition, we may expand upon our results to include more complicated systems, such as those involving higher order spatial derivatives or alternative boundary conditions. Moreover, different LPIs may also be introduced for $H_{\infty}$-gain analysis and (robust) controller synthesis, as was done for 1D systems. Finally, alternative spatial domains may also be considered, extending the PIE methodology to more complex geometries.

\bibliographystyle{IEEEtran}
\bibliography{bibfile}

\clearpage


\begin{appendices}
\section{Algebras of PI Operators}\label{sec_PI_algebras_appendix}
\subsection{Notation}
In this section, we will prove the composition rules of PI operators in 2D, as outlined in the article. In describing these results, we will (for the sake of compactness) use subscripts and superscripts to denote the free variables of a function $N\in L_2[x,y,\theta,\nu]$, so that:
\begin{align*}
\fvars{N}{xy\theta\nu}{}\! 
=\fvars{N}{xy\theta}{\nu}\! 
=\fvars{N}{xy}{\theta\nu}\! 
=\fvars{N}{x}{y\theta\nu}\! 
=\fvars{N}{}{xy\theta\nu}\! 
=N(x,y,\theta,\nu).
\end{align*}
For the sake of simplicity, we will also let $(x,y)\in[0,1]^2$, though the results presented in this section extend to any domain $(x,y)\in[a,b]\times[c,d]$. We denote the integral of a function $N\in L_2[x,y,\theta,\nu]$ accordingly as:
\begin{align*}
	\int_{\theta,\nu=0}^{1}\bbl(\fvars{N}{xy}{\theta\nu}\bbr)=\int_{0}^{1}\int_{0}^{1}N(x,y,\theta,\nu)d\nu d\theta
\end{align*}
In addition, we will rely on the functions $\bs{\delta}$, $\mbf{I}$, and to a lesser extent $\mbf{H}$, to limit the domains of integration in subsequent results. Here, $\fvars{\bs{\delta}}{x}{\theta}$ denotes a Dirac delta function, such that, for any $N\in L_2[x]$,
\begin{align*}
	\int_{\theta=0}^{1}\bbl(\fvarss{\bs{\delta}}{x}{\theta}\fvars{N}{\theta}{}\bbr) = \int_{\theta=0}^{1}\bbl(\fvarss{\bs{\delta}}{\theta}{x}\fvars{N}{\theta}{}\bbr) = \fvars{N}{x}{},
\end{align*}
for $x\in[a,b]$. Similarly, $\mbf{I}\in L_2[x,\theta]$ denotes an indicator function, 
\begin{align*}
\fvars{\mbf{I}}{x}{\theta}=\begin{cases}
1  &\text{if }\theta<x,  \\
0  &\text{otherwise.}
\end{cases}
\end{align*}
so that
\begin{align*}
\int_{\theta=0}^{1}\bbl(\fvarss{\mbf{I}}{x}{\theta}\fvars{N}{x}{\theta}\bbr) = 
\int_{\theta=0}^{x}\bbl(\fvars{N}{x}{\theta}\bbr)
\end{align*}
and
\begin{align*}
\int_{\theta=0}^{1}\bbl(\fvarss{\mbf{I}}{\theta}{x}\fvars{N}{x}{\theta}\bbr) = \int_{\theta=x}^{1}\bbl(\fvars{N}{x}{\theta}\bbr)
\end{align*}
Finally, we let $\mbf{H}\in L_2[x,\eta,\theta]$ denote a ``rectangular'' function,
\begin{align*}
\mbf{H}^{x}_{\eta\theta}=\begin{cases}
1   &\text{if } \theta < \eta < x, \\
0   &\text{otherwise}.
\end{cases}
\end{align*}
so that
\begin{align*}
\int_{\eta=0}^{1}\bbl(\fvarss{\mbf{H}}{x}{\eta\theta}\fvars{N}{x}{\eta\theta}\bbr) = \int_{\eta=\theta}^{x}\bbl(\fvars{N}{x}{\eta\theta}\bbr).
\end{align*}
Based on these definitions, the following identities follow trivially:
\begin{align}\label{eq_indfun_identities_appendix}
\fvarss{\bs{\delta}}{x}{\eta}\fvars{\bs{\delta}}{\eta}{\theta}
&=\fvarss{\bs{\delta}}{x}{\theta}\fvars{\bs{\delta}}{x}{\eta},	\nonumber\\
\fvarss{\bs{\delta}}{x}{\eta}\fvars{\mbf{I}}{\eta}{\theta}
&=\fvarss{\mbf{I}}{x}{\theta}\fvars{\bs{\delta}}{x}{\eta},    &
\fvarss{\bs{\delta}}{x}{\eta}\fvars{\mbf{I}}{\theta}{\eta}
&=\fvarss{\mbf{I}}{\theta}{x}\fvars{\bs{\delta}}{x}{\eta},   \nonumber\\
\fvarss{\mbf{I}}{x}{\eta}\fvars{\bs{\delta}}{\eta}{\theta}
&=\fvarss{\mbf{I}}{x}{\theta}\fvars{\bs{\delta}}{\theta}{\eta},       &
\fvarss{\mbf{I}}{\eta}{x}\fvars{\bs{\delta}}{\eta}{\theta}
&=\fvarss{\mbf{I}}{\theta}{x}\fvars{\bs{\delta}}{\theta}{\eta},	\nonumber\\
\fvarss{\mbf{I}}{x}{\eta}\fvars{\mbf{I}}{\eta}{\theta}
&=\fvarss{\mbf{I}}{x}{\theta}\fvars{\mbf{H}}{x}{\eta\theta},      &
\fvarss{\mbf{I}}{\eta}{x}\fvarss{\mbf{I}}{\eta}{\theta}
&=\fvarss{\mbf{I}}{x}{\theta}\fvars{\mbf{I}}{\eta}{x}
+ \fvarss{\mbf{I}}{\theta}{x}\fvars{\mbf{I}}{\eta}{\theta},     \nonumber\\
\fvarss{\mbf{I}}{x}{\eta}\fvars{\mbf{I}}{\theta}{\eta}
&=\fvarss{\mbf{I}}{x}{\theta}\fvars{\mbf{I}}{\theta}{\eta}	
+ \fvarss{\mbf{I}}{\theta}{x}\fvars{\mbf{I}}{x}{\eta},       &
\fvarss{\mbf{I}}{\eta}{x}\fvars{\mbf{I}}{\theta}{\eta}
&=\fvarss{\mbf{I}}{\theta}{x}\fvars{\mbf{H}}{\theta}{\eta x}.
\end{align}

\subsection{Preliminaries}

Using the notation introduced in the previous subsection, we may compactly define different parameterizations of PI operators, as well as their compositions. For example, recall that for $N=\{N_0,N_1,N_2\}\in\mcl{N}_{1D}^{n\times m}$, we define an associated PI operator as
\begin{align*}
\fvars{\bl(\mcl{P}[N]\mbf{u}\br)}{x}{}
&=\fvarss{N_0}{x}{}\fvars{\mbf{u}}{x}{}
+\int_{\theta=0}^{x}\bbl(\fvarss{N_1}{x}{\theta}\fvars{\mbf{u}}{\theta}{}\bbr)	\\
&\quad+\int_{\theta=x}^{1}\bbl(\fvarss{N_2}{x}{\theta}\fvars{\mbf{u}}{\theta}{}\bbr),
\end{align*}
for arbitrary $\mbf{u}\in L_2^{m}[x]$. Using the Dirac delta function $\bs{\delta}$, and the indicator function $\mbf{I}$ as introduced earlier, we may equivalently denote this operation as
\begin{align*}
\fvars{\bl(\mcl{P}[N]\mbf{u}\br)}{x}{}
&=\int_{\theta=0}^{1}\bbbl(\bbl[\fvarss{\bs{\delta}}{x}{\theta}\fvars{N_0}{x}{}
+\fvarss{\mbf{I}}{x}{\theta}\fvars{N_1}{x}{\theta}	
+\fvarss{\mbf{I}}{\theta}{x}\fvars{N_2}{x}{\theta}\bbl]\fvars{\mbf{u}}{\theta}{}\bbbr).
\end{align*}
Then, defining functions $\bs{\Phi}_i$ for $i=\{0,1,2\}$ as
\begin{align}\label{eq_Phi_appendix}
	\fvars{\bs{\Phi}_0}{x}{\theta}&=\fvars{\bs{\delta}}{x}{\theta},	&
	\fvars{\bs{\Phi}_1}{x}{\theta}&=\fvars{\mbf{I}}{x}{\theta},		&
	\fvars{\bs{\Phi}_2}{x}{\theta}&=\fvars{\mbf{I}}{\theta}{x},
\end{align}
we can describe the operation using a single sum as
\begin{align*}
\fvars{\bl(\mcl{P}[N]\mbf{u}\br)}{x}{}
&=\int_{\theta=0}^{1}\bbbl(\sum_{i=0}^{2}\bbl[\fvarss{\bs{\Phi}_{i}}{x}{\theta}\fvars{N_i}{x}{\theta}\bbl]\fvars{\mbf{u}}{\theta}{}\bbbr).
\end{align*}
This notation will allow us to compactly write the composition rules for different PI operators, using the following corollary.
\begin{cor}\label{cor_integral_Phi12_appendix}
	Let $\mbf{\Phi}_{i}$ for $i\in\{0,1,2\}$ be as defined in Eqn.~\ref{eq_Phi_appendix}. Then, for any $W\in L_2[x,\theta]$,
	\begin{align*}
	 \int_{\theta=0}^{1}\bbl(\fvars{W}{x}{\theta}\bbr)
	 =\int_{\theta=0}^{1}\bbbbl(\sum_{i=1}^{2}\fvarss{\bs{\Phi}_i}{x}{\theta}\fvars{W}{x}{\theta}\bbbbr)\\
	\end{align*}
\end{cor}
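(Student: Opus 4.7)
The plan is to reduce the claim to the pointwise identity $\fvars{\mbf{I}}{x}{\theta} + \fvars{\mbf{I}}{\theta}{x} = 1$, which holds for every $\theta \in [0,1]$ with $\theta \neq x$. Once this is available, multiplying by $\fvars{W}{x}{\theta}$ and integrating over $\theta\in[0,1]$ gives the stated equality immediately.

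Concretely, I would first fix $x \in [0,1]$ and split the integration domain at the point $\theta = x$ using additivity of the Lebesgue integral (the single point $\theta=x$ has measure zero):
\[
\int_{\theta=0}^{1}\bbl(\fvars{W}{x}{\theta}\bbr)
= \int_{\theta=0}^{x}\bbl(\fvars{W}{x}{\theta}\bbr)
+ \int_{\theta=x}^{1}\bbl(\fvars{W}{x}{\theta}\bbr).
\]
Next, using the definitions $\fvars{\bs{\Phi}_1}{x}{\theta} = \fvars{\mbf{I}}{x}{\theta}$ (the indicator of $\theta < x$) and $\fvars{\bs{\Phi}_2}{x}{\theta} = \fvars{\mbf{I}}{\theta}{x}$ (the indicator of $\theta > x$) from Eqn.~\eqref{eq_Phi_appendix}, each partial integral may be re-extended to all of $[0,1]$ against the corresponding indicator:
\[
\int_{\theta=0}^{x}\bbl(\fvars{W}{x}{\theta}\bbr) = \int_{\theta=0}^{1}\bbl(\fvarss{\bs{\Phi}_1}{x}{\theta}\fvars{W}{x}{\theta}\bbr),
\]
and similarly $\int_{x}^{1} = \int_{0}^{1}\bs{\Phi}_2\, W$. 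Summing the two equations and invoking linearity of the integral gives the claim.

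There is essentially no technical obstacle here: the identity is a tautological rewriting of $[0,1] = [0,x] \cup [x,1]$ in the indicator-function notation of the preliminaries. The only point worth remarking on is that the $i=0$ summand, carrying the Dirac delta $\fvars{\bs{\Phi}_0}{x}{\theta}$, is correctly absent from the right-hand side; this is consistent with the fact that $\fvars{\mbf{I}}{x}{\theta}+\fvars{\mbf{I}}{\theta}{x}$ already equals $1$ almost everywhere on $[0,1]$, so no additional delta contribution at $\theta=x$ is needed to recover the full integral of $W$.
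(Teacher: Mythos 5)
Your proof is correct and is essentially the argument the paper intends: the corollary is stated without proof precisely because it reduces to splitting $[0,1]$ into $[0,x]\cup[x,1]$ and applying the defining properties of the indicator functions $\mbf{I}$ from the preliminaries. Your remark about why the $i=0$ (Dirac delta) term is absent is a correct and worthwhile observation, but no further justification is needed.
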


In addition, we will rely heavily on the following proposition.
\begin{prop}\label{prop_Psi_appendix}
	Let $\mbf{\Phi}_{i}$ for $i\in\{0,1,2\}$ be as defined in Eqn.~\ref{eq_Phi_appendix}. Then, for any $W_{ij}\in L_2[x,\theta]$ with $i,j\in\{0,1,2\}$,
	\begin{align*}
	\sum_{i,j=0}^{2}
	\fvarss{\bs{\Phi}_i}{x}{\eta}
	\fvarss{\bs{\Phi}_j}{\eta}{\theta}
	\fvars{W_{ij}}{x}{\eta\theta}
	=\sum_{k=0}^{2}
	\fvars{\bs{\Phi}_k}{x}{\theta}\bbbbl(
	\sum_{i,j=0}^{2}
	\fvarss{\bs{\Psi}_{kij}}{\eta}{\theta}
	\fvars{W_{ij}}{x}{\eta\theta}\bbbbr),
	\end{align*}
	where
	\begin{align}\label{eq_Psi_appendix}
	\fvars{\bs{\Psi}_{000}}{x}{\eta\theta}
	&=\fvars{\bs{\delta}}{x}{\eta}	\nonumber\\
	\fvars{\bs{\Psi}_{101}}{x}{\eta\theta}
	&=\fvars{\bs{\delta}}{x}{\eta} &
	\fvars{\bs{\Psi}_{202}}{x}{\eta\theta}
	&=\fvars{\bs{\delta}}{x}{\eta}    \nonumber\\	
	\fvars{\bs{\Psi}_{110}}{x}{\eta\theta}
	&=\fvars{\bs{\delta}}{\theta}{\eta}    &
	\fvars{\bs{\Psi}_{220}}{x}{\eta\theta}
	&=\fvars{\bs{\delta}}{\theta}{\eta}    \nonumber\\	
	\fvars{\bs{\Psi}_{111}}{x}{\eta\theta}
	&=\fvars{\mbf{H}}{x}{\eta\theta}   &
	\fvars{\bs{\Psi}_{222}}{x}{\eta\theta}
	&=\fvars{\mbf{H}}{\theta}{\eta x}   \nonumber\\	
	\fvars{\bs{\Psi}_{112}}{x}{\eta\theta}
	&=\fvars{\mbf{I}}{\theta}{\eta}    &
	\fvars{\bs{\Psi}_{212}}{x}{\eta\theta}
	&=\fvars{\mbf{I}}{x}{\eta}    \nonumber\\	
	\fvars{\bs{\Psi}_{121}}{x}{\eta\theta}
	&=\fvars{\mbf{I}}{\eta}{x} &
	\fvars{\bs{\Psi}_{221}}{x}{\eta\theta}
	&=\fvars{\mbf{I}}{\eta}{\theta},	
	\end{align}
	and $\fvars{\bs{\Psi}_{kij}}{x}{\eta\theta}=0$ for any other indices $k,i,j\in\{0,1,2\}$.
\end{prop}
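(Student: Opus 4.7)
\medskip

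\noindent\textbf{Proof proposal for Proposition~\ref{prop_Psi_appendix}.} The plan is to verify the identity termwise: for each of the nine pairs $(i,j)\in\{0,1,2\}^2$ we rewrite the product $\fvars{\bs{\Phi}_i}{x}{\eta}\fvars{\bs{\Phi}_j}{\eta}{\theta}$ by applying precisely one of the product identities listed in~\eqref{eq_indfun_identities_appendix}, then read off the coefficient of each outer factor $\fvars{\bs{\Phi}_k}{x}{\theta}$ to identify $\fvars{\bs{\Psi}_{kij}}{x}{\eta\theta}$. Summing over $(i,j)$ and regrouping by $k$ yields the stated formula. Since both sides of the identity are linear in $W_{ij}$, it suffices to check that
\[
\fvars{\bs{\Phi}_i}{x}{\eta}\,\fvars{\bs{\Phi}_j}{\eta}{\theta}
=\sum_{k=0}^{2}\fvars{\bs{\Phi}_k}{x}{\theta}\,\fvars{\bs{\Psi}_{kij}}{x}{\eta\theta}
\]
for each pair $(i,j)$.

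The case analysis is short. The four pairs involving a single delta, namely $(0,0),(0,1),(0,2),(1,0),(2,0)$, each reduce using the first row of~\eqref{eq_indfun_identities_appendix}, producing a single outer $\bs{\Phi}_k(x,\theta)$ and leaving a delta in the $\bs{\Psi}_{kij}$ entry as tabulated in~\eqref{eq_Psi_appendix}. The two "aligned" indicator pairs $(1,1)$ and $(2,2)$ collapse via $\fvarss{\mbf{I}}{x}{\eta}\fvars{\mbf{I}}{\eta}{\theta}=\fvarss{\mbf{I}}{x}{\theta}\fvars{\mbf{H}}{x}{\eta\theta}$ and its mirror, yielding the two rectangular functions $\bs{\Psi}_{111}$ and $\bs{\Psi}_{222}$. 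The two "crossed" indicator pairs $(1,2)$ and $(2,1)$ are the only ones producing two outer factors each: the identities
\[
\fvarss{\mbf{I}}{x}{\eta}\fvars{\mbf{I}}{\theta}{\eta}
=\fvarss{\mbf{I}}{x}{\theta}\fvars{\mbf{I}}{\theta}{\eta}+\fvarss{\mbf{I}}{\theta}{x}\fvars{\mbf{I}}{x}{\eta},
\qquad
\fvarss{\mbf{I}}{\eta}{x}\fvars{\mbf{I}}{\eta}{\theta}
=\fvarss{\mbf{I}}{x}{\theta}\fvars{\mbf{I}}{\eta}{x}+\fvarss{\mbf{I}}{\theta}{x}\fvars{\mbf{I}}{\eta}{\theta}
\]
split each product into a $\bs{\Phi}_1(x,\theta)$ component and a $\bs{\Phi}_2(x,\theta)$ component, giving the two nonzero entries $\bs{\Psi}_{112},\bs{\Psi}_{212}$ and $\bs{\Psi}_{121},\bs{\Psi}_{221}$ respectively. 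All remaining $(k,i,j)$ triples contribute zero, matching the convention in the statement.

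With the nine identities verified, one substitutes back into the left-hand sum and exchanges the order of summation, pulling the common outer factor $\fvars{\bs{\Phi}_k}{x}{\theta}$ out of the inner double sum over $(i,j)$, which is exactly the right-hand side of the claim. The argument is entirely algebraic: no integration, no analytic considerations, and nothing beyond the product rules collected in~\eqref{eq_indfun_identities_appendix}. The only real pitfall is bookkeeping in the two crossed cases, since these are the only ones where a single product on the left contributes to two different outer terms $\bs{\Phi}_k(x,\theta)$ on the right, and hence to two different $\bs{\Psi}_{kij}$ entries. I would present the proof as a compact table listing each $(i,j)$ pair, the identity invoked, and the resulting $(k,\bs{\Psi}_{kij})$ summands, rather than as nine separate displayed equations.
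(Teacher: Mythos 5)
Your proposal is correct and follows essentially the same route as the paper's own proof: expand the nine products $\fvars{\bs{\Phi}_i}{x}{\eta}\fvars{\bs{\Phi}_j}{\eta}{\theta}$, rewrite each using the identities in~\eqref{eq_indfun_identities_appendix}, and regroup by the outer factor $\fvars{\bs{\Phi}_k}{x}{\theta}$, with the crossed pairs $(1,2)$ and $(2,1)$ being the only ones that split into two outer terms. The only quibble is a counting slip (you announce ``four pairs'' but list five, and $(0,0)$ involves two deltas rather than one); the mathematics is unaffected.
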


\begin{proof}
	Expanding the sums, invoking the definition of $\bs{\Phi}$, and applying identities~\eqref{eq_indfun_identities_appendix}, we find
	\begin{align*}
	&\sum_{i,j=0}^{2}
	\fvarss{\bs{\Phi}_i}{x}{\eta}
	\fvarss{\bs{\Phi}_j}{\eta}{\theta}
	\fvars{W_{ij}}{x}{\eta\theta}  \\
	&=\fvarss{\bs{\delta}}{x}{\eta}\fvarss{\bs{\delta}}{\eta}{\theta}
	\fvars{W_{00}}{x}{\eta\theta}
	+\fvarss{\bs{\delta}}{x}{\eta}\fvarss{\mbf{I}}{\eta}{\theta}
	\fvars{W_{01}}{x}{\eta\theta}
	+\fvarss{\bs{\delta}}{x}{\eta}\fvarss{\mbf{I}}{\theta}{\eta}
	\fvars{W_{02}}{x}{\eta\theta}  \\
	&\quad+\fvarss{\mbf{I}}{x}{\eta}\fvarss{\bs{\delta}}{\eta}{\theta}
	\fvars{W_{10}}{x}{\eta\theta}
	+\fvarss{\mbf{I}}{x}{\eta}\fvarss{\mbf{I}}{\eta}{\theta}
	\fvars{W_{11}}{x}{\eta\theta}
	+\fvarss{\mbf{I}}{x}{\eta}\fvarss{\mbf{I}}{\theta}{\eta}
	\fvars{W_{12}}{x}{\eta\theta}  \\
	&\qquad+\fvarss{\mbf{I}}{\eta}{x}\fvarss{\bs{\delta}}{\eta}{\theta}
	\fvars{W_{20}}{x}{\eta\theta}
	+\fvarss{\mbf{I}}{\eta}{x}\fvarss{\mbf{I}}{\eta}{\theta}
	\fvars{W_{21}}{x}{\eta\theta}
	+\fvarss{\mbf{I}}{\eta}{x}\fvarss{\mbf{I}}{\theta}{\eta}
	\fvars{W_{22}}{x}{\eta\theta}      \\
	&=\fvarss{\bs{\delta}}{x}{\theta}\fvarss{\bs{\delta}}{x}{\eta}
	\fvars{W_{00}}{x}{\eta\theta}
	+\fvarss{\mbf{I}}{x}{\theta}\fvarss{\bs{\delta}}{x}{\eta}
	\fvars{W_{01}}{x}{\eta\theta}
	+\fvarss{\mbf{I}}{\theta}{x}\fvarss{\bs{\delta}}{x}{\eta}
	\fvars{W_{02}}{x}{\eta\theta}      \\
	&\quad+\fvarss{\mbf{I}}{x}{\theta}\fvarss{\bs{\delta}}{\theta}{\eta}
	\fvars{W_{10}}{x}{\eta\theta}
	+\fvarss{\mbf{I}}{x}{\theta}\fvarss{\mbf{H}}{x}{\eta\theta}
	\fvars{W_{11}}{x}{\eta\theta}		\\
	&\qquad+\bbl(\fvarss{\mbf{I}}{x}{\theta}\fvars{\mbf{I}}{\theta}{\eta} + \fvarss{\mbf{I}}{\theta}{x}\fvars{\mbf{I}}{x}{\eta}\bbr)
	\fvars{W_{12}}{x}{\eta\theta}  
	+\fvarss{\mbf{I}}{\theta}{x}\fvarss{\bs{\delta}}{\theta}{\eta}
	\fvars{W_{20}}{x}{\eta\theta}		\\
	&\qquad\quad +\bbl(\fvarss{\mbf{I}}{x}{\theta}\fvars{\mbf{I}}{\eta}{x} + \fvarss{\mbf{I}}{\theta}{x}\fvars{\mbf{I}}{\eta}{\theta}\bbr)
	\fvars{W_{21}}{x}{\eta\theta}
	+\fvarss{\mbf{I}}{\theta}{x}\fvarss{\mbf{H}}{\theta}{\eta x}
	\fvars{W_{22}}{x}{\eta\theta}  \\
	&=\fvarss{\bs{\delta}}{x}{\theta}\fvarss{\bs{\Psi}_{000}}{x}{\eta\theta}
	\fvars{W_{00}}{x}{\eta\theta}  \\
	&\quad+\fvars{\mbf{I}}{x}{\theta}\bbbl(
	\fvarss{\bs{\Psi}_{101}}{x}{\eta\theta}
	\fvars{W_{01}}{x}{\eta\theta}
	+\fvarss{\bs{\Psi}_{110}}{x}{\eta\theta}
	\fvars{W_{10}}{x}{\eta\theta}	\\
	&\hspace{1.5cm}	+\fvarss{\bs{\Psi}_{112}}{x}{\eta\theta}
	\fvars{W_{12}}{x}{\eta\theta}
	+\fvarss{\bs{\Psi}_{111}}{x}{\eta\theta}
	\fvars{W_{11}}{x}{\eta\theta}	\\
	&\hspace{4.3cm} +\fvarss{\bs{\Psi}_{121}}{x}{\eta\theta}
	\fvars{W_{21}}{x}{\eta\theta}
	\bbbr) \\
	&\quad+\fvars{\mbf{I}}{\theta}{x}\bbbl(
	\fvarss{\bs{\Psi}_{202}}{x}{\eta\theta}
	\fvars{W_{02}}{x}{\eta\theta}
	+\fvarss{\bs{\Psi}_{220}}{x}{\eta\theta}
	\fvars{W_{20}}{x}{\eta\theta}	\\
	&\hspace{1.5cm} +\fvarss{\bs{\Psi}_{212}}{x}{\eta\theta}
	\fvars{W_{12}}{x}{\eta\theta}
	+\fvarss{\bs{\Psi}_{222}}{x}{\eta\theta}
	\fvars{W_{22}}{x}{\eta\theta}	\\
	&\hspace{4.3cm} +\fvarss{\bs{\Psi}_{221}}{x}{\eta\theta}
	\fvars{W_{21}}{x}{\eta\theta}
	\bbbr)     \\
	&=\fvars{\bs{\Phi}_0}{x}{\theta}
	\sum_{i=0}^2\sum_{j=0}^2
	\fvarss{\bs{\Psi}_{0ij}}{x}{\eta\theta}
	\fvars{W_{ij}}{x}{\eta\theta}	\\
	&\qquad +\fvars{\bs{\Phi}_1}{x}{\theta}
	\sum_{i=0}^2\sum_{j=0}^2
	\fvarss{\bs{\Psi}_{1ij}}{x}{\eta\theta}
	\fvars{W_{ij}}{x}{\eta\theta}	\\
	&\qquad\qquad +\fvars{\bs{\Phi}_2}{x}{\theta}
	\sum_{i=0}^2\sum_{j=0}^2
	\fvarss{\bs{\Psi}_{2ij}}{x}{\eta\theta}
	\fvars{W_{ij}}{x}{\eta\theta}     \\
	&=\sum_{k=0}^{2}
	\fvars{\bs{\Phi}_k}{x}{\theta}
	\sum_{i,p=0}^{2}
	\fvars{\bs{\Psi}_{kij}}{x}{\eta\theta}
	\fvarss{W_{ij}}{x}{\eta\theta},
	\end{align*}
	as desired.
\end{proof}
Applying this result, the composition rules of 1D-PI operators follow immediately.

\begin{lem}\label{lem_1Dalgebra_appendix}
	For any $N:=\{N_0,N_1,N_2\}\in\mcl{N}_{1D}$ and $M:=\{M_0,M_1,M_2\}\in\mcl{N}_{1D}$, there exists a unique
	$Q \in \mcl{N}_{1D}$ such that $\mcl{P}[N]\circ\mcl{P}[M]=\mcl{P}[Q]$. Specifically, we may choose $Q=\mcl{L}_{1D}(N,M)\in\mcl{N}_{1D}$, where the linear parameter map $\mcl{L}_{1D}:\mcl{N}_{1D}\times\mcl{N}_{1D}\rightarrow\mcl{N}_{1D}$ is such that
	\begin{align*}
	 \mcl{L}_{1D}(N,M)=\{Q_0,Q_1,Q_2\}\in\mcl{N}_{1D},
	\end{align*}
	where, defining functions $\bs{\Psi}_{kij}$ as in Eqn.~\eqref{eq_Psi_appendix},
	\begin{align*}
	\fvars{Q_k}{x}{\theta}=
	\int_{\eta=0}^{1}\bbbbl(\sum_{i,j=0}^{2}
	\fvarss{\bs{\Psi}_{kij}}{x}{\eta\theta}
	\fvarss{N_{i}}{x}{\eta}
	\fvars{M_{j}}{\eta}{\theta}\bbbbr),
	\end{align*}
	for each $k\in\{0,1,2\}$.
\end{lem}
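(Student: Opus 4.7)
The plan is to compute $\bl(\mcl{P}[N]\circ\mcl{P}[M]\br)\mbf{u}$ explicitly by writing both $\mcl{P}[N]$ and $\mcl{P}[M]$ in the compact $\bs{\Phi}$-notation introduced before Corollary~\ref{cor_integral_Phi12_appendix}, and then applying Proposition~\ref{prop_Psi_appendix} to repackage the result as a single PI operator. Specifically, for arbitrary $\mbf{u}\in L_2^m[x]$, I would first write
\[
(\mcl{P}[M]\mbf{u})(\eta)=\int_{\theta=0}^{1}\bbbl(\sum_{j=0}^{2}\fvarss{\bs{\Phi}_j}{\eta}{\theta}\fvars{M_j}{\eta}{\theta}\fvars{\mbf{u}}{\theta}{}\bbbr),
\]
and then substitute this into the analogous expression for $\mcl{P}[N]$ acting on the result.

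The second step is to interchange the order of integration (which is legitimate by Fubini, since the parameters are in $L_2$ and $\mbf u \in L_2$, so the integrand is absolutely integrable on $[0,1]^2$) to obtain
\[
\bl(\mcl{P}[N]\mcl{P}[M]\mbf{u}\br)(x)=\int_{\theta=0}^{1}\bbbbl[\int_{\eta=0}^{1}\bbbbl(\sum_{i,j=0}^{2}\fvarss{\bs{\Phi}_i}{x}{\eta}\fvarss{\bs{\Phi}_j}{\eta}{\theta}\fvarss{N_i}{x}{\eta}\fvars{M_j}{\eta}{\theta}\bbbbr)\fvars{\mbf{u}}{\theta}{}\bbbbr].
\]
Setting $W_{ij}(x,\eta,\theta):=N_i(x,\eta)M_j(\eta,\theta)$, Proposition~\ref{prop_Psi_appendix} applies directly and rewrites the $\eta$-integrand as $\sum_{k=0}^{2}\bs{\Phi}_k(x,\theta)\sum_{i,j}\bs{\Psi}_{kij}(x,\eta,\theta)W_{ij}(x,\eta,\theta)$, with the $\bs{\Phi}_k(x,\theta)$ factor pulled outside the $\eta$-integral.

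Comparing the resulting expression with the definition of $\mcl{P}[Q]$ in the $\bs{\Phi}$-notation, the bracketed $\eta$-integrals are exactly the claimed $Q_k(x,\theta)$, so $\mcl{P}[N]\mcl{P}[M]\mbf{u}=\mcl{P}[\mcl{L}_{1D}(N,M)]\mbf{u}$ for all $\mbf u\in L_2^m[x]$. For uniqueness of $Q$ as an element of $\mcl{N}_{1D}$, I would argue that the three components $Q_0$, $Q_1$, $Q_2$ are determined by the operator because the supports of the kernels $\bs{\delta}(x-\theta)$, $\mbf{I}(x,\theta)$ and $\mbf{I}(\theta,x)$ are disjoint up to a null set, so the multiplier and two integral kernels can be recovered separately (e.g. by testing against families of $L_2$ functions supported on $\{\theta<x\}$ and $\{\theta>x\}$).

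The main obstacle is purely bookkeeping: Proposition~\ref{prop_Psi_appendix} supplies precisely the 9-term expansion needed, so once the $\bs{\Phi}$/$\bs{\Psi}$ machinery is set up, no nontrivial estimate is required. The only technical subtlety is justifying Fubini and making sure that the `$\bs{\delta}$' factors in $\bs{\Psi}_{000}$, $\bs{\Psi}_{101}$, $\bs{\Psi}_{202}$, $\bs{\Psi}_{110}$, $\bs{\Psi}_{220}$ correctly collapse the $\eta$-integral to produce the multiplier and single-integral parts of $Q_0,Q_1,Q_2$; these are distributional identities already encoded in Eqn.~\eqref{eq_indfun_identities_appendix}, so no genuine analysis is needed beyond what Proposition~\ref{prop_Psi_appendix} provides.
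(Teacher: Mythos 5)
Your proposal follows exactly the paper's own argument: express both operators in the $\bs{\Phi}$-notation, interchange the order of integration, set $W_{ij}(x,\eta,\theta)=N_i(x,\eta)M_j(\eta,\theta)$, and apply Proposition~\ref{prop_Psi_appendix} to collect the result into the claimed kernels $Q_k$. The only additions — the explicit Fubini justification and the disjoint-support argument for uniqueness — are sensible fillers for steps the paper leaves implicit, so the proof is correct and essentially identical to the paper's.
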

\begin{proof}
	Let $\mbf{u}\in L_2[x]$ be arbitrary. Then, applying the results from Proposition~\ref{prop_Psi_appendix}, we find
	\begin{align*}
	&\fvars{\bl(\mcl{P}[N]\mcl{P}[M]\mbf{u}\br)}{x}{}	\\
	&=\int_{\eta=0}^{1}\bbbbl[\sum_{i=0}^{2}
	\fvarss{\bs{\Phi}_{i}}{x}{\eta}
	\fvars{N_{i}}{x}{\eta}
	\int_{\theta=0}^{1}\bbbl(\sum_{j=0}^{2}
	\fvarss{\bs{\Phi}_{j}}{\eta}{\theta}
	\fvarss{M_{j}}{\eta}{\theta}
	\fvars{\mbf{u}}{\theta}{}\bbbr)
	\bbbbr]      \\
	&=\int_{\theta=0}^{1}\bbbbl[
	\int_{\eta=0}^{1}\bbbbl(\sum_{i,j=0}^{2}
	\fvarss{\bs{\Phi}_{i}}{x}{\eta}
	\fvarss{\bs{\Phi}_{j}}{\eta}{\theta}
	\fvarss{N_{i}}{x}{\eta}
	\fvars{M_{j}}{\eta}{\theta}
	\bbbbr)\fvars{\mbf{u}}{\theta}{}
	\bbbbr]    \\
	&=\int_{\theta=0}^{1}\bbbbl[
	\sum_{k=0}^{2}
	\fvars{\bs{\Phi}_{k}}{x}{\theta}
	\int_{\eta=0}^{1}\bbbbl(\sum_{i,j=0}^{2}
	\fvarss{\bs{\Psi}_{kij}}{x}{\eta\theta}
	\fvarss{N_{i}}{x}{\eta}
	\fvars{M_{j}}{\eta}{\theta}
	\bbbbr)\fvars{\mbf{u}}{\theta}{}
	\bbbbr]  \\
	&=\int_{\theta=0}^{1}\bbbbl(
	\sum_{k=0}^{2}
	\fvarss{\bs{\Phi}_{k}}{x}{\theta}
	\fvarss{Q_{k}}{x}{\theta}
	\fvars{\mbf{u}}{\theta}{}
	\bbbbr)
	=\fvars{(\mcl{P}[Q]\mbf{u})}{x}{}
	\end{align*}
\end{proof}
Expanding the terms in this expression for the composition, it is easy to verify that these composition rules for 1D-PI operators match those (for 3-PI operators) presented in~\cite{peet2021partial}. Using this same approach, we can also derive composition rules for PI operators on additional dimensions.

\subsection{An Algebra of 011-PI Operators}\label{sec_011algebra_appendix}

Recall that we defined a parameter space for 011-PI operators as
\begin{align}\label{eq_N011_space_appendix}
\mcl{N}_{011}{\smallbmat{n_0&m_0\\n_1&m_1}}&:=\bmat{\R^{n_0\times m_0}   &\! L_2^{n_0\times m_1}[\theta] &\! L_2^{n_0\times m_1}[\nu]\\
	L_2^{n_1\times m_0}[x] &\! \mcl{N}_{1D}^{n_1\times m_1}   &\! L_2^{n_1\times m_1}[x,\nu]\\
	L_2^{n_1\times m_0}[y] &\! L_2^{n_1\times m_1}[y,\theta]   &\! \mcl{N}_{1D}^{n_1\times m_1}}.
\end{align}
Then, for any 
\begin{align*}
B&=\bmat{B_{00}&B_{01}&B_{02}\\B_{10}&B_{11}&B_{12}\\B_{20}&B_{21}&B_{22}}\in \mcl{N}_{011}{\smallbmat{n_0&m_0\\n_1&m_1}},\\
\end{align*}
we may write the associated PI operation using the functions $\bs{\Phi}_i$ as defined in Eqn.~\eqref{eq_Phi_appendix} as 
\begin{align*}
 &\fvars{\bl(\mcl{P}[B]\mbf{u}\br)}{xy}{}	\\
 &=\bmat{B_{00}u_0 
 &\int_{\theta=0}^{1}\bbl(\fvarss{B_{01}}{}{\theta}\fvars{\mbf{u}_1}{\theta}{}\bbr)
 &\int_{\nu=0}^{1}\bbl(\fvarss{B_{02}}{}{\nu}\fvars{\mbf{u}_2}{\nu}{}\bbr)
 \\\fvarss{B_{10}}{x}{}u_0
 &\fvars{\bl(\mcl{P}[B_{11}]\mbf{u}_1\br)}{x}{}
 &\int_{\nu=0}^{1}\bbl(\fvarss{B_{12}}{x}{\nu}\fvars{\mbf{u}_2}{\nu}{}\bbr)
 \\\fvarss{B_{20}}{y}{}u_0
 &\int_{\theta=0}^{1}\bbl(\fvarss{B_{21}}{y}{\theta}\fvars{\mbf{u}_1}{\theta}{}\bbr)
 &\fvars{\bl(\mcl{P}[B_{22}]\mbf{u}_2\br)}{y}{}},
\end{align*}
for any $\mbf{u}=\bmat{u_0\\\mbf{u}_1\\\mbf{u}_2}\in\bmat{\R^{m_0}\\L_2^{m_1}[x]\\L_2^{m_1}[y]}$.

\begin{lem}\label{lem_011algebra_appendix}
	For any $B=\smallbmat{B_{00}&B_{01}&B_{02}\\B_{10}&B_{11}&B_{12}\\B_{20}&B_{21}&B_{22}}\in\mcl{N}_{011}\smallbmat{n_0&p_0\\n_1&p_1}$ and $D=\smallbmat{D_{00}&D_{01}&D_{02}\\D_{10}&D_{11}&D_{12}\\D_{20}&D_{21}&D_{22}}\in\mcl{N}_{011}\smallbmat{p_0&m_0\\p_1&m_1}$, where $B_{11}=\{B_{11}^{0},B_{11}^{1},B_{11}^{2}\}\in\mcl{N}_{1D}$, $B_{22}=\{B_{22}^{0},B_{22}^{1},B_{22}^{2}\}\in\mcl{N}_{1D}$ and $D_{11}=\{D_{11}^{0},D_{11}^{1},D_{11}^{2}\}\in\mcl{N}_{1D}$, $D_{22}=\{D_{22}^{0},D_{22}^{1},D_{22}^{2}\}\in\mcl{N}_{1D}$, there exists a unique $R\in\mcl{N}_{011}\smallbmat{n_0&m_0\\n_1&m_1}$ such that $\mcl{P}[B]\circ\mcl{P}[D]=\mcl{P}[R]$. Specifically, we may choose $R=\mcl{L}_{011}(B,D)\in\mcl{N}_{011}\smallbmat{n_0&m_0\\n_1&m_1}$, where the linear parameter map $\mcl{L}_{011}:\mcl{N}_{011}\times\mcl{N}_{011}\rightarrow\mcl{N}_{011}$ is defined such that
	\begin{align}\label{eq_composition_011to011_1_appendix}
		\mcl{L}_{011}(B,D)=\bmat{R_{00}&R_{01}&R_{02}\\R_{10}&R_{11}&R_{12}\\R_{20}&R_{21}&R_{22}}\in\mcl{N}_{011}\smallbmat{n_0&m_0\\n_1&m_1},
	\end{align}
	where
\begin{align*}
	&R_{00}=B_{00}D_{00} + \int_{\eta=0}^{1}\bbl(\fvarss{B_{01}}{}{\eta}\fvars{D_{10}}{\eta}{}\bbr)+\int_{\mu=0}^{1}\bbl(\fvarss{B_{02}}{}{\eta}\fvars{D_{20}}{\eta}{}\bbr) \\ 
	&\fvars{R_{01}}{}{\theta}= B_{00}\fvars{D_{01}}{}{\theta} + \int_{\eta=0}^{1}\bbbbl(\fvars{B_{01}}{}{\eta}\sum_{i=0}^{2}\bbl[\fvarss{\bs{\Phi}_i}{\eta}{\theta}\fvars{D_{11}^{i}}{\eta}{\theta}\bbr]\bbbbr)\\
	&\qquad\quad+ \int_{\mu=0}^{1}\bbbl(\fvarss{B_{02}}{}{\mu}\fvars{D_{21}}{\mu}{\theta}\bbbr)	\\
	&\fvars{R_{02}}{}{\nu}=B_{00}\fvars{D_{02}}{}{\nu} + \int_{\eta=0}^{1}\bbbl(\fvarss{B_{01}}{}{\eta}\fvars{D_{12}}{\eta}{\nu}\bbbr)	\\
	&\qquad\quad+ \int_{\mu=0}^{1}\bbbbl(\fvars{B_{02}}{}{\mu}\sum_{i=0}^{2}\bbl[\fvarss{\bs{\Phi}_i}{\mu}{\nu}\fvars{D_{22}^{i}}{\mu}{\nu}\bbr]\bbbbr)	\\
	&\fvars{R_{10}}{x}{}= \fvarss{B_{10}}{x}{}D_{00} + \int_{\eta=0}^{1}\bbbbl(\sum_{i=0}^{2}\bbl[\fvarss{\bs{\Phi}_i}{x}{\eta}\fvars{B_{11}^{i}}{x}{\eta}\bbr]\fvars{D_{10}}{\eta}{}\bbbbr) \\
	&\qquad\quad+ \int_{\mu=0}^{1}\bbbl(\fvarss{B_{12}}{x}{\mu}\fvars{D_{20}}{\mu}{}\bbbr) \\
	&R_{11}=\mcl{L}_{101}\bl(B_{10},D_{01}\br) + \mcl{L}_{1D}\bl(B_{11},D_{11}\br) + \mcl{L}_{111}\bl(B_{12},D_{21}\br),\\
	&\fvars{R_{12}}{x}{\nu}= \fvarss{B_{10}}{x}{}\fvars{D_{02}}{}{\nu} + \int_{\eta=0}^{1}\bbbbl(\sum_{i=0}^{2}\bbl[\fvarss{\bs{\Phi}_i}{x}{\eta}\fvars{B_{11}^{i}}{x}{\eta}\bbr]\fvars{D_{12}}{\eta}{\nu}\bbbbr) \\
	&\qquad\quad+ \int_{\mu=0}^{1}\bbbbl(\fvars{B_{12}}{x}{\mu}\sum_{i=0}^{2}\bbl[\fvarss{\bs{\Phi}_i}{\mu}{\nu}\fvars{D_{22}^i}{\mu}{\nu}\bbr]\bbbbr) \\
	&\fvars{R_{20}}{y}{}= \fvarss{B_{20}}{y}{}D_{00} + \int_{\eta=0}^{1}\bbbbl(\fvarss{B_{21}}{y}{\eta}\fvars{D_{10}}{\eta}{}\bbbbr) \\
	&\qquad\quad+ \int_{\mu=0}^{1}\bbbbl(\sum_{i=0}^{2}\bbl[\fvarss{\bs{\Phi}_i}{y}{\mu}\fvars{B_{22}^{i}}{y}{\mu}\bbr]\fvars{D_{20}}{\mu}{}\bbbbr) \\
	&\fvars{R_{21}}{y}{\theta}= \fvarss{B_{20}}{y}{}\fvars{D_{01}}{}{\theta} + \int_{\eta=0}^{1}\bbbbl(\fvars{B_{21}}{y}{\eta}\sum_{i=0}^{2}\bbl[\fvarss{\bs{\Phi}_i}{\eta}{\theta}\fvars{D_{11}^i}{\eta}{\theta}\bbr]\bbbbr) \\
	&\qquad\quad+ \int_{\mu=0}^{1}\bbbbl(\sum_{i=0}^{2}\bbl[\fvarss{\bs{\Phi}_i}{y}{\mu}\fvars{B_{22}^{i}}{y}{\mu}\bbr]\fvars{D_{21}}{\mu}{\theta}\bbbbr) \\
	&R_{22}=\mcl{L}_{101}\bl(B_{20},D_{02}\br) + \mcl{L}_{111}\bl(B_{21},D_{12}\br) + \mcl{L}_{1D}\bl(B_{22},D_{22}\br), 
\end{align*}
and where $\mcl{L}_{101}:L_2[x]\times L_2[x]\rightarrow\mcl{N}_{1D}$ is defined such that, for arbitrary $N,M\in L_2[x]$, $\mcl{L}_{101}(N,M)=Q=\{0,Q_1,Q_2\}$, where
\begin{align*}
	\fvars{Q_{1}}{x}{\theta}&=\fvars{Q_{2}}{x}{\theta}
	=\fvarss{N}{x}{}\fvars{M}{}{\theta},
\end{align*}
and $\mcl{L}_{111}:L_2[x,y]\times L_2[y,x]\rightarrow\mcl{N}_{1D}$ is defined such that, for arbitrary $N\in L_2[x,y]$ and $M\in L_2[y,\theta]$, $\mcl{L}_{111}(N,M)=Q=\{0,Q_1,Q_2\}$, where
\begin{align*}
\fvars{Q_{1}}{x}{\theta}&=\fvars{Q_{2}}{x}{\theta}
=\int_{\mu=0}^{1}\bbbl(\fvarss{N}{x}{\mu}\fvars{M}{\mu}{\theta}\bbbr)
\end{align*}
\end{lem}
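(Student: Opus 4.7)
The plan is to compute the composition $(\mcl{P}[B]\circ\mcl{P}[D])\mbf{u}$ block by block for an arbitrary input $\mbf{u}=(u_0,\mbf{u}_1,\mbf{u}_2)\in\R^{m_0}\times L_2^{m_1}[x]\times L_2^{m_1}[y]$, and then match each block-coefficient against the corresponding entry of the proposed parameter $R=\mcl{L}_{011}(B,D)$. First I would evaluate $\mbf{v}:=\mcl{P}[D]\mbf{u}$ using the block form of $\mcl{P}[D]$, producing three components, each a sum of three terms (one per input slot $u_0,\mbf{u}_1,\mbf{u}_2$). Applying $\mcl{P}[B]$ to $\mbf{v}$ then yields three output components, each a sum of nine terms. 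Grouping these terms by which input slot they act on gives a candidate expression for each of the nine entries $R_{ij}$, and the task reduces to showing each group equals the claimed formula.

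The easy blocks are the scalar/finite-dimensional ones ($R_{00}$, $R_{01}$, $R_{02}$, $R_{10}$, $R_{20}$) and the mixed cross blocks ($R_{12}$, $R_{21}$): for these, direct substitution suffices once we use Corollary~\ref{cor_integral_Phi12_appendix} to rewrite single integrals over $\eta$ or $\mu$ as sums against $\bs{\Phi}_1,\bs{\Phi}_2$ whenever a $\mcl{N}_{1D}$-parameter gets contracted against an $L_2$-function. For $R_{12}$ and $R_{21}$ the second spatial variable is merely a parameter of the kernel, so the composition identity from Lemma~\ref{lem_1Dalgebra_appendix} applies with that variable frozen.

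The delicate blocks are the two $\mcl{N}_{1D}$-valued diagonal entries $R_{11}$ and $R_{22}$. For $R_{11}$, three distinct contributions arise as $\mcl{P}[B]$ is applied to the $\mbf{u}_1$-dependent part of $\mbf{v}$: a rank-one term $B_{10}(x)\int_0^1 D_{01}(\theta)\mbf{u}_1(\theta)d\theta$; the genuine 1D composition $(\mcl{P}[B_{11}]\mcl{P}[D_{11}]\mbf{u}_1)(x)$, captured by $\mcl{L}_{1D}(B_{11},D_{11})$ via Lemma~\ref{lem_1Dalgebra_appendix}; and a double-integral term $\int_0^1 B_{12}(x,\mu)\int_0^1 D_{21}(\mu,\theta)\mbf{u}_1(\theta)d\theta d\mu$. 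The main obstacle is that these three contributions are operators of different flavors and must be assembled into a single triple in $\mcl{N}_{1D}^{n_1\times m_1}$. This is exactly what motivates the auxiliary maps $\mcl{L}_{101}$ and $\mcl{L}_{111}$: each produces a triple with vanishing multiplier ($Q_0=0$) and identical upper/lower kernels ($Q_1=Q_2$), which is correct because neither contribution has a jump across $\theta=x$. Adding the three triples componentwise in $\mcl{N}_{1D}$ reproduces the formula for $R_{11}$; the argument for $R_{22}$ is the mirror image with $x,\theta$ replaced by $y,\theta$ and the roles of $B_{12},D_{21}$ swapped with $B_{21},D_{12}$.

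Finally, uniqueness of $R$ follows from injectivity of the parameter-to-operator map on the parameter space~\eqref{eq_N011_space_appendix}: any $R\in\mcl{N}_{011}$ for which $\mcl{P}[R]=0$ must, upon testing against localized inputs (delta-sequences in each slot), force all nine parameters to vanish almost everywhere. This step is routine and can be dispatched in one line at the end of the proof.
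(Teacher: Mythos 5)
Your proposal follows essentially the same route as the paper's proof: decompose both operators block-wise by linearity, compute the nine resulting compositions for an arbitrary input, use the 1D composition lemma (via $\mcl{L}_{1D}$) for the genuine $\mcl{N}_{1D}$-on-$\mcl{N}_{1D}$ part of $R_{11}$ and $R_{22}$, and repackage the rank-one and cross-integral contributions as zero-multiplier triples through $\mcl{L}_{101}$ and $\mcl{L}_{111}$, invoking Corollary~\ref{cor_integral_Phi12_appendix} to absorb plain integrals into the $\bs{\Phi}_1,\bs{\Phi}_2$ sum. The only addition is your explicit one-line uniqueness remark, which the paper leaves implicit; the argument is otherwise the same and is correct.
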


\begin{proof}
	To prove this lemma, we will exploit the linear structure of 011-PI operators, allowing us to express
	\begin{align*}
	\mcl{P}\smallbmat{B_{00}&B_{01}&B_{02}\\B_{10}&B_{11}&B_{12}\\B_{20}&B_{21}&B_{22}} = \mcl{P}\smallbmat{B_{00}&0&0\\0&0&0\\0&0&0} + \hdots + \mcl{P}\smallbmat{0&0&0&\\0&0&0\\0&0&B_{22}}.
	\end{align*}
	Then, $\mcl{P}[B]\circ\mcl{P}[D]$ maps a vector $u_0\in\R^{m_0}$, and functions $\mbf{u}_1\in L_2^{m_1}[x]$ and $\mbf{u}_2\in L_2^{m_1}[y]$ to $\R^{n_0}$ as
	\begin{align*}
	&\mcl{P}\smallbmat{B_{00}&B_{01}&B_{02}\\()&()&()\\()&()&()} \mcl{P}\smallbmat{D_{00}&()&()\\D_{10}&()&()\\D_{20}&()&()}u_0 = B_{00}D_{00}u_0 \\
	&\quad + \int_{\eta=0}^{1}\bbl(\fvarss{B_{01}}{}{\eta}\fvarss{D_{10}}{\eta}{}u_0\bbr) + \int_{\mu=0}^{1}\bbl(\fvarss{B_{02}}{}{\mu}\fvarss{D_{20}}{\mu}{}u_0\bbr) \\
	&\hspace*{4.75cm}= R_{00} u_0 = \mcl{P}\smallbmat{R_{00}&()&()\\()&()&()\\()&()&()}\! u_0,
	\end{align*}
	and	
	\begin{align*}
	&\mcl{P}\smallbmat{B_{00}&B_{01}&B_{02}\\()&()&()\\()&()&()} \mcl{P}\smallbmat{()&D_{01}&()\\()&D_{11}&()\\()&D_{21}&()}\mbf{u}_1 = B_{00}\int_{\theta=0}^{1}\bbl(\fvarss{D_{01}}{}{\theta}\fvars{\mbf{u}_1}{\theta}{}\bbr) \\
	&\qquad + \int_{\eta=0}^{1}\bbbbl[\fvars{B_{01}}{}{\eta}\int_{\theta=0}^{1}\bbbl(\sum_{i=0}^{2}\fvarss{\bs{\Phi}_i}{\eta}{\theta}\fvarss{D_{11}^{i}}{\eta}{\theta}\fvars{\mbf{u}_1}{\theta}{}\bbbr)\bbbbr] \\
	&\qquad + \int_{\mu=0}^{1}\bbbl[\fvars{B_{02}}{}{\mu}\int_{\theta=0}^{1}\bbl(\fvarss{D_{21}}{\mu}{\theta}\fvars{\mbf{u}_1}{\theta}{}\bbr)\bbbr] \\
	&\hspace*{2.5cm}= \int_{\theta=0}^{1}\bbl(\fvarss{R_{01}}{}{\theta}\fvars{\mbf{u}_1}{\theta}{}\bbr) = \mcl{P}\smallbmat{()&R_{01}&()\\()&()&()\\()&()&()}\mbf{u}_1,
	\end{align*}
	and finally
	\begin{align*}
	&\mcl{P}\smallbmat{B_{00}&B_{01}&B_{02}\\()&()&()\\()&()&()} \mcl{P}\smallbmat{()&()&D_{02}\\()&()&D_{12}\\()&()&D_{22}}\mbf{u}_2 = B_{00}\int_{\nu=0}^{1}\bbl(\fvarss{D_{02}}{}{\nu}\fvars{\mbf{u}_2}{\nu}{}\bbr) \\
	&\qquad + \int_{\eta=0}^{1}\bbbl[\fvars{B_{01}}{}{\eta}\int_{\nu=0}^{1}\bbl(\fvarss{D_{12}}{\eta}{\nu}\fvars{\mbf{u}_2}{\nu}{}\bbr)\bbbr] \\
	&\qquad + \int_{\mu=0}^{1}\bbbbl[\fvars{B_{02}}{}{\mu}\int_{\nu=0}^{1}\bbbl(\sum_{i=0}^{2}\fvarss{\bs{\Phi}_i}{\mu}{\nu}\fvarss{D_{22}^{i}}{\mu}{\nu}\fvars{\mbf{u}_2}{\nu}{}\bbbr)\bbbbr] \\
	&\hspace*{2.5cm}= \int_{\nu=0}^{1}\bbl(\fvarss{R_{02}}{}{\nu}\fvars{\mbf{u}_2}{\nu}{}\bbr) = \mcl{P}\smallbmat{()&()&R_{02}\\()&()&()\\()&()&()}\mbf{u}_2.
	\end{align*}
	Similarly, such states get mapped to functions in $L_2^{n_1}[x]$ as
	\begin{align*}
	&\fvars{\left(\mcl{P}\smallbmat{()&()&()\\B_{10}&B_{11}&B_{12}\\()&()&()} \mcl{P}\smallbmat{D_{00}&()&()\\D_{10}&()&()\\D_{20}&()&()}u_0\right)}{x}{} = \fvarss{B_{10}}{x}{}D_{00}u_0 \\
	& + \int_{\eta=0}^{1}\! \bbbl(\sum_{i=0}^{2}\! \fvarss{\bs{\Phi}_i}{x}{\eta}\fvarss{B_{11}}{x}{\eta}\fvarss{D_{10}}{\eta}{}u_0\bbbr)\! 
	+\! \int_{\mu=0}^{1}\! \bbl(\fvarss{B_{12}}{x}{\mu}\fvarss{D_{20}}{\mu}{}u_0\bbr) \\
	&\hspace*{3.5cm}= \fvarss{R_{10}}{x}{} u_0 = \fvars{\left(\mcl{P}\smallbmat{()&()&()\\R_{10}&()&()\\()&()&()}u_0\right)}{x}{},
	\end{align*}
	and	
	\begin{align*}
	&\fvars{\left(\mcl{P}\smallbmat{()&()&()\\B_{10}&B_{11}&B_{12}\\()&()&()} \mcl{P}\smallbmat{()&D_{01}&()\\()&D_{11}&()\\()&D_{21}&()}\mbf{u}_1\right)}{x}{} \\
	& =  \fvars{B_{10}}{x}{}\int_{\theta=0}^{1}\bbl(\fvarss{D_{01}}{}{\theta}\fvars{\mbf{u}_1}{\theta}{}\bbr) 
	+ \fvars{\bl(\mcl{P}[B_{11}]\mcl{P}[D_{11}]\mbf{u}_1\bl)}{x}{} \\
	&\quad + \int_{\mu=0}^{1}\bbbl[\fvars{B_{12}}{x}{\mu}\int_{\theta=0}^{1}\bbl(\fvarss{D_{21}}{\mu}{\theta}\fvars{\mbf{u}_1}{\theta}{}\bbr)\bbbr] \\
	& = \int_{\theta=0}^{1}\bbl(\fvars{B_{10}}{x}{}\fvarss{D_{01}}{}{\theta}\fvars{\mbf{u}_1}{\theta}{}\bbr) +  \fvars{\bbl(\bl[\mcl{P}[B_{11}]\circ\mcl{P}[D_{11}]\br]\mbf{u}_1\bbl)}{x}{} \\
	&\quad + \int_{\theta=0}^{1}\bbbl[\int_{\mu=0}^{1}\bbl(\fvarss{B_{12}}{x}{\mu}\fvarss{D_{21}}{\mu}{\theta}\bbr)\fvars{\mbf{u}_1}{\theta}{}\bbbr] \\
	& =  \fvars{\bl(\mcl{P}[\mcl{L}_{101}(B_{10},D_{01})]\mbf{u}_1\br)}{x}{} + \fvars{\bl(\mcl{P}[\mcl{L}_{1D}(B_{11},D_{11})]\mbf{u}_1\br)}{x}{} \\
	&\quad + \fvars{\bl(\mcl{P}[\mcl{L}_{111}(B_{12},D_{21})]\mbf{u}_1\br)}{x}{} 
	= \fvars{\bl(\mcl{P}[R_{11}]\mbf{u}_1\br)}{x}{}	\\
	&\hspace*{4.6cm} = \fvars{\left(\mcl{P}\smallbmat{()&()&()\\()&R_{11}&()\\()&()&()}\mbf{u}_1\right)}{x}{},
	\end{align*}
	and finally
	\begin{align*}
	&\fvars{\left(\mcl{P}\smallbmat{()&()&()\\B_{10}&B_{11}&B_{12}\\()&()&()} \mcl{P}\smallbmat{()&()&D_{02}\\()&()&D_{12}\\()&()&D_{22}}\mbf{u}_2\right)}{x}{} \\
	&= \fvars{B_{10}}{x}{}\int_{\nu=0}^{1}\bbl(\fvarss{D_{02}}{}{\nu}\fvars{\mbf{u}_2}{\nu}{}\bbr) \\
	&\quad + \int_{\eta=0}^{1}\bbbl[\sum_{i=0}^{2}\fvarss{\bs{\Phi}_i}{x}{\eta}\fvars{B_{11}}{x}{\eta}\int_{\nu=0}^{1}\bbl(\fvarss{D_{12}}{\eta}{\nu}\fvars{\mbf{u}_2}{\nu}{}\bbr)\bbbr] \\
	&\quad + \int_{\mu=0}^{1}\bbbbl[\fvars{B_{12}}{x}{\mu}\int_{\nu=0}^{1}\bbbl(\sum_{i=0}^{2}\fvarss{\bs{\Phi}_i}{\mu}{\nu}\fvarss{D_{22}^{i}}{\mu}{\nu}\fvars{\mbf{u}_2}{\nu}{}\bbbr)\bbbbr] \\
	&\hspace*{2.0cm}= \int_{\nu=0}^{1}\bbl(\fvarss{R_{12}}{x}{\nu}\fvars{\mbf{u}_2}{\nu}{}\bbr) = \fvars{\left(\mcl{P}\smallbmat{()&()&()\\()&()&R_{12}\\()&()&()}\mbf{u}_2\right)}{x}{}.
	\end{align*}
	Similarly, we attain a mapping to $L_2^{n_1}[y]$ as
	\begin{align*}
	&\fvars{\left(\mcl{P}\smallbmat{()&()&()\\()&()&()\\B_{20}&B_{21}&B_{22}} \mcl{P}\smallbmat{D_{00}&()&()\\D_{10}&()&()\\D_{20}&()&()}u_0\right)}{y}{} = \fvarss{B_{20}}{y}{}D_{00}u_0 \\
	& +\!  \int_{\eta=0}^{1}\! \bbl(\fvarss{B_{21}}{y}{\eta}\fvarss{D_{10}}{\eta}{}u_0\bbr) +\!  \int_{\mu=0}^{1}\! \bbbl(\sum_{i=0}^{2}\fvarss{\bs{\Phi}_i}{y}{\mu}\fvarss{B_{22}}{y}{\mu}\fvarss{D_{20}}{\mu}{}u_0\bbbr) \\
	&\hspace*{3.5cm}= \fvarss{R_{20}}{y}{} u_0 = \fvars{\left(\mcl{P}\smallbmat{()&()&()\\()&()&()\\R_{20}&()&()}u_0\right)}{y}{},
	\end{align*}
	and	
	\begin{align*}
	&\fvars{\left(\mcl{P}\smallbmat{()&()&()\\()&()&()\\B_{20}&B_{21}&B_{22}} \mcl{P}\smallbmat{()&D_{01}&()\\()&D_{11}&()\\()&D_{21}&()}\mbf{u}_1\right)}{y}{} \\
	&= \fvars{B_{20}}{y}{}\int_{\theta=0}^{1}\bbl(\fvarss{D_{01}}{}{\theta}\fvars{\mbf{u}_1}{\theta}{}\bbr) \\
	&\quad + \int_{\eta=0}^{1}\bbbbl[\fvars{B_{21}}{y}{\eta}\int_{\theta=0}^{1}\bbbl(\sum_{i=0}^{2}\fvarss{\bs{\Phi}_i}{\eta}{\theta}\fvarss{D_{11}^{i}}{\eta}{\theta}\fvars{\mbf{u}_1}{\theta}{}\bbbr)\bbbbr] \\
	&\quad + \int_{\mu=0}^{1}\bbbl[\sum_{i=0}^{2}\fvarss{\bs{\Phi}_i}{y}{\mu}\fvars{B_{22}}{y}{\mu}\int_{\theta=0}^{1}\bbl(\fvarss{D_{21}}{\mu}{\theta}\fvars{\mbf{u}_1}{\theta}{}\bbr)\bbbr] \\
	&\hspace*{2.0cm}= \int_{\theta=0}^{1}\bbl(\fvarss{R_{21}}{y}{\theta}\fvars{\mbf{u}_1}{\theta}{}\bbr) = \fvars{\left(\mcl{P}\smallbmat{()&()&()\\()&()&()\\()&R_{21}&()}\mbf{u}_1\right)}{y}{},
	\end{align*}
	and finally
	\begin{align*}
	&\fvars{\left(\mcl{P}\smallbmat{()&()&()\\()&()&()\\B_{20}&B_{21}&B_{22}} \mcl{P}\smallbmat{()&()&D_{02}\\()&()&D_{12}\\()&()&D_{22}}\mbf{u}_2\right)}{y}{} \\
	& =  \fvars{B_{20}}{y}{}\int_{\nu=0}^{1}\bbl(\fvarss{D_{02}}{}{\nu}\fvars{\mbf{u}_2}{\nu}{}\bbr) 
	+ \fvars{\bl(\mcl{P}[B_{22}]\mcl{P}[D_{22}]\mbf{u}_2\bl)}{y}{} \\
	&\quad + \int_{\eta=0}^{1}\bbbl[\fvars{B_{21}}{y}{\eta}\int_{\nu=0}^{1}\bbl(\fvarss{D_{12}}{\eta}{\nu}\fvars{\mbf{u}_2}{\nu}{}\bbr)\bbbr] \\
	& = \int_{\nu=0}^{1}\bbl(\fvars{B_{20}}{y}{}\fvarss{D_{02}}{}{\nu}\fvars{\mbf{u}_2}{\nu}{}\bbr) +  \fvars{\bbl(\bl[\mcl{P}[B_{22}]\circ\mcl{P}[D_{22}]\br]\mbf{u}_2\bbl)}{y}{} \\
	&\quad + \int_{\nu=0}^{1}\bbbl[\int_{\eta=0}^{1}\bbl(\fvarss{B_{21}}{y}{\eta}\fvarss{D_{12}}{\eta}{\nu}\bbr)\fvars{\mbf{u}_2}{\nu}{}\bbbr] \\
	& =  \fvars{\bl(\mcl{P}[\mcl{L}_{101}(B_{20},D_{02})]\mbf{u}_2\br)}{y}{} + \fvars{\bl(\mcl{P}[\mcl{L}_{1D}(B_{22},D_{22})]\mbf{u}_2\br)}{y}{} \\
	&\quad + \fvars{\bl(\mcl{P}[\mcl{L}_{111}(B_{21},D_{12})]\mbf{u}_2\br)}{y}{} 
	= \fvars{\bl(\mcl{P}[R_{22}]\mbf{u}_2\br)}{x}{}	\\
	&\hspace*{4.6cm} = \fvars{\left(\mcl{P}\smallbmat{()&()&()\\()&()&()\\()&()&R_{22}}\mbf{u}_2\right)}{y}{}.
	\end{align*}
	Combining these results, we conclude $\mcl{P}[B]\circ\mcl{P}[D]=\mcl{P}[R]$.
	
\end{proof}

\subsection{An Algebra of 2D-PI Operators}\label{sec_2Dalgebra_appendix}

We now consider 2D-PI operators, acting on functions $\mbf{u}\in L_2[x,y]$. Defining a parameter space for these operators as
\begin{align}\label{eq_N2D_space_appendix}
\mcl{N}_{2D}^{n\times m}\! &:=\! \bmat{L_2^{n\times m}[x,y]   &\! \! L_2^{n\times m}[x,y,\nu] &\! \! L_2^{n\times m}[x,y,\nu]\\
	L_2^{n\times m}[x,y,\theta] &\! \! L_2^{n\times m}[x,y,\theta,\nu]   &\! \! L_2^{n\times m}[x,y,\theta,\nu]\\
	L_2^{n\times m}[x,y,\theta] &\! \! L_2^{n\times m}[x,y,\theta,\nu]   &\! \! L_2^{n\times m}[x,y,\theta,\nu]},
\end{align}
for any 
\begin{align*}
N&=\bmat{N_{00}&N_{01}&N_{02}\\N_{10}&N_{11}&N_{12}\\N_{20}&N_{21}&N_{22}}\in \mcl{N}_{2D}^{n\times m},
\end{align*}
we may write the associated PI operation using the functions $\bs{\Phi}_i$ as defined in Eqn.~\eqref{eq_Phi_appendix} as 
\begin{align*}
\fvars{\bl(\mcl{P}[N]\mbf{u}\br)}{xy}{}
&=\int_{\theta,\nu=0}^{1}\bbbl(\sum_{i,j=0}^{2}\bbl[\fvarss{\bs{\Phi}_{i}}{x}{\theta}\fvarss{\bs{\Phi}_{j}}{y}{\nu}\fvars{N_{ij}}{xy}{\theta\nu}\bbl]\fvars{\mbf{u}}{\theta\nu}{}\bbbr).
\end{align*}

\begin{lem}\label{lem_2Dalgebra_appendix}
	For any $N:=\smallbmat{N_{00}&N_{01}&N_{02}\\N_{10}&N_{11}&N_{12}\\N_{20}&N_{21}&N_{22}}\in \mcl{N}_{2D}^{n\times k}$ and $M:=\smallbmat{M_{00}&M_{01}&M_{02}\\M_{10}&M_{11}&M_{12}\\M_{20}&M_{21}&M_{22}}\in \mcl{N}_{2D}^{k\times m}$, there exists a unique
	$Q \in \mcl{N}_{2D}^{n\times m}$ such that $\mcl{P}[N]\circ\mcl{P}[M]=\mcl{P}[Q]$. Specifically, we may choose $Q=\mcl{L}_{2D}(N,M)\in\mcl{N}_{2D}^{n\times m}$, where the linear parameter map $\mcl{L}_{2D}:\mcl{N}_{2D}\times\mcl{N}_{2D}\rightarrow\mcl{N}_{2D}$ is such that
	\begin{align}\label{eq_composition_2Dto2D_1_appendix}
	\mcl{L}_{2D}(N,M)=\smallbmat{Q_{00}&Q_{01}&Q_{02}\\Q_{10}&Q_{11}&Q_{12}\\Q_{20}&Q_{21}&Q_{22}}\in\mcl{N}_{2D}^{n\times m},
	\end{align}
	where, defining functions $\bs{\Psi}_{kij}$ as in Eqn.~\eqref{eq_Psi_appendix},
	\begin{align*}
	\fvars{Q_{kr}}{x}{\theta}= 
	\int_{\eta,\mu=0}^{1}\bbbbl(\sum_{i,j,p,q=0}^{2}
	\fvarss{\bs{\Psi}_{kij}}{x}{\eta\theta} \fvarss{\bs{\Psi}_{rpq}}{y}{\mu\nu}
	\fvarss{N_{ip}}{xy}{\eta\mu}
	\fvars{M_{jq}}{\eta\mu}{\theta\nu}\bbbbr),
	\end{align*}
	for each $k\in\{0,1,2\}$.
\end{lem}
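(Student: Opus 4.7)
My plan is to mimic the proof of Lemma~\ref{lem_1Dalgebra_appendix}, but apply Proposition~\ref{prop_Psi_appendix} twice — once in the $x$-direction and once in the $y$-direction. The 2D kernel of $\mcl{P}[N]$ factorizes into $x$- and $y$-pieces through the basis functions $\bs{\Phi}_i$ and $\bs{\Phi}_j$, so two back-to-back applications of the 1D reduction should yield the claimed tensor-product $\bs{\Psi}$-kernel formula for $Q_{kr}$.

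First I would pick an arbitrary $\mbf{u}\in L_2^{m}[x,y]$ and write $\fvars{(\mcl{P}[N]\mcl{P}[M]\mbf{u})}{xy}{}$ as a fourfold integral over intermediate variables $(\eta,\mu)$ (from applying $\mcl{P}[N]$) and $(\theta,\nu)$ (from applying $\mcl{P}[M]$). Fubini applies because all factors lie in $L_2$ on a bounded domain, so I may freely interchange the order of integration. The integrand then contains the four-factor product $\fvarss{\bs{\Phi}_i}{x}{\eta}\fvarss{\bs{\Phi}_j}{\eta}{\theta}\fvarss{\bs{\Phi}_p}{y}{\mu}\fvarss{\bs{\Phi}_q}{\mu}{\nu}$, which separates into an $x$-pair and a $y$-pair. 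Next I would apply Proposition~\ref{prop_Psi_appendix} to the sum over $(i,j)$ in the $x$-direction, absorbing every other factor into the associated $W_{ij}$; this replaces the $x$-pair with $\sum_{k}\fvarss{\bs{\Phi}_k}{x}{\theta}\sum_{i,j}\fvarss{\bs{\Psi}_{kij}}{x}{\eta\theta}$. A second invocation of the same proposition in the $y$-direction, with the roles of $(x,\eta,\theta)$ now played by $(y,\mu,\nu)$, handles the $y$-pair analogously. Pulling $\fvarss{\bs{\Phi}_k}{x}{\theta}\fvarss{\bs{\Phi}_r}{y}{\nu}$ outside the $(\eta,\mu)$-integration leaves exactly the kernel $Q_{kr}$ defined in the statement, so $\mcl{P}[N]\mcl{P}[M]\mbf{u}=\mcl{P}[Q]\mbf{u}$.

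For uniqueness, I would observe that the nine basis products $\fvarss{\bs{\Phi}_i}{x}{\theta}\fvarss{\bs{\Phi}_j}{y}{\nu}$ are supported on essentially disjoint portions of the product domain (a Dirac point on $\{\theta=x,\nu=y\}$, four codimension-one ``edge'' components, and four open region indicators), which forces the parameter-to-operator map to be injective; so $Q$ is uniquely determined. The hard part will really be bookkeeping: the fourfold index sum over $i,j,p,q\in\{0,1,2\}$ produces $81$ terms, and keeping the pairing of $x$-side and $y$-side factors straight while applying Proposition~\ref{prop_Psi_appendix} twice requires care. Once the grouping is set up correctly, however, the calculation mechanically reduces to two independent invocations of the 1D identity — which is exactly why the authors encoded everything through the $\bs{\Phi}$/$\bs{\Psi}$ formalism in the first place.
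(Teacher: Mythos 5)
Your proposal matches the paper's proof essentially verbatim: expand the composition as a fourfold integral, interchange the order of integration, and apply Proposition~\ref{prop_Psi_appendix} to the $x$-pair and the $y$-pair of $\bs{\Phi}$ factors (the paper does both in a single step), which yields exactly the kernel $Q_{kr}$. The brief support-disjointness remark you add for uniqueness is not spelled out in the paper but is consistent with it and does not change the route.
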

\begin{proof}
	Let $\mbf{u}\in L_2^{m}[x,y]$ be arbitrary. Then, applying the results from Proposition~\ref{prop_Psi_appendix}, we find
	\begin{align*}
	&\fvars{\bl(\mcl{P}[N]\mcl{P}[M]\mbf{u}\br)}{xy}{}	\\
	&=\int_{\eta,\mu=0}^{1}\bbbbl[\sum_{i,p=0}^{2}
	\fvarss{\bs{\Phi}_{i}}{x}{\eta}
	\fvarss{\bs{\Phi}_{p}}{y}{\mu}
	\fvars{N_{ip}}{xy}{\eta\mu}	\\
	&\hspace*{2.25cm}
	\int_{\theta,\nu=0}^{1}\bbbl(\sum_{j,q=0}^{2}
	\fvarss{\bs{\Phi}_{j}}{\eta}{\theta}
	\fvarss{\bs{\Phi}_{q}}{\mu}{\nu}
	\fvarss{M_{jq}}{\eta\mu}{\theta\nu}
	\fvars{\mbf{u}}{\theta\nu}{}\bbbr)
	\bbbbr]      \\
	&=\int_{\theta,\nu=0}^{1}\bbbbl[
	\int_{\eta,\mu=0}^{1}\\
	&\hspace*{0.7cm}
	\bbbbl(\sum_{i,j,p,q=0}^{2} 	
	\fvarss{\bs{\Phi}_{i}}{x}{\eta}
	\fvarss{\bs{\Phi}_{j}}{\eta}{\theta}
	\fvarss{\bs{\Phi}_{p}}{y}{\mu}
	\fvarss{\bs{\Phi}_{q}}{\mu}{\nu}
	\fvarss{N_{ip}}{xy}{\eta\mu}
	\fvars{M_{jq}}{\eta\mu}{\theta\nu}
	\bbbbr)\fvars{\mbf{u}}{\theta\nu}{}
	\bbbbr]    \\
	&=\int_{\theta,\nu=0}^{1}\bbbbl[
	\sum_{k,r=0}^{2}
	\fvars{\bs{\Phi}_{k}}{x}{\theta}\fvars{\bs{\Phi}_{r}}{y}{\nu} 
	\int_{\eta,\mu=0}^{1} \\
	&\hspace*{1.5cm}
	\bbbbl(\sum_{i,j,p,q=0}^{2}
	\fvarss{\bs{\Psi}_{kij}}{x}{\eta\theta}
	\fvarss{\bs{\Psi}_{rpq}}{y}{\mu\nu}
	\fvarss{N_{ip}}{xy}{\eta\mu}
	\fvars{M_{jq}}{\eta\mu}{\theta\nu}
	\bbbbr)\fvars{\mbf{u}}{\theta\nu}{}
	\bbbbr]  \\
	&=\int_{\theta,\nu=0}^{1}\bbbbl(
	\sum_{k,r=0}^{2}
	\fvarss{\bs{\Phi}_{k}}{x}{\theta}
	\fvarss{\bs{\Phi}_{r}}{y}{\nu}
	\fvarss{Q_{kr}}{xy}{\theta\nu}
	\fvars{\mbf{u}}{\theta\nu}{}
	\bbbbr)
	=\fvars{(\mcl{P}[Q]\mbf{u})}{xy}{}
	\end{align*}
\end{proof}

\subsection{An Operator from 2D to 011}\label{sec_2Dto011algebra_appendix}

Having defined algebras of operators on $RL^{n_0,n_1}[x,y]:=\R^{n_0}\times L_2^{n_1}[x]\times L_2^{n_1}[y]$ and $L_2^{n_2}[x,y]$, it remains to define operators mapping $L_2[x,y]\rightarrow RL[x,y]$ and back. For this first mapping, we first define a parameter space 
\begin{align*}
\mcl{N}^{n\times m}_{2D\rightarrow 1D}&:=L_2^{n\times m}[x,\nu]\times L_2^{n\times m}[x,\theta,\nu]\times L_2^{n\times m}[x,\theta,\nu],
\end{align*}
with associated operator $\mcl{P}[N]:L_2^{m}[x,y]\rightarrow L_2^{n}[x]$ defined such that, for arbitrary $N=\{N_0,N_1,N_2\}\in\mcl{N}^{n\times m}_{2D\rightarrow 1D}$,
\begin{align*}
\fvars{(\mcl{P}[N]\mbf{u})}{x}{}:=\int_{\theta,\mu=0}^{1}\bbbbl(\sum_{i=0}^{2}\bbl[\fvarss{\bs{\Phi}_i}{x}{\theta}\fvarss{N_i}{x}{\theta\nu}\fvars{\mbf{u}}{\theta\nu}{}\bbr]\bbbbl).
\end{align*}
This operator maps functions on two variables to functions on a single variable, allowing us to build a mapping from $L_2[x,y]$ to $RL[x,y]$. In particular, letting
\begin{align}\label{eq_N2Dto011_space_appendix}
\mcl{N}_{2D\rightarrow 011}\smallbmat{n_0\\n_1\\m_2}&:=\bmat{L_2^{n_0\times m_2}[\theta,\nu]\\\mcl{N}_{2D\rightarrow 1D}^{n_1\times m_2}\\\mcl{N}_{2D\rightarrow 1D}^{n_1\times m_2}},	
\end{align}
an associated operator $\mcl{P}[D]:L_2^{m_2}[x,y]\rightarrow RL^{n_0,n_1}[x,y]$ may be defined such that, for any $D=\smallbmat{D_0\\D_1\\D_2}\in\mcl{N}_{2D\rightarrow 011}\smallbmat{n_0\\n_1\\m_2}$ and $\mbf{u}\in L_2^{m_2}[x,y]$,
\begin{align*}
\fvars{(\mcl{P}[D]\mbf{u})}{xy}{}:=\bmat{\int_{\theta,\nu=0}^{1}\bbl(\fvarss{D_0}{}{\theta\nu}\fvars{\mbf{u}}{\theta\nu}{}\bbr)\\
	\fvars{(\mcl{P}[D_1]\mbf{u})}{x}{}\\
	\fvars{(\mcl{P}[D_2]\mbf{u})}{y}{}}.
\end{align*}
This operator allows us to map functions $\mbf{u}\in L_2[x,y]$ to functions $\mbf{v}=\smallbmat{v_0\\\mbf{v}_1\\\mbf{v}_2}\in\smallbmat{\R\\L_2[x]\\L_2[y]}$, which is necessary to map state variables living on the interior of a 2D domain to state variables living on the boundary. An important property of this operator is also that its composition with 011- and 2D-PI operators returns another 2D$\rightarrow$011-PI operator, as described in the following lemmas.
\begin{lem}\label{lem_011x2Dto1Dalgebra_appendix}
	For any $B=\smallbmat{B_{00}&B_{01}&B_{02}\\B_{10}&B_{11}&B_{12}\\B_{20}&B_{21}&B_{22}}\in\mcl{N}_{011}\smallbmat{n_0&p_0\\n_1&p_1}$ and $D=\smallbmat{D_{0}\\D_{1}\\D_{2}}\in\mcl{N}_{2D\rightarrow 011}\smallbmat{p_0\\p_1\\m_2}$, where
	\begin{align*} &B_{11}=\{B_{11}^{0},B_{11}^{1},B_{11}^{2}\}\in\mcl{N}_{1D},\\ 
	&B_{22}=\{B_{22}^{0},B_{22}^{1},B_{22}^{2}\}\in\mcl{N}_{1D},\\ 
	&D_{1}=\{D_{1}^{0},D_{1}^{1},D_{1}^{2}\}\in\mcl{N}_{2D\rightarrow 1D},\\ 
	&D_{2}=\{D_{2}^{0},D_{2}^{1},D_{2}^{2}\}\in\mcl{N}_{2D\rightarrow 1D},
	\end{align*}
	there exists a unique $S\in\mcl{N}_{2D\rightarrow 011}\smallbmat{n_0\\n_1\\m_2}$ such that $\mcl{P}[B]\circ\mcl{P}[D]=\mcl{P}[S]$. Specifically, we may choose $S=\mcl{L}_{2D\rightarrow 011}^{1}(B,D)\in\mcl{N}_{2D\rightarrow 011}\smallbmat{n_0\\n_1\\m_2}$, where the linear parameter map $\mcl{L}_{2D\rightarrow 011}^{1}:\mcl{N}_{011}\times\mcl{N}_{2D\rightarrow 011}\rightarrow\mcl{N}_{2D\rightarrow 011}$ is defined such that
	\begin{align}\label{eq_composition_2Dto011_1_appendix}
	\mcl{L}_{2D\rightarrow 011}^{1}(B,D)=\bmat{S_{0}\\S_{1}\\S_{2}}\in\mcl{N}_{2D\rightarrow 011}\smallbmat{n_0\\n_1\\m_2},
	\end{align}
	with
	\begin{align*}
	\fvars{S_{0}}{}{\theta\nu}&=B_{00}\fvars{D_{0}}{}{\theta\nu} + \int_{\eta=0}^{1}\bbbbl(\fvars{B_{01}}{}{\eta}\sum_{i=0}^{2}\bbl[\fvarss{\bs{\Phi}_i}{\eta}{\theta}\fvars{D_{1}^{i}}{\eta}{\theta\nu}\bbr]\bbbbr)\\
	&\qquad + \int_{\mu=0}^{1}\bbbbl(\fvars{B_{02}}{}{\mu}\sum_{i=0}^{2}\bbl[\fvarss{\bs{\Phi}_i}{\mu}{\nu}\fvars{D_{2}^{i}}{\mu}{\theta\nu}\bbr]\bbbbr) \\ 
	S_{1}&=\mcl{L}_{0}(B_{10},D_{0})+\mcl{L}_{1}(B_{11},D_{1})+\mcl{L}_{2}(B_{12},D_{2})	\\
	S_{2}&=\mcl{L}_{0}(B_{20},D_{0})+\mcl{L}_{2}(B_{21},D_{1})+\mcl{L}_{1}(B_{22},D_{2}),	
	\end{align*}
	where $\mcl{L}_{0}:L_2[x]\times L_2[\theta,\nu]\rightarrow \mcl{N}_{2D\rightarrow 1D}$ is defined such that, for arbitrary $N\in L_2[x]$, $M\in L_2[\theta,\nu]$, $\mcl{L}_{0}(N,M)=\{Q_0,Q_1,Q_2\}$ with
	\begin{align*}
	\fvars{Q_{0}}{x}{\theta\nu}&=0	&
	\fvars{Q_{1}}{x}{\theta\nu}&=\fvars{Q_{2}}{x}{\theta\nu}
	=\fvarss{N}{x}{}\fvars{M}{}{\theta\nu},
	\end{align*}
	$\mcl{L}_{1}:\mcl{N}_{1D}\times \mcl{N}_{2D\rightarrow 1D}\rightarrow \mcl{N}_{2D\rightarrow 1D}$ is defined such that, for arbitrary $N=\{N_0,N_1,N_2\}\in \mcl{N}_{1D}$, $M=\{M_0,M_1,M_2\}\in \mcl{N}_{2D\rightarrow 1D}$, $\mcl{L}_{1}(N,M)=\{Q_0,Q_1,Q_2\}$ with
	\begin{align*}
	\fvars{Q_{k}}{x}{\theta\nu}&=\int_{\eta=0}^{1}\bbbl(\sum_{i,j=0}^{2}\fvarss{\bs{\Psi}_{kij}}{x}{\eta\theta}\fvarss{N_i}{x}{\eta}\fvars{M_j}{\eta}{\theta\nu}\bbbr)
	\end{align*}
	for $k\in\{0,1,2\}$, and $\mcl{L}_{2}:L_2[x,\nu]\times \mcl{N}_{2D\rightarrow 1D}\rightarrow \mcl{N}_{2D\rightarrow 1D}$ is defined such that, for arbitrary $N\in L_2[x,\nu]$, $M=\{M_0,M_1,M_2\}\in \mcl{N}_{2D\rightarrow 1D}$, $\mcl{L}_{2}(N,M)=\{Q_0,Q_1,Q_2\}$, where
	\begin{align*}
	\fvars{Q_{0}}{x}{\theta\nu}&=0	&
	\fvars{Q_{1}}{x}{\theta\nu}&=\fvars{Q_{2}}{x}{\theta\nu}=\int_{\mu=0}^{1}\bbbl(\sum_{i=0}^{2}\fvarss{\bs{\Phi}_{i}}{\mu}{\nu}\fvarss{N}{x}{\mu}\fvars{M_i}{\mu}{\theta\nu}\bbbr).
	\end{align*}
	
\end{lem}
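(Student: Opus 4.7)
The proof will closely parallel the proofs of Lemma~\ref{lem_011algebra_appendix} and Lemma~\ref{lem_2Dalgebra_appendix}, exploiting linearity of the operators in the parameters and the $\bs{\Phi}$/$\bs{\Psi}$ machinery introduced in Section~\ref{sec_PI_algebras_appendix} of the appendix. The overall strategy is to take an arbitrary $\mbf{u}\in L_2^{m_2}[x,y]$, compute $\mcl{P}[D]\mbf{u}\in RL^{p_0,p_1}$ explicitly, apply $\mcl{P}[B]$ to the resulting triple, and then identify the coefficients of the output with the claimed parameter map $\mcl{L}^1_{2D\to 011}(B,D)$.

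First, I would expand $\mcl{P}[D]\mbf{u}=\smallbmat{v_0\\ \mbf v_1\\ \mbf v_2}$ using the definition of a 2D$\to$011 operator, so that
\[
v_0=\int_{\theta,\nu=0}^{1}\bl(\fvarss{D_0}{}{\theta\nu}\fvars{\mbf u}{\theta\nu}{}\br),\quad
\fvars{\mbf v_1}{x}{}=\fvars{(\mcl P[D_1]\mbf u)}{x}{},\quad \fvars{\mbf v_2}{y}{}=\fvars{(\mcl P[D_2]\mbf u)}{y}{}.
\]
Then $\mcl{P}[B]\mcl{P}[D]\mbf u$ is a 3-vector whose entries I would compute one at a time. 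By linearity I may split $\mcl P[B]=\sum_{i,j}\mcl P[B_{ij}\,e_{ij}]$ (where $e_{ij}$ is a single-block selector), so the output in each slot is a sum of nine contributions, each of which is a composition of a simple block of $B$ with a single component of $\mcl P[D]\mbf u$. This reduces the problem to identifying, for each such composition, a corresponding 2D$\to$1D (or $L_2[\theta,\nu]$) parameter.

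Next I would handle the three slots in turn. For the top slot ($\R^{n_0}$), each contribution is an integral against $\mbf u$; collecting the integrand gives the claimed formula for $S_0$, where the inner $\int_{\eta}$ over $\bs{\Phi}_i(\eta,\theta)$-weighted pieces of $D_1^i$ arises from substituting the explicit form of $\mcl P[D_1]\mbf u$, and similarly for $D_2^i$. For the middle slot ($L_2^{n_1}[x]$) there are three distinct compositions to process:
(a) $B_{10}$ with the $\R$-component $v_0$, producing via $\mcl L_0$ a parameter in $\mcl N_{2D\to 1D}$ with only $Q_1=Q_2=B_{10}(x)D_0(\theta,\nu)$;
(b) $B_{11}$ with $\mcl P[D_1]\mbf u$, which is exactly a composition of a 1D-PI operator with a 2D$\to$1D operator, and here Proposition~\ref{prop_Psi_appendix} applied in the variable $x$ yields the $\mcl L_1$ formula with the $\bs\Psi_{kij}$ coefficients;
(c) $B_{12}$ integrated against $\mcl P[D_2]\mbf u$, giving $\mcl L_2(B_{12},D_2)$ after swapping the order of the $\mu$ and $\theta,\nu$ integrals. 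The bottom slot is analogous by the $x\leftrightarrow y$ symmetry built into the parameter space $\mcl N_{011}$, yielding $S_2$.

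Finally I would collect these contributions and read off that $\mcl P[B]\mcl P[D]\mbf u=\mcl P[S]\mbf u$ with $S$ as claimed, and uniqueness of $S$ follows by density of suitable $\mbf u$ (exactly as in the preceding lemmas). The main obstacle is purely bookkeeping: correctly tracking which dummy variable ($\theta,\nu,\eta,\mu$) belongs to which factor and making sure the application of Proposition~\ref{prop_Psi_appendix} in case (b) above is in the right variable (namely $x$, with $y$ acting as a parameter carried through $D_1$). There is no new analytic content beyond that already used for the 1D and 2D composition lemmas, so once the indexing is set up correctly the identification of $S_0$, $S_1$, $S_2$ follows line by line.
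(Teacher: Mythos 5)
Your proposal matches the paper's proof in all essentials: the paper likewise splits $\mcl{P}[B]$ by linearity into its row blocks, computes each output slot of $\mcl{P}[B]\mcl{P}[D]\mbf{u}$ explicitly, invokes Corollary~\ref{cor_integral_Phi12_appendix} to rewrite plain integrals in terms of $\bs{\Phi}_1+\bs{\Phi}_2$, and applies Proposition~\ref{prop_Psi_appendix} in the $x$ (resp.\ $y$) variable to obtain the $\mcl{L}_1$ terms, reading off $S_0$, $S_1$, $S_2$ exactly as you describe. No gaps; this is the same argument.
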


\begin{proof}
	To prove this lemma, we will exploit the linear structure of 011-PI operators, allowing us to express
	\begin{align*}
	&\mcl{P}\smallbmat{B_{00}&B_{01}&B_{02}\\B_{10}&B_{11}&B_{12}\\B_{20}&B_{21}&B_{22}}\mcl{P}\smallbmat{D_0\\D_1\\D_2} = 
	\mcl{P}\smallbmat{B_{00}&B_{01}&B_{02}\\0&0&0\\0&0&0}\mcl{P}\smallbmat{D_0\\D_1\\D_2}\\
	&\quad+ \mcl{P}\smallbmat{0&0&0\\B_{10}&B_{11}&B_{12}\\0&0&0}\mcl{P}\smallbmat{D_0\\D_1\\D_2} + \mcl{P}\smallbmat{0&0&0\\0&0&0\\B_{20}&B_{21}&B_{22}}\mcl{P}\smallbmat{D_0\\D_1\\D_2}.
	\end{align*}
	Considering each of these terms separately, we may invoke the definitions of the different operators to find that, for arbitrary $\mbf{u}\in L_2^{m_2}[x,y]$,
	\begin{align*}
	&\mcl{P}\smallbmat{B_{00}&B_{01}&B_{02}\\()&()&()\\()&()&()} \mcl{P}\smallbmat{D_{0}\\D_{1}\\D_{2}}\mbf{u} = B_{00}\int_{\theta,\nu=0}^{1}\bbl(\fvarss{D_{0}}{}{\theta\nu}\fvars{\mbf{u}}{\theta\nu}{}\bbr) \\
	&\qquad + \int_{\eta=0}^{1}\fvars{B_{01}}{}{\eta}\bbbbl(\int_{\theta,\mu=0}^{1}\bbbl[\sum_{i=0}^{2}\bbl(\fvarss{\bs{\Phi}_i}{\eta}{\theta}\fvarss{D_{1}^{i}}{\eta}{\theta\nu}\fvars{\mbf{u}}{\theta\nu}{}\bbr)\bbbl]\bbbbr)\\
	&\qquad + \int_{\mu=0}^{1}\fvars{B_{02}}{}{\mu}\bbbbl(\int_{\theta,\mu=0}^{1}\bbbl[\sum_{i=0}^{2}\bbl(\fvarss{\bs{\Phi}_i}{\mu}{\nu}\fvarss{D_{2}^{i}}{\mu}{\theta\nu}\fvars{\mbf{u}}{\theta\nu}{}\bbr)\bbbl]\bbbbr)\\
	&\hspace*{2.75cm}=\int_{\theta,\nu=0}^{1}\bbl(\fvarss{S_{0}}{}{\theta\nu}\fvars{\mbf{u}}{\theta\nu}{}\bbr)
	=\mcl{P}\smallbmat{S_{0}\\()\\()}\mbf{u}.
	\end{align*}
	In addition, using Corollary~\ref{cor_integral_Phi12_appendix} and Proposition~\ref{prop_Psi_appendix}, it follows that
	\begin{align*}
	&\fvars{\left(\mcl{P}\smallbmat{()&()&()\\B_{10}&B_{11}&B_{12}\\()&()&()} \mcl{P}\smallbmat{D_{0}\\D_{1}\\D_{2}}\mbf{u}\right)}{x}{}
	\! =\! \fvars{B_{10}}{x}{}\! \int_{\theta,\nu=0}^{1}\! \bbl(\fvarss{D_{0}}{}{\theta\nu}\fvars{\mbf{u}}{\theta\nu}{}\bbr) \\
	&+\!  \int_{\eta=0}^{1}\! \bbbbl(\sum_{i=0}^{2}\! \bbbl[\fvarss{\mbf{\Phi}_{i}}{x}{\eta}\fvars{B_{11}^{i}}{x}{\eta}\int_{\theta,\nu=0}^{1}\! \bbbl(\sum_{j=0}^{2}\bbl[\fvarss{\bs{\Phi}_j}{\eta}{\theta}\fvarss{D_{1}^{j}}{\eta}{\theta\nu}\fvars{\mbf{u}}{\theta\nu}{}\bbr]\bbbl)\bbbr]\bbbbr)\\
	&\hspace*{1.6cm} +\!  \int_{\mu=0}^{1}\! \fvars{B_{12}}{x}{\mu}\bbbbl(\int_{\theta,\nu=0}^{1}\! \bbbbl[\sum_{i=0}^{2}\bbl(\fvarss{\bs{\Phi}_i}{\mu}{\nu}\fvarss{D_{2}^{i}}{\mu}{\theta\nu}\fvars{\mbf{u}}{\theta\nu}{}\bbr)\bbbbl]\bbbbr)\\
	&=\int_{\theta,\nu=0}^{1}\bbbl(\sum_{k=1}^{2}\bbl[\fvarss{\bs{\Phi}_k}{x}{\theta}\fvarss{B_{10}}{x}{}\fvars{D_{0}}{}{\theta\nu}\bbr]\fvars{\mbf{u}}{\theta\nu}{}\bbr) \\
	&+\!  \int_{\theta,\nu=0}^{1}\! \bbbbl(\sum_{k=0}^{2}\fvars{\mbf{\Phi}_{k}}{x}{\theta}\int_{\eta=0}^{1}\! \bbbbl[\sum_{i,j=0}^{2}\fvarss{\mbf{\Psi}_{kij}}{x}{\eta\theta}\fvars{B_{11}^{i}}{x}{\eta}\fvars{D_{1}^{j}}{\eta}{\theta\nu}\bbbbr]\fvars{\mbf{u}}{\theta\nu}{}\bbbbr)\\
	&\hspace*{0.55cm}+\! \int_{\theta,\nu=0}^{1}\! \bbbbl(\sum_{k=1}^{2}\fvars{\bs{\Phi}_k}{x}{\theta}\int_{\mu=0}^{1}\! \bbbbl[\sum_{i=0}^{2}\fvarss{\bs{\Phi}_i}{\mu}{\nu}\fvarss{B_{12}}{x}{\mu}\fvars{D_{2}^{i}}{\mu}{\theta\nu}\bbbbl]\fvars{\mbf{u}}{\theta\nu}{}\bbbbr)\\
	&=\fvars{\bl(\mcl{P}[\mcl{L}_{0}(B_{10},D_{0})]\mbf{u}\br)}{x}{} +\fvars{\bl(\mcl{P}[\mcl{L}_{1}(B_{11},D_{1})]\mbf{u}\br)}{x}{}	\\ &\hspace*{2.5cm}+\fvars{\bl(\mcl{P}[\mcl{L}_{2}(B_{12},D_{2})]\mbf{u}\br)}{x}{} =\fvars{\left(\mcl{P}\smallbmat{()\\S_{1}\\()}\mbf{u}\right)}{x}.
	\end{align*}	
	Finally, by the same approach,
	\begin{align*}
	&\fvars{\left(\mcl{P}\smallbmat{()&()&()\\()&()&()\\B_{20}&B_{21}&B_{22}} \mcl{P}\smallbmat{D_{0}\\D_{1}\\D_{2}}\mbf{u}\right)}{y}{}
	\! =\!  \fvars{B_{20}}{y}{}\! \int_{\theta,\nu=0}^{1}\! \bbl(\fvarss{D_{0}}{}{\theta\nu}\fvars{\mbf{u}}{\theta\nu}{}\bbr) \\
	&+\!  \int_{\mu=0}^{1}\! \bbbbl(\sum_{i=0}^{2}\bbbl[\fvarss{\mbf{\Phi}_{i}}{y}{\mu}\fvars{B_{22}^{i}}{y}{\mu}\int_{\theta,\nu=0}^{1}\! \bbbl(\sum_{j=0}^{2}\bbl[\fvarss{\bs{\Phi}_j}{\mu}{\nu}\fvarss{D_{2}^{j}}{\mu}{\theta\nu}\fvars{\mbf{u}}{\theta\nu}{}\bbr]\bbbl)\bbbr]\bbbbr)\\
	&\hspace*{1.6cm} +\!  \int_{\eta=0}^{1}\! \fvars{B_{21}}{y}{\eta}\bbbbl(\int_{\theta,\nu=0}^{1}\! \bbbbl[\sum_{i=0}^{2}\bbl(\fvarss{\bs{\Phi}_i}{\eta}{\theta}\fvarss{D_{1}^{i}}{\eta}{\theta\nu}\fvars{\mbf{u}}{\theta\nu}{}\bbr)\bbbbl]\bbbbr)\\
	&=\int_{\theta,\nu=0}^{1}\bbbl(\sum_{k=1}^{2}\bbl[\fvarss{\bs{\Phi}_k}{y}{\nu}\fvarss{B_{20}}{y}{}\fvars{D_{0}}{}{\theta\nu}\bbr]\fvars{\mbf{u}}{\theta\nu}{}\bbr) \\
	&+\! \int_{\theta,\nu=0}^{1}\! \bbbbl(\sum_{k=0}^{2}\fvars{\mbf{\Phi}_{k}}{y}{\nu}\int_{\mu=0}^{1}\! \bbbbl[\sum_{i,j=0}^{2}\fvarss{\mbf{\Psi}_{kij}}{y}{\mu\nu}\fvars{B_{22}^{i}}{y}{\mu}\fvars{D_{2}^{j}}{\mu}{\theta\nu}\bbbbr]\fvars{\mbf{u}}{\theta\nu}{}\bbbbr)\\
	&\hspace*{0.55cm}+\!  \int_{\theta,\nu=0}^{1}\! \bbbbl(\sum_{k=1}^{2}\fvars{\bs{\Phi}_k}{y}{\nu}\int_{\eta=0}^{1}\! \bbbl[\sum_{i=0}^{2}\fvarss{\bs{\Phi}_i}{\eta}{\theta}\fvarss{B_{21}}{y}{\eta}\fvars{D_{1}^{i}}{\eta}{\theta\nu}\bbbl]\fvars{\mbf{u}}{\theta\nu}{}\bbbbr)\\
	&\quad=\fvars{\bl(\mcl{P}[\mcl{L}_{0}(B_{20},D_{0})]\mbf{u}\br)}{x}{} +	\fvars{\bl(\mcl{P}[\mcl{L}_{1}(B_{22},D_{2})]\mbf{u}\br)}{x}{}\\ &\hspace*{2.5cm}+\fvars{\bl(\mcl{P}[\mcl{L}_{2}(B_{21},D_{1})]\mbf{u}\br)}{x}{} =\fvars{\left(\mcl{P}\smallbmat{()\\()\\S_{2}}\mbf{u}\right)}{y}{}.
	\end{align*}
	Combining the results, we conclude that $\mcl{P}[B]\circ\mcl{P}[D]=\mcl{P}[S]$.
	
\end{proof}

\begin{lem}\label{lem_2Dto1Dx2Dalgebra_appendix}
	For any $N=\smallbmat{N_{00}&N_{01}&N_{02}\\N_{10}&N_{11}&N_{12}\\N_{20}&N_{21}&N_{22}}\in\mcl{N}_{2D}^{p_2\times m_2}$ and $D=\smallbmat{D_{0}\\D_{1}\\D_{2}}\in\mcl{N}_{2D\rightarrow 011}\smallbmat{n_0\\n_1\\p_2}$, where
	\begin{align*}
	&D_{1}=\{D_{1}^{0},D_{1}^{1},D_{1}^{2}\}\in\mcl{N}_{2D\rightarrow 1D},\\ 
	&D_{2}=\{D_{2}^{0},D_{2}^{1},D_{2}^{2}\}\in\mcl{N}_{2D\rightarrow 1D},
	\end{align*}
	there exists a unique $S\in\mcl{N}_{2D\rightarrow 011}\smallbmat{n_0\\n_1\\m_2}$ such that $\mcl{P}[D]\circ\mcl{P}[N]=\mcl{P}[S]$. Specifically, we may choose $S=\mcl{L}_{2D\rightarrow 011}^{2}(D,N)\in\mcl{N}_{2D\rightarrow 011}\smallbmat{n_0\\n_1\\m_2}$, where the linear parameter map $\mcl{L}_{2D\rightarrow 011}^{2}:\mcl{N}_{2D\rightarrow 011}\times\mcl{N}_{2D}\rightarrow\mcl{N}_{2D\rightarrow 011}$ is defined such that
	\begin{align}\label{eq_composition_2Dto011_2_appendix}
	\mcl{L}_{2D\rightarrow 011}^{2}(D,N)=\bmat{S_{0}\\S_{1}\\S_{2}}\in\mcl{N}_{2D\rightarrow 011}\smallbmat{n_0\\n_1\\m_2},
	\end{align}
	where
	\begin{align*}
	&\fvars{S_{0}}{}{\theta\nu}=\int_{\eta,\mu=0}^{1}\bbbbl(\sum_{j,q=0}^{2}\fvarss{\bs{\Phi}_j}{\eta}{\theta}\fvarss{\bs{\Phi}_q}{\mu}{\nu}\fvarss{D_0}{}{\eta\mu}\fvars{N_{jq}}{\eta\mu}{\theta\nu}\bbbbr)
	\end{align*}
	and where
	\begin{align*}
	&S_{1}=\{S_{1}^{0},S_{1}^{1},S_{1}^{2}\},	&
	&S_{2}=\{S_{2}^{0},S_{2}^{1},S_{2}^{2}\},	
	\end{align*}
	with
	\begin{align*}
	\fvars{S_{1}^{k}}{x}{\theta\nu}=\int_{\eta,\mu=0}^{1}\bbbbl(\sum_{i,j,q=0}^{2}\fvarss{\bs{\Psi}_{kij}}{x}{\eta\theta}\fvarss{\bs{\Phi}_q}{\mu}{\nu}\fvarss{D_1^{i}}{x}{\eta\mu}\fvars{N_{jq}}{\eta\mu}{\theta\nu}\bbbbr)	\\
	\fvars{S_{2}^{r}}{y}{\theta\nu}=\int_{\eta,\mu=0}^{1}\bbbbl(\sum_{j,p,q=0}^{2}\fvarss{\bs{\Phi}_j}{\eta}{\theta}\fvarss{\bs{\Psi}_{rpq}}{y}{\mu\nu}\fvarss{D_2^{p}}{y}{\eta\mu}\fvars{N_{jq}}{\eta\mu}{\theta\nu}\bbbbr),	\\
	\end{align*}
	for $k,r\in\{0,1,2\}$.
	
\end{lem}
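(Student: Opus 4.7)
The plan is to mimic the strategy used in Lemma~\ref{lem_2Dalgebra_appendix} and Lemma~\ref{lem_011x2Dto1Dalgebra_appendix}: write the composition component-by-component on an arbitrary $\mbf{u}\in L_2^{m_2}[x,y]$, use Fubini to interchange the inner integrals of $\mcl{P}[N]\mbf{u}$ with the outer integrals defining $\mcl{P}[D]$, and then apply Proposition~\ref{prop_Psi_appendix} to collapse nested $\bs{\Phi}$ factors in a \emph{single} spatial variable into the corresponding $\bs{\Psi}$ factors. The result of each calculation is then matched against the definitions of the 2D$\rightarrow$011 and 2D$\rightarrow$1D PI operators to read off the parameters of $S$.

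First I would write, using the $\bs{\Phi}$ notation,
\begin{align*}
\fvars{(\mcl{P}[N]\mbf{u})}{\eta\mu}{}
=\int_{\theta,\nu=0}^{1}\bbbl(\sum_{j,q=0}^{2}\fvarss{\bs{\Phi}_j}{\eta}{\theta}\fvarss{\bs{\Phi}_q}{\mu}{\nu}\fvarss{N_{jq}}{\eta\mu}{\theta\nu}\fvars{\mbf{u}}{\theta\nu}{}\bbbr),
\end{align*}
and substitute this into each of the three blocks of $\mcl{P}[D]$. For the $\R^{n_0}$-block (the $D_0$ entry), the composition introduces no extra $\bs{\Phi}$ factors, so interchanging the $(\eta,\mu)$ and $(\theta,\nu)$ integrals directly yields the claimed formula for $S_0$, with the four $\bs{\Phi}$ factors accounting for the four possible integration regions in $(\eta,\mu)$.

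For the $L_2^{n_1}[x]$-block, the composition produces a triple sum of terms of the form $\fvarss{\bs{\Phi}_i}{x}{\eta}\fvarss{\bs{\Phi}_j}{\eta}{\theta}$ along the $x$-direction, while the $y$-direction contributes only $\fvarss{\bs{\Phi}_q}{\mu}{\nu}$ (no nesting). I would then apply Proposition~\ref{prop_Psi_appendix} to the $x$-direction alone to rewrite
\[
\sum_{i,j=0}^{2}\fvarss{\bs{\Phi}_i}{x}{\eta}\fvarss{\bs{\Phi}_j}{\eta}{\theta}W_{ij}
=\sum_{k=0}^{2}\fvars{\bs{\Phi}_k}{x}{\theta}\sum_{i,j=0}^{2}\fvarss{\bs{\Psi}_{kij}}{x}{\eta\theta}W_{ij},
\]
which immediately puts the expression in the form $\int_{\theta,\nu}\sum_{k}\fvarss{\bs{\Phi}_k}{x}{\theta}\fvarss{S_1^k}{x}{\theta\nu}\fvars{\mbf{u}}{\theta\nu}{}$, with $S_1^k$ exactly as stated. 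The $L_2^{n_1}[y]$-block is handled symmetrically, applying Proposition~\ref{prop_Psi_appendix} in the $y$-direction (producing $\bs{\Psi}_{rpq}$) while leaving the $x$-direction with a single $\bs{\Phi}_j$, giving $S_2^r$.

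Uniqueness of $S$ follows from the same observation used in the earlier lemmas: the parameters of a 2D$\rightarrow$011 PI operator are determined by the kernel of the operator as an integral operator on $L_2[x,y]$, modulo the identification of the $\bs{\Phi}$ supports. The main obstacle is purely bookkeeping — one must be careful that only the composition in the \emph{spatial} variable used by $D_1$ (resp.\ $D_2$) gets collapsed via $\bs{\Psi}$, while the other spatial variable retains its single $\bs{\Phi}_q$ factor inherited from $N$, and that Fubini is applicable since all parameters are in $L_2$ and the integration regions are bounded.
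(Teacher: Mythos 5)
Your proposal is correct and follows essentially the same route as the paper's proof: decompose $\mcl{P}[D]\circ\mcl{P}[N]$ block-by-block, interchange the order of integration, and apply Proposition~\ref{prop_Psi_appendix} only in the spatial direction where nested $\bs{\Phi}$ factors arise ($x$ for the $D_1$ block, $y$ for the $D_2$ block, neither for the $D_0$ block). The resulting parameters match the stated $S_0$, $S_1^k$, and $S_2^r$ exactly as in the paper.
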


\begin{proof}
	
	To prove this result, we once again note that, by linearity of the PI operators,
	\begin{align*}
	\mcl{P}[D]\mcl{P}[N]= \mcl{P}\smallbmat{D_0\\0\\0}\mcl{P}[N]+ \mcl{P}\smallbmat{0\\D_1\\0}\mcl{P}[N]+ \mcl{P}\smallbmat{0\\0\\D_2}\mcl{P}[N].
	\end{align*}
	Considering each of these terms separately, we find that, for arbitrary $\mbf{u}\in L_2[x,y]$,
	\begin{align*}
	&\mcl{P}\smallbmat{D_0\\()\\()}\mcl{P}[N]\mbf{u}\\
	&=\int_{\eta,\mu=0}^{1}\bbbbl(\fvars{D_0}{}{\eta\mu}\int_{\theta,\nu=0}^{1}\bbbbl[\sum_{j,q=0}^{2}\fvarss{\bs{\Phi}_j}{\eta}{\theta}\fvarss{\bs{\Phi}_q}{\mu}{\nu}\fvarss{N_{jq}}{\eta\mu}{\theta\nu}\fvars{\mbf{u}}{\theta\nu}{}\bbbbr]\bbbbr)\\
	&=\int_{\theta,\nu=0}^{1}\bbbbl(\int_{\eta,\mu=0}^{1}
	\bbbbl[\sum_{j,q=0}^{2}\fvarss{\bs{\Phi}_j}{\eta}{\theta}\fvarss{\bs{\Phi}_q}{\mu}{\nu}\fvarss{D_0}{}{\eta\mu}\fvars{N_{jq}}{\eta\mu}{\theta\nu}\bbbbr]\fvars{\mbf{u}}{\theta\nu}{}\bbbbr)\\
	&=\int_{\theta,\nu=0}^{1}\bbl(\fvarss{S_0}{}{\theta\nu}\fvars{\mbf{u}}{\theta\nu}{}\bbr)=\mcl{P}\smallbmat{S_0\\()\\()}\mbf{u}.
	\end{align*}	
	Similarly, using Proposition~\ref{prop_Psi_appendix}, we find
	\begin{align*}
	&\fvars{\left(\mcl{P}\smallbmat{()\\D_1\\()}\mcl{P}[N]\mbf{u}\right)}{x}{}\\
	&=\int_{\eta,\mu=0}^{1}\bbbbl(\sum_{i=0}^{2}\fvarss{\bs{\Phi}_i}{x}{\eta}\fvars{D_1^{i}}{x}{\eta\mu}\int_{\theta,\nu=0}^{1}\\
	&\hspace*{4.0cm}\bbbbl[\sum_{j,q=0}^{2}\fvarss{\bs{\Phi}_j}{\eta}{\theta}\fvarss{\bs{\Phi}_q}{\mu}{\nu}\fvarss{N_{jq}}{\eta\mu}{\theta\nu}\fvars{\mbf{u}}{\theta\nu}{}\bbbbr]\bbbbr)\\
	&=\int_{\theta,\nu=0}^{1}\bbbbl(\sum_{k=0}^{2}\fvars{\bs{\Phi}_k}{x}{\theta}\int_{\eta,\mu=0}^{1}\\
	&\hspace*{2.5cm}\bbbbl[\sum_{i,j,q=0}^{2}\fvarss{\bs{\Psi}_{kij}}{x}{\eta\theta}\fvarss{\bs{\Phi}_q}{\mu}{\nu}\fvarss{D_1^{i}}{x}{\eta\mu}\fvars{N_{jq}}{\eta\mu}{\theta\nu}\bbbbr]\fvars{\mbf{u}}{\theta\nu}{}\bbbbr)\\
	&=\int_{\theta,\nu=0}^{1}\bbbbl(\sum_{k=0}^{2}\fvarss{\bs{\Phi}_k}{x}{\theta\nu}\fvarss{S_1^k}{x}{\theta\nu}\fvars{\mbf{u}}{\theta\nu}{}\bbbbr)=\fvars{\left(\mcl{P}\smallbmat{()\\S_1\\()}\mbf{u}\right)}{x}{},
	\end{align*}
	and
	\begin{align*}
	&\fvars{\left(\mcl{P}\smallbmat{()\\()\\D_2}\mcl{P}[N]\mbf{u}\right)}{y}{}\\
	&=\int_{\eta,\mu=0}^{1}\bbbbl(\sum_{p=0}^{2}\fvarss{\bs{\Phi}_p}{y}{\mu}\fvars{D_2^{p}}{y}{\eta\mu}\int_{\theta,\nu=0}^{1}\\
	&\hspace*{4.0cm}\bbbbl[\sum_{j,q=0}^{2}\fvarss{\bs{\Phi}_j}{\eta}{\theta}\fvarss{\bs{\Phi}_q}{\mu}{\nu}\fvarss{N_{jq}}{\eta\mu}{\theta\nu}\fvars{\mbf{u}}{\theta\nu}{}\bbbbr]\bbbbr)\\
	&=\int_{\theta,\nu=0}^{1}\bbbbl(\sum_{r=0}^{2}\fvars{\bs{\Phi}_r}{y}{\nu}\int_{\eta,\mu=0}^{1}\\
	&\hspace*{2.5cm}\bbbbl[\sum_{j,p,q=0}^{2}\fvarss{\bs{\Phi}_j}{\eta}{\theta}\fvarss{\bs{\Psi}_{rpq}}{y}{\mu\nu}\fvarss{D_2^{p}}{y}{\eta\mu}\fvars{N_{jq}}{\eta\mu}{\theta\nu}\bbbbr]\fvars{\mbf{u}}{\theta\nu}{}\bbbbr)\\
	&=\int_{\theta,\nu=0}^{1}\bbbbl(\sum_{r=0}^{2}\fvarss{\bs{\Phi}_r}{x}{\theta\nu}\fvarss{S_2^r}{y}{\theta\nu}\fvars{\mbf{u}}{\theta\nu}{}\bbbbr)=\fvars{\left(\mcl{P}\smallbmat{()\\()\\S_2}\mbf{u}\right)}{y}{}.
	\end{align*}
	Combining the results, we conclude that $\mcl{P}[D]\circ\mcl{P}[N]=\mcl{P}[S]$.
	
\end{proof}

\subsection{An Operator from 011 to 2D}\label{sec_011to2Dalgebra_appendix}

In addition to the operator mapping $L_2^{n_2}[x,y]$ to $RL^{n_0,n_1}[x,y]:=\R^{n_0}\times L_2^{n_1}[x]\times L_2^{n_1}[y]$ defined in the previous section, we also define an operator performing the inverse of this mapping. For this, we first define a parameter space
\begin{align*}
\mcl{N}^{n\times m}_{1D\rightarrow 2D}&:=L_2^{n\times m}[x,y]\times L_2^{n\times m}[x,y,\theta]\times L_2^{n\times m}[x,y,\theta]	
\end{align*}
with associated operator $\mcl{P}[N]:L_2^{m}[x]\rightarrow L_2^{n}[x,y]$ defined such that, for arbitrary $N=\{N_0,N_1,N_2\}\in\mcl{N}^{n\times m}_{1D\rightarrow 2D}$ and $\mbf{u}\in L_2^{m}[x]$,
\begin{align*}
\fvars{(\mcl{P}[N]\mbf{u})}{xy}{}:=\int_{\theta=0}^{1}\bbbbl(\sum_{i=0}^{2}\bbl[\fvarss{\bs{\Phi}_i}{x}{\theta}\fvarss{N_i}{xy}{\theta}\fvars{\mbf{u}}{\theta}{}\bbr]\bbbbl).
\end{align*}
Building upon this, we define yet another space
\begin{align}\label{eq_N011to2D_space_appendix}
\mcl{N}_{011\rightarrow 2D}\smallbmat{m_0\\m_1\\n_2}&:=\bmat{L_2^{n_2\times m_0}[x,y]\\\mcl{N}_{1D\rightarrow 2D}^{n_2\times m_1}\\\mcl{N}_{1D\rightarrow 2D}^{n_2\times m_1}},
\end{align}
with associated PI operator $\mcl{P}[E]:RL^{m_0,m_1}\rightarrow L_2^{n_2}[x,y]$ such that, for arbitrary $E=\smallbmat{E_0\\E_1\\E_2}\in\mcl{N}_{011\rightarrow 2D}\smallbmat{m_0\\m_1\\n_2}$ and $\mbf{u}=\smallbmat{u_0\\\mbf{u}_1\\\mbf{u}_2}\in RL^{m_0,m_1}[x,y]$,
\begin{align*}
\fvars{(\mcl{P}[E]\mbf{u})}{xy}{}:=\fvarss{E_0}{xy}{}u_0+\fvars{(\mcl{P}[E_1]\mbf{u}_1)}{xy}{}+\fvars{(\mcl{P}[E_2]\mbf{u})}{xy}{}.
\end{align*}
This operator maps functions in $\mbf{u}=\smallbmat{u_0\\\mbf{u}_1\\\mbf{u}_2}\in\smallbmat{\R\\L_2[x]\\L_2[y]}$ to functions $\mbf{v}\in L_2[x,y]$, allowing us to map state variables living on the boundary of a 2D domain to state variables living on its interior. As was the case for 2D$\rightarrow$011-PI operators, the composition of 011$\rightarrow$2D-PI operators with 011- and 2D-PI operators too can be expressed as a PI operator, as described in the following lemmas.

\begin{lem}\label{lem_1Dto2Dx011algebra_appendix}
	For any $E=\smallbmat{E_{0}\\E_{1}\\E_{2}}\in\mcl{N}_{011\rightarrow 2D}\smallbmat{p_0\\p_1\\n_2}$ and $B=\smallbmat{B_{00}&B_{01}&B_{02}\\B_{10}&B_{11}&B_{12}\\B_{20}&B_{21}&B_{22}}\in\mcl{N}_{011}\smallbmat{p_0&m_0\\p_1&m_1}$, where
	\begin{align*} &B_{11}=\{B_{11}^{0},B_{11}^{1},B_{11}^{2}\}\in\mcl{N}_{1D}^{p_1\times m_1},\\ 
	&B_{22}=\{B_{22}^{0},B_{22}^{1},B_{22}^{2}\}\in\mcl{N}_{1D}^{p_1\times m_1},\\ 
	&E_{1}=\{E_{1}^{0},E_{1}^{1},E_{1}^{2}\}\in\mcl{N}_{1D\rightarrow 2D}^{n_2\times m_1},\\ 
	&E_{2}=\{E_{2}^{0},E_{2}^{1},E_{2}^{2}\}\in\mcl{N}_{1D\rightarrow 2D}^{n_2\times m_1},
	\end{align*}
	there exists a unique $T\in\mcl{N}_{011\rightarrow 2D}\smallbmat{m_0\\m_1\\n_2}$ such that $\mcl{P}[E]\circ\mcl{P}[B]=\mcl{P}[T]$. Specifically, we may choose $T=\mcl{L}_{011\rightarrow 2D}^{1}(E,B)\in\mcl{N}_{011\rightarrow 2D}\smallbmat{m_0\\m_1\\n_2}$, where the linear parameter map $\mcl{L}_{011\rightarrow 2D}^{1}:\mcl{N}_{011\rightarrow 2D}\times\mcl{N}_{011}\rightarrow\mcl{N}_{011\rightarrow 2D}$ is defined such that
	\begin{align}\label{eq_composition_011to2D_1_appendix}
	\mcl{L}_{011\rightarrow 2D}^{1}(E,B)=\bmat{T_{0}\\T_{1}\\T_{2}}\in\mcl{N}_{011\rightarrow 2D}\smallbmat{m_0\\m_1\\n_2},
	\end{align}
	where
	\begin{align*}
	&\fvars{T_{0}}{xy}{}=\fvars{E_{0}}{xy}{}B_{00} + \int_{\eta=0}^{1}\bbbbl(\sum_{i=0}^{2}\bbl[\fvarss{\bs{\Phi}_i}{x}{\eta}\fvars{E_{1}^{i}}{xy}{\eta}\fvars{B_{10}}{\eta}{}\bbr]\bbbbr)\\
	&\qquad\quad + \int_{\mu=0}^{1}\bbbbl(\sum_{i=0}^{2}\bbl[\fvarss{\bs{\Phi}_i}{y}{\mu}\fvars{E_{2}^{i}}{xy}{\mu}\fvars{B_{20}}{\mu}{}\bbr]\bbbbr) \\ 
	&T_{1}=\mcl{L}_{0}(E_{0},B_{01})+\mcl{L}_{1}(E_{1},B_{11})+\mcl{L}_{2}(E_{2},B_{21})	\\
	&T_{2}=\mcl{L}_{0}(E_{0},B_{02})+\mcl{L}_{2}(E_{1},B_{12})+\mcl{L}_{1}(E_{2},B_{22}),	
	\end{align*}
	and where $\mcl{L}_{0}:L_2[\theta,\nu]\times L_2[x]\rightarrow \mcl{N}_{1D\rightarrow 2D}$ is defined such that, for arbitrary $N\in L_2[x,y]$, $M\in L_2[\theta]$, $\mcl{L}_{0}(N,M)=\{Q_0,Q_1,Q_2\}$ with
	\begin{align*}
	\fvars{Q_{0}}{xy}{\theta}&=0	&
	\fvars{Q_{1}}{xy}{\theta}&=\fvars{Q_{2}}{xy}{\theta}
	=\fvarss{N}{xy}{}\fvars{M}{}{\theta},
	\end{align*}
	and $\mcl{L}_{1}:\mcl{N}_{1D\rightarrow 2D}\times \mcl{N}_{1D}\rightarrow \mcl{N}_{1D\rightarrow 2D}$ is defined such that, for arbitrary $N=\{N_0,N_1,N_2\}\in \mcl{N}_{1D\rightarrow 2D}$, $M=\{M_0,M_1,M_2\}\in \mcl{N}_{1D}$, $\mcl{L}_{1}(N,M)=\{Q_0,Q_1,Q_2\}$ with
	\begin{align*}
	\fvars{Q_{k}}{xy}{\theta}&=\int_{\eta=0}^{1}\bbbl(\sum_{i,j=0}^{2}\fvarss{\bs{\Psi}_{kij}}{x}{\eta\theta}\fvarss{N_i}{xy}{\eta}\fvars{M_j}{\eta}{\theta}\bbbr)
	\end{align*}
	for $k\in\{0,1,2\}$, and $\mcl{L}_{2}:\mcl{N}_{1D\rightarrow 2D}\times L_2[x,\nu]\rightarrow \mcl{N}_{1D\rightarrow 2D}$ is defined such that, for arbitrary $N=\{N_0,N_1,N_2\}\in\mcl{N}_{1D\rightarrow 2D}$, $M\in L_2[y,\theta]$, $\mcl{L}_{2}(N,M)=\{Q_0,Q_1,Q_2\}$, where
	\begin{align*}
	\fvars{Q_{0}}{xy}{\theta}&=0	&
	\fvars{Q_{1}}{xy}{\theta}&=\fvars{Q_{2}}{xy}{\theta}=\int_{\mu=0}^{1}\bbbl(\sum_{i=0}^{2}\fvarss{\bs{\Phi}_{i}}{y}{\mu}\fvarss{N_i}{xy}{\mu}\fvars{M}{\mu}{\theta}\bbbr).
	\end{align*}
	
\end{lem}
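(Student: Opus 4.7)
The plan is to mirror the proof of Lemma~\ref{lem_011x2Dto1Dalgebra_appendix} almost verbatim, with the roles of input and output swapped: here the 2D piece sits on the output side of the composition while the 011 piece sits on the input side. By linearity of all PI operations, I would decompose $\mcl{P}[E]$ along its three output components $E_0,E_1,E_2$ and $\mcl{P}[B]$ along the three input columns (corresponding to $u_0\in\R^{m_0}$, $\mbf{u}_1\in L_2^{m_1}[x]$, $\mbf{u}_2\in L_2^{m_1}[y]$), reducing the identity $\mcl{P}[E]\circ\mcl{P}[B]=\mcl{P}[T]$ to nine pairwise compositions, each contributing to exactly one of $T_0$, $T_1$, or $T_2$.

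For the $T_0$ column I would apply $\mcl{P}[B]$ to an arbitrary $u_0$, obtaining the triple $\smallbmat{B_{00}u_0\\B_{10}(x)u_0\\B_{20}(y)u_0}\in RL^{p_0,p_1}$; then applying $\mcl{P}[E]$ and expanding $\mcl{P}[E_1]$, $\mcl{P}[E_2]$ produces $E_0(x,y)B_{00}u_0+\int_0^1\sum_i\fvarss{\bs{\Phi}_i}{x}{\eta}\fvarss{E_1^i}{xy}{\eta}\fvarss{B_{10}}{\eta}{}u_0\,d\eta+\int_0^1\sum_i\fvarss{\bs{\Phi}_i}{y}{\mu}\fvarss{E_2^i}{xy}{\mu}\fvarss{B_{20}}{\mu}{}u_0\,d\mu$, which is exactly $T_0(x,y)u_0$ as stated. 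The $T_1$ and $T_2$ columns are handled analogously, with $T_2$ obtained from $T_1$ by swapping $x\leftrightarrow y$, $E_1\leftrightarrow E_2$, $B_{11}\leftrightarrow B_{22}$, and $B_{12}\leftrightarrow B_{21}$. For $T_1$, the nine-term expansion splits into a pure multiplier-times-integral piece $E_0\circ B_{01}$ (matching $\mcl{L}_0(E_0,B_{01})$), a cross piece $E_2\circ B_{21}$ in which the $y$-integration from $E_2$ and the $\theta$-integration from $B_{21}$ are independent (matching $\mcl{L}_2(E_2,B_{21})$), and the nested piece $E_1\circ B_{11}$ in which two PI integrals act in the same variable $x$. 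This last piece is the only place Proposition~\ref{prop_Psi_appendix} enters: it collapses $\fvarss{\bs{\Phi}_i}{x}{\eta}\fvarss{\bs{\Phi}_j}{\eta}{\theta}$ into $\sum_k\fvarss{\bs{\Phi}_k}{x}{\theta}\fvars{\bs{\Psi}_{kij}}{x}{\eta\theta}$, yielding exactly the formula defining $\mcl{L}_1(E_1,B_{11})\in\mcl{N}_{1D\rightarrow 2D}$.

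The main obstacle, as in the earlier composition lemmas, is not any individual calculation but the bookkeeping needed to keep the multiplier slot (the coefficient of $\bs{\Phi}_0=\bs{\delta}$) distinct from the integral slots (coefficients of $\bs{\Phi}_1,\bs{\Phi}_2$). In particular, terms such as $E_0\circ B_{01}$ arise via integration of $\mbf{u}_1$ over all of $[0,1]$, and via Corollary~\ref{cor_integral_Phi12_appendix} naturally live in the $Q_1,Q_2$ slots of the auxiliary operator $\mcl{L}_0$ with $Q_0\equiv 0$; this asymmetry must be respected when matching against the closed-form definitions of $\mcl{L}_0,\mcl{L}_1,\mcl{L}_2$ in the lemma statement. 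Uniqueness of $T$ then follows from the canonical direct-sum decomposition of $\mcl{N}_{011\rightarrow 2D}$ along the three input components, together with the uniqueness of the kernel coefficients in each $\bs{\Phi}$-slot for PI operators on $L_2$.
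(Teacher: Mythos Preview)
Your proposal is correct and follows essentially the same approach as the paper: decompose by the three input columns of $\mcl{P}[B]$ (acting on $u_0$, $\mbf{u}_1$, $\mbf{u}_2$), then within each column expand the action of $\mcl{P}[E]$ into its three pieces $E_0,E_1,E_2$, invoking Corollary~\ref{cor_integral_Phi12_appendix} for the full-integral terms and Proposition~\ref{prop_Psi_appendix} for the nested $E_1\circ B_{11}$ (resp.\ $E_2\circ B_{22}$) piece. One minor terminological slip: $E_0,E_1,E_2$ are not ``output components'' of $\mcl{P}[E]$ but rather the pieces acting on the three \emph{input} components of $RL^{p_0,p_1}$; the output is always in $L_2^{n_2}[x,y]$.
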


\begin{proof}
	To prove this lemma, we will exploit the linear structure of 011-PI operators, allowing us to express
	\begin{align*}
	&\mcl{P}\smallbmat{E_0\\E_1\\E_2}\mcl{P}\smallbmat{B_{00}&B_{01}&B_{02}\\B_{10}&B_{11}&B_{12}\\B_{20}&B_{21}&B_{22}} = 
	\mcl{P}\smallbmat{E_0\\E_1\\E_2}\mcl{P}\smallbmat{B_{00}&0&0\\B_{10}&0&0\\B_{20}&0&0}\\
	&\qquad\qquad+ \mcl{P}\smallbmat{E_0\\E_1\\E_2}\mcl{P}\smallbmat{0&B_{01}&0\\0&B_{11}&0\\0&B_{21}&0}+ \mcl{P}\smallbmat{E_0\\E_1\\E_2}\mcl{P}\smallbmat{0&0&B_{02}\\0&0&B_{12}\\0&0&B_{22}}.
	\end{align*}
	Considering each of these terms separately, we find that for an arbitrary function $\mbf{u}=\smallbmat{u_0\\\mbf{u}_1\\\mbf{u}_2}\in RL^{m_0,m_1}$,
	\begin{align*}
	&\fvars{\left(\mcl{P}\smallbmat{E_0\\E_1\\E_2}\mcl{P}\smallbmat{B_{00}&()&()\\B_{10}&()&()\\B_{20}&()&()}u_0\right)}{xy}{} = \fvarss{E_{0}}{xy}{}B_{00}u_0 \\
	&\qquad + \int_{\eta=0}^{1}\bbbbl(\sum_{i=0}^{2}\bbl[\fvarss{\bs{\Phi}_i}{x}{\eta}\fvarss{E_{1}^{i}}{xy}{\eta}\fvarss{B_{10}}{\eta}{}u_0\bbr]\bbbbr)\\
	&\qquad + \int_{\mu=0}^{1}\bbbbl(\sum_{i=0}^{2}\bbl[\fvarss{\bs{\Phi}_i}{y}{\mu}\fvarss{E_{2}^{i}}{xy}{\mu}\fvars{B_{20}}{\mu}{}u_0\bbr]\bbbbr)\\
	&\hspace*{2.75cm}=\bbl(\fvarss{T_{0}}{xy}{}u_0\bbr)
	=\fvars{\left(\mcl{P}\smallbmat{T_{0}\\()\\()}u_0\right)}{xy}{}.
	\end{align*}
	Furthermore, using Corollary~\ref{cor_integral_Phi12_appendix} and Proposition~\ref{prop_Psi_appendix}, we find that
	\begin{align*}
	&\fvars{\left(\mcl{P}\smallbmat{E_0\\E_1\\E_2}\mcl{P}\smallbmat{0&B_{01}&0\\0&B_{11}&0\\0&B_{21}&0}\mbf{u}_1\right)}{xy}{} = \int_{\theta=0}^{1}\bbl(\fvarss{E_{0}}{xy}{}\fvarss{B_{01}}{}{\theta}\fvars{\mbf{u}_1}{\theta}{}\bbr) \\
	&+ \int_{\eta=0}^{1}\bbbbl(\sum_{i=0}^{2}\bbbl[\fvarss{\mbf{\Phi}_{i}}{x}{\eta}\fvars{E_{1}^{i}}{xy}{\eta}\int_{\theta=0}^{1}\bbbl(\sum_{j=0}^{2}\bbl[\fvarss{\bs{\Phi}_j}{\eta}{\theta}\fvarss{B_{11}^{j}}{\eta}{\theta}\fvars{\mbf{u}_1}{\theta}{}\bbr]\bbbl)\bbbr]\bbbbr)\\
	&+ \int_{\mu=0}^{1}\bbbbl(\sum_{i=0}^{2}\bbbl[\fvarss{\bs{\Phi}_i}{y}{\mu}\fvars{E_{2}^{i}}{xy}{\mu}\int_{\theta=0}^{1}\bbl(\fvarss{B_{21}}{\mu}{\theta}\fvars{\mbf{u}_1}{\theta}{}\bbr)\bbbr]\bbbbl)\\
	&=\int_{\theta=0}^{1}\bbbl(\sum_{k=1}^{2}\fvars{\bs{\Phi}_k}{x}{\theta}\bbl[\fvarss{E_{0}}{xy}{}\fvars{B_{01}}{}{\theta}\bbr]\fvars{\mbf{u}_1}{\theta}{}\bbr) \\
	&+ \int_{\theta=0}^{1}\bbbbl(\sum_{k=0}^{2}\fvars{\mbf{\Phi}_{k}}{x}{\theta}\int_{\eta=0}^{1}\bbbbl[\sum_{i,j=0}^{2}\fvarss{\mbf{\Psi}_{kij}}{x}{\eta\theta}\fvars{E_{1}^{i}}{xy}{\eta}\fvars{B_{11}^{j}}{\eta}{\theta}\bbbbr]\fvars{\mbf{u}_1}{\theta}{}\bbbbr)\\
	&+ \int_{\theta=0}^{1}\bbbbl(\sum_{k=1}^{2}\fvars{\bs{\Phi}_k}{x}{\theta}\int_{\mu=0}^{1}\bbbbl[\sum_{i=0}^{2}\fvarss{\bs{\Phi}_i}{y}{\mu}\fvarss{E_{2}}{xy}{\mu}\fvars{B_{21}^{i}}{\mu}{\theta}\bbbbl]\fvars{\mbf{u}_1}{\theta}{}\bbbbr)\\
	&=\fvars{\bl(\mcl{P}[\mcl{L}_{0}(E_{0},B_{01})]\mbf{u}_1\br)}{xy}{} +\fvars{\bl(\mcl{P}[\mcl{L}_{1}(E_{1},B_{11})]\mbf{u}_1\br)}{xy}{}	\\ &\hspace*{1.75cm}+\fvars{\bl(\mcl{P}[\mcl{L}_{2}(E_{2},B_{21})]\mbf{u}_1\br)}{xy}{} =\fvars{\left(\mcl{P}\smallbmat{()\\T_{1}\\()}\mbf{u}_1\right)}{xy}{}.
	\end{align*}
	Finally, performing the same steps
	\begin{align*}
	&\fvars{\left(\mcl{P}\smallbmat{E_0\\E_1\\E_2}\mcl{P}\smallbmat{0&0&B_{02}\\0&0&B_{12}\\0&0&B_{22}}\mbf{u}_2\right)}{xy}{} = \int_{\nu=0}^{1}\bbl(\fvarss{E_{0}}{xy}{}\fvarss{B_{02}}{}{\nu}\fvars{\mbf{u}_2}{\nu}{}\bbr) \\
	&+ \int_{\eta=0}^{1}\bbbbl(\sum_{i=0}^{2}\bbbl[\fvarss{\bs{\Phi}_i}{x}{\eta}\fvars{E_{1}^{i}}{xy}{\eta}\int_{\nu=0}^{1}\bbl(\fvarss{B_{12}}{\eta}{\nu}\fvars{\mbf{u}_2}{\nu}{}\bbr)\bbbr]\bbbbl)\\
	&+ \int_{\mu=0}^{1}\bbbbl(\sum_{i=0}^{2}\bbbl[\fvarss{\mbf{\Phi}_{i}}{y}{\mu}\fvars{E_{2}^{i}}{xy}{\mu}\int_{\nu=0}^{1}\bbbl(\sum_{j=0}^{2}\bbl[\fvarss{\bs{\Phi}_j}{\mu}{\nu}\fvarss{B_{22}^{j}}{\mu}{\nu}\fvars{\mbf{u}_2}{\nu}{}\bbr]\bbbl)\bbbr]\bbbbr)\\
	&=\int_{\nu=0}^{1}\bbbl(\sum_{k=1}^{2}\fvars{\bs{\Phi}_k}{y}{\nu}\bbl[\fvarss{E_{0}}{xy}{}\fvars{B_{02}}{}{\nu}\bbr]\fvars{\mbf{u}_2}{\nu}{}\bbr) \\
	&+ \int_{\nu=0}^{1}\bbbbl(\sum_{k=1}^{2}\fvars{\bs{\Phi}_k}{y}{\nu}\int_{\eta=0}^{1}\bbbbl[\sum_{i=0}^{2}\fvarss{\bs{\Phi}_i}{x}{\eta}\fvarss{E_{1}}{xy}{\eta}\fvars{B_{12}^{i}}{\eta}{\nu}\bbbbl]\fvars{\mbf{u}_2}{\nu}{}\bbbbr)\\
	&+ \int_{\nu=0}^{1}\bbbbl(\sum_{k=0}^{2}\fvars{\mbf{\Phi}_{k}}{y}{\nu}\int_{\mu=0}^{1}\bbbbl[\sum_{i,j=0}^{2}\fvarss{\mbf{\Psi}_{kij}}{y}{\mu\nu}\fvars{E_{2}^{i}}{xy}{\mu}\fvars{B_{22}^{j}}{\mu}{\nu}\bbbbr]\fvars{\mbf{u}_2}{\nu}{}\bbbbr)\\
	&=\fvars{\bl(\mcl{P}[\mcl{L}_{0}(E_{0},B_{02})]\mbf{u}_2\br)}{xy}{} +\fvars{\bl(\mcl{P}[\mcl{L}_{2}(E_{1},B_{12})]\mbf{u}_2\br)}{xy}{}	\\ &\hspace*{1.75cm}+\fvars{\bl(\mcl{P}[\mcl{L}_{1}(E_{2},B_{22})]\mbf{u}_2\br)}{xy}{} =\fvars{\left(\mcl{P}\smallbmat{()\\()\\T_{2}}\mbf{u}_2\right)}{xy}{}.
	\end{align*}
	Combining the results, we conclude that $\mcl{P}[E]\circ\mcl{P}[B]=\mcl{P}[T]$.	
	
\end{proof}

\begin{lem}\label{lem_2Dx1Dto2Dalgebra_appendix}
	For any $N=\smallbmat{N_{00}&N_{01}&N_{02}\\N_{10}&N_{11}&N_{12}\\N_{20}&N_{21}&N_{22}}\in\mcl{N}_{2D}^{n_2\times p_2}$ and $E=\smallbmat{E_{0}\\E_{1}\\E_{2}}\in\mcl{N}_{011\rightarrow 2D}\smallbmat{m_0\\m_1\\p_2}$, where
	\begin{align*}
	&E_{1}=\{E_{1}^{0},E_{1}^{1},E_{1}^{2}\}\in\mcl{N}_{1D\rightarrow 2D}^{p_2\times m_1},\\ 
	&E_{2}=\{E_{2}^{0},E_{2}^{1},E_{2}^{2}\}\in\mcl{N}_{1D\rightarrow 2D}^{p_2\times m_1},
	\end{align*}
	there exists a unique $T\in\mcl{N}_{011\rightarrow 2D}\smallbmat{n_0\\n_1\\m_2}$ such that $\mcl{P}[N]\circ\mcl{P}[E]=\mcl{P}[T]$. Specifically, we may choose $T=\mcl{L}_{011\rightarrow 2D}^{2}(N,E)\in\mcl{N}_{011\rightarrow 2D}\smallbmat{m_0\\m_1\\n_2}$, where the linear parameter map $\mcl{L}_{011\rightarrow 2D}^{2}:\mcl{N}_{2D}\times\mcl{N}_{011\rightarrow 2D}\rightarrow\mcl{N}_{011\rightarrow 2D}$ is defined such that
	\begin{align}\label{eq_composition_011to2D_2_appendix}
	\mcl{L}_{011\rightarrow 2D}^{2}(N,E)=\bmat{T_{0}\\T_{1}\\T_{2}}\in\mcl{N}_{011\rightarrow 2D}\smallbmat{m_0\\m_1\\n_2},
	\end{align}
	where
	\begin{align*}
	&\fvars{T_{0}}{xy}{}=\int_{\eta,\mu=0}^{1}\bbbbl(\sum_{i,p=0}^{2}\fvarss{\bs{\Phi}_i}{x}{\eta}\fvarss{\bs{\Phi}_p}{y}{\mu}\fvarss{N_{ip}}{xy}{\eta\mu}\fvarss{E_0}{\eta\mu}{}\bbbbr)
	\end{align*}
	and
	\begin{align*}
	&T_{1}=\{T_{1}^{0},T_{1}^{1},T_{1}^{2}\},	&
	&T_{2}=\{T_{2}^{0},T_{2}^{1},T_{2}^{2}\},	
	\end{align*}
	with
	\begin{align*}
	\fvars{T_{1}^{k}}{xy}{\theta}=\int_{\eta,\mu=0}^{1}\bbbbl(\sum_{i,j,p=0}^{2}\fvarss{\bs{\Psi}_{kij}}{x}{\eta\theta}\fvarss{\bs{\Phi}_p}{y}{\mu}\fvarss{N_{ip}}{xy}{\eta\mu}\fvarss{E_1^j}{\eta\mu}{\theta}\bbbbr)	\\
	\fvars{T_{2}^{r}}{xy}{\nu}=\int_{\eta,\mu=0}^{1}\bbbbl(\sum_{i,p,q=0}^{2}\fvarss{\bs{\Phi}_i}{x}{\eta}\fvarss{\bs{\Psi}_{rpq}}{y}{\mu\nu}\fvarss{N_{ip}}{xy}{\eta\mu}\fvarss{E_2^q}{\eta\mu}{\nu}\bbbbr),	\\
	\end{align*}
	for $k,r\in\{0,1,2\}$.
	
\end{lem}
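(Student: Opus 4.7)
The plan is to mimic the structure of the proof of Lemma~\ref{lem_2Dto1Dx2Dalgebra_appendix}, exploiting linearity of the PI operators to reduce the composition to three separate cases, one for each "block" of $E$. Specifically, I would write
\[
\mcl{P}[N]\mcl{P}\smallbmat{E_0\\E_1\\E_2} = \mcl{P}[N]\mcl{P}\smallbmat{E_0\\0\\0} + \mcl{P}[N]\mcl{P}\smallbmat{0\\E_1\\0} + \mcl{P}[N]\mcl{P}\smallbmat{0\\0\\E_2},
\]
and show that each term, when applied to a corresponding component $u_0\in\R^{m_0}$, $\mbf{u}_1\in L_2^{m_1}[x]$, or $\mbf{u}_2\in L_2^{m_1}[y]$, produces precisely the corresponding block $T_0$, $T_1$, or $T_2$ of a PI operator in $\mcl{N}_{011\rightarrow 2D}$.

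For the first term, I would expand $\mcl{P}[N]$ in its standard form in terms of $\bs{\Phi}_i,\bs{\Phi}_p$, then evaluate $\bl(\mcl{P}\smallbmat{E_0\\0\\0}u_0\br)(\eta,\mu) = \fvars{E_0}{\eta\mu}{}u_0$, and swap the order so that the $\bs{\Phi}_i,\bs{\Phi}_p$ factors sit outside the $\eta,\mu$ integral. The resulting kernel (after pulling out $u_0$) is exactly the proposed $T_0$. For the $T_1$ part, the composition introduces two consecutive indicator-type factors $\bs{\Phi}_i(x,\eta)\bs{\Phi}_j(\eta,\theta)$ in the $x$-direction, which I would collapse via Proposition~\ref{prop_Psi_appendix} into the form $\sum_k \bs{\Phi}_k(x,\theta)\bs{\Psi}_{kij}(x,\eta,\theta)$; the $y$-direction is untouched so only a single $\bs{\Phi}_p(y,\mu)$ appears. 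Reading off the kernel of the resulting operator gives the claimed formula for $T_1^k$. The $T_2$ case is completely symmetric, with the roles of $x$ and $y$ swapped and $\bs{\Psi}_{rpq}$ arising in the $y$-direction.

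Uniqueness follows because the resulting parameters are obtained by reading off the kernel of a bounded integral operator, which is uniquely determined almost everywhere once the operator is fixed; this parallels the corresponding observation in the earlier lemmas.

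The main obstacle, as in the preceding lemmas, is purely bookkeeping: keeping track of which dummy variables ($\eta,\mu,\theta,\nu$) are inner versus outer, and applying Proposition~\ref{prop_Psi_appendix} in the correct direction (once in $x$ for $T_1$, once in $y$ for $T_2$) while the \emph{other} direction retains only a single $\bs{\Phi}$. No new analytic content is needed beyond Corollary~\ref{cor_integral_Phi12_appendix} and Proposition~\ref{prop_Psi_appendix}, which together justify both the expansion of a single $\bs{\Phi}_p(y,\mu)$ when integrated against a Dirac-free kernel and the collapse of the product of two consecutive $\bs{\Phi}$'s. I expect the proof to be essentially a mirror image of that of Lemma~\ref{lem_2Dto1Dx2Dalgebra_appendix}, with no new difficulties beyond careful indexing.
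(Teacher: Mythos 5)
Your proposal is correct and follows essentially the same route as the paper's proof: decompose by linearity into the $E_0$, $E_1$, $E_2$ blocks, interchange the order of integration, and apply Proposition~\ref{prop_Psi_appendix} once in the $x$-direction for $T_1$ and once in the $y$-direction for $T_2$, leaving a single $\bs{\Phi}$ factor in the untouched direction. The only cosmetic difference is that the paper does not need to invoke Corollary~\ref{cor_integral_Phi12_appendix} here, since every term in this composition already carries its $\bs{\Phi}$ factors.
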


\begin{proof}
	
	Applying Proposition~\ref{prop_Psi_appendix}, and the definitions of the operators $\mcl{P}[E]$ and $\mcl{P}[N]$, it follows that, for arbitrary $\mbf{u}=\smallbmat{u_0\\\mbf{u}_1\\\mbf{u}_2}\in RL^{m_0,m_1}$,
	\begin{align*}
	&\fvars{\left(\mcl{P}[N]\mcl{P}[E]\mbf{u}\right)}{xy}{}\\
	&= \int_{\eta,\mu=0}^{1}\bbbbl(\sum_{i,p=0}^{2}\fvarss{\bs{\Phi}_i}{x}{\eta}\fvarss{\bs{\Phi}_p}{y}{\mu}\fvarss{N_{ip}}{xy}{\eta\mu}\fvarss{E_0}{\eta\mu}{}u_0\bbbbr)\\
	&+\int_{\eta,\mu=0}^{1}\bbbbl(\sum_{i,p=0}^{2}\fvarss{\bs{\Phi}_i}{x}{\eta}\fvarss{\bs{\Phi}_p}{y}{\mu}\fvars{N_{ip}}{xy}{\eta\mu}\\
	&\hspace*{3.0cm} \int_{\theta=0}^{1}\bbbbl[\sum_{j=0}^{2}\fvarss{\bs{\Phi}_j}{\eta}{\theta}\fvarss{E_1^j}{\eta\mu}{\theta}\fvars{\mbf{u}_1}{\theta}{}\bbbbr]\bbbbr)\\
	&+\int_{\eta,\mu=0}^{1}\bbbbl(\sum_{i,p=0}^{2}\fvarss{\bs{\Phi}_i}{x}{\eta}\fvarss{\bs{\Phi}_p}{y}{\mu}\fvars{N_{ip}}{xy}{\eta\mu}\\
	&\hspace*{3.0cm} \int_{\nu=0}^{1}\bbbbl[\sum_{q=0}^{2}\fvarss{\bs{\Phi}_q}{\mu}{\nu}\fvarss{E_2^q}{\eta\mu}{\nu}\fvars{\mbf{u}_2}{\nu}{}\bbbbr]\bbbbr)\\
	&= \int_{\eta,\mu=0}^{1}\bbbbl(\sum_{i,p=0}^{2}\fvarss{\bs{\Phi}_i}{x}{\eta}\fvarss{\bs{\Phi}_p}{y}{\mu}\fvarss{N_{ip}}{xy}{\eta\mu}\fvarss{E_0}{\eta\mu}{}\bbbbr)u_0\\
	&+\int_{\theta=0}^{1}
	\bbbbl(\sum_{k=0}^{2}\fvars{\bs{\Phi}_k}{x}{\theta}\\
	&\qquad \int_{\eta,\mu=0}^{1}\bbbbl[\sum_{i,j,p=0}^{2}\fvarss{\bs{\Psi}_{kij}}{x}{\eta\theta}\fvarss{\bs{\Phi}_p}{y}{\mu}\fvarss{N_{ip}}{xy}{\eta\mu}\fvarss{E_1^j}{\eta\mu}{\theta}\bbbbr]\fvars{\mbf{u}_1}{\theta}{}\bbbbr)\\
	&+\int_{\nu=0}^{1}
	\bbbbl(\sum_{r=0}^{2}\fvars{\bs{\Phi}_r}{y}{\nu}\\
	&\qquad \int_{\eta,\mu=0}^{1}\bbbbl[\sum_{i,p,q=0}^{2}\fvarss{\bs{\Phi}_i}{x}{\eta}\fvarss{\bs{\Psi}_{rpq}}{y}{\mu\nu}\fvarss{N_{ip}}{xy}{\eta\mu}\fvarss{E_2^q}{\eta\mu}{\nu}\bbbbr]\fvars{\mbf{u}_2}{\nu}{}\bbbbr)\\
	&=\fvarss{T_0}{xy}{}u_0 + \int_{\theta=0}^{1}\bbbbl(\sum_{k=0}^{2}\fvarss{\bs{\Phi}_k}{x}{\theta}\fvarss{T_1^k}{xy}{\theta}\fvars{\mbf{u}_1}{\theta}{}\bbbbr)\\
	&\hspace*{1.0cm} + \int_{\nu=0}^{1}\bbbbl(\sum_{r=0}^{2}\fvarss{\bs{\Phi}_r}{y}{\nu}\fvarss{T_2^r}{xy}{\nu}\fvars{\mbf{u}_2}{\nu}{}\bbbbr)\\
	&=\fvars{\left(\mcl{P}\smallbmat{T_0\\()\\()}u_0\right)}{xy}{} + \fvars{\left(\mcl{P}\smallbmat{()\\T_1\\()}\mbf{u}_1\right)}{xy}{} + \fvars{\left(\mcl{P}\smallbmat{()\\()\\T_2}\mbf{u}_2\right)}{xy}{}\\
	&=\fvars{\bl(\mcl{P}[T]\mbf{u}\br)}{xy}{},
	\end{align*}
	as desired.
	
\end{proof}

In addition to these compositions with 011- and 2D-PI operators, the compositions of 011$\rightarrow$2D-PI and 2D$\rightarrow$011-PI can also be expressed as PI operators, as described in the following lemmas.

\begin{lem}\label{lem_2Dto1Dx1Dto2D algebra_appendix}
	For any $D=\smallbmat{D_{0}\\D_{1}\\D_{2}}\in\mcl{N}_{2D\rightarrow011}\smallbmat{n_0\\n_1\\p_2}$ and  $E=\smallbmat{E_{0}\\E_{1}\\E_{2}}\in\mcl{N}_{011\rightarrow 2D}\smallbmat{m_0\\m_1\\p_2}$, where
	\begin{align*} &D_{1}=\{D_{1}^{0},D_{1}^{1},D_{1}^{2}\}\in\mcl{N}_{2D\rightarrow 1D}^{n_1\times p_2},\\ 
	&D_{2}=\{D_{2}^{0},D_{2}^{1},D_{2}^{2}\}\in\mcl{N}_{2D\rightarrow 1D}^{n_1\times p_2},\\ 
	&E_{1}=\{E_{1}^{0},E_{1}^{1},E_{1}^{2}\}\in\mcl{N}_{1D\rightarrow 2D}^{p_2\times m_1},\\ 
	&E_{2}=\{E_{2}^{0},E_{2}^{1},E_{2}^{2}\}\in\mcl{N}_{1D\rightarrow 2D}^{p_2\times m_1},
	\end{align*}
	there exists a unique $R\in\mcl{N}_{011}\smallbmat{n_0&m_0\\n_1&m_1}$ such that $\mcl{P}[D]\circ\mcl{P}[E]=\mcl{P}[R]$. Specifically, we may choose $R=\mcl{L}_{011\rightarrow 011}(D,E)\in\mcl{N}_{011}\smallbmat{n_0&m_0\\n_1&m_1}$, where the linear parameter map $\mcl{L}_{011\rightarrow 011}:\mcl{N}_{2D\rightarrow 011}\times\mcl{N}_{011\rightarrow 2D}\rightarrow\mcl{N}_{011}$ is defined such that
	\begin{align}\label{eq_composition_011to011_2_appendix}
	\mcl{L}_{011\rightarrow 011}(D,E)=\bmat{R_{00}&R_{01}&R_{02}\\R_{10}&R_{11}&R_{12}\\R_{20}&R_{21}&R_{22}}\in\mcl{N}_{011}\smallbmat{n_0&m_0\\n_1&m_1},
	\end{align}
	where
	\begin{align*}
	&R_{00}=\int_{\eta,\nu=0}^{1}\bbl(\fvarss{D_{0}}{}{\eta\mu}\fvarss{E_{0}}{\eta\mu}{}\bbr)\\	
	&\fvars{R_{10}}{x}{}=\int_{\eta,\mu=0}^{1}\bbbbl(\sum_{i=0}^{k}\fvarss{\bs{\Phi}_i}{x}{\eta}\fvarss{D_1^{i}}{x}{\eta\mu}\fvarss{E_0}{\eta\mu}{}\bbbbr)\\
	&\fvars{R_{20}}{y}{}=\int_{\eta,\mu=0}^{1}\bbbbl(\sum_{i=0}^{k}\fvarss{\bs{\Phi}_i}{y}{\mu}\fvarss{D_2^{i}}{y}{\eta\mu}\fvarss{E_0}{\eta\mu}{}\bbbbr)\\
	&\fvars{R_{01}}{}{\theta}=\int_{\eta,\mu=0}^{1}\bbbbl(\sum_{i=0}^{2}\fvarss{\bs{\Phi}_i}{\eta}{\theta}\fvarss{D_0}{}{\eta\mu}\fvars{E_1^{i}}{\eta\mu}{\theta}\bbbbr)\\
	&\fvars{R_{21}}{y}{\theta}=\int_{\eta,\mu=0}^{1}\bbbbl(\sum_{i,j=0}^{2}\fvarss{\bs{\Phi}_i}{y}{\mu}\fvarss{\bs{\Phi}_j}{\eta}{\theta}\fvarss{D_2^{i}}{y}{\eta\mu}\fvarss{E_1^{j}}{\eta\mu}{\theta}\bbbbr)\\
	&\fvars{R_{02}}{}{\nu}=\int_{\eta,\mu=0}^{1}\bbbbl(\sum_{i=0}^{2}\fvarss{\bs{\Phi}_i}{\mu}{\nu}\fvarss{D_0}{}{\eta\mu}\fvars{E_2^{i}}{\eta\mu}{\nu}\bbbbr)	\\
	&\fvars{R_{12}}{x}{\nu}=\int_{\eta,\mu=0}^{1}\bbbbl(\sum_{i,j=0}^{2}\fvarss{\bs{\Phi}_i}{x}{\eta}\fvarss{\bs{\Phi}_j}{\mu}{\nu}\fvarss{D_1^{i}}{x}{\eta\mu}\fvarss{E_2^{j}}{\eta\mu}{\nu}\bbbbr)
	\end{align*}
	and where
	\begin{align*}
	 R_{11}&=\{R_{11}^{0},R_{11}^{1},R_{11}^{2}\},	&
	 R_{22}&=\{R_{22}^{0},R_{22}^{1},R_{22}^{2}\},
	\end{align*}
	with
	\begin{align*}
	&\fvars{R_{11}^{k}}{x}{\theta}=\int_{\eta,\mu=0}^{1}\bbbbl(\sum_{i,j=0}^{2}\fvarss{\bs{\Psi}_{kij}}{x}{\eta\theta}\fvarss{D_1^{i}}{x}{\eta\mu}\fvars{E_1^{j}}{\eta\mu}{\theta}\bbbbr) \\
	&\fvars{R_{22}^{k}}{y}{\nu}=\int_{\eta,\mu=0}^{1}\bbbbl(\sum_{i,j=0}^{2}\fvarss{\bs{\Psi}_{kij}}{y}{\mu\nu}\fvarss{D_2^{i}}{y}{\eta\mu}\fvars{E_2^{j}}{\eta\mu}{\nu}\bbbbr),
	\end{align*}
	for $k\in\{0,1,2\}$.
	
\end{lem}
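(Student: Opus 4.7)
The plan is to mimic the strategy used in Lemmas~\ref{lem_011x2Dto1Dalgebra_appendix}, \ref{lem_2Dto1Dx2Dalgebra_appendix}, \ref{lem_1Dto2Dx011algebra_appendix}, and \ref{lem_2Dx1Dto2Dalgebra_appendix}: exploit linearity of both PI operators to split $\mcl{P}[D]\circ\mcl{P}[E]$ into nine component compositions indexed by the block components of $D$ and $E$, evaluate each composition directly from the definitions using the $\bs{\Phi}_i$ formalism, and then identify each resulting map with the corresponding block of the target parameter $R\in\mcl{N}_{011}\smallbmat{n_0&m_0\\n_1&m_1}$ introduced in the statement.

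More concretely, I would take an arbitrary $\mbf{u}=\smallbmat{u_0\\\mbf{u}_1\\\mbf{u}_2}\in RL^{m_0,m_1}$ and first apply $\mcl{P}[E]$ to obtain an intermediate $\mbf{v}\in L_2^{p_2}[x,y]$ expressed as a sum of three contributions: $\fvarss{E_0}{xy}{}u_0$, plus a 1D-to-2D integral in $\theta$ involving $E_1^j$ and $\mbf{u}_1$, plus a 1D-to-2D integral in $\nu$ involving $E_2^j$ and $\mbf{u}_2$. I would then feed this $\mbf{v}$ into $\mcl{P}[D]$, using the definition of the 2D$\rightarrow$011 operator, which contributes a scalar component (via $D_0$ and a double integral in $\eta,\mu$), a function of $x$ (via $D_1^i$ and integrals in $\eta,\mu$ with $\bs{\Phi}_i^{x\eta}$), and a function of $y$ (via $D_2^i$ with $\bs{\Phi}_i^{y\mu}$). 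Fubini then lets me exchange the order of integration so that the composite kernel sits inside an outer integral over $\theta$ or $\nu$ acting on $\mbf{u}_1$ or $\mbf{u}_2$.

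For the six ``mixed'' blocks $R_{00},R_{10},R_{20},R_{01},R_{02},R_{12},R_{21}$, the outer integrations in $\theta$ or $\nu$ are decoupled from the indicator variables in $\eta$ or $\mu$, so the kernels drop out immediately as the stated formulas without invoking Proposition~\ref{prop_Psi_appendix}. The interesting cases are $R_{11}$ (arising from $D_1\circ E_1$) and $R_{22}$ (arising from $D_2\circ E_2$), where the composition generates a nested pair of $\bs{\Phi}_i^{x\eta}\bs{\Phi}_j^{\eta\theta}$ indicator factors sharing the intermediate variable $\eta$ (or $\mu$ for $R_{22}$). Here I would apply Proposition~\ref{prop_Psi_appendix} to rewrite this nested product as $\sum_{k=0}^{2}\bs{\Phi}_k^{x\theta}\bs{\Psi}_{kij}^{x\eta\theta}$, which after collecting terms yields exactly the three-component $\mcl{N}_{1D}$ kernel $R_{11}=\{R_{11}^0,R_{11}^1,R_{11}^2\}$ given in the lemma (and analogously $R_{22}$).

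The main obstacle will be purely bookkeeping: there are nine sub-cases, each producing a specific block of $R$, and several of them must be aggregated across multiple source pairs (e.g., the contribution to $R_{11}^0$ comes only from the block where both $D$ and $E$ use the $L_2$-multiplier index, while $R_{11}^1,R_{11}^2$ collect the indicator-weighted terms). The substantive step is the single application of Proposition~\ref{prop_Psi_appendix} inside the $R_{11}$ and $R_{22}$ calculations; everything else is Fubini plus matching definitions. Uniqueness of $R$ follows from the standard density/kernel argument that a PI operator on $RL^{m_0,m_1}\rightarrow RL^{n_0,n_1}$ vanishes only if each of its nine parameter blocks vanishes, so distinct $R$'s give distinct operators.
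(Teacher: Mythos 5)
Your proposal is correct and follows essentially the same route as the paper's proof: decompose $\mcl{P}[D]\circ\mcl{P}[E]$ by linearity into the nine block compositions, evaluate the blocks involving $E_0$ and the mixed blocks $R_{01},R_{02},R_{21},R_{12}$ directly by exchanging the order of integration (the indicator factors there involve distinct variable pairs, so no collapsing is needed), and invoke Proposition~\ref{prop_Psi_appendix} precisely for the $D_1\circ E_1$ and $D_2\circ E_2$ blocks where nested $\bs{\Phi}_i\bs{\Phi}_j$ factors share the intermediate variable, yielding the three-component kernels $R_{11}$ and $R_{22}$. The only quibble is cosmetic (you announce ``six'' mixed blocks but list seven); the substance matches the paper.
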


\begin{proof}
	To prove this result, we exploit the linear structure of the PI operators, allowing us to decompose
	\begin{align*}
	\mcl{P}\smallbmat{D_0\\D_1\\D_2}\mcl{P}\smallbmat{E_0\\E_1\\E_2}&= \mcl{P}\smallbmat{D_0\\0\\0}\mcl{P}\smallbmat{E_0\\0\\0}+ \mcl{P}\smallbmat{0\\D_1\\0}\mcl{P}\smallbmat{E_0\\0\\0}\\
	&+ \hdots+ \mcl{P}\smallbmat{0\\D_1\\0}\mcl{P}\smallbmat{0\\0\\E_2}+ \mcl{P}\smallbmat{0\\0\\D_2}\mcl{P}\smallbmat{0\\0\\E_2}
	\end{align*}
	Focusing first on the terms involving $E_0$, we find that, for arbitrary $u_0\in\R^{m_0}$,
	\begin{align*}
	\mcl{P}\smallbmat{D_0\\()\\()}\mcl{P}\smallbmat{E_0\\()\\()}u_0&= \int_{\eta,\mu=0}^{1}\bbbl(\fvarss{D_0}{}{\eta\mu}\fvarss{E_0}{\eta\mu}{}u_0\bbbr)\\
	&=R_{00}u_0= \mcl{P}\smallbmat{R_{00}&()&()\\()&()&()\\()&()&()}u_0,
	\end{align*}
	and
	\begin{align*}
	\fvars{\left(\mcl{P}\smallbmat{()\\D_1\\()}\mcl{P}\smallbmat{E_0\\()\\()}u_0\right)}{x}{}\! 
	&=\! \int_{\eta,\mu=0}^{1}\! \bbbbl(\sum_{i=0}^{k}\! \fvarss{\bs{\Phi}_i}{x}{\eta}\fvarss{D_1^{i}}{x}{\eta\mu}\fvarss{E_0}{\eta\mu}{}\! u_0\bbbbr)\\
	&=\fvarss{R_{10}}{x}{}u_0=\fvars{\left(\mcl{P}\smallbmat{()&()&()\\R_{10}&()&()\\()&()&()}u_0\right)}{x}{},
	\end{align*}
	and finally,
	\begin{align*}
	\fvars{\left(\mcl{P}\smallbmat{()\\()\\D_2}\mcl{P}\smallbmat{E_0\\()\\()}u_0\right)}{y}{}\! 
	&=\! \int_{\eta,\mu=0}^{1}\! \bbbbl(\sum_{i=0}^{k}\! \fvarss{\bs{\Phi}_i}{y}{\mu}\fvarss{D_2^{i}}{y}{\eta\mu}\fvarss{E_0}{\eta\mu}{}\! u_0\bbbbr)\\
	&=\fvarss{R_{20}}{y}{}u_0=\fvars{\left(\mcl{P}\smallbmat{()&()&()\\()&()&()\\R_{20}&()&()}u_0\right)}{y}{}.
	\end{align*}
	Similarly, for the terms involving $E_1$, we find that, for arbitrary $\mbf{u}_1\in L_2^{m_1}[x]$,
	\begin{align*}
	&\mcl{P}\smallbmat{D_0\\()\\()}\mcl{P}\smallbmat{()\\E_1\\()}\mbf{u}_1\\
	&\quad= \int_{\eta,\mu=0}^{1}\bbbbl(\fvars{D_0}{}{\eta\mu}\int_{\theta=0}^{1}\bbbbl[\sum_{i=0}^{2}\fvarss{\bs{\Phi}_i}{\eta}{\theta}\fvarss{E_1^{i}}{\eta\mu}{\theta}\fvars{\mbf{u}_1}{\theta}{}\bbbbr]\bbbbr)\\
	&\qquad=\int_{\theta=0}^{1}\bbbbl(\int_{\eta,\mu=0}^{1}\bbbbl[\sum_{i=0}^{2}\fvarss{\bs{\Phi}_i}{\eta}{\theta}\fvarss{D_0}{}{\eta\mu}\fvars{E_1^{i}}{\eta\mu}{\theta}\bbbbr]\fvars{\mbf{u}_1}{\theta}{}\bbbbr)\\
	&\qquad\quad=\int_{\theta=0}^{1}\bbl(\fvarss{R_{01}}{}{\theta}\fvars{\mbf{u}_1}{\theta}{}\bbr)=\mcl{P}\smallbmat{()&R_{01}&()\\()&()&()\\()&()&()}\mbf{u}_1,
	\end{align*}
	and
	\begin{align*}
	&\fvars{\left(\mcl{P}\smallbmat{()\\()\\D_2}\mcl{P}\smallbmat{()\\E_1\\()}\mbf{u}_1\right)}{y}{}\\
	&= \int_{\eta,\mu=0}^{1}\! \bbbbl(\sum_{i=0}^{2}\fvarss{\bs{\Phi}_i}{y}{\mu}\fvars{D_2^{i}}{y}{\eta\mu}\int_{\theta=0}^{1}\! \bbbbl[\sum_{j=0}^{2}\fvarss{\bs{\Phi}_j}{\eta}{\theta}\fvarss{E_1^{j}}{\eta\mu}{\theta}\fvars{\mbf{u}_1}{\theta}{}\bbbbr]\bbbbr)\\
	&= \int_{\theta=0}^{1}\bbbbl(\int_{\eta,\mu=0}^{1}\bbbbl[\sum_{i,j=0}^{2}\fvarss{\bs{\Phi}_i}{y}{\mu}\fvarss{\bs{\Phi}_j}{\eta}{\theta}\fvarss{D_2^{i}}{y}{\eta\mu}\fvarss{E_1^{j}}{\eta\mu}{\theta}\bbbbr]\fvars{\mbf{u}_1}{\theta}{}\bbbbr)\\
	&=\int_{\theta=0}^{1}\bbl(\fvarss{R_{21}}{y}{\theta}\fvars{\mbf{u}_1}{\theta}{}\bbr)=\fvars{\left(\mcl{P}\smallbmat{()&()&()\\()&()&()\\()&R_{21}&()}\mbf{u}_1\right)}{y}{},
	\end{align*}
	and finally, using Proposition~\ref{prop_Psi_appendix},
	\begin{align*}
	&\fvars{\left(\mcl{P}\smallbmat{()\\D_1\\()}\mcl{P}\smallbmat{()\\E_1\\()}\mbf{u}_1\right)}{x}{}\\
	&= \int_{\eta,\mu=0}^{1}\! \bbbbl(\sum_{i=0}^{2}\fvarss{\bs{\Phi}_i}{x}{\eta}\fvars{D_1^{i}}{x}{\eta\mu}\int_{\theta=0}^{1}\! \bbbbl[\sum_{j=0}^{2}\fvarss{\bs{\Phi}_j}{\eta}{\theta}\fvarss{E_1^{j}}{\eta\mu}{\theta}\fvars{\mbf{u}_1}{\theta}{}\bbbbr]\bbbbr)\\
	&=\int_{\theta=0}^{1}\! \bbbbl(\sum_{k=0}^{2}\! \fvars{\bs{\Phi}_k}{x}{\theta}\int_{\eta,\mu=0}^{1}\! \bbbbl[\sum_{i,j=0}^{2}\! \fvarss{\bs{\Psi}_{kij}}{x}{\eta\theta}\fvarss{D_1^{i}}{x}{\eta\mu}\fvars{E_1^{j}}{\eta\mu}{\theta}\bbbbr]\fvars{\mbf{u}_1}{\theta}{}\bbbbr)\\
	&=\fvars{\left(\mcl{P}\smallbmat{()&()&()\\()&R_{11}&()\\()&()&()}\mbf{u}_1\right)}{x}{}.
	\end{align*}
	Finally, for the terms involving $E_2$, for arbitrary $\mbf{u}_2\in L_2^{m_1}[y]$,
	\begin{align*}
	&\mcl{P}\smallbmat{D_0\\()\\()}\mcl{P}\smallbmat{()\\()\\E_2}\mbf{u}_2\\
	&\quad= \int_{\eta,\mu=0}^{1}\bbbbl(\fvars{D_0}{}{\eta\mu}\int_{\nu=0}^{1}\bbbbl[\sum_{i=0}^{2}\fvarss{\bs{\Phi}_i}{\mu}{\nu}\fvarss{E_2^{i}}{\eta\mu}{\nu}\fvars{\mbf{u}_2}{\nu}{}\bbbbr]\bbbbr)\\
	&\qquad=\int_{\nu=0}^{1}\bbbbl(\int_{\eta,\mu=0}^{1}\bbbbl[\sum_{i=0}^{2}\fvarss{\bs{\Phi}_i}{\mu}{\nu}\fvarss{D_0}{}{\eta\mu}\fvars{E_2^{i}}{\eta\mu}{\nu}\bbbbr]\fvars{\mbf{u}_2}{\nu}{}\bbbbr)\\
	&\qquad\quad=\int_{\nu=0}^{1}\bbl(\fvarss{R_{02}}{}{\nu}\fvars{\mbf{u}_2}{\nu}{}\bbr)=\mcl{P}\smallbmat{()&()&R_{02}\\()&()&()\\()&()&()}\mbf{u}_2,
	\end{align*}
	and
	\begin{align*}
	&\fvars{\left(\mcl{P}\smallbmat{()\\D_1\\()}\mcl{P}\smallbmat{()\\()\\E_2}\mbf{u}_2\right)}{x}{}\\
	&= \int_{\eta,\mu=0}^{1}\! \bbbbl(\sum_{i=0}^{2}\fvarss{\bs{\Phi}_i}{x}{\eta}\fvars{D_1^{i}}{x}{\eta\mu}\int_{\nu=0}^{1}\! \bbbbl[\sum_{j=0}^{2}\fvarss{\bs{\Phi}_j}{\mu}{\nu}\fvarss{E_2^{j}}{\eta\mu}{\nu}\fvars{\mbf{u}_2}{\nu}{}\bbbbr]\bbbbr)\\
	&= \int_{\nu=0}^{1}\bbbbl(\int_{\eta,\mu=0}^{1}\bbbbl[\sum_{i,j=0}^{2}\fvarss{\bs{\Phi}_i}{x}{\eta}\fvarss{\bs{\Phi}_j}{\mu}{\nu}\fvarss{D_1^{i}}{x}{\eta\mu}\fvarss{E_2^{j}}{\eta\mu}{\nu}\bbbbr]\fvars{\mbf{u}_2}{\nu}{}\bbbbr)\\
	&=\int_{\nu=0}^{1} \bbl(\fvarss{R_{12}}{x}{\nu}\fvars{\mbf{u}_2}{\nu}{}\bbr)=\fvars{\left(\mcl{P}\smallbmat{()&()&()\\()&()&R_{12}\\()&()&()}\mbf{u}_2\right)}{x}{},
	\end{align*}
	and, once more using Proposition~\ref{prop_Psi_appendix},
	\begin{align*}
	&\fvars{\left(\mcl{P}\smallbmat{()\\()\\D_2}\mcl{P}\smallbmat{()\\()\\E_2}\mbf{u}_2\right)}{y}{}\\
	&= \int_{\eta,\mu=0}^{1}\! \bbbbl(\sum_{i=0}^{2}\fvarss{\bs{\Phi}_i}{y}{\mu}\fvars{D_2^{i}}{y}{\eta\mu}\int_{\nu=0}^{1}\! \bbbbl[\sum_{j=0}^{2}\fvarss{\bs{\Phi}_j}{\mu}{\nu}\fvarss{E_2^{j}}{\eta\mu}{\nu}\fvars{\mbf{u}_2}{\nu}{}\bbbbr]\bbbbr)\\
	&=\int_{\nu=0}^{1}\! \bbbbl(\sum_{k=0}^{2}\! \fvars{\bs{\Phi}_k}{y}{\nu}\int_{\eta,\mu=0}^{1}\! \bbbbl[\sum_{i,j=0}^{2}\! \fvarss{\bs{\Psi}_{kij}}{y}{\mu\nu}\fvarss{D_2^{i}}{y}{\eta\mu} \fvars{E_2^{j}}{\eta\mu}{\nu}\bbbbr]\fvars{\mbf{u}_2}{\nu}{}\bbbbr)\\
	&=\fvars{\left(\mcl{P}\smallbmat{()&()&()\\()&()&()\\()&()&R_{22}}\mbf{u}_2\right)}{y}{}.
	\end{align*}
	Combining the results, we conclude that $\mcl{P}[D]\circ\mcl{P}[E]=\mcl{P}[R]$.
	
\end{proof}

\begin{table*}[!ht]
	\centering
	\begin{tabular}{l|l|l}
		Parameter space & Explicit notation & Associated PI operation	\\\hline
		$\mcl{N}_{1D}^{n\times m}$ 
		& $L_2^{n\times m}[x,\theta] \times L_2^{n\times m}[x] \times L_2^{n\times m}[x,\theta] $  
		& $\! \! \fvars{\bl(\mcl{P}[N]\mbf{u}\br)}{x}{}=\int_{\theta=0}^{1}\bbbl(\sum_{i=0}^{2}\bbl[\fvarss{\bs{\Phi}_{i}}{x}{\theta}\fvars{N_i}{x}{\theta}\bbl]\fvars{\mbf{u}}{\theta}{}\bbbr)$
		\\\hline
		$\mcl{N}_{011}\smallbmat{n_0&m_0\\n_1&m_1}$ 
		& $\! \! \bmat{\R^{n_0\times m_0}   &L_2^{n_0\times m_1}[\theta] &L_2^{n_0\times m_1}[\nu]\\ L_2^{n_1\times m_0}[x] &\mcl{N}_{1D}^{n_1\times m_1}   &L_2^{n_1\times m_1}[x,\nu]\\			L_2^{n_1\times m_0}[y] &L_2^{n_1\times m_1}[y,\theta]   &\mcl{N}_{1D}^{n_1\times m_1}}$	 
		&	$\! \! \fvars{\bl(\mcl{P}[N]\mbf{u}\br)}{xy}{}
		=\! \bmat{N_{00}u_0 
			&\! \! \int_{\theta=0}^{1}\bbl(\fvarss{N_{01}}{}{\theta}\fvars{\mbf{u}_1}{\theta}{}\bbr)
			&\! \! \int_{\nu=0}^{1}\bbl(\fvarss{N_{02}}{}{\nu}\fvars{\mbf{u}_2}{\nu}{}\bbr)
			\\\fvarss{N_{10}}{x}{}u_0
			&\! \! \fvars{\bl(\mcl{P}[N_{11}]\mbf{u}_1\br)}{x}{}
			&\! \! \int_{\nu=0}^{1}\bbl(\fvarss{N_{12}}{x}{\nu}\fvars{\mbf{u}_2}{\nu}{}\bbr)
			\\\fvarss{N_{20}}{y}{}u_0
			&\! \! \int_{\theta=0}^{1}\bbl(\fvarss{N_{21}}{y}{\theta}\fvars{\mbf{u}_1}{\theta}{}\bbr)
			&\! \! \fvars{\bl(\mcl{P}[N_{22}]\mbf{u}_2\br)}{y}{}}$
		\\\hline
		$\mcl{N}_{2D}^{n\times m}$ 
		& $\! \! \bmat{L_{2}^{n \times m}[x,y]&\!  L_{2}^{n \times m}[x,y,\nu]    &\!  L_{2}^{n \times m}[x,y,\nu]\\ L_{2}^{n \times m}[x,y,\theta]   &\!  L_2^{n\times m}[x,y,\theta,\nu]&\!  L_2^{n\times m}[x,y,\theta,\nu]\\ L_{2}^{n \times m}[x,y,\theta]&\!  L_2^{n\times m}[x,y,\theta,\nu]&\! L_2^{n\times m}[x,y,\theta,\nu]}$ 
		& $\! \! \fvars{\bl(\mcl{P}[N]\mbf{u}\br)}{xy}{}=\int_{\theta,\nu=0}^{1}\bbbl(\sum_{i,p=0}^{2}\bbl[\fvarss{\bs{\Phi}_{i}}{x}{\theta}\fvarss{\bs{\Phi}_{p}}{y}{\nu}\fvars{N_{ip}}{xy}{\theta\nu}\bbl]\fvars{\mbf{u}}{\theta\nu}{}\bbbr)$
		\\\hline
		$\mcl N_{2D\rightarrow 1D}^{n\times m}$ 
		& $L_2^{n \times m}[\theta,\nu] \times L_2^{n \times m}[x,\theta,\nu] \times L_2^{n \times m}[x,\theta,\nu]$
		& $\! \! \fvars{\bl(\mcl{P}[N]\mbf{u}\br)}{x}{}=\int_{\theta,\nu=0}^{1}\bbbl(\sum_{i=0}^{2}\bbl[\fvarss{\bs{\Phi}_{i}}{x}{\theta}\fvars{N_i}{x}{\theta\nu}\bbl]\fvars{\mbf{u}}{\theta\nu}{}\bbbr)$
		\\\hline
		$\mcl N_{2D\rightarrow 011}\smallbmat{n_0\\n_1\\m_2}$
		& $\! \! \bmat{L_2^{n_0\times m_2}[\theta,\nu]\\\mcl{N}^{n_1\times m_2}_{2D\rightarrow 1D}\\\mcl{N}^{n_1\times m_2}_{2D\rightarrow 1D}}$
		&	$\! \! \fvars{\bl(\mcl{P}[N]\mbf{u}\br)}{xy}{}
		=\bmat{
			\int_{\theta,\nu=0}^{1}\bbl(\fvarss{N_{0}}{}{\theta\nu}\fvars{\mbf{u}}{\theta\nu}{}\bbr)
			\\\fvars{\bl(\mcl{P}[N_{1}]\mbf{u}\br)}{x}{}
			\\\fvars{\bl(\mcl{P}[N_{2}]\mbf{u}\br)}{y}{}}$
		\\\hline
		$\mcl N_{1D\rightarrow 2D}^{n\times m}$ 
		& $L_2^{n \times m}[x,y] \times L_2^{n \times m}[x,y,\theta] \times L_2^{n \times m}[x,y,\theta]$ 
		& $\! \! \fvars{\bl(\mcl{P}[N]\mbf{u}\br)}{xy}{}=\int_{\theta=0}^{1}\bbbl(\sum_{i=0}^{2}\bbl[\fvarss{\bs{\Phi}_{i}}{x}{\theta}\fvars{N_i}{xy}{\theta}\bbl]\fvars{\mbf{u}}{\theta}{}\bbbr)$
		\\\hline
		$\mcl N_{011\rightarrow 2D}\smallbmat{m_0\\m_1\\n_2}$
		& $\! \! \bmat{L_2^{n_2\times m_0}[x,y]\\\mcl{N}^{n_2\times m_1}_{1D\rightarrow 2D}\\\mcl{N}^{n_2\times m_1}_{1D\rightarrow 2D}}$
		& $\! \! \fvars{\bl(\mcl{P}[N]\mbf{u}\br)}{xy}{}=\fvars{N_0}{xy}{}u_0 + \fvars{\bl(\mcl{P}[N_1]\mbf{u}_1\br)}{xy}{} + \fvars{\bl(\mcl{P}[N_2]\mbf{u}_2\br)}{xy}{}$
		\\\hline
		$\mcl{N}_{0112}{\smallbmat{n_0&m_0\\n_1&m_1\\n_2&m_2}}$ 
		& $\! \! \bmat{\mcl{N}_{011}{\smallbmat{n_0&m_0\\n_1&m_1}} & \mcl{N}_{2D\rightarrow 011}\smallbmat{n_0\\n_1\\m_2} \\			\mcl{N}_{011\rightarrow 2D}\smallbmat{m_0\\m_1\\n_2} & \mcl{N}_{2D}^{n_2\times m_2}}$
		&	$\! \! \fvars{\bl(\mcl{P}[N]\mbf{u}\br)}{xy}{}
		=\bmat{\fvars{\bl(\mcl{P}[N_{11}]\mbf{u}_1\br)}{xy}{}
			&\fvars{\bl(\mcl{P}[N_{12}]\mbf{u}_2\br)}{xy}{}
			\\\fvars{\bl(\mcl{P}[N_{21}]\mbf{u}_1\br)}{xy}{}
			&\fvars{\bl(\mcl{P}[N_{22}]\mbf{u}_2\br)}{xy}{}}$
	\end{tabular}
	\caption{Parameter spaces for different PI operators, as presented in Appendix~\ref{sec_PI_algebras_appendix}}
	\label{tab_PI_param_spaces_appendix}
\end{table*}

\begin{lem}\label{lem_1Dto2Dx2Dto1D algebra_appendix}
	For any $E=\smallbmat{E_{0}\\E_{1}\\E_{2}}\in\mcl{N}_{011\rightarrow 2D}\smallbmat{p_0\\p_1\\n_2}$ and $D=\smallbmat{D_{0}\\D_{1}\\D_{2}}\in\mcl{N}_{2D\rightarrow011}\smallbmat{p_0\\p_1\\m_2}$, where
	\begin{align*} 
	&E_{1}=\{E_{1}^{0},E_{1}^{1},E_{1}^{2}\}\in\mcl{N}_{1D\rightarrow 2D}^{n_2\times p_1},\\ 
	&E_{2}=\{E_{2}^{0},E_{2}^{1},E_{2}^{2}\}\in\mcl{N}_{1D\rightarrow 2D}^{n_2\times p_1},\\
	&D_{1}=\{D_{1}^{0},D_{1}^{1},D_{1}^{2}\}\in\mcl{N}_{2D\rightarrow 1D}^{p_1\times m_2},\\ 
	&D_{2}=\{D_{2}^{0},D_{2}^{1},D_{2}^{2}\}\in\mcl{N}_{2D\rightarrow 1D}^{p_1\times m_2},
	\end{align*}
	there exists a unique $Q\in\mcl{N}_{2D}^{n_2\times m_2}$ such that $\mcl{P}[D]\circ\mcl{P}[E]=\mcl{P}[Q]$. Specifically, we may choose $Q=\mcl{L}_{2D\rightarrow 2D}(E,D)\in\mcl{N}_{2D}^{n_2\times m_2}$, where the linear parameter map $\mcl{L}_{2D\rightarrow 2D}:\mcl{N}_{011\rightarrow 2D}\times\mcl{N}_{2D\rightarrow 011}\rightarrow\mcl{N}_{2D}$ is defined such that
	\begin{align}\label{eq_composition_2Dto2D_2_appendix}
	\mcl{L}_{2D\rightarrow 2D}(E,D)=\bmat{Q_{00}&Q_{01}&Q_{02}\\Q_{10}&Q_{11}&Q_{12}\\Q_{20}&Q_{21}&Q_{22}}\in\mcl{N}_{2D}^{n_2\times m_2},
	\end{align}
	where $\fvars{Q_{00}}{xy}{\theta\nu}=0$, and	\\
	\vspace{-1.0cm}
	\begin{align*}
	\fvars{Q_{10}}{xy}{\theta\nu}&=\int_{\mu=0}^{1} \bbbbl(\sum_{p,q=0}^{2}\fvarss{\bs{\Psi}_{0pq}}{y}{\mu\nu}\fvarss{E_2^{p}}{xy}{\mu}\fvarss{D_2^{q}}{\mu}{\theta\nu}\bbbbr)\\
	\fvars{Q_{20}}{xy}{\theta\nu}&=\int_{\mu=0}^{1} \bbbbl(\sum_{p,q=0}^{2}\fvarss{\bs{\Psi}_{0pq}}{y}{\mu\nu}\fvarss{E_2^{p}}{xy}{\mu}\fvarss{D_2^{q}}{\mu}{\theta\nu}\bbbbr)\\
	\fvars{Q_{01}}{xy}{\theta\nu}&=\int_{\eta=0}^{1} \bbbbl(\sum_{i,j=0}^{2}\fvarss{\bs{\Psi}_{0ij}}{x}{\eta\theta}\fvarss{E_1^{i}}{xy}{\eta}\fvarss{D_1^{j}}{\eta}{\theta\nu}\bbbbr)\\
	\fvars{Q_{02}}{xy}{\theta\nu}&=\int_{\eta=0}^{1} \bbbbl(\sum_{i,j=0}^{2}\fvarss{\bs{\Psi}_{0ij}}{x}{\eta\theta}\fvarss{E_1^{i}}{xy}{\eta}\fvarss{D_1^{j}}{\eta}{\theta\nu}\bbbbr)\\
	\fvars{Q_{11}}{xy}{\theta\nu}&=\fvarss{E_0}{xy}{}\fvarss{D_0}{}{\theta\nu}\! 
	+\! \int_{\eta=0}^{1}\!  \bbbbl(\sum_{i,j=0}^{2}\! \fvarss{\bs{\Psi}_{1ij}}{x}{\eta\theta}\fvarss{E_1^{i}}{xy}{\eta}\fvarss{D_1^{j}}{\eta}{\theta\nu}\bbbbr)\\
	&\qquad+\int_{\mu=0}^{1}\!  \bbbbl(\sum_{p,q=0}^{2}\! \fvarss{\bs{\Psi}_{1pq}}{y}{\mu\nu}\fvarss{E_2^{p}}{xy}{\mu}\fvarss{D_2^{q}}{\mu}{\theta\nu}\bbbbr)\\
	\fvars{Q_{21}}{xy}{\theta\nu}&=\fvarss{E_0}{xy}{}\fvarss{D_0}{}{\theta\nu}\! 
	+\! \int_{\eta=0}^{1}\!  \bbbbl(\sum_{i,j=0}^{2}\! \fvarss{\bs{\Psi}_{2ij}}{x}{\eta\theta}\fvarss{E_1^{i}}{xy}{\eta}\fvarss{D_1^{j}}{\eta}{\theta\nu}\bbbbr)\\
	&\qquad+\int_{\mu=0}^{1}\!  \bbbbl(\sum_{p,q=0}^{2}\! \fvarss{\bs{\Psi}_{1pq}}{y}{\mu\nu}\fvarss{E_2^{p}}{xy}{\mu}\fvarss{D_2^{q}}{\mu}{\theta\nu}\bbbbr)\\
	\fvars{Q_{12}}{xy}{\theta\nu}&=\fvarss{E_0}{xy}{}\fvarss{D_0}{}{\theta\nu}\! 
	+\! \int_{\eta=0}^{1}\!  \bbbbl(\sum_{i,j=0}^{2}\! \fvarss{\bs{\Psi}_{1ij}}{x}{\eta\theta}\fvarss{E_1^{i}}{xy}{\eta}\fvarss{D_1^{j}}{\eta}{\theta\nu}\bbbbr)\\
	&\qquad+\int_{\mu=0}^{1}\!  \bbbbl(\sum_{p,q=0}^{2}\! \fvarss{\bs{\Psi}_{2pq}}{y}{\mu\nu}\fvarss{E_2^{p}}{xy}{\mu}\fvarss{D_2^{q}}{\mu}{\theta\nu}\bbbbr)\\
	\fvars{Q_{22}}{xy}{\theta\nu}&=\fvarss{E_0}{xy}{}\fvarss{D_0}{}{\theta\nu}\! 
	+\! \int_{\eta=0}^{1}\!  \bbbbl(\sum_{i,j=0}^{2}\! \fvarss{\bs{\Psi}_{2ij}}{x}{\eta\theta}\fvarss{E_1^{i}}{xy}{\eta}\fvarss{D_1^{j}}{\eta}{\theta\nu}\bbbbr)\\
	&\qquad+\int_{\mu=0}^{1}\!  \bbbbl(\sum_{p,q=0}^{2}\! \fvarss{\bs{\Psi}_{2pq}}{y}{\mu\nu}\fvarss{E_2^{p}}{xy}{\mu}\fvarss{D_2^{q}}{\mu}{\theta\nu}\bbbbr)
	\end{align*}
	
\end{lem}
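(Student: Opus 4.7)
}

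The plan is to mirror the strategy used in the preceding composition lemmas of Appendix~\ref{sec_PI_algebras_appendix}: exploit linearity to decompose the composition $\mcl{P}[E]\circ\mcl{P}[D]$ into a sum over the three ``channels'' of the intermediate space $RL^{p_0,p_1}=\R^{p_0}\times L_2^{p_1}[x]\times L_2^{p_1}[y]$, then reduce each resulting term to the canonical $\mcl{N}_{2D}$ form by repeated application of Proposition~\ref{prop_Psi_appendix}. Concretely, for arbitrary $\mbf{u}\in L_2^{m_2}[x,y]$, the vector $\mcl{P}[D]\mbf{u}\in RL^{p_0,p_1}$ has three components, and $\mcl{P}[E]$ acts on each component through a different block. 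Thus
\[
\mcl{P}[E]\mcl{P}[D] \;=\; \mcl{P}\smallbmat{E_0\\0\\0}\mcl{P}\smallbmat{D_0\\0\\0} \;+\; \mcl{P}\smallbmat{0\\E_1\\0}\mcl{P}\smallbmat{0\\D_1\\0} \;+\; \mcl{P}\smallbmat{0\\0\\E_2}\mcl{P}\smallbmat{0\\0\\D_2},
\]
since the cross terms (e.g.\ $E_0$ paired with $D_1$) vanish by construction of the block structures in $\mcl{N}_{011\rightarrow 2D}$ and $\mcl{N}_{2D\rightarrow 011}$.

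The first term, $\fvars{E_0}{xy}{}\int_{\theta,\nu}\fvars{D_0}{}{\theta\nu}\fvars{\mbf{u}}{\theta\nu}{}$, is a rank-one integral operator in which neither variable is ``split'' by an indicator. I will express the trivial $x$- and $y$-integrations as $\int = \int(\bs{\Phi}_1+\bs{\Phi}_2)$ using Corollary~\ref{cor_integral_Phi12_appendix}, producing the common $E_0(x,y)D_0(\theta,\nu)$ contribution that appears identically in $Q_{11},Q_{12},Q_{21},Q_{22}$. The second term, $\mcl{P}[E_1]\mcl{P}[D_1]$, requires substituting the definitions and observing that $\mcl{P}[D_1]\mbf{u}\in L_2^{p_1}[x]$ depends only on $x$, while $E_1(x,y,\eta)$ carries the $y$-dependence. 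The resulting double integral in $(\eta,\mu)$ has the structure $\sum_{i,j}\bs{\Phi}_i^x{}_\eta\bs{\Phi}_j^\eta{}_\theta E_1^i(x,y,\eta)D_1^j(\eta,\mu,\theta,\nu)$; applying Proposition~\ref{prop_Psi_appendix} in the $x$-variable collapses the $\bs{\Phi}_i\bs{\Phi}_j$ product into $\sum_k \bs{\Phi}_k^x{}_\theta\bs{\Psi}_{kij}^x{}_{\eta\theta}$, while the $y$-variable is passed through by splitting $\int d\nu$ as above. The third term is symmetric, with the roles of $x$ and $y$ (and of $\bs{\Psi}_{kij}$, $\bs{\Psi}_{rpq}$) exchanged.

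Collecting the contributions to each $\bs{\Phi}_k^x{}_\theta\bs{\Phi}_r^y{}_\nu$ sector yields the nine formulas for $Q_{kr}$: the pure $E_0D_0$ piece contributes only to sectors $k,r\in\{1,2\}$ (since splitting identity integrals only produces $\bs{\Phi}_1$ and $\bs{\Phi}_2$, never $\bs{\Phi}_0=\bs{\delta}$), the $E_1D_1$ term contributes to all four combinations of $k\in\{0,1,2\}$ with $r\in\{1,2\}$, and the $E_2D_2$ term contributes to all four combinations of $k\in\{1,2\}$ with $r\in\{0,1,2\}$. This index pattern explains why $Q_{00}=0$ and why the formulas for $Q_{10},Q_{20}$ involve only $\bs{\Psi}_{0pq}$ (from the $E_2D_2$ chain) while $Q_{01},Q_{02}$ involve only $\bs{\Psi}_{0ij}$ (from the $E_1D_1$ chain). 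Finally, matching coefficients against the defining formula of $\mcl{P}[Q]$ and invoking uniqueness of the $\mcl{N}_{2D}$ parameters (already established for $\mcl{L}_{2D}$ in Lemma~\ref{lem_2Dalgebra_appendix}) completes the proof.

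The main obstacle will be bookkeeping: keeping track of which integration variables get ``consumed'' by a $\bs{\Psi}$ expansion versus which are merely pass-throughs that need an identity splitting $\mbf{1}=\bs{\Phi}_1+\bs{\Phi}_2$, and ensuring the resulting kernels land in the correct slot of the $3\times 3$ parameter array. There is no new algebraic technique required beyond Proposition~\ref{prop_Psi_appendix}, but the combinatorics are dense because both the $x$ and $y$ variables must be handled independently and simultaneously in the $E_1D_1$ and $E_2D_2$ terms.
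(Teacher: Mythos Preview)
Your proposal is correct and follows essentially the same approach as the paper: decompose $\mcl{P}[E]\mcl{P}[D]$ into the three channel terms $E_0D_0$, $E_1D_1$, $E_2D_2$, apply Corollary~\ref{cor_integral_Phi12_appendix} to split the trivial $y$- (resp.\ $x$-) integrals as $\bs{\Phi}_1+\bs{\Phi}_2$, apply Proposition~\ref{prop_Psi_appendix} to the $\bs{\Phi}_i\bs{\Phi}_j$ products in the active variable, and read off $Q_{kr}$. The paper carries out all three terms in a single display rather than discussing them separately, but the mechanism and the index-pattern analysis you give (explaining why $Q_{00}=0$ and why $Q_{k0},Q_{0r}$ receive only the $\bs{\Psi}_{0\cdot\cdot}$ contributions) are identical in substance.
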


\begin{proof}
	Applying Corollary~\ref{cor_integral_Phi12_appendix} and Proposition~\ref{prop_Psi_appendix}, and invoking the definitions of the operators $\mcl{P}[E]$ and $\mcl{P}[D]$, for arbitrary $\mbf{u}\in L_2[x,y]$,
	\begin{align*}
	&\fvars{\left(\mcl{P}[E]\mcl{P}[D]\mbf{u}\right)}{xy}{}= \fvars{E_0}{xy}{}\int_{\theta,\nu=0}^{1}\bbl(\fvarss{D_0}{}{\theta\nu}\fvars{\mbf{u}}{\theta\nu}{}\bbr)\\
	&+\int_{\eta=0}^{1}\bbbbl(\sum_{i=0}^{2}\fvarss{\bs{\Phi}_i}{x}{\eta}\fvars{E_1^{i}}{xy}{\eta} \int_{\theta,\nu=0}^{1}\bbbbl[\sum_{j=0}^{2}\fvarss{\bs{\Phi}_j}{\eta}{\theta}\fvarss{D_1^{j}}{\eta}{\theta\nu}\fvars{\mbf{u}}{\theta\nu}{}\bbbbr]\bbbbr) \\
	&+\int_{\mu=0}^{1}\bbbbl(\sum_{p=0}^{2}\fvarss{\bs{\Phi}_p}{y}{\mu}\fvars{E_2^{p}}{xy}{\mu} \int_{\theta,\nu=0}^{1}\bbbbl[\sum_{q=0}^{2}\fvarss{\bs{\Phi}_q}{\mu}{\nu}\fvarss{D_2^{q}}{\mu}{\theta\nu}\fvars{\mbf{u}}{\theta\nu}{}\bbbbr]\bbbbr)	\\
	&=\int_{\theta,\nu=0}^{1}\bbbbl(\sum_{k,r=1}^{2}\fvarss{\bs{\Phi}_k}{x}{\theta}\fvarss{\bs{\Phi}_r}{y}{\nu}\fvarss{E_0}{xy}{}\fvarss{D_0}{}{\theta\nu}\fvars{\mbf{u}}{\theta\nu}{}\bbbbr)\\
	&+\int_{\theta,\nu=0}^{1}\bbbbl(\sum_{k=0}^{2}\sum_{r=1}^{2}\fvarss{\bs{\Phi}_k}{x}{\theta}\fvarss{\bs{\Phi}_r}{y}{\nu}\\
	&\hspace*{2.5cm}\int_{\eta=0}^{1} \bbbbl[\sum_{i,j=0}^{2}\fvarss{\bs{\Psi}_{kij}}{x}{\eta\theta}\fvarss{E_1^{i}}{xy}{\eta}\fvarss{D_1^{j}}{\eta}{\theta\nu}\bbbbr]\fvars{\mbf{u}}{\theta\nu}{}\bbbbr) \\
	&+\int_{\theta,\nu=0}^{1}\bbbbl(\sum_{k=1}^{2}\sum_{r=0}^{2}\fvarss{\bs{\Phi}_k}{x}{\theta}\fvarss{\bs{\Phi}_r}{y}{\nu}\\
	&\hspace*{2.5cm}\int_{\mu=0}^{1} \bbbbl[\sum_{p,q=0}^{2}\fvarss{\bs{\Psi}_{rpq}}{y}{\mu\nu}\fvarss{E_2^{p}}{xy}{\mu}\fvarss{D_2^{q}}{\mu}{\theta\nu}\bbbbr]\fvars{\mbf{u}}{\theta\nu}{}\bbbbr)	\\
	&=\int_{\theta,\nu=0}^{1}\bbbbl(\sum_{k,r=0}^{2}\fvarss{\bs{\Phi}_k}{x}{\theta}\fvarss{\bs{\Phi}_r}{y}{\nu}\fvarss{Q_{kr}}{xy}{\theta\nu}\fvars{\mbf{u}_2}{\theta\nu}{}\bbbbr)
	=\fvars{\bl(\mcl{P}[Q]\mbf{u}\br)}{xy},
	\end{align*}
	as desired.
	
\end{proof}

\begin{table*}[!ht]
	\renewcommand{\arraystretch}{1.4}
	\centering
	\begin{tabular}{c|c|c}
		Linear Parameter Map	\smallskip	& Associated Parameter Spaces	& Defined in \\\hline
		$\mcl{L}_{011}$&	$\mcl{N}_{011}\times\mcl{N}_{011}\rightarrow\mcl{N}_{011}$	& Equation~\eqref{eq_composition_011to011_1_appendix}, Appendix~\ref{sec_011algebra_appendix}\\
		$\mcl{L}_{2D}$	&	$\mcl{N}_{2D}\times\mcl{N}_{2D}\rightarrow\mcl{N}_{2D}$	& Equation~\eqref{eq_composition_2Dto2D_1_appendix}, Appendix~\ref{sec_2Dalgebra_appendix}\\
		$\mcl{L}_{2D\rightarrow 011}^{1}$	&	$\mcl{N}_{011}\times\mcl{N}_{2D\rightarrow 011}\rightarrow\mcl{N}_{2D\rightarrow 011}$	& Equation~\eqref{eq_composition_2Dto011_1_appendix}, Appendix~\ref{sec_2Dto011algebra_appendix}\\
		$\mcl{L}_{2D\rightarrow 011}^{2}$	&	$\mcl{N}_{2D\rightarrow 011}\times\mcl{N}_{2D}\rightarrow\mcl{N}_{2D\rightarrow 011}$	& Equation~\eqref{eq_composition_2Dto011_2_appendix}, Appendix~\ref{sec_2Dto011algebra_appendix}\\
		$\mcl{L}_{011\rightarrow 2D}^{1}$	&	$\mcl{N}_{011\rightarrow 2D}\times\mcl{N}_{011}\rightarrow\mcl{N}_{011\rightarrow 2D}$	& Equation~\eqref{eq_composition_011to2D_1_appendix}, Appendix~\ref{sec_011to2Dalgebra_appendix}\\
		$\mcl{L}_{011\rightarrow 2D}^{2}$	&	$\mcl{N}_{2D}\times\mcl{N}_{011\rightarrow 2D}\rightarrow\mcl{N}_{011\rightarrow 2D}$	& Equation~\eqref{eq_composition_011to2D_2_appendix}, Appendix~\ref{sec_011to2Dalgebra_appendix}\\
		$\mcl{L}_{011\rightarrow 011}$	&	$\mcl{N}_{2D\rightarrow 011}\times\mcl{N}_{011\rightarrow 2D}\rightarrow\mcl{N}_{011}$	& Equation~\eqref{eq_composition_011to011_2_appendix}, Appendix~\ref{sec_011to2Dalgebra_appendix}\\
		$\mcl{L}_{2D\rightarrow 2D}$	&	$\mcl{N}_{011\rightarrow 2D}\times\mcl{N}_{2D\rightarrow 011}\rightarrow\mcl{N}_{2D}$	& Equation~\eqref{eq_composition_2Dto2D_2_appendix}, Appendix~\ref{sec_011to2Dalgebra_appendix}\\	
	\end{tabular}
	\caption{Linear parameter maps for compositions of PI operators, as presented in Appendix~\ref{sec_PI_algebras_appendix}}
	\label{tab_composition_maps_appendix}
\end{table*}

\subsection{An Algebra of 0112-PI Operators}\label{sec_0112algebra_appendix}

Having described PI operators mapping functions in $RL^{n_0,n_1}[x,y]:=\R\times L_2[x]\times L_2[y]$ and $L_2^{n_2}[x,y]$ and the corresponding composition rules, we can now combine our results to describe a 0112-PI operator, mapping functions in $RLL^{n_0,n_1,n_2}[x,y]:=RL^{n_0,n_1}[x,y]\times L_2^{n_2}[x,y]$. Letting
\begin{align}\label{eq_N0112_space_appendix}
 \mcl{N}_{0112}\smallbmat{n_0&m_0\\n_1&m_1\\n_2&m_2}:= \bmat{\mcl{N}_{011}\smallbmat{n_0&m_0\\n_1&m_1}& \mcl{N}_{2D\rightarrow 011}\smallbmat{n_0\\n_1\\m_2}\\ \mcl{N}_{011\rightarrow 2D}\smallbmat{m_0\\m_1\\n_2}& \mcl{N}_{2D}^{n_2\times m_2}},
\end{align}
for arbitrary
\begin{align*}
 N=\bmat{N_{11}&N_{12}\\N_{21}&N_{22}}\in\mcl{N}\smallbmat{n_0&m_0\\n_1&m_1\\n_2&m_2},
\end{align*}
we define an associated PI operator $\mcl{P}[N]:RLL^{m_0,m_1,m_2}[x,y]\rightarrow RLL^{n_0,n_1,n_2}[x,y]$ as
\begin{align*}
 \mcl{P}[N]=\bmat{\mcl{P}[N_{11}]&\mcl{P}[N_{12}]\\\mcl{P}[N_{21}]&\mcl{P}[N_{22}]},
\end{align*}
where the different operators $\mcl{P}[N_11]$,...,$\mcl{P}[N_{22}]$ are as defined in the previous sections. Using the results from these sections, it is also easy to see that the set of operators parameterized in this manner also forms an algebra.

\begin{thm}\label{thm_composition0112_appendix}
	For any $B=\smallbmat{B_{11}&B_{12}\\B_{21}&B_{22}}\in\mcl{N}_{0112}\smallbmat{n_0&p_0\\n_1&p_1\\n_2&p_2}$ and $D=\smallbmat{D_{11}&D_{12}\\D_{21}&D_{22}}\in\mcl{N}_{0112}\smallbmat{p_0&m_0\\p_1&m_1\\p_2&m_2}$, there exists a unique $R\in\mcl{N}_{0112}\smallbmat{n_0&m_0\\n_1&m_1\\n_2&m_2}$ such that $\mcl{P}[B]\circ\mcl{P}[D]=\mcl{P}[R]$. Specifically, we may choose $R=\mcl{L}_{0112}(B,D)\in\mcl{N}_{0112}\smallbmat{n_0&m_0\\n_1&m_1\\n_2&m_2}$, where the linear parameter map $\mcl{L}_{0112}:\mcl{N}_{0112}\times\mcl{N}_{0112}\rightarrow\mcl{N}_{0112}$ defined such that
	\begin{align}\label{eq_composition_0112_appendix}
	 \mcl{L}_{0112}(B,D)=\bmat{R_{11}&R_{12}\\R_{21}&R_{22}}\in\mcl{N}_{0112}\smallbmat{n_0&m_0\\n_1&m_1\\n_2&m_2},
	\end{align}
	where
	\begin{align*}
	 R_{11}&=\mcl{L}_{011}(B_{11},D_{11})+\mcl{L}_{011\rightarrow 011}(B_{12},D_{21}),\\
	 R_{21}&=\mcl{L}_{011\rightarrow 2D}^{1}(B_{21},D_{11})+\mcl{L}_{011\rightarrow 2D}^{2}(B_{22},D_{21}),\\
	 R_{12}&=\mcl{L}_{2D\rightarrow 011}^{1}(B_{11},D_{12})+\mcl{L}_{2D\rightarrow 011}^{2}(B_{12},D_{22}),\\
	 R_{22}&=\mcl{L}_{2D\rightarrow 2D}(B_{21},D_{12})+\mcl{L}_{2D}(B_{22},D_{22}),
	\end{align*}
	where the different parameter maps $\mcl{L}$ are listed in Table~\ref{tab_composition_maps_appendix}.
\end{thm}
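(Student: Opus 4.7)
The plan is to exploit the block structure of 0112-PI operators and reduce the theorem to the eight composition lemmas already established in earlier subsections of the appendix. No new computation is required; what remains is essentially bookkeeping on a $2\times 2$ block-operator product.

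First I would invoke the definition of $\mcl{P}[B]$ and $\mcl{P}[D]$ as $2\times 2$ block operators on $RLL^{m_0,m_1,m_2}[x,y]=RL^{m_0,m_1}[x,y]\times L_2^{m_2}[x,y]$. By ordinary block-matrix multiplication,
\[
\mcl{P}[B]\circ\mcl{P}[D]=\bmat{\mcl{P}[B_{11}]\mcl{P}[D_{11}]+\mcl{P}[B_{12}]\mcl{P}[D_{21}] & \mcl{P}[B_{11}]\mcl{P}[D_{12}]+\mcl{P}[B_{12}]\mcl{P}[D_{22}] \\ \mcl{P}[B_{21}]\mcl{P}[D_{11}]+\mcl{P}[B_{22}]\mcl{P}[D_{21}] & \mcl{P}[B_{21}]\mcl{P}[D_{12}]+\mcl{P}[B_{22}]\mcl{P}[D_{22}]}.
\]
A quick type check confirms that in each of the eight pair-wise products, the codomain of the right factor matches the domain of the left factor, and that each such product falls under exactly one of the composition lemmas already proved in Appendix~\ref{sec_PI_algebras_appendix} (as summarized in Table~\ref{tab_composition_maps_appendix}).

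Second, I would invoke those lemmas pointwise: Lemma~\ref{lem_011algebra_appendix} rewrites $\mcl{P}[B_{11}]\mcl{P}[D_{11}]=\mcl{P}[\mcl{L}_{011}(B_{11},D_{11})]$ and Lemma~\ref{lem_2Dto1Dx1Dto2D algebra_appendix} rewrites $\mcl{P}[B_{12}]\mcl{P}[D_{21}]=\mcl{P}[\mcl{L}_{011\rightarrow 011}(B_{12},D_{21})]$, yielding the formula for $R_{11}$; Lemmas~\ref{lem_011x2Dto1Dalgebra_appendix} and~\ref{lem_2Dto1Dx2Dalgebra_appendix} give the two summands of $R_{12}$ via $\mcl{L}_{2D\rightarrow 011}^{1}$ and $\mcl{L}_{2D\rightarrow 011}^{2}$; Lemmas~\ref{lem_1Dto2Dx011algebra_appendix} and~\ref{lem_2Dx1Dto2Dalgebra_appendix} give the two summands of $R_{21}$ via $\mcl{L}_{011\rightarrow 2D}^{1}$ and $\mcl{L}_{011\rightarrow 2D}^{2}$; and Lemmas~\ref{lem_1Dto2Dx2Dto1D algebra_appendix} and~\ref{lem_2Dalgebra_appendix} give the two summands of $R_{22}$ via $\mcl{L}_{2D\rightarrow 2D}$ and $\mcl{L}_{2D}$. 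Within each block I would then combine the two resulting PI operators by summing their parameters, which is legitimate because each underlying parameter space is a linear space and $\mcl{P}[\cdot]$ is linear in its parameters. This produces exactly the expressions $R_{11},R_{12},R_{21},R_{22}$ asserted in the theorem, giving $\mcl{P}[B]\circ\mcl{P}[D]=\mcl{P}[R]$ with $R=\mcl{L}_{0112}(B,D)$.

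Third, for uniqueness I would argue as in the preceding subsections: the functions $\bs{\Phi}_i$ introduced in~\eqref{eq_Phi_appendix} are linearly independent on test inputs supported around arbitrary points, so each parameter block of $R$ is uniquely determined by the action of $\mcl{P}[R]$. The main ``difficulty'' in the proof is purely clerical: correctly matching each of the eight sub-products to the appropriate composition lemma and tracking the linear-space structure of the eight different $\mcl N$-spaces involved. Since this pairing is fully enumerated in Table~\ref{tab_composition_maps_appendix}, I expect no substantive obstacle beyond careful bookkeeping.
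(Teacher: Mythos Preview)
Your proposal is correct and follows essentially the same approach as the paper: expand the $2\times 2$ block-operator product, apply the eight previously established composition lemmas to each sub-product, and combine using linearity of $\mcl{P}[\cdot]$. You are in fact slightly more explicit than the paper in matching each sub-product to its corresponding lemma and in sketching the uniqueness argument, which the paper's proof leaves implicit.
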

\begin{proof}
	Exploiting the linear structure of the 0112-PI operator, and applying the results from the previous sections, it follows that
	\begin{align*}
	 &\mcl{P}[B]\circ\mcl{P}[D]= \mcl{P}\smallbmat{B_{11}&B_{12}\\B_{21}&B_{22}}\circ \mcl{P}\smallbmat{D_{11}&D_{12}\\D_{21}&D_{22}}	\\
	 &\qquad=\bmat{\mcl{P}[B_{11}]&\mcl{P}[B_{12}]\\\mcl{P}[B_{21}]&\mcl{P}[B_{22}]} \bmat{\mcl{P}[D_{11}]&\mcl{P}[D_{12}]\\\mcl{P}[D_{21}]&\mcl{P}[D_{22}]}\\
	 &\qquad=\bmat{\mcl{P}[B_{11}]\circ\mcl{P}[D_{11}]&\mcl{P}[B_{11}]\circ\mcl{P}[D_{12}]\\\mcl{P}[B_{21}]\circ\mcl{P}[D_{11}]&\mcl{P}[B_{21}]\circ\mcl{P}[D_{12}]}\\
	 &\qquad\qquad+\bmat{\mcl{P}[B_{12}]\circ\mcl{P}[D_{21}]&\mcl{P}[B_{12}]\circ\mcl{P}[D_{22}]\\\mcl{P}[B_{22}]\circ\mcl{P}[D_{21}]&\mcl{P}[B_{22}]\circ\mcl{P}[D_{22}]}\\
	 &\qquad=\bmat{\mcl{P}[R_{11}]&\mcl{P}[R_{12}]\\\mcl{P}[R_{21}]&\mcl{P}[R_{22}]}=\mcl{P}[R]
	\end{align*}	
	
\end{proof}

\clearpage

\section{Additional Proofs}
\subsection{Inverse of 011-PI Operators}\label{sec_inv_appendix}
\begin{lem}\label{lem_inv_appendix}
Suppose
\[
 Q=\bmat{Q_{00}&Q_{0x}&Q_{0y}\\Q_{x0}&Q_{xx}&Q_{xy}\\Q_{y0}&Q_{yx}&Q_{yy}}\in{N}_{011}\smallbmat{n_0&n_0\\n_1&n_1}
\]
where $Q_{xx}=\{Q_{xx}^0,Q^1_{xx},Q^1_{xx}\}\in\mcl{N}_{1D}^{n_1\times n_1}$ (separable) and $Q_{yy}=\{Q^0_{yy},Q^1_{yy},Q^1_{yy}\}\in\mcl{N}_{1D}^{n_1\times n_1}$ (separable). Suppose the matrix-valued functions can be decomposed as
\begin{align*}
 &\bmat{&Q_{0x}(x)&Q_{0y}(y)\\Q_{x0}(x)&Q_{xx}^{1}(x,\theta)&Q_{xy}(x,y)\\Q_{y0}(y)&Q_{yx}(x,y)&Q_{yy}^{1}(y,\nu)}
 =\\
&\qquad\bmat{&H_{0x}Z(x)&H_{0y}Z(y)\\
	Z^T(x)H_{x0}&Z^T(x)\Gamma_{xx}Z(\theta)&Z^T(x)\Gamma_{xy}Z(y)\\
	Z^T(y)H_{y0}&Z^T(y)\Gamma_{yx}Z(x)&Z^T(y)\Gamma_{yy}Z(\nu)}
\end{align*}
for some $q\in \N$ and $Z \in L_2^{q \times n_1}$ and
\[
 \bmat{&H_{0x}&H_{0y}\\H_{x0}&\Gamma_{xx}&\Gamma_{xy}\\H_{y0}&\Gamma_{yx}&\Gamma_{yy}}\in
 \bmat{&\mb{R}^{n_0\times q}&\mb{R}^{n_0\times q}\\
 \mb{R}^{q\times n_0}&\mb{R}^{q\times q}&\mb{R}^{q\times q}    \\
 \mb{R}^{q\times n_0}&\mb{R}^{q\times q}&\mb{R}^{q\times q}}
\]
Finally, define the linear parameter map $\mcl{L}_{\text{inv}}:\mcl{N}_{011}\smallbmat{n_0&n_0\\n_1&n_1}\rightarrow \mcl{N}_{011}\smallbmat{n_0&n_0\\n_1&n_1}$ as in Equation.~\eqref{eq_inv_011_appendix} in Figure~\ref{fig_inverse_operator_appendix}.
Then, there exists a set of parameters $\hat{Q}=\mcl{L}_{\text{inv}}(Q)\in\mcl{N}_{011}\smallbmat{n_0&n_0\\n_1&n_1}$ such that
\begin{align*}
 (\mcl{P}[\hat Q]\circ\mcl{P}[Q]) \mbf{u}=(\mcl{P}[Q]\circ\mcl{P}[\hat Q]) \mbf{u}=\mbf{u},
\end{align*}
for any $\mbf{u}\in\R^{n_0}\times L_2^{n_1}[x]\times L_2^{n_1}[y]$.\\
\end{lem}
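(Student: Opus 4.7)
The plan is to exploit the low-rank separable structure of $Q$ via the common basis $Z$ to reduce the operator-level inversion to (i) pointwise inversion of the multipliers $Q_{xx}^0(x), Q_{yy}^0(y)$ and (ii) a finite-dimensional matrix inversion of size $(n_0+2q)\times(n_0+2q)$. This is essentially a Sherman–Morrison–Woodbury argument adapted to the 011-PI algebra, and it automatically explains why $\hat Q$ must live in the same separable parameter class as $Q$.

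First, I would write out $\mcl{P}[Q]\mbf{u} = \hat{\mbf{u}}$ explicitly for $\mbf{u}=(u_0,\mbf u_1,\mbf u_2)$. Because every off-diagonal kernel factors through $Z$, and because $Q_{xx}^1=Z^T\Gamma_{xx}Z$ and $Q_{yy}^1=Z^T\Gamma_{yy}Z$, every integral that couples the three components passes through just two finite-dimensional vectors
\begin{align*}
 v_x := \int_0^1 Z(\theta)\mbf u_1(\theta)\,d\theta \in \R^q,\qquad
 v_y := \int_0^1 Z(\nu)\mbf u_2(\nu)\,d\nu \in \R^q.
\end{align*}
Assuming the multipliers $Q_{xx}^0(x)$ and $Q_{yy}^0(y)$ are pointwise invertible, the second and third components of the equation can be solved pointwise for $\mbf u_1(x)$ and $\mbf u_2(y)$ in terms of $\hat{\mbf u}_1,\hat{\mbf u}_2,u_0,v_x,v_y$, producing expressions of the form $\mbf u_1(x)=[Q_{xx}^0(x)]^{-1}\hat{\mbf u}_1(x)-[Q_{xx}^0(x)]^{-1}Z^T(x)[H_{x0}u_0+\Gamma_{xx}v_x+\Gamma_{xy}v_y]$ and symmetrically for $\mbf u_2$.

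Substituting these back into the definition of $v_x,v_y$ and into the $\hat u_0$ equation yields a closed linear system in $(u_0,v_x,v_y)\in\R^{n_0+2q}$, whose coefficient matrix involves $Q_{00}$, $H_{\star}$, $\Gamma_{\star}$, and the ``capacitance'' matrices $W_x:=\int_0^1 Z(x)[Q_{xx}^0(x)]^{-1}Z^T(x)dx$, $W_y:=\int_0^1 Z(y)[Q_{yy}^0(y)]^{-1}Z^T(y)dy$. Under the hidden nondegeneracy hypothesis (invertibility of this finite matrix together with the two multipliers), I invert it and substitute the resulting $(u_0,v_x,v_y)$ back into the pointwise expressions for $\mbf u_1,\mbf u_2$. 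Reading off the coefficients, the map $\hat{\mbf u}\mapsto \mbf u$ acquires the 011-PI form $\mcl{P}[\hat Q]$, and the entries of $\hat Q$ are precisely the formulae tabulated in $\mcl{L}_{\text{inv}}$ in Eqn.~\eqref{eq_inv_011_appendix}: the $(1,1),(1,x),(1,y),\ldots$ blocks come from the finite Woodbury inverse, while the separable 1D blocks have new multipliers $[Q_{xx}^0]^{-1},[Q_{yy}^0]^{-1}$ and new outer factors $Z^T[Q_{xx}^0]^{-1}$, $[Q_{yy}^0]^{-1}Z$.

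This gives the one-sided identity $\mcl{P}[Q]\circ\mcl{P}[\hat Q]=I$. For the other direction, the cleanest route is to invoke Lemma~\ref{lem_011algebra_appendix}: $\mcl{L}_{011}(\hat Q,Q)$ is a 011-PI parameter set which, by the same algebraic derivation, must also equal the identity parameters, so $\mcl{P}[\hat Q]\circ\mcl{P}[Q]=I$ as well. The main obstacle I expect is bookkeeping rather than conceptual: the explicit composition in the 011-PI algebra produces many cross-terms that must be collected into the correct $\{B_{00},B_{0x},\ldots,B_{yy}\}$ blocks, and one has to verify that the multiplier parts of the 1D diagonal blocks of the composite are indeed $I$ (using separability of $Q_{xx},Q_{yy}$, which is the crucial hypothesis that makes $\hat Q$ stay in the class $\mcl{N}_{011}$ at all).
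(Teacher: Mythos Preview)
Your approach is correct but takes a genuinely different route from the paper. You \emph{derive} the inverse constructively: by noting that every integral term factors through the finite-dimensional quantities $v_x=\int Z\mbf u_1$ and $v_y=\int Z\mbf u_2$, you reduce the operator equation $\mcl P[Q]\mbf u=\hat{\mbf u}$ to pointwise inversion of $Q_{xx}^0,Q_{yy}^0$ plus a Woodbury-type inversion of an $(n_0+2q)\times(n_0+2q)$ matrix built from $Q_{00},H_\star,\Gamma_\star$ and the capacitance integrals $W_x,W_y$ (which are exactly the paper's $K_{xx},K_{yy}$). The paper instead takes the formulae for $\hat Q$ as given and \emph{verifies} them: it computes $R=\mcl L_{011}(\hat Q,Q)$ block by block and checks that $R_{00}=I_{n_0}$, $R_{xx}^0=R_{yy}^0=I_{n_1}$, and that every remaining block $R_{0x},R_{0y},R_{x0},R_{xx}^1,R_{xy},\ldots$ vanishes after substituting the separable factorizations and simplifying. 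Your route explains where the expressions in Eqn.~\eqref{eq_inv_011_appendix} come from and makes the role of separability transparent; the paper's route is purely mechanical but gives an explicit audit of each block. Note also that your second paragraph already yields both directions once the finite system is invertible (the reduction steps are equivalences), so the appeal to Lemma~\ref{lem_011algebra_appendix} for the reverse composition is not strictly needed, though it is harmless.
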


\begin{figure*}[!t]
\hrulefill
\footnotesize

\begin{flalign}\label{eq_inv_011_appendix}
&\qquad\mcl{L}_{\text{inv}}(Q)=
\bmat{\hat{Q}_{00}&\hat{Q}_{0x}&\hat{Q}_{0y}\\\hat{Q}_{x0}&\hat{Q}_{xx}&\hat{Q}_{xy}\\\hat{Q}_{y0}&\hat{Q}_{yx}&\hat{Q}_{yy}}\in\mcl{N}_{011}\smallbmat{n_0&n_0\\n_1&n_1},
&	&\text{with}
&\mat{\hat{Q}_{xx}=\{\hat{Q}_{xx}^{0},\hat{Q}_{xx}^{1},\hat{Q}_{xx}^{1}\}\in\mcl{N}_{1D}^{n_1\times n_1},\\
\hat{Q}_{yy}=\{\hat{Q}_{yy}^{0},\hat{Q}_{yy}^{1},\hat{Q}_{yy}^{1}\}\in\mcl{N}_{1D}^{n_1\times n_1}},	&	&
\end{flalign}
and with
\begin{flalign*}
&\hspace*{0.2cm}\hat{Q}_{00}=\bbl(I_{n_0}-\hat{H}_{0x}K_{xx}H_{x0}-\hat{H}_{0y}K_{yy}H_{y0}\bbr)Q_{00}^{-1},	\\
&\begin{array}{l}
\hat{Q}_{xx}^{0}(x)=[Q_{xx}^{0}(x)]^{-1},		\\
 \\
\hat{Q}_{yy}^{0}(y)=[Q_{yy}^{0}(y)]^{-1},
\end{array}
&
&\hspace*{-2.0cm}\bmat{&\hat{Q}_{0x}(x)&\hat{Q}_{0y}(y)\\\hat{Q}_{x0}(x)&\hat{Q}_{xx}^{1}(x,\theta)&\hat{Q}_{xy}(x,y)\\\hat{Q}_{y0}(y)&\hat{Q}_{yx}(x,y)&\hat{Q}_{yy}^{1}(y,\nu)}
=
\bmat{&\hat{H}_{0x}\hat{Z}_{0x}(x)&\hat{H}_{0y}\hat{Z}_{0y}(y)\\
	\hat{Z}_{x0}^T(x)\hat{H}_{x0}&\hat{Z}_{x0}^T(x)\hat{\Gamma}_{xx}\hat{Z}_{0x}(\theta)&\hat{Z}_{x0}^T(x)\hat{\Gamma}_{xy}\hat{Z}_{0y}(y)\\
	\hat{Z}_{y0}^T(y)\hat{H}_{y0}&\hat{Z}_{y0}^T(y)\hat{\Gamma}_{yx}\hat{Z}_{0x}(x)&\hat{Z}_{y0}^T(y)\hat{\Gamma}_{yy}\hat{Z}_{0y}(\nu)},
\end{flalign*}
where
\begin{flalign*}
&\quad \hat{Z}_{x0}(x)=Z(x)[\hat{Q}_{xx}^{0}(x)]^T, &
&\hat{Z}_{0x}(x)=Z(x)\hat{Q}_{xx}^{0}(x), 		&
&\hat{Z}_{y0}(y)=Z(y)[\hat{Q}_{yy}^{0}(y)]^T, &
&\hat{Z}_{0y}(y)=Z(y)\hat{Q}_{yy}^{0}(y),	&
\end{flalign*}
and where
\begin{align*}
\hat{\Gamma}_{yx}&=-\bl(\Pi_{yx}-\Pi_{yy}\text{E}_{yx}\br)
\bl(\Sigma_{x}
-K_{xx}\Pi_{xy}\text{E}_{yx}\br)^{-1},   &
\hat{\Gamma}_{xy}&=-\bl(\Pi_{xy}
-\Pi_{xx}\text{E}_{xy}\br)
\bl(\Sigma_{y}-K_{yy}\Pi_{yx}\text{E}_{xy}\br)^{-1}, \\
\hat{\Gamma}_{yy}&=-\bbl(\Pi_{yy}+\hat{\Gamma}_{yx}K_{xx}\Pi_{xy}\bbr)
\Sigma_{y}^{-1}, 	&
\hat{\Gamma}_{xx}&=-\bbl(\Pi_{xx}
+\hat{\Gamma}_{xy}K_{yy}\Pi_{yx}\bbr)\Sigma_{x}^{-1},   \\
\hat{H}_{y0}&=-\bbl(H_{y0}+\hat{\Gamma}_{yy}K_{yy}H_{y0} + \hat{\Gamma}_{yx}K_{xx}H_{x0}\bbr)
Q_{00}^{-1},      &
\hat{H}_{x0}&=
-\bbl(H_{x0} + \hat{\Gamma}_{xx}K_{xx} H_{x0} + \hat{\Gamma}_{xy} K_{yy} H_{y0}\bbr)Q_{00}^{-1},  \\
\hat{H}_{0y}&=-\bl(Q_{00}^{-1}H_{0y}-Q_{00}^{-1}H_{0x} \text{E}_{xy}\br)
\bl(\Sigma_{y}-K_{yy}\Pi_{yx} \text{E}_{xy}\br)^{-1},   &
\hat{H}_{0x}&=-\bbl(Q_{00}^{-1}H_{0x}
+\hat{H}_{0y}K_{yy}\Pi_{yx}\bbr)\Sigma_{x}^{-1},
\end{align*}
with
\begin{align*}
&\begin{array}{l}
\Pi_{xx}=\Gamma_{xx}-H_{x0}Q_{00}^{-1}H_{0x},   \\
\Pi_{xy}=\Gamma_{xy}-H_{x0}Q_{00}^{-1}H_{0y},   \\
\Pi_{yx}=\Gamma_{yx}-H_{y0}Q_{00}^{-1}H_{0x},   \\
\Pi_{yy}=\Gamma_{yy}-H_{y0}Q_{00}^{-1}H_{0y},   
\end{array}		&
&\begin{array}{l}
\Sigma_x=I_{q}+K_{xx}\Pi_{xx},    \\
\Sigma_y=I_{q}+K_{yy}\Pi_{yy},    \\
\text{E}_{xy}=\Sigma_{x}^{-1}K_{xx}\Pi_{xy},  \\
\text{E}_{yx}=\Sigma_{y}^{-1}K_{yy}\Pi_{yx},
\end{array}
&
&\begin{array}{l}
K_{xx}=\int_{a}^{b} Z(x)\hat{Q}_{xx}^0(x)Z^T(x)dx,    \\
 \\
K_{yy}=\int_{c}^{d} Z(y)\hat{Q}_{yy}^0(y)Z^T(y)dy.
\end{array}
\end{align*}
	
\hrulefill
\caption{Parameters $\hat{Q}$ describing the inverse PI operator $\mcl{P}[\hat{Q}]=\mcl{P}[Q]^{-1}$ in Lemma~\ref{lem_inv_appendix}}
\label{fig_inverse_operator_appendix}
\end{figure*}

\begin{proof}

Let $Q,\hat{Q}\in\mcl{N}\smallbmat{n_0&n_0\\n_1&n_1}$ be as defined. Then, by the composition rules of PI operators, we have $\mcl{P}[R]=\mcl{P}[\hat{Q}]\circ\mcl{P}[Q]$, where 
\begin{align*}
 R=\bmat{R_{00}&R_{0x}&R_{0y}\\R_{x0}&R_{xx}&R_{xy}\\R_{y0}&R_{yx}&R_{yy}}= \mcl{L}_{011}(\hat{Q},Q)\in\mcl{N}\smallbmat{n_0&n_0\\n_1&n_1},
\end{align*}
with $R_{xx}=\{R_{xx}^{0},R_{xx}^{1},R_{xx}^{1}\}\in\mcl{N}_{1D}^{n_1\times n_1}$ and $R_{yy}=\{R_{yy}^{0},R_{yy}^{1},R_{yy}^{1}\}\in\mcl{N}_{1D}^{n_1\times n_1}$, and where
\begin{align*}
 &R_{00}\! =\! \hat{Q}_{00}Q_{00}\! +\! \int_{a}^{b}\!  \hat{Q}_{0x}(x)Q_{x0}(x)dx  
 \! +\! \int_{c}^{d}\!  \hat{Q}_{0y}(y)Q_{y0}(y)dy    \\
 &R_{xx}^{0}(x) =\hat{Q}_{xx}^{0}(x)Q_{xx}^{0}(x)  \\
 &R_{yy}^{0}(y) =\hat{Q}_{yy}^{0}(y)Q_{yy}^{0}(y)    
\end{align*}
and
\begin{align*}
 &R_{0x}(x)=\hat{Q}_{00} Q_{0x}(x) + \hat{Q}_{0x}(x)Q_{xx}^{0}(x) \\
 &\quad+ \int_{a}^{b} \hat{Q}_{0x}(\theta) Q_{xx}^{1}(\theta,x) d\theta + \int_{c}^{d} \hat{Q}_{0y}(y)Q_{yx}(x,y) dy \\
 &R_{0y}(y)=\hat{Q}_{00} Q_{0y}(y) + \hat{Q}_{0y}(y)Q_{yy}^{0}(y) \\
 &\quad+ \int_{c}^{d} \hat{Q}_{0y}(\nu) Q_{yy}^{1}(\nu,y) d\nu + \int_{a}^{b} \hat{Q}_{0x}(x)Q_{xy}(x,y) dx,     
\end{align*}
\begin{align*}
 &R_{x0}(x)=\hat{Q}_{x0}(x)Q_{00} + \hat{Q}_{xx}^{0}(x)Q_{x0}(x)    \\
 &\quad+\int_{a}^{b} \hat{Q}_{xx}^{1}(x,\theta) Q_{x0}(\theta) d\theta
 + \int_{c}^{d} \hat{Q}_{xy}(x,y) Q_{y0}(y) dy   \\
 &R_{xx}^{1}(x,\theta) =\hat{Q}_{x0}(x)Q_{0x}(\theta)+\hat{Q}_{xx}^{0}(x)Q_{xx}^{1}(x,\theta) \\
 &\quad+\hat{Q}_{xx}^{1}(x,\theta)Q_{xx}^{0}(\theta) 
 +\int_{a}^{b} \hat{Q}_{xx}^{1}(x,\eta)Q_{xx}^{1}(\eta,\theta)d\eta \\
 &\qquad +\int_{c}^{d} \hat{Q}_{xy}(x,y) Q_{yx}(\theta,y) dy,  \\
 &R_{xy}(x,y)=\hat{Q}_{x0}(x)Q_{0y}(y) + \hat{Q}_{xx}^{0}(x)Q_{xy}(x,y) \\
 &\quad+\int_{a}^{b} \hat{Q}_{xx}^{1}(x,\theta)Q_{xy}(\theta,y) d\theta \\
 &\qquad +\hat{Q}_{xy}(x,y)Q_{yy}^{0}(y) +\int_{c}^{d} \hat{Q}_{xy}(x,\nu)Q_{yy}^{1}(\nu,y) d\nu    \\
 &R_{y0}(y)=\hat{Q}_{y0}(y)Q_{00} + \hat{Q}_{yy}^{0}(y)Q_{y0}(y)    \\
 &\quad+\int_{c}^{d} \hat{Q}_{yy}^{1}(y,\nu) Q_{y0}(\nu) d\nu 
 +\int_{a}^{b} \hat{Q}_{yx}(x,y) Q_{x0}(x) dx  \\
 &R_{yx}(y,x)=\hat{Q}_{y0}(y)Q_{0x}(x) + \hat{Q}_{yy}^{0}(y)Q_{yx}(x,y) \\
 &\quad+\int_{c}^{d} \hat{Q}_{yy}^{1}(y,\nu)Q_{yx}(x,\nu)   \\
 &\qquad+\hat{Q}_{yx}(x,y)Q_{xx}^{0}(x) + \int_{a}^{b} \hat{Q}_{yx}(\theta,y)Q_{xx}^{1}(\theta,x) d\theta  \\
 &R_{yy}^{1}(y,\nu) =\hat{Q}_{y0}(y)Q_{0y}(\nu) + \hat{Q}_{yy}^{0}(y)Q_{yy}^{1}(y,\nu)  \\
 &\quad+\hat{Q}_{yy}^{1}(y,\nu)Q_{yy}^{0}(\nu) 
 +\int_{c}^{d} \hat{Q}_{yy}^{1}(y,\mu)Q_{yy}^{1}(\mu,\nu)d\mu   \\
 &\qquad +\int_{a}^{b} \hat{Q}_{yx}(x,y) Q_{xy}(x,\nu) dx.
\end{align*}
To demonstrate that the operator $\mcl{P}[\hat{Q}]$ defines an inverse of the operator $\mcl{P}[Q]$, we show that $\mcl{P}[R]$ describes an identity operation on $\R^{n_0}\times L_2^{n_1}[x]\times L_2^{n_1}[y]$. 
In particular, we show that each of the functions $R_{ij}$ is constantly equal to zero, except for $R_{00}$, $R_{xx}^{0}$ and $R_{yy}^{0}$, which are identity matrices of appropriate sizes. 
To this end, we first note that $\hat{Q}_{xx}^{0}=[Q_{xx}^{0}]^{-1}$ and $\hat{Q}_{yy}^{0}=[Q_{yy}^{0}]^{-1}$, from which it immediately follows that $R_{xx}^{0}=I_{n_1}$ and $R_{yy}^{0}=I_{n_1}$. To see that also $R_{00}=I_{n_0}$, we expand each of the terms in its definition, obtaining
\begin{align*}
 \hat{Q}_{00}Q_{00}
 =\bbl(I_{n_0}&-\hat{H}_{0x}K_{xx}H_{x0}-\hat{H}_{0y}K_{yy}H_{y0}\bbr),   \\
 \int_{a}^{b}\hat{Q}_{0x}(x)Q_{x0}(x)dx    
 &=\int_{a}^{b}\hat{H}_{0x}
 Z(x)\hat{Q}_{xx}^{0}Z^T(x)H_{x0} dx   \\
 &=\hat{H}_{0x} K_{xx}H_{x0},        \\
 \int_{c}^{d}\hat{Q}_{0y}(y)Q_{y0}(y)dy    
 &=\int_{c}^{d}\hat{H}_{0y}
 Z(y)\hat{Q}_{yy}^{0}Z^T(y)H_{y0} dy   \\
 &=\hat{H}_{0y} K_{yy}H_{y0}.        
\end{align*}
Adding these terms, we immediately find $R_{00}=I_{n_0}$. 

For the remaining functions, we also expand the different terms in their definitions. Starting with $R_{0x}(x)$, we find
\begin{align*}
 &\hat{Q}_{00}Q_{0x}
 =\bbl(I_{n_0}\! -\! \hat{H}_{0x}K_{xx}H_{x0}\! -\! \hat{H}_{0y}K_{yy}H_{y0}\bbr)Q_{00}^{-1}H_{0x}Z(x)  \\
 &\hspace*{1.2cm}=Q_{00}^{-1}H_{0x}Z(x)
 -\hat{H}_{0x}K_{xx}\left(\Gamma_{xx}-\Pi_{xx}\right)Z(x) \\
 &\hspace*{1.2cm}\qquad-\hat{H}_{0y}K_{yy}\left(\Gamma_{yx}-\Pi_{yx}\right)Z(x), \\ 
 &\hat{Q}_{0x}(x)Q_{xx}^{0}(x)
 =\hat{H}_{0x}Z(x)\hat{Q}_{xx}^{0}(x)Q_{xx}^{0}(x)
 =\hat{H}_{0x}Z(x),  \\
 &\int_{a}^{b}\hat{Q}_{0x}(\theta)Q_{xx}^{1}(\theta,x)d\theta =  \\
 &\hspace*{1.5cm} \int_{a}^{b}\hat{H}_{0x}Z(\theta)\hat{Q}_{xx}^{0}(\theta)Z^T(\theta)\Gamma_{xx}Z(x)d\theta   \\
 &\hspace{5.0cm}=\hat{H}_{0x}K_{xx}\Gamma_{xx}Z(x), \\
 &\int_{c}^{d}\hat{Q}_{0y}(y)Q_{yx}(x,y)dy =   \\
 &\hspace*{1.5cm} \int_{c}^{d}\hat{H}_{0y}Z(y)\hat{Q}_{yy}^{0}(y)Z^T(y)\Gamma_{yx}Z(x)dy \\  
 &\hspace*{5.0cm}=\hat{H}_{0y}K_{yy}\Gamma_{yx}Z(x), 
\end{align*}
from which it follows that
\begin{align*}
 R_{0x}(x) &= Q_{00}^{-1}H_{0x}Z(x) + \hat{H}_{0x}Z(x) \\
 &\qquad +\hat{H}_{0x}K_{xx}\Pi_{xx} Z(x)
 +\hat{H}_{0y}K_{yy}\Pi_{yx} Z(x)   \\
 &=Q_{00}^{-1}H_{0x}Z(x) + \hat{H}_{0x}Z(x)   \\
 &\qquad +\hat{H}_{0x}\left(\Sigma_x - I_{q}\right) Z(x)
 +\hat{H}_{0y}K_{yy}\Pi_{yx} Z(x)   \\
 &=Q_{00}^{-1}H_{0x}Z(x)
 -\left(Q_{00}^{-1}H_{0x}+\hat{H}_{0y}K_{yy}\Pi_{yx}\right) Z(x)    \\
 &\qquad+\hat{H}_{0y}K_{yy}\Pi_{yx} Z(x)   
 =0.
\end{align*}
Expanding the terms in the expression of $R_{0y}$ in the same way, we obtain
\begin{align*}
&\hat{Q}_{00}Q_{0y}
=Q_{00}^{-1}H_{0y}Z(y)
-\hat{H}_{0x}K_{xx}\left(\Gamma_{xy}-\Pi_{xy}\right)Z(y) \\
&\hspace*{1.2cm}\qquad-\hat{H}_{0y}K_{yy}\left(\Gamma_{yy}-\Pi_{yy}\right)Z(y), \\ 
&\hat{Q}_{0y}(x)Q_{yy}^{0}(y)
=\hat{H}_{0y}Z(y),  \\
&\int_{c}^{d}\hat{Q}_{0y}(\nu)Q_{yy}^{1}(\nu,y)d\nu 
=\hat{H}_{0y}K_{yy}\Gamma_{yy}Z(y), \\
&\int_{a}^{b}\hat{Q}_{0x}(x)Q_{xy}(x,y)dx  
=\hat{H}_{0x}K_{xx}\Gamma_{xy}Z(y), 
\end{align*}
suggesting also
\begin{align*}
 R_{0y}(y)&= Q_{00}^{-1}H_{0y}Z(y) + \hat{H}_{0y}Z(y) \\
 &\qquad +\hat{H}_{0x}K_{xx}\Pi_{xy}Z(y)
 +\hat{H}_{0y}K_{yy}\Pi_{yy}Z(y)    \\
 &= Q_{00}^{-1}H_{0y}Z(y) + \hat{H}_{0y}Z(y) \\
 &\qquad +\hat{H}_{0x}\Sigma_{x}\text{E}_{xy}Z(y)
 +\hat{H}_{0y}\left(\Sigma_y - I_q\right)Z(y)   \\
 &= Q_{00}^{-1}H_{0y}Z(y) -\left(Q_{00}^{-1}H_{0x}+\hat{H}_{0y}K_{yy}\Pi_{yx}\right)\text{E}_{xy}Z(y)   \\
 &\qquad +\hat{H}_{0y}\Sigma_y Z(y)  \\
 &=\left(Q_{00}^{-1}H_{0y}-Q_{00}^{-1}H_{0x}\text{E}_{xy}\right)Z(y) \\
 &\qquad +\hat{H}_{0y}\left(\Sigma_y - K_{yy}\Pi_{yx}\text{E}_{xy}\right)Z(y)    \\
 &=\left(Q_{00}^{-1}H_{0y}-Q_{00}^{-1}H_{0x}\text{E}_{xy}\right)Z(y)    \\
 &\qquad -\left(Q_{00}^{-1}H_{0y}-Q_{00}^{-1}H_{0x}\text{E}_{xy}\right)Z(y)
 =0.
\end{align*}
Next, we consider the expression for $R_{x0}(x)$, for which
\begin{align*}
 &\hat{Q}_{x0}(x)Q_{00}
 =\hat{Q}_{xx}^{0}(x)Z^T(x)\hat{H}_{x0}Q_{00}  \\
 &\quad=-\hat{Q}_{xx}^{0}Z^T(x)\bbl(H_{x0} + \hat{\Gamma}_{xx}K_{xx} H_{x0} + \hat{\Gamma}_{xy} K_{yy} H_{y0}\bbr),   \\
 &\hat{Q}_{xx}^{0}(x)Q_{x0}(x)
 =\hat{Q}_{xx}^{0}(x)Z^T(x)H_{x0},   \\
 &\int_{a}^{b}\hat{Q}_{xx}^{1}(x,\theta)Q_{x0}(\theta)d\theta   =   \\
 &\hspace*{1.0cm}\int_{a}^{b}\hat{Q}_{xx}^{0}(x)Z^T(x)\hat{\Gamma}_{xx}Z(\theta)\hat{Q}_{xx}^{0}(\theta)Z^T(\theta)H_{x0}d\theta  \\
 &\hspace*{4.0cm}= \hat{Q}_{xx}^{0}(x)Z^T(x)\hat{\Gamma}_{xx}K_{xx}H_{x0}, \\
 &\int_{c}^{d}\hat{Q}_{xy}(x,y)Q_{y0}(y)dy = \\
 &\hspace*{1.0cm}\int_{c}^{d}\hat{Q}_{xx}^{0}(x)Z^T(x)\hat{\Gamma}_{xy}Z(y)\hat{Q}_{yy}^{0}(y)Z^T(y)H_{y0}dy  \\
 &\hspace*{4.0cm} =\hat{Q}_{xx}^{0}(x)Z^T(x)\hat{\Gamma}_{xy}K_{yy}H_{y0}.
\end{align*}
Adding these terms, it is clear that also $R_{x0}(x)=0$.\\
Similarly, for $R_{xx}^{1}$,
\begin{align*}
 &\hat{Q}_{x0}(x)Q_{0x}(\theta)
 =\hat{Q}_{xx}^{0}(x)Z^T(x)\hat{H}_{x0} H_{0x}Z(\theta), \\
 &\hat{Q}_{xx}^{0}(x)Q_{xx}^{1}(x,\theta)
 =\hat{Q}_{xx}^{0}(x)Z^T(x)\Gamma_{xx} Z(\theta),    \\
 &\hat{Q}_{xx}^{1}(x,\theta)Q_{xx}^{0}(\theta)
 =\hat{Q}_{xx}^{0}(x)Z^T(x)\hat{\Gamma}_{xx} Z(\theta), \\
 &\int_{a}^{b}\hat{Q}_{xx}^{1}(x,\eta)Q_{xx}^{1}(\eta,\theta)d\eta =\\
 &\quad \int_{a}^{b} \hat{Q}_{xx}^{0}(x)Z^T(x)\hat{\Gamma}_{xx} Z(\eta)\hat{Q}_{xx}^{0}(\eta)Z^T(\eta)\Gamma_{xx} Z(\theta)d\eta \\
 &\hspace*{3.8cm} =\hat{Q}_{xx}^{0}(x)Z^T(x)\hat{\Gamma}_{xx} K_{xx}\Gamma_{xx} Z(\theta),  \\
 &\int_{c}^{d}\hat{Q}_{xy}(x,y)Q_{yx}(\theta,y)dy =\\
 &\quad \int_{c}^{d} \hat{Q}_{xx}^{0}(x)Z^T(x)\hat{\Gamma}_{xy} Z(y)\hat{Q}_{yy}^{0}(y)Z^T(y)\Gamma_{yx} Z(\theta)dy \\
 &\hspace*{3.8cm} =\hat{Q}_{xx}^{0}(x)Z^T(x)\hat{\Gamma}_{xy} K_{yy}\Gamma_{yx} Z(\theta).
\end{align*}
We note that each of these terms may be described as a constant matrix, premultiplied by the function $\hat{Q}_{xx}^{0}(x)Z^T(x)$, and postmultiplied by the function $Z(\theta)$. Hence, we may also express $R_{xx}^{1}(x,\theta)=\hat{Q}_{xx}^{0}(x)Z^T(x) A_{xx} Z(\theta)$ for some matrix $A_{xx}$. In particular, adding the different terms, we find this matrix to be given by
\begin{align*}
 A_{xx}
 &=\hat{H}_{x0} H_{0x}
 +\Gamma_{xx} + \hat{\Gamma}_{xx}    
 +\hat{\Gamma}_{xx} K_{xx}\Gamma_{xx}
 +\hat{\Gamma}_{xy} K_{yy}\Gamma_{yx}   \\
 &=
 -\bbl[H_{x0} + \hat{\Gamma}_{xx}K_{xx} H_{x0} + \hat{\Gamma}_{xy} K_{yy} H_{y0}\bbr]Q_{00}^{-1}H_{0x} \\
 &\qquad +\Gamma_{xx} + \hat{\Gamma}_{xx}    
 +\hat{\Gamma}_{xx} K_{xx}\Gamma_{xx}
 +\hat{\Gamma}_{xy} K_{yy}\Gamma_{yx} \\
 &=
 -\bbl[\Gamma_{xx}-\Pi_{xx}\bbr]
 -\hat{\Gamma}_{xx}K_{xx}\bbl[\Gamma_{xx}-\Pi_{xx}\bbr] \\
 &\qquad -\hat{\Gamma}_{xy}K_{yy}\bbl[\Gamma_{yx}-\Pi_{yx}\bbr] 
  +\Gamma_{xx} + \hat{\Gamma}_{xx}     \\
 &\qquad\quad +\hat{\Gamma}_{xx} K_{xx}\Gamma_{xx}
 +\hat{\Gamma}_{xy} K_{yy}\Gamma_{yx} \\
 &=
 \Pi_{xx}
 +\hat{\Gamma}_{xx}K_{xx}\Pi_{xx} 
 +\hat{\Gamma}_{xy}K_{yy}\Pi_{yx} 
  + \hat{\Gamma}_{xx}\\
 &=
 \Pi_{xx}
 +\hat{\Gamma}_{xx}\bbl[\Sigma_x - I_q\bbr] 
 +\hat{\Gamma}_{xy}K_{yy}\Pi_{yx} 
  + \hat{\Gamma}_{xx} \\
 &=
 \Pi_{xx}
 -\bbl[\Pi_{xx}+\hat{\Gamma}_{xy}K_{yy}\Pi_{yx}\bbr] 
  +\hat{\Gamma}_{xy}K_{yy}\Pi_{yx} 
 =0,
\end{align*}
proving that also $R_{xx}^{1}=0$. Finally, for $R_{xy}$, we find
\begin{align*}
 &\hat{Q}_{x0}(x)Q_{0y}(y)
 =\hat{Q}_{xx}^{0}(x)Z^T(x)\hat{H}_{x0} H_{0y}Z(y), \\
 &\hat{Q}_{xx}^{0}(x)Q_{xy}^{1}(x,y)
 =\hat{Q}_{xx}^{0}(x)Z^T(x)\Gamma_{xy} Z(y),    \\
 &\int_{a}^{b}\hat{Q}_{xx}^{1}(x,\theta)Q_{xy}(\theta,y)d\theta =\\
 &\quad \int_{a}^{b} \hat{Q}_{xx}^{0}(x)Z^T(x)\hat{\Gamma}_{xx} Z(\theta)\hat{Q}_{xx}^{0}(\theta)Z^T(\theta)\Gamma_{xy} Z(y)d\theta \\
 &\hspace*{3.8cm} =\hat{Q}_{xx}^{0}(x)Z^T(x)\hat{\Gamma}_{xx} K_{xx}\Gamma_{xy} Z(y),  \\
 &\hat{Q}_{xy}(x,y)Q_{yy}^{0}(y)
 =\hat{Q}_{xx}^{0}(x)Z^T(x)\hat{\Gamma}_{xy} Z(y)\hat{Q}_{yy}^{0}(y)Q_{yy}^{0}(y)    \\
 &\hspace*{2.5cm} =\hat{Q}_{xx}^{0}(x)Z^T(x)\hat{\Gamma}_{xy} Z(y), \\
 &\int_{c}^{d}\hat{Q}_{xy}(x,\nu)Q_{yy}^{1}(\nu,y)d\nu =\\
 &\quad \int_{c}^{d} \hat{Q}_{xx}^{0}(x)Z^T(x)\hat{\Gamma}_{xy} Z(\nu)\hat{Q}_{yy}^{0}(\nu)Z^T(\nu)\Gamma_{yy} Z(y)d\nu \\
 &\hspace*{3.8cm} =\hat{Q}_{xx}^{0}(x)Z^T(x)\hat{\Gamma}_{xy} K_{yy}\Gamma_{yy} Z(y).
\end{align*}
Studying these terms, it is clear that (similar to $R_{xx}^{1}$) we may express $R_{xy}(x,y)=\hat{Q}_{xx}^{0}(x)Z^T(x) A_{xy} Z(y)$, where 
\begin{align*}
 A_{xy}
 &=
 +\Gamma_{xy} + \hat{\Gamma}_{xy}    
 +\hat{\Gamma}_{xx} K_{xx}\Gamma_{xy}
 +\hat{\Gamma}_{xy} K_{yy}\Gamma_{yy}  \\
 &=
 -\bbl[H_{x0} + \hat{\Gamma}_{xx}K_{xx} H_{x0} + \hat{\Gamma}_{xy} K_{yy} H_{y0}\bbr]Q_{00}^{-1}H_{0y} \\
 &\qquad +\Gamma_{xy} + \hat{\Gamma}_{xy}    
 +\hat{\Gamma}_{xx} K_{xx}\Gamma_{xy}
 +\hat{\Gamma}_{xy} K_{yy}\Gamma_{yy}  \\
 &=
 -\bbl[\Gamma_{xy}-\Pi_{xy}\bbr]
 -\hat{\Gamma}_{xx}K_{xx}\bbl[\Gamma_{xy}-\Pi_{xy}\bbr] \\
 &\qquad -\hat{\Gamma}_{xy}K_{yy}\bbl[\Gamma_{yy}-\Pi_{yy}\bbr] 
  +\Gamma_{xy} + \hat{\Gamma}_{xy}     \\
 &\qquad\quad +\hat{\Gamma}_{xx} K_{xx}\Gamma_{xy}
 +\hat{\Gamma}_{xy} K_{yy}\Gamma_{yy} \\
 &=
 \Pi_{xy}
 +\hat{\Gamma}_{xx}K_{xx}\Pi_{xy} 
 +\hat{\Gamma}_{xy}K_{yy}\Pi_{yy} 
  + \hat{\Gamma}_{xy} \\
 &=
 \Pi_{xy}
 +\hat{\Gamma}_{xx}\Sigma_x\text{E}_{xy} 
 +\hat{\Gamma}_{xy}\bbl[\Sigma_y - I_q\bbr]
  + \hat{\Gamma}_{xy} \\
 &=
 \Pi_{xy}
 -\bbl[\Pi_{xx}
 +\hat{\Gamma}_{xy}K_{yy}\Pi_{yx}\bbr]\text{E}_{xy} 
 +\hat{\Gamma}_{xy} \Sigma_y \\
 &=
 \Pi_{xy} - \Pi_{xx}\text{E}_{xy}   
 +\hat{\Gamma}_{xy}\bbl[\Sigma_y - K_{yy}\Pi_{yx}\text{E}_{xy}\bbr] \\
 &=
 \bbl[\Pi_{xy} - \Pi_{xx}\text{E}_{xy}\bbr]
 -\bbl[\Pi_{xy} - \Pi_{xx}\text{E}_{xy}\bbr] = 0,
\end{align*}
from which it follows that also $R_{xy}=0$.\\
Performing the same steps as for $R_{x0}$, $R_{xx}^{1}$ and $R_{xy}$, we can also show that $R_{y0}$, $R_{yy}^{1}$ and $R_{yx}$ are constantly equal to zero. This leaves only $R_{00}=I_{n_0}$, $R_{xx}^{0}=I_{n_1}$ and $R_{yy}^{0}=I_{n_1}$ as nonzero functions defining $\mcl{P}[R]=\mcl{P}[\hat{Q}]\circ\mcl{P}[Q]$. By definition of the 011-PI operator, it immediately follows that $\mcl{P}[R]\mbf{u}=\mbf{u}$ for any $\mbf{u}\in\R^{n_0}\times L_2^{n_1}[x]\times L_2^{n_1}[y]$, proving the desired result.\\

\end{proof}

\newpage

\subsection{Adjoint of 2D-PI operator}\label{sec_adjoint_appendix}
\begin{lem}\label{lem_adjoint_appendix}
Suppose $ N \in \mcl N_{2D}^{n\times m}$ and define $ \hat{N} \in \mcl N_{2D}^{m\times n}$ such that
\begin{align}\label{adj_operator_appendix}
  &\hat{N}(x,y,\theta,\nu)  \nonumber\\
  &=\bmat{\hat{N}_{00}(x,y)&\hat{N}_{01}(x,y,\nu)&\hat{N}_{02}(x,y,\nu)\\ \hat{N}_{10}(x,y,\theta)&\hat{N}_{11}(x,y,\theta,\nu)&\hat{N}_{12}(x,y,\theta,\nu)\\ \hat{N}_{20}(x,y,\theta)&\hat{N}_{21}(x,y,\theta,\nu)&\hat{N}_{22}(x,y,\theta,\nu)} \nonumber\\
  &=\bmat{N_{00}^T(x,y)&N^T_{02}(x,\nu,y)&N^T_{01}(x,\nu,y)\\ N^T_{20}(\theta,y,x)&N^T_{22}(\theta,\nu,x,y)&N^T_{21}(\theta,\nu,x,y)\\ N^T_{10}(\theta,y,x)&N^T_{12}(\theta,\nu,x,y)&N^T_{11}(\theta,\nu,x,y)}.
 \end{align}
Then for any $\mbf u\in L_2^{m}[x,y]$ and $\mbf{v}\in L_2^{n}[x,y]$,
\[
\ip{\mbf{v}}{\mcl{P}[N]\mbf{u}}_{L_2}=\ip{\mcl{P}[\hat{N}]\mbf{v}}{\mbf{u}}_{L_2}.
\]

\end{lem}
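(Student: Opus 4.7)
The plan is to expand the inner product $\ip{\mbf{v}}{\mcl{P}[N]\mbf{u}}_{L_2}$ using the definition of $\mcl{P}[N]$ from Eqn.~\eqref{eq:2D_PI_operator}, yielding a sum of nine scalar terms (one multiplier, four single integrals, and four double integrals). For each term, I will apply Fubini's theorem to interchange the outer integral over $(x,y)$ with the inner integral(s) over $(\theta,\nu)$, and then perform the change of dummy variables $x\leftrightarrow\theta$ and/or $y\leftrightarrow\nu$ so that the factor $\mbf{u}$ is evaluated at the outermost integration point. Collecting the resulting expressions, they will match the nine terms of $\ip{\mcl{P}[\hat{N}]\mbf{v}}{\mbf{u}}_{L_2}$ with the identification given in Eqn.~\eqref{adj_operator_appendix}.

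The key observation that makes this work is that swapping the order of integration on a triangular region flips the direction of the inequality: e.g.
\[
\int_{a}^{b}\!\int_{a}^{x} f(x,\theta)\,d\theta\,dx = \int_{a}^{b}\!\int_{\theta}^{b} f(x,\theta)\,dx\,d\theta,
\]
so after renaming $x\leftrightarrow\theta$ the lower-triangular integral $\int_{a}^{x}$ becomes an upper-triangular integral $\int_{x}^{b}$. This is precisely why the indices in the adjoint formula swap as $1\leftrightarrow 2$ while the $0$ index is preserved. For a typical term like $\int_{a}^{x}\!\int_{c}^{y} N_{11}(x,y,\theta,\nu)\mbf{u}(\theta,\nu)\,d\nu\,d\theta$, I will perform Fubini in both variables, rename both pairs, and transpose the quadratic form, arriving at $\int_{x}^{b}\!\int_{y}^{d} N_{11}^T(\theta,\nu,x,y)\mbf{v}(\theta,\nu)\,d\nu\,d\theta$, which is exactly the $\hat{N}_{22}$ contribution to $\mcl{P}[\hat{N}]\mbf{v}$. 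The analogous patterns hold for $N_{01},N_{10},N_{02},N_{20},N_{12},N_{21},N_{22}$, and the multiplier term $N_{00}$ is simply transposed in place.

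There is no real mathematical obstacle here; the argument is essentially a bookkeeping exercise verifying that Fubini's theorem applied to each of the nine terms reproduces the formula in Eqn.~\eqref{adj_operator_appendix}. The only care required is to track \emph{which} pair of variables (none, $x/\theta$, $y/\nu$, or both) must be renamed for each term and to confirm that the index swap $(i,j)\mapsto(j',i')$ with $0'=0$, $1'=2$, $2'=1$ acts consistently on both the $x$-direction index $i$ and the $y$-direction index $j$. Since Fubini applies without issue under the $L_2\times L_2$ integrability assumption on $N$, $\mbf{u}$, and $\mbf{v}$, summing the transformed terms completes the proof.
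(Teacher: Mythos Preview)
Your proposal is correct and matches the paper's approach: both arguments rest on Fubini together with the variable swap $(x,y)\leftrightarrow(\theta,\nu)$ and the observation that this interchanges the lower- and upper-triangular integrals (i.e.\ the index swap $1\leftrightarrow 2$). The only cosmetic difference is that the paper packages all nine terms at once via the compact representation $\mcl{P}[N]\mbf{u}=\int\sum_{i,j}\bs{\Phi}_i(x,\theta)\bs{\Phi}_j(y,\nu)N_{ij}\mbf{u}$ and the identities $\bs{\Phi}_0(x,\theta)=\bs{\Phi}_0(\theta,x)$, $\bs{\Phi}_1(x,\theta)=\bs{\Phi}_2(\theta,x)$, rather than treating each term separately.
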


\begin{proof}
 Let $N\in\mcl{N}_{2D}^{n\times m}$ and $\hat{N}\in\mcl{N}_{2D}^{m\times n}$ be as defined. Recall the definition of the indicator function
 \begin{align*}
  \mbf{I}(x,\theta)=\begin{cases}
              1  &\text{if }x>\theta,  \\
              0  &\text{otherwise},
             \end{cases}
 \end{align*}
 and let the Dirac delta function $\bs{\delta}(x,\theta)=\bs{\delta}(\theta,x)$ be defined such that, for any $B\in L_2[x,\theta]$,
 \begin{align*}
  \int_{a}^{b}\bs{\delta}(x,\theta)B(x,\theta)d\theta = B(x,x).
 \end{align*}
 Then, as described in Appendix~\ref{sec_2Dalgebra_appendix}, defining
 \begin{align*}
  \bs{\Phi}_0(x,\theta)&=\bs{\delta}(x,\theta)	\\
  \bs{\Phi}_1(x,\theta)&=\mbf{I}(x-\theta)		&
  \bs{\Phi}_2(x,\theta)&=\mbf{I}(\theta-x)
 \end{align*}
 we may describe the 2D-PI operation $\mcl{P}[N]\mbf{u}$ as
 \begin{align*}
 &\bl(\mcl{P}[N]\mbf{u}\br)(x,y)=\\
 &~\int_{a}^{b}\int_{c}^{d}\bbbbl(\sum_{i,j=0}^{2}\bs{\Phi}_i(x,\theta)\bs{\Phi}_j(y,\nu)N(x,y,\theta,\nu)\mbf{u}(\theta,\nu)\bbbbr)d\nu d\theta
 \end{align*}
 Noting that $\bs{\Phi}_0(x,\theta)=\bs{\Phi}_0(\theta,x)$, and $\bs{\Phi}_1(x,\theta)=\bs{\Phi}_2(\theta,x)$, it immediately follows that
 \begin{align*}
  &\ip{\mbf{v}}{\mcl{P}[N]\mbf{u}}_{L_2}
  =\int_{a}^{b}\int_{c}^{d}\bbbl[\mbf{v}^T(x,y)\bbl(\mcl{P}[N]\mbf{u}\bbr)(x,y)\bbbr] dy dx   \\
  &=\int_{a}^{b}\int_{c}^{d}\bbbbl[\int_{a}^{b}\int_{c}^{d}\bbbbl(\sum_{i,j=0}^{2}\bs{\Phi}_i(x,\theta)\bs{\Phi}_j(y,\nu)\\
  &\hspace*{2.5cm}\mbf{v}^T(x,y)N(x,y,\theta,\nu)\mbf{u}(\theta,\nu)\bbbbr)d\nu d\theta\bbbbr]dy dx \\
  &=\int_{a}^{b}\int_{c}^{d}\bbbbl[\int_{a}^{b}\int_{c}^{d}\bbbbl(\sum_{i,j=0}^{2}\bs{\Phi}_i(\theta,x)\bs{\Phi}_j(\nu,y)\\
  &\hspace*{2.25cm}\bbl[N^T(\theta,\nu,x,y)\mbf{v}(\theta,\nu)\bbr]^T\mbf{u}(x,y)\bbbbr)d\nu d\theta\bbbbr]dy dx \\
  &=\int_{a}^{b}\int_{c}^{d}\bbbl[\bbl(\mcl{P}[\hat{N}]\mbf{v}\bbr)^T(x,y)\mbf{u}(x,y)\bbbr] dy dx   
  =\ip{\mcl{P}[\hat{N}]\mbf{v}}{\mbf{u}}_{L_2},
 \end{align*}
 as desired.
\end{proof}

\subsection{Map From Fundamental to PDE State}\label{sec_Tmap_appendix}

For the following theorem, recall the definition 
\begin{align}\label{eq_setX_appendix}
 X(\mcl{B}):=
 &\left\{\bmat{\mbf{u}_0\\\mbf{u}_{1}\\\mbf{u}_{2}}\in \bmat{L_2^{n_0}\\ H_1^{n_1}\\ H_2^{n_2}}\mid \mcl{B}\Lambda_{\text{bf}}\mbf{u}=0\right\},
\end{align}
for the space $X$ of solutions to a standardized PDE. In particular, recall that any such solution must satisfy the boundary conditions  $\mcl{B}\Lambda_{\text{bf}}\mbf{u}=0$, where $\Lambda_{\text{bf}}\mbf{u}$ describes the solution along the boundary, and $\mcl{B}=\mcl{P}[B]$ for some $B\in\mcl{N}_{011}$.

\begin{figure*}[!t]
\footnotesize
\hrulefill
\begin{align}\label{eq_Tmat_appendix}
    T_{11}(x,y,\theta,\nu) &= K_{33}^{11}(x,y,\theta,\nu)
    + T_{21}(x,y,\theta,\nu) + T_{12}(x,y,\theta,\nu)
    -T_{22}(x,y,\theta,\nu), \nonumber\\
    T_{21}(x,y,\theta,\nu) &= - K_{32}^{1}(x,y,\nu)G_{23}^{0}(\theta,\nu) + T_{22}(x,y,\theta,\nu), \nonumber\\
    T_{12}(x,y,\theta,\nu) &= - K_{31}^{1}(x,y,\theta)G_{13}^{0}(\theta,\nu) + T_{22}(x,y,\theta,\nu), \nonumber\\
    T_{22}(x,y,\theta,\nu) &= -K_{30}(x,y)G_{03}(\theta,\nu)
    -\int_{a}^{x}K_{31}^{1}(x,y,\eta)G_{13}^{1}(\eta,\nu,\theta)d\eta
    -\int_{c}^{y}K_{32}^{1}(x,y,\mu)G_{23}^{1}(\theta,\mu,\nu)d\mu, 
    \hspace{3.5cm}
\end{align}
where
\begin{align}\label{eq_K_mat_appendix}
 &K_{33}^{11}(x,y,\theta,\nu)=
    \bmat{
    0&0&0\\
    0&I_{n_1}&0\\
    0&0&(x-\theta)(y-\nu)
    },  &
 &K_{32}^{1}(x,y,\nu)=
    \bmat{
    0&0&0\\
    I_{n_1}&0&0\\
    0&(y-\nu)&(x-a)(y-\nu)
    },\nonumber\\
 &K_{30}(x,y)=
    \bmat{
    0&0&0&0&0\\
    I_{n_1}&0&0&0&0\\
    0&I_{n_2}&(x-a)&(y-c)&(y-c)(x-a)
    },   &
 &K_{31}^{1}(x,y,\theta)=
    \bmat{
    0&0&0\\
    I_{n_1}&0&0\\
    0&(x-\theta)&(y-c)(x-\theta)
    },
    \hspace{2.0cm}
\end{align}
and
\begin{align}\label{eq_G_mat_appendix}
    &\enspace G_{0}(x,y)=\hat{E}_{00}F_{0}(x,y) + \hat{E}_{01}(x)F_{1}^{0}(x,y) + \int_{a}^{b}\hat{E}_{01}(\theta)F_{1}^{1}(\theta,y,x)d\theta
    + \hat{E}_{02}(y)F_{2}^{0}(x,y) + \int_{c}^{d}\hat{E}_{02}(\nu)F_{2}^{1}(x,\nu,y),    \nonumber\\
    &
    \begin{array}{l}
    G_{1}^{0}(x,y)=\hat{E}_{11}^{0}(x)F_{1}^{0}(x,y),
    \\
    G_{1}^{1}(x,y,\theta)=\hat{E}_{10}(x)F_{0}(\theta,y) + \hat{E}_{11}^{0}(x)F_{1}^{1}(x,y,\theta) 
    \\
    \hspace*{1.5cm}+ \hat{E}_{11}^{1}(x,\theta)F_{1}^{0}(\theta,y)  
    +\int_{a}^{b}\hat{E}_{11}^{1}(x,\eta)F_{1}^{1}(\eta,y,\theta)d\eta
    \\
    \hspace*{2.0cm}+\hat{E}_{12}(x,y)F_{2}^{0}(\theta,y) + \int_{c}^{d}\hat{E}_{12}(x,\nu)F_{2}^{1}(\theta,\nu,y)d\nu,
    \end{array}
    \hspace{0.75cm}
    \begin{array}{l}
    G_{2}^{0}(x,y)=\hat{E}_{22}^{0}(y)F_{2}^{0}(x,y), 
    \\
    G_{2}^{1}(x,y,\nu)=\hat{E}_{20}(y)F_{0}(x,\nu) + \hat{E}_{22}^{0}(y)F_{2}^{1}(x,y,\nu) 
    \\
    \hspace*{1.5cm}+ \hat{E}_{22}^{1}(y,\nu)F_{2}^{0}(x,\nu)  
    +\int_{c}^{d}\hat{E}_{22}^{1}(y,\mu)F_{2}^{1}(x,\mu,\nu)d\mu
    \\
    \hspace*{2.0cm}+\hat{E}_{21}(x,y)F_{1}^{0}(x,\nu) + \int_{a}^{b}\hat{E}_{21}(\theta,y)F_{1}^{1}(\theta,\nu,x)d\theta,
    \end{array}
    \hspace{1.1cm}
\end{align}
with
\begin{align}\label{eq_F_mat_appendix}
    &F_{0}(x,y)=B_{00}H_{03}(x,y) + B_{01}(x)H_{13}^{0}(x,y) + B_{02}(y)H_{23}^{0}(x,y),    \nonumber\\
    &F_{1}^{1}(x,y,\theta)=B_{10}(x)H_{03}(\theta,y) + B_{11}^{1}(x,\theta)H_{13}^{0}(\theta,y)
    +B_{12}(x,y)H_{23}^{0}(\theta,y),   &
    &F_{1}^{0}(x,y)=B_{11}^{0}(x)H_{13}^{0}(x,y), \nonumber\\
    &F_{2}^{1}(x,y,\nu)=B_{20}(y)H_{03}(x,\nu) + B_{22}^{1}(y,\nu)H_{23}^{0}(x,\nu)
    +B_{21}(x,y)H_{13}^{0}(x,\nu),   &
    &F_{2}^{0}(x,y)=B_{22}^{0}(y)H_{23}^{0}(x,y), \hspace{3.0cm}
\end{align}
and $\smallbmat{\hat{E}_{00}&\hat{E}_{01}&\hat{E}_{02}\\\hat{E}_{10}&\hat{E}_{11}&\hat{E}_{12}\\\hat{E}_{20}&\hat{E}_{21}&\hat{E}_{22}}=\mcl{L}_{\text{inv}}\left(\smallbmat{E_{00}&E_{01}&E_{02}\\E_{10}&E_{11}&E_{12}\\E_{20}&E_{21}&E_{22}}\right)\in\mcl{N}_{011}$, where $\mcl{L}_{\text{inv}}:\mcl{N}_{011}\rightarrow\mcl{N}_{011}$ is defined as in Equation~\eqref{eq_inv_011_appendix}, and $E_{11}=\{E_{11}^{0},E_{11}^{1},E_{11}^{1}\}\in\mcl{N}_{1D}$ and $E_{22}=\{E_{22}^{0},E_{22}^{1},E_{22}^{1}\}\in\mcl{N}_{1D}$, with 
\begin{align}\label{eq_E_mat_appendix}
    &E_{00}=B_{00}H_{00} + \int_{a}^{b}B_{01}(x)H_{10}(x)dx + \int_{c}^{d}B_{02}(y)H_{20}(y)dy, \nonumber\\
    &E_{01}(x)=B_{00}H_{01}(x) + B_{01}(x)H_{11}^{0}(x),  &
    &E_{02}(y)=B_{00}H_{02}(y) + B_{02}(y)H_{22}^{0}(y),  \nonumber\\
    &E_{10}(x)=B_{10}(x)H_{00}, &
    &E_{20}(y)=B_{20}(y)H_{00}, \nonumber\\
    &E_{11}^{0}(x)=B_{11}^{0}H_{11}^{0},   &
    &E_{22}^{0}(y)=B_{22}^{0}H_{22}^{0},   \nonumber\\
    &E_{11}^{1}(x,\theta)= B_{10}(x)H_{01}(\theta) + B_{11}^{1}(x,\theta)H_{11}^{0}, &
    &E_{22}^{1}(y,\nu)= B_{20}(y)H_{02}(\nu) + B_{22}^{1}(y,\nu)H_{22}^{0}, \nonumber\\
    &E_{12}(x,y)=B_{10}(x)H_{02}(y) + B_{12}(x,y)H_{22}^{0}(y), &
    &E_{21}(x,y)=B_{20}(y)H_{01}(x) + B_{21}(x,y)H_{11}^{0}(x), 
    \hspace{2.3cm}
\end{align}
where,
{\tiny
\begin{align}\label{eq_H_mat_appendix}
 &H_{00} =
    \bmat{I_{n_1}&0&0&0&0\\
    I_{n_1}&0&0&0&0\\
    I_{n_1}&0&0&0&0\\
    I_{n_1}&0&0&0&0\\
    0&I_{n_2}&0&0&0\\
    0&I_{n_2}&(b-a)&0&0\\
    0&I_{n_2}&0&(d-c)&0\\
    0&I_{n_2}&(b-a)&(d-c)&(d-c)(b-a)\\
    0&0&I_{n_2}&0&0\\
    0&0&I_{n_2}&0&0\\
    0&0&I_{n_2}&0&(d-c)\\
    0&0&I_{n_2}&0&(d-c)\\
    0&0&0&I_{n_2}&0\\
    0&0&0&I_{n_2}&(b-a)\\
    0&0&0&I_{n_2}&0\\
    0&0&0&I_{n_2}&(b-a)\\
    0&0&0&0&I_{n_2}\\
    0&0&0&0&I_{n_2}\\
    0&0&0&0&I_{n_2}\\
    0&0&0&0&I_{n_2}},    &
    &H_{01}(x) =
    \bmat{
    0&0&0\\I_{n_1}&0&0\\0&0&0\\I_{n_1}&0&0\\
    0&0&0\\ 0&(b-x)&0\\ 0&0&0\\
    0&(b-x)&(d-c)(b-x)\\
    0&0&0\\ 0&I_{n_2}&0 \\0&0&0\\ 0&I_{n_2}&(d-c)\\
    0&0&0\\ 0&0&(b-x) \\0&0&0\\ 0&0&(b-x)\\
    0&0&0\\ 0&0&I_{n_2} \\0&0&0\\ 0&0&I_{n_2}
    },   &
    &H_{02}(y) =
    \bmat{
    0&0&0\\0&0&0\\I_{n_1}&0&0\\I_{n_1}&0&0\\
    0&0&0\\ 0&0&0\\ 0&(d-y)&0\\
    0&(d-y)&(b-a)(d-y)\\
    0&0&0\\ 0&0&0 \\0&0&(d-y)\\ 0&0&(d-y)\\
    0&0&0\\ 0&0&0 \\0&I_{n_2}&0\\ 0&I_{n_2}&(b-a)\\
    0&0&0\\ 0&0&0 \\0&0&I_{n_2}\\ 0&0&I_{n_2}
    },   \nonumber\\
    &\begin{array}{l}
    H_{11}^{0}=
    \bmat{
    I_{n_1}&0&0\\I_{n_1}&0&0\\
    0&I_{n_2}&0\\0&I_{n_2}&(d-c)\\0&0&I_{n_2}\\0&0&I_{n_2}
    },\\
    \\
    H_{22}^{0}=
    \bmat{
    I_{n_1}&0&0\\I_{n_1}&0&0\\
    0&I_{n_2}&0\\0&I_{n_2}&(b-a)\\0&0&I_{n_2}\\0&0&I_{n_2}
    }
    \end{array}
    &
    &\begin{array}{l}
    H_{13}^{0}(y)=
    \bmat{
    0&0&0\\0&I_{n_1}&0\\
    0&0&0\\ 0&0&(d-y)\\0&0&0\\0&0&I_{n_2}
    },\\
    \\
    H_{23}^{0}(x)=
    \bmat{
    0&0&0\\0&I_{n_1}&0\\
    0&0&0\\ 0&0&(b-x)\\0&0&0\\0&0&I_{n_2}
    },
    \end{array}
    &
    &H_{03}(x,y)=
    \bmat{
    0&0&0\\0&0&0\\0&0&0\\0&I_{n_1}&0\\
    0&0&0\\0&0&0\\0&0&0\\ 0&0&(d-y)(b-x)\\0&0&0\\0&0&0\\0&0&0\\ 0&0&(d-y)\\0&0&0\\0&0&0\\0&0&0\\ 0&0&(b-x)\\
    0&0&0\\0&0&0\\0&0&0\\0&0&I_{n_2}
    }.
\end{align}
}
\hrulefill
\caption{Parameters $T$ describing PI operator $\mcl{T}=\mcl{P}[T]$ mapping the fundamental state back to the PDE state in Theorem~\ref{thm_Tmap_appendix}}
\label{fig_Tmap_matrices_appendix}
\end{figure*}

\begin{thm}\label{thm_Tmap_appendix}
Let
\[
 B=\bmat{B_{00}&B_{01}&B_{02}\\B_{10}&B_{11}&B_{12}\\B_{20}&B_{21}&B_{22}}\in\mcl{N}_{011}\smallbmat{n_1+4n_2& 4n_1+16n_2\\n_1+2n_2&2n_1+4n_2}
\]
with $B_{11}=\{B_{11}^{0},B_{11}^{1},B_{11}^{1}\}\in\mcl{N}_{1D}$ and $B_{22}=\{B_{22}^{0},B_{22}^{1},B_{22}^{1}\}\in\mcl{N}_{1D}$ be given, and such that $B_{11}^{0},B_{22}^{0}\in\R^{n_1+2n_2\times 2n_1+4n_2}$. Let
\[
 T=\bmat{T_{00}&0&0\\0&T_{11}&T_{12}\\0&T_{21}&T_{22}}\in\mcl{N}_{2D}
\]
where
\[
 T_{00}=\bmat{I_{n_0}&0&0\\0&0_{n_1}&0\\0&0&0_{n_2}}
\]
and $T_{11},T_{12},T_{21},T_{22}$ are as defined in Equations~\eqref{eq_Tmat_appendix} in Figure~\ref{fig_Tmap_matrices_appendix}. Then, if $\mcl{T}=\mcl{P}[T]$, for any $\mbf{u}\in X(\mcl{B})$ and $\hat{\mbf{u}} \in L_2^{n_0+n_1+n_2}[x,y]$, we have
\begin{align*}
 \mbf{u}=\mcl{T}\mcl{D}\mbf{u} \qquad \text{and} \qquad   \hat{\mbf{u}}=\mcl{D}\mcl{T}\hat{\mbf{u}},
\end{align*}
where $\mcl{D}=\bmat{I_{n_0}&&\\&\partial_x\partial_y&\\&&\partial_x^2\partial_y^2}$.\\
\end{thm}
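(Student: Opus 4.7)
The plan is to prove the two identities separately. The first identity $\mbf{u}=\mcl{T}\mcl{D}\mbf{u}$ essentially repeats the argument already carried out for Theorem~\ref{thm_Tmap} in the main text: given $\mbf{u}\in X(\mcl{B})$, combine Lemma~\ref{lem_uhat_to_u} (writing $\mbf{u}=\mcl{K}_1\Lambda_{\text{bc}}\mbf{u}+\mcl{K}_2\hat{\mbf{u}}$) with Corollary~\ref{cor_uhat_to_BC} (writing $\Lambda_{\text{bf}}\mbf{u}=\mcl{H}_1\Lambda_{\text{bc}}\mbf{u}+\mcl{H}_2\hat{\mbf{u}}$), impose $\mcl{B}\Lambda_{\text{bf}}\mbf{u}=0$, and invert $\mcl{B}\mcl{H}_1$ using well-posedness together with Lemma~\ref{lem_inv_appendix}. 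The composition maps $\mcl{L}_{011}$, $\mcl{L}_{011\rightarrow 011}$ and $\mcl{L}_{011\rightarrow 2D}^{2}$ from Appendix~\ref{sec_PI_algebras_appendix}, applied to $\mcl{K}_2-\mcl{K}_1(\mcl{B}\mcl{H}_1)^{-1}\mcl{B}\mcl{H}_2$, produce exactly the parameters displayed in Eqn.~\eqref{eq_Tmat_appendix}. This is a direct (if lengthy) bookkeeping calculation.

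The substantive work lies in the converse identity $\hat{\mbf{u}}=\mcl{D}\mcl{T}\hat{\mbf{u}}$. I would exploit the block-diagonal structure of both $\mcl{D}$ and $T$: the $n_0$-block reduces trivially via $T_{00}$; the $n_1$-block requires $\partial_x\partial_y\bbl(\mcl{P}[T_{11}]\hat{\mbf{u}}_1+\mcl{P}[T_{12}]\hat{\mbf{u}}_2\bbr)=\hat{\mbf{u}}_1$, and the $n_2$-block requires the analogue with $\partial_x^2\partial_y^2$ returning $\hat{\mbf{u}}_2$. To establish these I would apply Lemmas~\ref{lem_div_operator_x} and \ref{lem_div_operator_y} iteratively; at each stage the result is again a 2D-PI operator whose parameters are explicit combinations of partial derivatives of the kernels and of Leibniz boundary evaluations. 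The key structural remark is that $T$ splits as a ``FTC kernel'' piece from $K_{33}$ (namely $I_{n_1}$ in the middle block and $(x-\theta)(y-\nu)I_{n_2}$ in the corner) plus a ``boundary-correction'' piece whose factors $K_{30}$, $K_{31}^{1}$, $K_{32}^{1}$ from Eqn.~\eqref{eq_K_mat_appendix} are polynomial in $(x,y)$ of degree at most one in each variable on each block, so the multiplier portions are annihilated by the relevant mixed derivatives; the integral portions generate boundary terms that cancel because $K_{31}^{1}(x,y,x)=0$ and $K_{32}^{1}(x,y,y)=0$ by inspection. The $K_{33}$ contribution then collapses to $\hat{\mbf{u}}_1$ (respectively $\hat{\mbf{u}}_2$) by two (respectively four) applications of the fundamental theorem of calculus.

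The main obstacle will be the careful accounting of the many rank-one cross-terms involving the $G_{ij}^{k}$ factors that appear when differentiating the $\int_a^x K_{31}^{1}G_{13}^{1}d\eta$ and $\int_c^y K_{32}^{1}G_{23}^{1}d\mu$ summands of $T_{22}$, and similar contributions in $T_{12}$ and $T_{21}$. Rather than treating each of the sixteen resulting terms separately, I would organize the computation around the algebraic identity $T_{11}-T_{21}-T_{12}+T_{22}=K_{33}^{11}$ built into Eqn.~\eqref{eq_Tmat_appendix}: this identity means that under the mixed differentiation $\partial_x\partial_y$, the boundary-correction contributions from the four blocks appear in a pattern that telescopes to zero, leaving only the $K_{33}^{11}$ piece. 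An analogous identity $T_{22}$ alone supplies the $K_{33}$ corner contribution in the $n_2$-block under $\partial_x^2\partial_y^2$. With these organising identities in hand, the verification becomes a finite number of derivative and boundary-evaluation checks on the explicit polynomials in Eqn.~\eqref{eq_K_mat_appendix}.
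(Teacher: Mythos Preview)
Your proposal is essentially the paper's own proof: both argue the first identity via Lemma~\ref{lem_uhat_to_u}, Corollary~\ref{cor_uhat_to_BC} and the inversion of $\mcl{B}\mcl{H}_1$, and both establish $\hat{\mbf{u}}=\mcl{D}\mcl{T}\hat{\mbf{u}}$ by applying Lemmas~\ref{lem_div_operator_x}--\ref{lem_div_operator_y} blockwise, using the telescoping identity $T_{11}-T_{21}-T_{12}+T_{22}=K_{33}^{11}$ for the multiplier term and the low polynomial degree of the $K_{ij}$ factors in $(x,y)$ to kill the remaining integral kernels. One small correction: on the $n_1$-block the boundary evaluations $K_{31}^{1}(x,y,x)$ and $K_{32}^{1}(x,y,y)$ are \emph{not} zero (the $I_{n_1}$ entry survives) --- what actually makes the cross-terms $M_{10},M_{20},M_{01},M_{02}$ vanish is that these evaluations are constant in $y$ (resp.\ $x$), so the subsequent $\partial_y$ (resp.\ $\partial_x$) annihilates them.
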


\begin{proof}
 We will first proof the first identity, $\mbf{u}=\mcl{T}\mcl{D}\mbf{u}$. To this end, suppose $\mbf{u}\in X(\mcl{B})$, and define $\hat{\mbf{u}}=\mcl{D}\mbf{u}\in L_2^{n_0+n_1+n_2}[x,y]$. Furthermore,
 let $K_{ij}$ and $H_{ij}$ (for appropriate $i,j\in\{0,1,2,3\}$) be as defined in Equations~\eqref{eq_K_mat_appendix} and~\eqref{eq_H_mat_appendix}, and let
 \begin{align*}
  H_1&=\bmat{
  	H_{00}~H_{01}~H_{02}\\
  	0\ ~~H_{11}\ ~~0\\
  	~~0\ ~~\ 0 ~~~~H_{22}}\in\mcl{N}_{011} &
  &H_2=\bmat{H_{03}\\H_{13}\\H_{23}}\in\mcl{N}_{2D\rightarrow 011}  \\
  K_1&=\bmat{K_{30}\\K_{31}\\K_{32}}\in\mcl{N}_{011\rightarrow 2D}    &
  &\hspace{-0.5cm}K_2=\bmat{T_{00}~~0~~~0\\~0\ ~K_{22}^{11} ~0\\~0\ ~~~0 ~ ~~0}\in\mcl{N}_{2D},
 \end{align*}
 where
 \begin{align*}
	 &H_{11}\! =\! \{H_{11}^{0},0,0\}\! \in\mcl{N}_{1D}	&	 &\! \! H_{22}\! =\! \{H_{22}^{0},0,0\}\! \! \in\mcl{N}_{1D}	\\
	 &H_{13}\! =\! \{H_{13}^{0},0,0\}\! \in\mcl{N}_{2D\rightarrow 1D} 	&
	 &\! \! H_{23}\! =\! \{H_{23}^{0},0,0\}\! \in\mcl{N}_{2D\rightarrow 1D}  \\
	 &K_{31}\! =\! \{0,K_{31}^{1},0\}\! \in\mcl{N}_{1D\rightarrow 2D} 	&
	 &\! \! K_{32}\! =\! \{0,K_{32}^{1},0\}\! \in\mcl{N}_{1D\rightarrow 2D} .
 \end{align*}
 Then, by Lemma~\ref{lem_uhat_to_u} and Corollary~\ref{cor_uhat_to_BC},
 \begin{align}\label{eq_uhat_to_u_appendix}
 \Lambda_{\text{bf}} \mbf{u} &= \mcl{H}_1 \Lambda_{\text{bc}} \mbf{u}+\mcl{H}_2 \hat{\mbf{u}} \nonumber\\
 \mbf{u} &= \mcl{K}_1 \Lambda_{\text{bc}} \mbf{u}+\mcl{K}_2 \hat{\mbf{u}},
 \end{align}
 where $\mcl{H}_1=\mcl{P}[H_1]$, $\mcl{H}_2=\mcl{P}[H_2]$, $\mcl{K}_1=\mcl{P}[K_1]$, and $\mcl{K}_2=\mcl{P}[K_2]$. Enforcing the boundary conditions $\mcl{B}\Lambda_{\text{bf}}\mbf{u}=0$, we may use the composition rules of PI operators to express
 \begin{align*}
  0=\mcl{B}\Lambda_{\text{bf}}\mbf{u}
  =\mcl{B}\mcl{H}_1 \Lambda_{\text{bc}} \mbf{u}+\mcl{B}\mcl{H}_2 \hat{\mbf{u}}
  =\mcl{E}\Lambda_{\text{bc}}\mbf{u}
  +\mcl{F}\hat{\mbf{u}},
 \end{align*}
 where $\mcl{E}=\mcl{P}[E]$ and $\mcl{F}=\mcl{P}[F]$ with
 \begin{align*}
  E&=\bmat{E_{00}&E_{01}&E_{02}\\E_{10}&E_{11}&E_{12}\\E_{20}&E_{21}&E_{22}}\in\mcl{N}_{011},    &
  F&=\bmat{F_{0}\\F_{1}\\F_{2}}\in\mcl{N}_{2D\rightarrow 011},
 \end{align*}
 and
 \begin{align*}
  E_{11}&=\{E_{11}^{0},E_{11}^{1},E_{11}^{1}\}\in\mcl{N}_{1D}
  \\
  E_{22}&=\{E_{22}^{0},E_{22}^{1},E_{22}^{1}\}\in\mcl{N}_{1D} \\
  F_{1}&=\{F_{1}^{0},F_{1}^{1},F_{1}^{1}\}\in\mcl{N}_{2D\rightarrow 1D} \\
  F_{2}&=\{F_{2}^{0},F_{2}^{1},F_{2}^{1}\}\in\mcl{N}_{2D\rightarrow 1D}
 \end{align*}
 defined as in Equations~\eqref{eq_F_mat_appendix} and~\eqref{eq_E_mat_appendix}. Assuming $\mcl{B}$ is of sufficient rank, we can then invert the 011-PI operator $\mcl{E}$, allowing us to write
 \begin{align*}
  \Lambda_{\text{bc}}\mbf{u}=-\mcl{E}^{-1}\mcl{F}\hat{\mbf{u}}=-\mcl{G}\hat{\mbf{u}},
 \end{align*}
 where $\mcl{G}=\mcl{P}[G]$ with
 \begin{align*}
 G&=\bmat{G_{0}\\\{G_{1}^{0},G_{1}^{1},G_{1}^{1}\}\\\{G_{2}^{0},G_{2}^{1},G_{2}^{1}\}}\in\mcl{N}_{2D\rightarrow 011}
 \end{align*}
 defined as in Equations~\eqref{eq_G_mat_appendix}. Finally, substituting this expression into Equation~\eqref{eq_uhat_to_u_appendix}, and once more using the composition rules of PI operators, we obtain
 \begin{align*}
  \mbf{u} = \mcl{K}_1 \Lambda_{\text{bc}} \mbf{u}+\mcl{K}_2 \hat{\mbf{u}}
  &=-\mcl{K}_1 \mcl{G}\hat{\mbf{u}}+\mcl{K}_2 \hat{\mbf{u}}\\
  &=(\mcl{K}_2-\mcl{K}_1\mcl{G})\hat{\mbf{u}}
  =\mcl{T}\hat{\mbf{u}}=\mcl{T}\mcl{D}\mbf{u},
 \end{align*}
 as desired.

 Suppose now $\hat{\mbf{u}}\in L_2^{n_0+n_1+n_2}[x,y]$ and $\mbf{u}=\bmat{\mbf{u}_0\\\mbf{u}_1\\\mbf{u}_2}=\mcl{T}\hat{\mbf{u}}$. To prove the second identity, $\hat{\mbf{u}}=\mcl{D}\mcl{T}\hat{\mbf{u}}$, we split $\mbf{u}$ into its different components $\mbf{u}_0$, $\mbf{u}_1$ and $\mbf{u}_2$, using the matrices
 \begin{align*}
  J_0 &= \bmat{I_{n_0}&0&0},	&	J_1&=\bmat{I_{n_1}&0},	&	J_2&=\bmat{0&I_{n_2}},
 \end{align*}
 and
 \begin{align*}
  N=\bmat{0&I_{n_1}&0\\0&0&I_{n_2}}\in\R^{n_1+n_2\times n_0+n_1+n_2},
 \end{align*}
 so that
 \begin{align*}
  \mbf{u}_0&=J_0\mbf{u},	&
  \mbf{u}_1&=J_1N\mbf{u},	&	\mbf{u}_2&=J_2N\mbf{u}.
 \end{align*}
 By definition of the operator $\mcl{D}$, we then have to prove that
 \begin{align}
  \hat{\mbf{u}}_0&=J_0\mcl{T}\hat{\mbf{u}}=\mcl{P}[J_0 T]\hat{\mbf{u}}, \label{eq_uhat0=Du0_appendix}\\
  \hat{\mbf{u}}_1&=J_1 N\ \partial_x\partial_y\bbl(\mcl{T}\hat{\mbf{u}}\bbr)=J_1\ \partial_x\partial_y\bbl(\mcl{P}[N T]\hat{\mbf{u}}\bbr),	\label{eq_uhat1=Du1_appendix}\\
  \hat{\mbf{u}}_2&=J_2 N\ \partial_x^2\partial_y^2\bbl(\mcl{T}\hat{\mbf{u}}\bbr)=\partial_x\partial_y\bbl[J_2\ \partial_x\partial_y\bbl(\mcl{P}[N T]\hat{\mbf{u}}\bbr)\bbr]. \label{eq_uhat2=Du2_appendix}	
 \end{align} 
 To prove Relation~\eqref{eq_uhat0=Du0_appendix}, we use the definition of the parameters $T$ (Equations~\eqref{eq_Tmat_appendix}), suggesting $J_0T_{00}=J_0$, whilst $J_0T_{ij}=0$ for $i,j\in\{1,2\}$. It immediately follows that 
 \begin{align*}
 J_0\mcl{T}\hat{\mbf{u}}=\mcl{P}[J_0 T]\hat{\mbf{u}}=J_0\hat{\mbf{u}}=\hat{\mbf{u}}_0.
 \end{align*} 
 
 For the remaining relations, we first note that $NT_{00}=0$, 
 so that
 \begin{align*}
 N\mcl{T}=\mcl{P}[NT]=\mcl{P}\smallbmat{NT_{00}&0&0\\0&NT_{11}&NT_{12}\\0&NT_{21}&NT_{22}}=\mcl{P}\smallbmat{0&0&0\\0&\overline{T}_{11}&\overline{T}_{12}\\0&\overline{T}_{21}&\overline{T}_{22}},
 \end{align*}
 where we define $\overline{T}_{ij}=NT_{ij}$ for $i,j\in\{1,2\}$. Since the multiplier terms ($T_{0j}$ and $T_{i0}$) of this PI operator are all zero, by Lemmas~\ref{lem_div_operator_x} and~\ref{lem_div_operator_y}, we may write the composition of the differential operator $\partial_x\partial_y$ with this PI operator as a PI operator,
 \begin{align*}
 \partial_x\partial_y \bbl(\mcl{P}[NT]\hat{\mbf{u}}\bbr)&=\partial_x\partial_y\left(\mcl{P}\smallbmat{0&0&0\\0&\overline{T}_{11}&\overline{T}_{12}\\0&\overline{T}_{21}&\overline{T}_{22}}\hat{\mbf{u}}\right)\\
 &=\left(\mcl{P}\smallbmat{M_{00}&M_{01}&M_{02}\\M_{10}&M_{11}&M_{12}\\M_{20}&M_{21}&M_{22}}\hat{\mbf{u}}\right),
 \end{align*}
 where,
 \begin{align*}
 &M_{00}(x,y)=\overline{T}_{11}(x,y,x,y)-\overline{T}_{21}(x,y,x,y) \\
 &\hspace*{1.20cm}-\overline{T}_{12}(x,y,x,y)+\overline{T}_{22}(x,y,x,y)
 =\overline{K}_{33}^{11}(x,y,x,y)   \\
 &M_{10}(x,y,\theta)=\partial_x\bbl(
 \overline{T}_{11}(x,y,\theta,y)-\overline{T}_{12}(x,y,\theta,y)
 \bbr)\\
 &\hspace*{1.75cm}=\partial_x\bbl(\overline{K}_{33}^{11}(x,y,\theta,y)- \overline{K}_{32}^{1}(x,y,y){G}_{23}^{0}(\theta,y)\bbr)    \\
 &M_{20}(x,y,\theta)=\partial_x\bbl(
 \overline{T}_{21}(x,y,\theta,y)-\overline{T}_{22}(x,y,\theta,y)
 \bbr)\\
 &\hspace*{1.75cm}=-\partial_x\bbl(\overline{K}_{32}^{1}(x,y,y){G}_{23}^{0}(\theta,y)\bbr)  \\
 &M_{01}(x,y,\nu)=\partial_y\bbl(
 \overline{T}_{11}(x,y,x,\nu)-\overline{T}_{21}(x,y,x,\nu)
 \bbr)\\
 &\hspace*{1.75cm}=\partial_y\bbl(\overline{K}_{33}^{11}(x,y,x,\nu)- \overline{K}_{31}^{1}(x,y,x){G}_{13}^{0}(x,\nu)\bbr)    \\
 &M_{02}(x,y,\nu)=\partial_y\bbl(
 \overline{T}_{12}(x,y,x,\nu)-\overline{T}_{22}(x,y,x,\nu)
 \bbr)\\
 &\hspace*{1.75cm}=-\partial_y\bbl(\overline{K}_{31}^{1}(x,y,x){G}_{13}^{0}(x,\nu)\bbr)  \\
 &M_{11}(x,y,\theta,\nu)=\partial_x\partial_y\overline{T}_{11}(x,y,\theta,\nu)  \\
 &M_{21}(x,y,\theta,\nu)=\partial_x\partial_y\overline{T}_{21}(x,y,\theta,\nu)  \\
 &M_{12}(x,y,\theta,\nu)=\partial_x\partial_y\overline{T}_{12}(x,y,\theta,\nu)  \\
 &M_{22}(x,y,\theta,\nu)=\partial_x\partial_y\overline{T}_{22}(x,y,\theta,\nu)
 \end{align*}
 with $\overline{K}_{31}^{1}=NK_{31}^1$, $\overline{K}_{32}^{1}=NK_{32}^{1}$, and $\overline{K}_{33}^{11}=N K_{33}^{11}$.\\
 Studying Equations~\eqref{eq_K_mat_appendix} defining $K_{31}^{1}$, $K_{32}^{1}$ and $K_{33}^{11}$, it is easy to see that $\partial_{y}K_{31}^{1}(x,y,x)=\partial_{x}K_{32}^{1}(x,y,y)=\partial_{y}K_{33}^{11}(x,y,x,\nu)=\partial_{x}K_{33}^{11}(x,y,\theta,y)=0$, and therefore also
 \begin{align*}
 M_{10}(x,y,\theta)&=0,	&
 M_{20}(x,y,\theta)&=0,\\
 M_{01}(x,y,\nu)&=0,		&
 M_{02}(x,y,\nu)&=0,
 \end{align*}
 so that
 \begin{align*}
  \partial_x\partial_y \bbl(\mcl{P}[N T]\hat{\mbf{u}}\bbr)=\left(\mcl{P}\smallbmat{M_{00}&0&0\\0&M_{11}&M_{12}\\0&M_{21}&M_{22}}\hat{\mbf{u}}\right).
 \end{align*}
 For the remaining parameters $M_{ij}$, we consider the products $J_1 M_{ij}$. Once more studying Equations~\eqref{eq_K_mat_appendix}, we note that $J_1NK_{30}$, $J_1NK_{31}^{1}$, $J_1NK_{32}^{1}$ and $J_1NK_{33}$ are all constant matrices. By the definitions (Equations~\eqref{eq_Tmat_appendix}) of functions $T_{ij}$, for each $i,j\in\{1,2\}$, this implies also
 \begin{align*}
  J_1M_{ij}(x,y,\theta,\nu)&=J_1\ \partial_x\partial_y\overline{T}_{11}(x,y,\theta,\nu) \\ &=\partial_x\partial_y (J_1NT_{ij}(x,y,\theta,\nu))=0,
 \end{align*}
 Finally, for $M_{00}$, it is easy to see that
 \begin{align*}
 J_1M_{00}(x,y,x,y)&=J_1 NK_{33}^{11}(x,y,x,y)\\
 &\qquad\quad=\bmat{0&I_{n_1}&0}
 =J_1N.
 \end{align*}
 Combining the results, we obtain
 \begin{align*}
 J_1 N \partial_x\partial_y\bbl(\mcl{T}\hat{\mbf{u}}\bbr)\! 
 &=\! J_1 \partial_x\partial_y \bbl(\mcl{P}[N T]\hat{\mbf{u}}\bbr)\\
 \!  &=\! J_1\mcl{P}\! \smallbmat{M_{00}&0&0\\0&M_{11}&M_{12}\\0&M_{21}&M_{22}}\! \hat{\mbf{u}}\! 
 =\! \mcl{P}\smallbmat{J_1N&0&0\\0&0&0\\0&0&0}\hat{\mbf{u}}\! =\! \hat{\mbf{u}}_1,
 \end{align*}
 proving Relation~\eqref{eq_uhat1=Du1_appendix}. To prove the final Relation~\eqref{eq_uhat2=Du2_appendix}, we note that $J_2M_{00}(x,y,x,y)=0$, and therefore
 \begin{align*}
 J_2\ \partial_x\partial_y \mcl{P}[N T]=\mcl{P}\smallbmat{0&0&0\\0&J_2M_{11}&J_2M_{12}\\0&J_2M_{21}&J_2M_{22}}= \mcl{P}\smallbmat{0&0&0\\0&\overline{M}_{11}&\overline{M}_{12}\\0&\overline{M}_{21}&\overline{M}_{22}},
 \end{align*}
 where we define $\overline{M}_{ij}:=J_2M_{ij}$ for $i,j\in\{1,2\}$. The resulting operator contains no multiplier terms, allowing once more the composition with the differential operator $\partial_x\partial_y$ to be taken, yielding (by Lemmas~\ref{lem_div_operator_x} and~\ref{lem_div_operator_y})
 \begin{align*}
 \partial_x\partial_y\bbl[J_2\ \partial_x\partial_y\bbl(\mcl{P}[NT]\hat{\mbf{u}}\bbr)\bbr]    
 &=\partial_x\partial_y\left(\mcl{P}\smallbmat{0&0&0\\0&\overline{M}_{11}&\overline{M}_{12}\\0&\overline{M}_{21}&\overline{M}_{22}}\hat{\mbf{u}}\right) \\
 &\quad=\left(\mcl{P}\smallbmat{W_{00}&W_{01}&W_{02}\\W_{10}&W_{11}&W_{12}\\W_{20}&W_{21}&W_{22}}\hat{\mbf{u}}\right), 
 \end{align*}
 where
 \begin{align*}
 &W_{00}(x,y)=\overline{M}_{11}(x,y,x,y)-\overline{M}_{21}(x,y,x,y) \\
 &\hspace*{2.0cm}-\overline{M}_{12}(x,y,x,y)+\overline{M}_{22}(x,y,x,y)\\
 &\hspace*{1.5cm}=[\partial_x\partial_y\hat{K}_{33}^{11}(x,y,\theta,\nu)]\bbr|_{\theta=x,\nu=y}   \\
 &W_{10}(x,y,\theta)=\partial_x\bbl(
 \overline{M}_{11}(x,y,\theta,y)-\overline{M}_{12}(x,y,\theta,y)
 \bbr)\\
 &\qquad=\partial_x^2\partial_y\bbl(\hat{K}_{33}^{11}(x,y,\theta,\nu)- \hat{K}_{32}^{1}(x,y,\nu)G_{23}^{0}(\theta,\nu)\bbr)\bbr|_{\nu=y}\\
 &W_{20}(x,y,\theta)=\partial_x\bbl(
 \overline{M}_{21}(x,y,\theta,y)-\overline{M}_{22}(x,y,\theta,y)
 \bbr)\\
 &\qquad=-\partial_x^2\partial_y\bbl(\hat{K}_{32}^{1}(x,y,\nu)G_{23}^{0}(\theta,\nu)\bbr)\bbr|_{\nu=y}  \\
 &W_{01}(x,y,\nu)=\partial_y\bbl(
 \overline{M}_{11}(x,y,x,\nu)-\overline{M}_{21}(x,y,x,\nu)
 \bbr)\\
 &\qquad=\partial_x\partial_y^2\bbl(\hat{K}_{33}^{11}(x,y,\theta,\nu)- \hat{K}_{31}^{1}(x,y,\theta)G_{13}^{0}(\theta,\nu)\bbr)\bbr|_{\theta=x}\\
 &W_{02}(x,y,\nu)=\partial_y\bbl(
 \overline{M}_{12}(x,y,x,\nu)-\overline{M}_{22}(x,y,x,\nu)
 \bbr)\\
 &\qquad =-\partial_x\partial_y^2\bbl(\hat{K}_{31}^{1}(x,y,\theta)G_{13}^{0}(\theta,\nu)\bbr)\bbr|_{\theta=x}  \\
 &W_{11}(x,y,\theta,\nu)=\partial_x\partial_y\overline{M}_{11}(x,y,\theta,\nu)  \\
 &W_{21}(x,y,\theta,\nu)=\partial_x\partial_y\overline{M}_{21}(x,y,\theta,\nu)  \\
 &W_{12}(x,y,\theta,\nu)=\partial_x\partial_y\overline{M}_{12}(x,y,\theta,\nu)  \\
 &W_{22}(x,y,\theta,\nu)=\partial_x\partial_y\overline{M}_{22}(x,y,\theta,\nu)
 \end{align*}
 with $\hat{K}_{31}^{1}:=J_2\overline{K}_{31}^1$, $\hat{K}_{32}^{1}:=J_2\overline{K}_{32}^{1}$, and $\hat{K}_{33}^{11}:=J_2\overline{K}_{33}^{11}$.
 Once again, it is clear from the definitions of functions $K_{31}^{1}$, $K_{32}^{1}$ and $K_{33}^{11}$ that the derivatives $\partial_x\partial_y^2 K_{31}^{1}$, $\partial_x\partial_y^2 K_{33}^{11}$ and $\partial_x^2\partial_y K_{32}^{1}$, $\partial_x^2\partial_y K_{33}^{11}$, evaluated at $\nu=y$ and $\theta=x$ respectively, will all be equal to zero, suggesting
 \begin{align*}
 W_{10}(x,y,\theta)&=0,	&
 W_{20}(x,y,\theta)&=0,\\
 W_{01}(x,y,\nu)&=0,	&
 W_{02}(x,y,\nu)&=0.
 \end{align*}
 In addition, studying the definitions of the functions $K_{30}$, $K_{31}^{1}$, $K_{32}^{1}$, and $K_{33}^{11}$, we  find that their derivatives $\partial_x\partial_y K_{30}$, $\partial_x\partial_y K_{31}^{1}$, $\partial_x\partial_y K_{32}^{1}$, and $\partial_x\partial_y K_{33}^{11}$ are all constant matrices. By definition of the functions $T_{11},T_{21},T_{12}$ and $T_{22}$, it follows that $\partial_x^2\partial_y^2 T_{ij}=0$ for each $i,j\in\{1,2\}$, and thus
 \begin{align*}
 W_{ij}(x,y,\theta,\nu)&=\partial_x\partial_y\overline{M}_{ij}(x,y,\theta,\nu)   \\
 &=\partial_x^2\partial_y^2[J_2 N T_{ij}(x,y,\theta,\nu)]=0.
 \end{align*}
 Finally, from the definition of $\hat{K}_{33}^{11}$, it follows that
 \begin{align*}
 W_{00}(x,y)&\! =\! \left[\partial_x\partial_y\hat{K}_{33}^{11}(x,y,\theta,\nu)\right]\bbbr|_{\tiny\mat{\theta=x\\\nu=y}}	
 \! \! \! =\! \bmat{0&\! \! 0&\! \! I_{n_2}}
 \! =\! J_2N.
 \end{align*}
 Combining these results, we obtain Relation~\eqref{eq_uhat2=Du2_appendix}
 \begin{align*}
 J_2 N \partial_x^2\partial_y^2\bbl(\! \mcl{T}\hat{\mbf{u}}\! \bbr)\! 
 &=\! \partial_x\partial_y\bbl[J_2\ \partial_x\partial_y\bbl(\mcl{P}[NT]\hat{\mbf{u}}\bbr)\bbr]\!     \\
 &=\! \left(\! \mcl{P}\! \smallbmat{W_{00}&W_{01}&W_{02}\\W_{10}&W_{11}&W_{12}\\W_{20}&W_{21}&W_{22}}\hat{\mbf{u}}\! \right)\! 
 =\! \left(\! \mcl{P}\! \smallbmat{J_{2}N&0&0\\0&0&0\\0&0&0}\hat{\mbf{u}}\! \right)\! 
 =\! \hat{\mbf{u}}_2.
 \end{align*}

\end{proof}

\subsection{A Parameterization of Positive PI Operators}\label{sec_pos_PI_appendix}

\begin{prop}\label{prop_pos_PI_appendix}
For any $Z_1,\hdots,Z_9\in L_2^{q\times n}[x,y,\theta,\nu]$ and a scalar function $g\in L_2[x,y]$ with $g(x,y)\geq 0$ for all $x,y \in [a,b] \times [c,d]$ let $\mcl L_{\text{PI}}: \R^{9q \times 9q} \rightarrow \mcl N_{2D}^{n\times n}$ be defined as
\begin{align}\label{eq_posmat_to_posPI_appendix}
 &\mcl{L}_{\text{PI}}\left(\smallbmat{P_{11}&\hdots&P_{19}\\\vdots&\ddots&\vdots\\P_{91}&\hdots&P_{99}}\right)    
 =N:=\bmat{N_{00}&N_{01}&N_{02}\\N_{10}&N_{11}&N_{12}\\N_{20}&N_{21}&N_{22}}\in\mcl{N}_{2D}^{n\times n},
\end{align}
where the functions $N_{ij}$ are as defined in Equations~\eqref{eq_pos_Nmats_appendix} in Figure~\ref{fig_positive_parameters_appendix}. Then for any $P\ge 0$, if $N =\mcl L_{\text{PI}} (P)$, we have that $\mcl{P}^*[N]=\mcl{P}[N]$, and $\ip{\mbf{u}}{\mcl{P}[N]\mbf{u}}_{L_2}\geq 0$ for any $\mbf{u}\in L_2^n[x,y]$.\\
\end{prop}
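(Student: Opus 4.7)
My plan is to exhibit an explicit factorization
\begin{align*}
\mcl P[\mcl L_{\text{PI}}(P)] = \mcl Z^{*}P\mcl Z,
\end{align*}
where $\mcl Z: L_2^n[x,y]\to L_2^{9q}[x,y]$ is a single block-stacked 2D-PI-type operator assembled from the monomial bases $Z_1,\ldots,Z_9$ and the pointwise-nonnegative weight $\sqrt{g}$. Once this factorization is in hand, self-adjointness and positivity are automatic: for $P=P^{T}\ge 0$ I may write $P=R^{T}R$, so that $\ip{\mbf u}{\mcl P[N]\mbf u}_{L_2}=\ip{\mcl Z\mbf u}{P\mcl Z\mbf u}_{L_2}=\norm{R\mcl Z\mbf u}_{L_2}^{2}\ge 0$ for every $\mbf u\in L_2^n[x,y]$, while $(\mcl Z^{*}P\mcl Z)^{*}=\mcl Z^{*}P^{T}\mcl Z=\mcl Z^{*}P\mcl Z$ using Lemma~\ref{lem_adjoint_appendix} together with $P=P^{T}$.

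\textbf{Constructing $\mcl Z$.} First I would define $\mcl Z\mbf u$ as the vertical stack of nine components $\mcl Z_1\mbf u,\ldots,\mcl Z_9\mbf u$, one for each of the nine integration regions in the 2D-PI template~\eqref{eq:2D_PI_operator}. Concretely, $(\mcl Z_1\mbf u)(x,y)=\sqrt{g(x,y)}Z_1(x,y)\mbf u(x,y)$ is the multiplier piece; $(\mcl Z_2\mbf u)(x,y)=\sqrt{g(x,y)}\int_a^{x}Z_2(x,y,\theta)\mbf u(\theta,y)d\theta$ captures the $\int_a^{x}$ piece; and so on up to $(\mcl Z_9\mbf u)(x,y)=\sqrt{g(x,y)}\int_x^{b}\int_y^{d}Z_9(x,y,\theta,\nu)\mbf u(\theta,\nu)d\theta d\nu$. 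The scalar factor $\sqrt{g}$ appears twice when $\mcl Z^{*}$ and $\mcl Z$ are composed, producing the inner weight $g$ that will sit inside each integrand of $\mcl L_{\text{PI}}(P)$ and which is what actually enforces positivity on the domain $[a,b]\times[c,d]$ when $g$ is chosen to vanish on the boundary.

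\textbf{Main identity.} The technical heart of the proof is to verify
\begin{align*}
\sum_{i,j=1}^{9}\mcl Z_i^{*}P_{ij}\mcl Z_j \;=\; \mcl P[\mcl L_{\text{PI}}(P)].
\end{align*}
For each fixed pair $(i,j)$, reading the adjoint $\mcl Z_i^{*}$ off Lemma~\ref{lem_adjoint_appendix} simply swaps the roles of the integration variables with the outer variables $(x,y)$, so $\mcl Z_i^{*}P_{ij}\mcl Z_j$ is itself a 2D-PI operator whose defining parameter is the product of two $Z$-factors, two copies of $\sqrt{g}$, and $P_{ij}$, supported on the intersection of the integration regions associated with $\mcl Z_i$ and $\mcl Z_j$. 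Those intersections are precisely the indicator-function products enumerated in Proposition~\ref{prop_Psi_appendix}; for instance, pairing a $\int_a^{x}$-type factor with a $\int_y^{d}$-type factor yields an indicator supported on $\{\theta<x\}\cap\{\nu>y\}$, which is exactly the $N_{12}$ region of $\mcl N_{2D}$, while pairing two $\int_a^{x}$-type factors yields the $\fvars{\mbf H}{x}{\eta\theta}$ pattern contributing to $N_{11}$.

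\textbf{Main obstacle.} The hard part is the combinatorial bookkeeping that relates the $81$ blocks $P_{ij}$ to the $9$ slots $N_{kl}$ of the output 2D-PI operator. A given slot $N_{kl}$ receives contributions from every pair $(i,j)$ whose joint integration region contains the $(k,l)$-region; the multiplier slot $N_{00}$ only collects the single contribution from $(i,j)=(1,1)$, but the slots $N_{11},N_{12},N_{21},N_{22}$ each collect contributions from many pairs via nested indicator overlaps. To carry this out systematically I would invoke Lemma~\ref{lem_2Dalgebra_appendix} with the parameters of $\mcl Z_i^{*}$ and $\mcl Z_j$ plugged in, collect terms by the $\bs\Psi$-index $k\in\{0,1,2\}$ in each spatial direction, and match the result term-for-term against the closed form~\eqref{eq_posmat_to_posPI_appendix}. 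This step is a long but mechanical symbolic verification rather than a conceptual difficulty; once it is completed, the positivity and self-adjointness conclusions from the first paragraph finish the proof.
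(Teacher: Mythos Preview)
Your proposal is correct and follows essentially the same approach as the paper: construct the block-stacked operator $\mcl Z$ with the nine $\sqrt{g}\,Z_i$ components, invoke the 2D-PI composition rules to identify $\mcl Z^{*}P\mcl Z$ with $\mcl P[\mcl L_{\text{PI}}(P)]$, and then read off self-adjointness and positivity from the factorization together with $P=P^{T}\ge 0$. If anything, you spell out the combinatorial bookkeeping via the $\bs\Psi$-indicators in more detail than the paper, which simply appeals to ``the composition rules of 2D-PI operators'' for the identity $\mcl P[N]=\mcl Z^{*}P\mcl Z$.
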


\begin{proof}
Let $P\geq 0$ be an arbitrary matrix of appropriate size, and $N=\mcl{L}_{\text{PI}}(P)\in\mcl{N}_{2D}^{n\times n}$.
 It is easy to see that, by definition of the functions $N_{ij}\in L_2$, the PI operator $\mcl{P}[N]$ defined by $N$ is self-adjoint. Furthermore, if we define a 2D-PI operator $\mcl{Z}:L_2^n[x,y]\rightarrow L_2^{9q}[x,y]$ as
 \begin{align}\label{eq_Zop_appendix}
  (\mcl{Z}\mbf{u})(x,y)
  \!=\!
	\left[\!\!
  \begin{array}{rcccl}
   \!\!\!&\!\!\!\sqrt{g}(x,y)\!\!\!&\!\!\!Z_1(x,y)\!\!\!&\!\!\!\mbf{u}(x,y)\!\!\!&\!\!\! \\
   \int_{a}^{x}\!\!\!&\!\!\!\sqrt{g}(x,y)\!\!\!&\!\!\!Z_2(x,y,\theta)\!\!\!&\!\!\!\mbf{u}(\theta,y)\!\!\!&\!\!\!d\theta    \\
   \int_{x}^{b}\!\!\!&\!\!\!\sqrt{g}(x,y)\!\!\!&\!\!\!Z_3(x,y,\theta)\!\!\!&\!\!\!\mbf{u}(\theta,y)\!\!\!&\!\!\!d\theta    \\
   \int_{c}^{y}\!\!\!&\!\!\!\sqrt{g}(x,y)\!\!\!&\!\!\!Z_4(x,y,\nu)\!\!\!&\!\!\!\mbf{u}(x,\nu)\!\!\!&\!\!\!d\nu \\
   \int_{y}^{d}\!\!\!&\!\!\!\sqrt{g}(x,y)\!\!\!&\!\!\!Z_5(x,y,\nu)\!\!\!&\!\!\!\mbf{u}(x,\nu)\!\!\!&\!\!\!d\nu \\
   \int_{a}^{x}\int_{c}^{y}\!\!\!&\!\!\!\sqrt{g}(x,y)\!\!\!&\!\!\!Z_6(x,y,\theta,\nu)\!\!\!&\!\!\!\mbf{u}(\theta,\nu)\!\!\!&\!\!\!d\nu d\theta \\
   \int_{x}^{b}\int_{c}^{y}\!\!\!&\!\!\!\sqrt{g}(x,y)\!\!\!&\!\!\!Z_7(x,y,\theta,\nu)\!\!\!&\!\!\!\mbf{u}(\theta,\nu)\!\!\!&\!\!\!d\nu d\theta \\
   \int_{a}^{x}\int_{y}^{d}\!\!\!&\!\!\!\sqrt{g}(x,y)\!\!\!&\!\!\!Z_8(x,y,\theta,\nu)\!\!\!&\!\!\!\mbf{u}(\theta,\nu)\!\!\!&\!\!\!d\nu d\theta \\
   \int_{x}^{b}\int_{y}^{d}\!\!\!&\!\!\!\sqrt{g}(x,y)\!\!\!&\!\!\!Z_9(x,y,\theta,\nu)\!\!\!&\!\!\!\mbf{u}(\theta,\nu)\!\!\!&\!\!\!d\nu d\theta
  \end{array}\!\!\right],
 \end{align}
 by the composition rules of 2D-PI operators (as discussed in Appx.~\ref{sec_2Dalgebra_appendix}), it follows that $\mcl{P}[N]=\mcl{Z}^* P\mcl{Z}$. Since $P\geq 0$, we may split $P=\left[P^{\f{1}{2}}\right]^T P^{\f{1}{2}}$ for some $P^{\f{1}{2}}\in\R^{9q\times 9q}$, and thus
 \begin{align*}
  \ip{\mbf{u}}{\mcl{P}[N]\mbf{u}}_{L_2}
  &=\ip{\mcl{Z}\mbf{u}}{P\mcl{Z}\mbf{u}}_{L_2}   \\
  &=\ip{P^{\frac{1}{2}}\mcl{Z}\mbf{u}}{P^{\frac{1}{2}}\mcl{Z}\mbf{u}}_{L_2}\geq 0
 \end{align*}
 for any $\mbf{u}\in L_2^n[x,y]$, concluding the proof.

\end{proof}

 \begin{figure*}[!t]
 	
 	\hrulefill
 	{\footnotesize
 		\begin{align}\label{eq_pos_Nmats_appendix}
 		&N_{00}(x,y)=g(x,y)[Z_{1}(x,y)]^T P_{11}Z_{1}(x,y)
 		\nonumber\\
 		& \nonumber\\
 		&N_{10}(x,y,\theta)=g(x,y)[Z_{1}(x,y)]^T P_{12}Z_{2}(x,y,\theta) +g(\theta,y)[Z_{3}(\theta,y,x)]^T P_{31}Z_{1}(\theta,y)
 		\nonumber\\
 		&\qquad+\int_{x}^{b}g(\eta,y)[Z_{2}(\eta,y,x)]^T P_{22}Z_{2}(\eta,y,\theta)d\eta
 		+\int_{\theta}^{x}g(\eta,y)[Z_{3}(\eta,y,x)]^T P_{32}Z_{2}(\eta,y,\theta)d\eta
 		+\int_{a}^{\theta}g(\eta,y)[Z_{3}(\eta,y,x)]^T P_{33}Z_{3}(\eta,y,\theta)d\eta
 		\nonumber\\
 		&N_{20}(x,y,\theta)=[N_{10}(\theta,y,x)]^T
 		\nonumber\\
 		& \nonumber\\
 		&N_{01}(x,y,\nu)=g(x,y)[Z_{1}(x,y)]^T P_{14}Z_{4}(x,y,\nu) +g(x,\nu)[Z_{5}(x,\nu,y)]^T P_{51}Z_{1}(x,\nu)
 		\nonumber\\
 		&\qquad+\int_{y}^{d}g(x,\mu)[Z_{4}(x,\mu,y)]^T P_{44}Z_{4}(x,\mu,\nu)d\mu +\int_{\nu}^{y}g(x,\mu)[Z_{5}(x,\mu,y)]^T P_{54}Z_{4}(x,\mu,\nu)d\mu
 		+\int_{c}^{\nu}g(x,\mu)[Z_{5}(x,\mu,y)]^T P_{55}Z_{5}(x,\mu,\nu)d\mu
 		\nonumber\\
 		&N_{02}(x,y,\nu)=[N_{01}(x,\nu,y)]^T
 		\nonumber\\
 		& \nonumber\\
 		&N_{11}(x,y,\theta,\nu)
 		=g(x,y)[Z_{1}(x,y)]^T P_{16}Z_{6}(x,y,\theta,\nu) +g(\theta,\nu)[Z_{9}(\theta,x,\nu,y)]^T P_{91}Z_{1}(\theta,\nu)
 		\nonumber\\
 		&\qquad+g(x,\nu)[Z_{5}^{02}(x,\nu,y)]^T P_{52}Z_{2}(x,\theta,\nu)
 		+g(\theta,y)[Z_{3}^{20}(\theta,y,x)]^T P_{34}Z_{4}(\theta,y,\nu)
 		\nonumber\\
 		&\qquad+\int_{x}^{b}g(\eta,y)[Z_{2}(\eta,y,x)]^T  P_{26}Z_{6}(\eta,y,\theta,\nu)]d\eta
 		+\int_{\theta}^{x}g(\eta,y)[Z_{3}(\eta,y,x)]^T P_{36}Z_{6}(\eta,y,\theta,\nu)d\eta
 		+\int_{a}^{\theta}g(\eta,y)[Z_{3}(\eta,y,x)]^T P_{37}Z_{7}(\eta,y,\theta,\nu)d\eta
 		\nonumber\\
 		&\qquad+\int_{x}^{b}g(\eta,\nu)[Z_{7}(\eta,\nu,x,y)]^T  P_{72}Z_{2}(\eta,\theta,\nu)]d\eta
 		+\int_{\theta}^{x}g(\eta,\nu)[Z_{9}(\eta,\nu,x,y)]^T P_{92}Z_{2}(\eta,\theta,\nu)d\eta
 		+\int_{a}^{\theta}g(\eta,\nu)[Z_{9}(\eta,\nu,x,y)]^T P_{93}Z_{3}(\eta,\theta,\nu)d\eta
 		\nonumber\\
 		&\qquad+\int_{y}^{d}g(x,\mu)[Z_{4}(x,\mu,y)]^T P_{46}Z_{6}(x,\mu,\theta,\nu)d\mu
 		+\int_{\nu}^{y}g(x,\mu)[Z_{5}(x,\mu,y)]^T P_{56}Z_{6}(x,\mu,\theta,\nu)d\mu
 		+\int_{c}^{\nu}g(x,\mu)[Z_{5}(x,\mu,y)]^T P_{58}Z_{8}(x,\mu,\theta,\nu)d\mu
 		\nonumber\\
 		&\qquad+\int_{y}^{d}g(\theta,\mu)[Z_{7}(\theta,\mu,x,y)]^T P_{74}Z_{4}(\theta,\mu,\nu)d\mu
 		+\int_{\nu}^{y}g(\theta,\mu)[Z_{9}(\theta,\mu,x,y)]^T P_{94}Z_{4}(\theta,\mu,\nu)d\mu
 		+\int_{c}^{\nu}g(\theta,\mu)[Z_{9}(\nu,x,\mu,y)]^T P_{95}Z_{5}(\theta,\mu,\nu)d\mu
 		\nonumber\\
 		&\qquad+\int_{x}^{b}\int_{y}^{d}g(\eta,\mu)[Z_{6}(\eta,\mu,x,y)]^T  P_{66}Z_{6}(\eta,\mu,\theta,\nu)]d\mu d\eta
 		+\int_{\theta}^{x}\int_{y}^{d}g(\eta,\mu)[Z_{7}(\eta,\mu,x,y)]^T  P_{76}Z_{6}(\eta,\mu,\theta,\nu)]d\mu d\eta    \nonumber\\
 		&\qquad\qquad+\int_{a}^{\theta}\int_{y}^{d}g(\eta,\mu)[Z_{7}(\eta,\mu,x,y)]^T  P_{77}Z_{7}(\eta,\mu,\theta,\nu)]d\mu d\eta
 		+\int_{x}^{b}\int_{\nu}^{y}g(\eta,\mu)[Z_{8}(\eta,\mu,x,y)]^T  P_{86}Z_{6}(\eta,\mu,\theta,\nu)]d\mu d\eta \nonumber\\
 		&\qquad\qquad\qquad+\int_{\theta}^{x}\int_{\nu}^{y}g(\eta,\mu)[Z_{9}(\eta,\mu,x,y)]^T  P_{96}Z_{6}(\eta,\mu,\theta,\nu)]d\mu d\eta
 		+\int_{a}^{\theta}\int_{\nu}^{y}g(\eta,\mu)[Z_{9}(\eta,\mu,x,y)]^T  P_{97}Z_{7}(\eta,\mu,\theta,\nu)]d\mu d\eta    \nonumber\\
 		&\qquad\qquad\qquad\qquad+\int_{x}^{b}\int_{c}^{\nu}g(\eta,\mu)[Z_{8}(\eta,\mu,x,y)]^T  P_{88}Z_{8}(\eta,\mu,\theta,\nu)]d\mu d\eta
 		+\int_{\theta}^{x}\int_{c}^{\nu}g(\eta,\mu)[Z_{9}(\eta,\mu,x,y)]^T  P_{98}Z_{8}(\eta,\mu,\theta,\nu)]d\mu d\eta    \nonumber\\
 		&\qquad\qquad\qquad\qquad\qquad+\int_{a}^{\theta}\int_{c}^{\nu}g(\eta,\mu)[Z_{9}(\eta,\mu,x,y)]^T  P_{99}Z_{9}(\eta,\mu,\theta,\nu)]d\mu d\eta
 		\nonumber\\
 		&N_{22}(x,y,\theta,\nu)=[N_{11}(\theta,x,\nu,y)]^T
 		\nonumber\\
 		& \nonumber\\
 		&N_{21}(x,y,\theta,\nu)
 		=g(x,y)[Z_{1}(x,y)]^T P_{17}Z_{7}(x,y,\theta,\nu) +g(\theta,\nu)[Z_{8}(\theta,x,\nu,y)]^T P_{81}Z_{1}(\theta,\nu)
 		\nonumber\\
 		&\qquad+g(x,\nu)[Z_{5}(x,\nu,y)]^T P_{53}Z_{3}(x,\theta,\nu)
 		+g(\theta,y)[Z_{2}(\theta,y,x)]^T P_{24}Z_{4}(\theta,y,\nu)
 		\nonumber\\
 		&\qquad+\int_{\theta}^{b}g(\eta,y)[Z_{2}(\eta,y,x)]^T  P_{26}Z_{6}(\eta,y,\theta,\nu)]d\eta
 		+\int_{x}^{\theta}g(\eta,y)[Z_{2}(\eta,y,x)]^T P_{28}Z_{8}(\eta,y,\theta,\nu)d\eta
 		+\int_{a}^{x}g(\eta,y)[Z_{3}(\eta,y,x)]^T P_{37}Z_{7}(\eta,y,\theta,\nu)d\eta
 		\nonumber\\
 		&\qquad+\int_{\theta}^{b}g(\eta,\nu)[Z_{8}(\eta,\nu,x,y)]^T  P_{82}Z_{2}(\eta,\theta,\nu)]d\eta
 		+\int_{x}^{\theta}g(\eta,\nu)[Z_{8}(\eta,\nu,x,y)]^T P_{83}Z_{3}(\eta,\theta,\nu)d\eta
 		+\int_{a}^{x}g(\eta,\nu)[Z_{9}(\eta,\nu,x,y)]^T P_{93}Z_{3}(\eta,\theta,\nu)d\eta
 		\nonumber\\
 		&\qquad+\int_{y}^{d}g(\theta,\mu)[Z_{6}(\theta,\mu,x,y)]^T P_{64}Z_{4}(\theta,\mu,\nu)d\mu
 		+\int_{\nu}^{y}g(\theta,\mu)[Z_{8}(\theta,\mu,x,y)]^T P_{84}Z_{4}(\theta,\mu,\nu)d\mu
 		+\int_{c}^{\nu}g(\theta,\mu)[Z_{8}(\theta,\mu,x,y)]^T P_{85}Z_{5}(\theta,\mu,\nu)d\mu
 		\nonumber\\
 		&\qquad+\int_{y}^{d}g(x,\mu)[Z_{4}(x,\mu,y)]^T P_{47}Z_{7}(x,\mu,\theta,\nu)d\mu
 		+\int_{\nu}^{y}g(x,\mu)[Z_{5}(x,\mu,y)]^T P_{57}Z_{7}(x,\mu,\theta,\nu)d\mu
 		+\int_{c}^{\nu}g(x,\mu)[Z_{5}(x,\mu,y)]^T P_{59}Z_{9}(x,\mu,\theta,\nu)d\mu
 		\nonumber\\
 		&\qquad+\int_{\theta}^{b}\int_{y}^{d}g(\eta,\mu)[Z_{6}(\eta,\mu,x,y)]^T  P_{66}Z_{6}(\eta,\mu,\theta,\nu)]d\mu d\eta
 		+\int_{x}^{\theta}\int_{y}^{d}g(\eta,\mu)[Z_{6}(\eta,\mu,x,y)]^T  P_{67}Z_{7}(\eta,\mu,\theta,\nu)]d\mu d\eta    \nonumber\\
 		&\qquad\qquad+\int_{a}^{x}\int_{y}^{d}g(\eta,\mu)[Z_{7}(\eta,\mu,x,y)]^T  P_{77}Z_{7}(\eta,\mu,\theta,\nu)]d\mu d\eta
 		+\int_{\theta}^{b}\int_{\nu}^{y}g(\eta,\mu)[Z_{8}(\eta,\mu,x,y)]^T  P_{86}Z_{6}(\eta,\mu,\theta,\nu)]d\mu d\eta    \nonumber\\
 		&\qquad\qquad\qquad+\int_{x}^{\theta}\int_{\nu}^{y}g(\eta,\mu)[Z_{8}(\eta,\mu,x,y)]^T  P_{87}Z_{7}(\eta,\mu,\theta,\nu)]d\mu d\eta
 		+\int_{a}^{x}\int_{\nu}^{y}g(\eta,\mu)[Z_{9}(\eta,\mu,x,y)]^T  P_{97}Z_{7}(\eta,\mu,\theta,\nu)]d\mu d\eta    \nonumber\\
 		&\qquad\qquad\qquad\qquad+\int_{\theta}^{b}\int_{c}^{\nu}g(\eta,\mu)[Z_{8}(\eta,\mu,x,y)]^T  P_{88}Z_{8}(\eta,\mu,\theta,\nu)]d\mu d\eta
 		+\int_{x}^{\theta}\int_{c}^{\nu}g(\eta,\mu)[Z_{9}(\eta,\mu,x,y)]^T  P_{98}Z_{8}(\eta,\mu,\theta,\nu)]d\mu d\eta    \nonumber\\
 		&\qquad\qquad\qquad\qquad\qquad+\int_{a}^{x}\int_{c}^{\nu}g(\eta,\mu)[Z_{9}(\eta,\mu,x,y)]^T  P_{99}Z_{9}(\eta,\mu,\theta,\nu)]d\mu d\eta    \nonumber\\
 		&N_{12}(x,y,\theta,\nu)=[N_{21}(\theta,x,\nu,y)]^T
 		\end{align}
 	}
 	\hrulefill
 	\caption{Parameters $N$ describing the positive PI operator $\mcl{P}[N]=\mcl{Z}^* P\mcl{Z}$ in Proposition~\ref{prop_pos_PI_appendix}}
 	\label{fig_positive_parameters_appendix}
 \end{figure*}

\end{appendices}

\end{document}